\begin{document}

\newcommand{\N}{\mathbf{N}}
\newcommand{\n}{\mathbf{n}}
\newcommand{\x}{\mathbf{x}}
\newcommand{\h}{\mathbf{h}}
\newcommand{\m}{\mathbf{m}}

\newcommand{\B}{\mathbf{B}}
\newcommand{\U}{\mathbf{U}}
\newcommand{\V}{\mathbf{V}}
\newcommand{\T}{\mathbf{T}}
\newcommand{\G}{\mathbf{G}}
\newcommand{\Para}{\mathbf{P}}
\newcommand{\Levi}{\mathbf{L}}
\newcommand{\Y}{\mathbf{Y}}
\newcommand{\X}{\mathbf{X}}
\newcommand{\M}{\mathbf{M}}
\newcommand{\pro}{\mathbf{prod}}
\renewcommand{\o}{\overline}

\newcommand{\Gtilde}{\mathbf{\tilde{G}}}
\newcommand{\Ttilde}{\mathbf{\tilde{T}}}
\newcommand{\Btilde}{\mathbf{\tilde{B}}}
\newcommand{\Ltilde}{\mathbf{\tilde{L}}}
\newcommand{\C}{\operatorname{C}}

\newcommand{\bl}{\operatorname{bl}}
\newcommand{\Z}{\operatorname{Z}}
\newcommand{\Gal}{\operatorname{Gal}}
\newcommand{\kernel}{\operatorname{ker}}
\newcommand{\Irr}{\operatorname{Irr}}
\newcommand{\D}{\operatorname{D}}
\newcommand{\I}{\operatorname{I}}
\newcommand{\GL}{\operatorname{GL}}
\newcommand{\SL}{\operatorname{SL}}
\newcommand{\W}{\operatorname{W}}
\newcommand{\R}{\operatorname{R}}
\newcommand{\Br}{\operatorname{Br}}
\newcommand{\Aut}{\operatorname{Aut}}
\newcommand{\End}{\operatorname{End}}
\newcommand{\Ind}{\operatorname{Ind}}
\newcommand{\Res}{\operatorname{Res}}
\newcommand{\br}{\operatorname{br}}
\newcommand{\Hom}{\operatorname{Hom}}
\newcommand{\Endo}{\operatorname{End}}
\newcommand{\Ho}{\operatorname{H}}
\newcommand{\Tr}{\operatorname{Tr}}
\newcommand{\opp}{\operatorname{opp}}
\theoremstyle{remark}

\theoremstyle{definition}
\newtheorem{definition}{Definition}[section]
\newtheorem{notation}[definition]{Notation}
\newtheorem{construction}[definition]{Construction}
\newtheorem{remark}[definition]{Remark}
\newtheorem{example}[definition]{Example}

\theoremstyle{plain}

\newtheorem{theorem}[definition]{Theorem}
\newtheorem{lemma}[definition]{Lemma}
\newtheorem{question}{Question}
\newtheorem{corollary}[definition]{Corollary}
\newtheorem{proposition}[definition]{Proposition}
\newtheorem{conjecture}[definition]{Conjecture}
\newtheorem{assumption}[definition]{Assumption}
\newtheorem{hypothesis}[definition]{Hypothesis}
\newtheorem{maintheorem}[definition]{Main Theorem}

\newtheorem*{theo*}{Theorem}
\newtheorem*{conj*}{Conjecture}
\newtheorem*{cor*}{Corollary}

\newtheorem{theo}{Theorem}
\newtheorem{conj}[theo]{Conjecture}
\newtheorem{cor}[theo]{Corollary}

\renewcommand{\thetheo}{\Alph{theo}}
\renewcommand{\theconj}{\Alph{conj}}
\renewcommand{\thecor}{\Alph{cor}}

%
%
%
%
%
%
%

%

\title{Derived equivalences and equivariant Jordan decomposition}

\date{\today}
\author{Lucas Ruhstorfer}
\address{Fachbereich Mathematik, TU Kaiserslautern, 67653 Kaiserslautern, Germany}
\email{ruhstorfer@mathematik.uni-kl.de}
\keywords{Derived equivalence, automorphisms and Jordan decomposition of groups of Lie type}

\subjclass[2010]{20C33}

\begin{abstract}
The Bonnafé--Rouquier equivalence can be seen as a modular analogue of Lusztig's Jordan decomposition for groups of Lie type. In this paper, we show that this equivalence can be lifted to include automorphisms of the finite group of Lie type. Moreover, we prove the existence of a local version of this equivalence which satisfies similar properties. 
\end{abstract}

\maketitle

\section*{Introduction}

\subsection*{Representation theory of groups of Lie type}

Establishing a conjecture by Brou\'e, Bonnaf\'e--Rouquier \cite{BoRo} and later Bonnaf\'e--Dat--Rouquier \cite{Dat} proved a Jordan decomposition for blocks of groups of Lie type. Let $\G$ be a connected reductive group with Frobenius endomorphism $F: \G \to \G$ defining an $\mathbb{F}_q$-structure on $\G$. Fix a prime $\ell$ coprime to $q$ and let $(\mathcal{O},K,k)$ be an $\ell$-modular system as in \ref{Grothendieck group} below. Suppose that $(\G^\ast,F^\ast)$ is a group in duality with $(\G,F)$ and fix a semisimple element $ s\in (\G^\ast)^{F^\ast}$ of $\ell'$-order. We denote by $e_s^{\G^F} \in \mathrm{Z}(\mathcal{O} \G^F)$ the central idempotent associated to the $(\G^\ast)^{F^\ast}$-conjugacy class of $s$ as in Brou\'e--Michel \cite{BrMi}. Suppose that $\Levi^\ast$ is the minimal $F^\ast$-stable Levi subgroup of $\G^\ast$ containing $\mathrm{C}_{(\G^\ast)^{F^\ast}}(s)\mathrm{C}_{\G^\ast}^\circ(s)$. Let $\Para$ be a parabolic subgroup of $\G$ with Levi decomposition $\Para= \Levi \ltimes \U$. Then the associated \textit{Deligne--Lusztig} variety
$$\Y_\U^\G= \{ g \U \in \G/ \U \mid g^{-1} F(g) \in \U F(\U) \}$$
has a left $\G^F$- and a right $\Levi^F$-action. Its $\ell$-adic cohomology groups $H^i_c(\Y_\U^\G,\mathcal{O})$ can therefore be considered as $\Lambda \G^F$-$\Lambda \Levi^F$-bimodules. Then the following was proved in \cite{Dat}:

\begin{theo}[Bonnaf\'e--Dat--Rouquier]\label{BDRintro}
	Let $\Levi^*$ be an $F^*$-stable Levi subgroup of $\G^*$ containing $\C^\circ_{\G^*}(s) \C_{(\G^\ast)^{F^\ast}}(s)$. Then the complex $C=G\Gamma_c(\Y_{\U}^\G, \mathcal{O})^{\operatorname{red}}e_s^{\Levi^F}$ of $\mathcal{O} \G^F e_s^{\G^F}$-$\mathcal{O} \Levi^F e_s^{\Levi^F}$ bimodules induces a splendid Rickard equivalence between $\mathcal{O}\G^F e_s^{\G^F}$ and $\mathcal{O}\Levi^F e_s^{\Levi^F}$. The bimodule $H^{\operatorname{dim}(\Y_{\U}^\G)}(C)$ induces a Morita equivalence between $\mathcal{O} \G^F e_s^{\G^F}$ and  $\mathcal{O} \Levi^F e_s^{\Levi^F}$.
\end{theo}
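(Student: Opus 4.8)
This is the theorem of Bonnaf\'e--Dat--Rouquier \cite{Dat}, which removes the hypothesis on $\Z(\G)$ present in Bonnaf\'e--Rouquier \cite{BoRo}; here is the shape of the argument I would give. Put $d=\dim(\Y_\U^\G)$ and abbreviate $C=R\Gamma_c(\Y_\U^\G,\mathcal O)^{\operatorname{red}}e_s^{\Levi^F}$. The plan is to establish three points: (a) $C$ is a \emph{splendid} complex of $\mathcal O\G^F e_s^{\G^F}$-$\mathcal O\Levi^F e_s^{\Levi^F}$-bimodules, i.e.\ a bounded complex of $p$-permutation bimodules that are projective on each side and whose terms behave well under Brauer constructions; (b) $C^{\vee}\otimes^{\mathbf L}_{\mathcal O\G^F}C\simeq\mathcal O\Levi^F e_s^{\Levi^F}$ and $C\otimes^{\mathbf L}_{\mathcal O\Levi^F}C^{\vee}\simeq\mathcal O\G^F e_s^{\G^F}$ in the homotopy categories of $p$-permutation bimodules; and (c) $e_s^{\G^F}H^i_c(\Y_\U^\G,\mathcal O)e_s^{\Levi^F}=0$ for all $i\ne d$. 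From (a) and (b) one gets the splendid Rickard equivalence; adding (c), the complex $C$ becomes homotopy equivalent to the single bimodule $H^d(C)$ placed in degree $d$, so $H^d(C)$ induces a Morita equivalence.

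For (a) I would start from the classical fact that the right $\Levi^F$-action on $\Y_\U^\G$ is free, which lets one represent $R\Gamma_c(\Y_\U^\G,\mathcal O)$ by a bounded complex of $\mathcal O[\G^F\times\Levi^F]$-modules that are $p$-permutation for twisted-diagonal $p$-subgroups and projective on the $\Levi^F$-side; the reduced representative $R\Gamma_c(\Y_\U^\G,\mathcal O)^{\operatorname{red}}$ keeps these properties, and projectivity on the $\G^F$-side after cutting by $e_s^{\Levi^F}$ is extracted from the disjointness statements of \cite{BoRo}. Compatibility with the Brauer construction rests on the identification $\operatorname{Br}_Q\bigl(R\Gamma_c(\Y_\U^\G,\mathcal O)\bigr)\simeq R\Gamma_c\bigl((\Y_\U^\G)^Q,k\bigr)$ for $p$-subgroups $Q\le\G^F\times\Levi^F$, the fixed-point variety being a union of Deligne--Lusztig varieties for the centralizer of the image of $Q$ in $\G\times\Levi$; one also uses that $e_s^{\G^F}$ acts as the identity on $R\Gamma_c(\Y_\U^\G,\mathcal O)e_s^{\Levi^F}$, i.e.\ the compatibility of Lusztig induction with the $e_s$-decomposition (Brou\'e--Michel \cite{BrMi}).

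Step (b) is the core. The bimodule $C^{\vee}\otimes^{\mathbf L}_{\mathcal O\G^F}C$ realizes the composite ${}^{*}\R_\Levi^\G\circ\R_\Levi^\G$ of Deligne--Lusztig induction $\R_\Levi^\G$ and its adjoint ${}^{*}\R_\Levi^\G$, computed via K\"unneth and proper base change from the $\ell$-adic cohomology of Deligne--Lusztig varieties. The Mackey formula for Lusztig functors — available in the cases needed here, the remaining ones being proved in \cite{Dat} — expands it as a direct sum, over the double cosets $w\in\Levi^F\backslash\G^F/\Levi^F$ for which $\Levi$ and ${}^w\Levi$ share a maximal torus, of summands $\R^{\Levi}_{\Levi\cap{}^w\Levi}\circ\operatorname{ad}(w)\circ{}^{*}\R^{\Levi}_{{}^{w^{-1}}\Levi\cap\Levi}$. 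The decisive observation is that, after multiplying by $e_s^{\Levi^F}$ on both sides, only $w=1$ survives: since $\Levi^{*}$ already contains $\C^\circ_{\G^{*}}(s)\C_{(\G^{*})^{F^{*}}}(s)$, for $w\ne1$ the twisted element ${}^{w}s$ lies outside the $\Levi^{*F^{*}}$-conjugacy class of $s$, so $e_{{}^{w}s}^{\Levi^F}e_s^{\Levi^F}=0$ and the geometric correspondence underlying that summand vanishes after cutting. The $w=1$ term is the identity functor, so $C^{\vee}\otimes^{\mathbf L}_{\mathcal O\G^F}C\simeq\mathcal O\Levi^F e_s^{\Levi^F}$, and the companion isomorphism $C\otimes^{\mathbf L}_{\mathcal O\Levi^F}C^{\vee}\simeq\mathcal O\G^F e_s^{\G^F}$ follows by a standard argument (a one-sided quasi-inverse between symmetric block algebras forces a two-sided one, together with a count of simple modules matching the known character-theoretic Jordan decomposition). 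Since $\C_{\G^*}(s)$ need not be connected, all of this is first carried out after a regular embedding $\G\hookrightarrow\Gtilde$ with connected center and then descended by Clifford theory along the component group $\C_{\G^*}(s)/\C^\circ_{\G^*}(s)$; this last point is exactly where \cite{Dat} goes beyond \cite{BoRo}.

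Finally (c) is proved by induction on $\dim\G$: for a proper Levi, the Mackey/Brauer analysis of (a)--(b) reduces the middle-degree concentration to the same statement inside smaller centralizer subgroups, and one combines this with the classical disjointness of Lusztig series over $K$ and a parity estimate on the cohomology of $\Y_\U^\G$. I expect the real difficulty to lie in (b): securing the Mackey formula in exactly the generality required (past the cases covered by Bonnaf\'e--Michel), and making the non-connected-centralizer bookkeeping go through — in particular, verifying that the vanishing $e_{{}^{w}s}^{\Levi^F}e_s^{\Levi^F}=0$ for $w\ne1$ genuinely relies on the minimality of $\Levi^*$, and that the descent along $\G\hookrightarrow\Gtilde$ is compatible with all the geometric and block-theoretic structures in play. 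By comparison, the splendidness in (a) and the upgrade in (c) to a Morita equivalence are formal, once the $\ell$-adic base-change statements and the concentration result are in hand.
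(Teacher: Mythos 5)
The paper never proves this statement, so there is no internal argument to measure yours against: Theorem \ref{BDRintro} reappears in the body as Theorem \ref{Boro2}, which is deduced in one line from Theorem \ref{Boro} by noting that the hypothesis $\C^\circ_{\G^*}(s)\,\C_{(\G^\ast)^{F^\ast}}(s)\subseteq\Levi^*$ forces $\N^F=\Levi^F$, and the proof of Theorem \ref{Boro} is itself a pointer to \cite[Section 7]{Dat}. What you have written is therefore a reconstruction of the external Bonnaf\'e--(Dat--)Rouquier proof, and as such it is broadly faithful: splendidness of the Rickard--Rouquier complex, a disjointness argument showing the two adjunction maps become isomorphisms after cutting by $e_s$ on both sides, and the concentration of $H^\bullet_c(\Y_\U^\G,\mathcal{O})e_s^{\Levi^F}$ in degree $\dim\Y_\U^\G$ (\cite[Theorem 11.7]{BoRo}, quoted in the paper just before Theorem \ref{Boro}), which upgrades the Rickard equivalence to a Morita equivalence exactly as in the paper's Proposition \ref{equiv}.

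Three corrections to your sketch. First, the relevant prime throughout is $\ell$, not $p$: the terms of $C$ are $\ell$-permutation bimodules, the Brauer construction is taken at $\ell$-subgroups, and biprojectivity already follows from the vertices lying in $\Delta\Levi^F$. Second, the vanishing of the cross terms does \emph{not} rely on minimality of $\Levi^*$ --- the theorem is stated for an arbitrary $F^*$-stable Levi containing $\C^\circ_{\G^*}(s)\,\C_{(\G^\ast)^{F^\ast}}(s)$, and it is exactly this containment (the ``almost superregular'' condition of Definition \ref{almost superregular}) that guarantees $\N^*=\Levi^*$, i.e.\ that the only classes $w$ for which ${}^{w}s$ remains $\Levi^*$-conjugate to $s$ already lie in $\Levi^*$; moreover \cite{BoRo} runs this step geometrically, via a Bruhat-type decomposition of the product variety computing $C^\vee\otimes_{\mathcal{O}\G^F}C$, precisely so as not to presuppose the Mackey formula for Lusztig functors in unproved cases. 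Third, the regular embedding in \cite{Dat} is not used to \emph{descend} from $\Gtilde$ by Clifford theory but to \emph{extend} the bimodule from $\Levi^F$ to $\N^F=\operatorname{N}_{\G^F}(\Levi,e_s^{\Levi^F})$ (this is Assumption \ref{assumptionBDR} in the paper); under the hypothesis of the present statement that extension problem is vacuous, which is the whole point of singling out this special case.
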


\subsection*{Clifford theory and group automorphisms}

The Jordan decomposition by Bonnaf\'e--Rouquier has proved to be extremely useful in the representation theory of finite groups of Lie type. For instance, the Bonnaf\'e--Rouquier Morita equivalence was a crucial ingredient in the verification of one direction of Brauer's height zero conjecture by Malle--Kessar \cite{KessarMalle}. Our main objective in this article is therefore to extend their results to include automorphisms.

Let us therefore from now on assume that $\G$ is a simple algebraic group of simply connected type and (for the sake of exposition) not of type $D_4$. Let $F: \G \to \G$ be a Frobenius endomorphism such that $\G^F/\mathrm{Z}(\G^F)$ is a finite simple group. We fix a regular embedding $ \iota: \G \hookrightarrow \Gtilde$.

 Using the classification of automorphisms of finite simple groups of Lie type we prove the existence of bijective morphisms $F_0: \tilde{\G} \to \tilde{\G}$ and $\sigma: \tilde{\G} \to \tilde{\G}$ stabilizing a Levi subgroup $\Levi$ of $\G$ in duality with $\Levi^\ast$ and such that the image of $\tilde{\G}^F \rtimes \mathcal{A}$, where $\mathcal{A}:= \langle \sigma|_{\tilde{\G}^F}, F_0|_{\tilde{\G}^F} \rangle$, generates the stabilizer of $e_s^{\G^F}$ in $\mathrm{Out}(\G^F)$. Moreover, these bijective morphisms commute with each other and the Frobenius endomorphism $F: \G \to \G$ is an integral power of $F_0$. Using this explicit description of automorphisms we can prove the following:

\begin{theo}[see Theorem \ref{Morita lift}]\label{intro2}
Assume that the order of $\sigma: \G^F \to \G^F$ is coprime to $\ell$.
Then $H^{\mathrm{dim}(\Y_\U^\G)}_c(\Y_\U^\G,\mathcal{O}) e_s^{\Levi^F}$ extends to an $\mathcal{O}[ (\G^F \times (\Levi^F)^{\mathrm{opp}}) \Delta(\mathcal{A})]$-module $M$. Moreover, the bimodule $\tilde{M}:=\mathrm{Ind}_{ (\G^F \times (\Levi^F)^{\mathrm{opp}}) \Delta(\mathcal{A}) }^{\tilde{\G}^F \mathcal{A} \times (\tilde{\Levi}^F \mathcal{A})^{\mathrm{opp}}} (M)$ induces a Morita equivalence between $\mathcal{O} \tilde{\Levi}^F \mathcal{A} e_s^{\Levi^F}$ and $\mathcal{O} \tilde{\G}^F \mathcal{A} e_s^{\G^F}$.
\end{theo}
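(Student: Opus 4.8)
The plan is to deduce the theorem from Bonnaf\'e--Dat--Rouquier (Theorem \ref{BDRintro}) together with the explicit description of the automorphism data. Write $B = H^{\dim(\Y_\U^\G)}_c(\Y_\U^\G,\mathcal{O})\, e_s^{\Levi^F}$, which by the second part of Theorem \ref{BDRintro} induces a Morita equivalence between $A := \mathcal{O}\G^F e_s^{\G^F}$ and $A' := \mathcal{O}\Levi^F e_s^{\Levi^F}$. First I would show that the group $\Delta(\mathcal{A}) \leq \G^F\mathcal{A}\times(\Levi^F\mathcal{A})^{\opp}$ acts on the isomorphism class of $B$ as an $A$-$A'$-bimodule. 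This is where the construction of $\sigma$ and $F_0$ is used: since $\sigma$ and $F_0$ are bijective morphisms of $\Gtilde$ stabilizing $\G$, the Levi subgroup $\Levi$ and the parabolic $\Para$ (after replacing by a conjugate if necessary), they act on the variety $\Y_\U^\G$ compatibly with the $\G^F$- and $\Levi^F$-actions twisted by the corresponding automorphisms, hence on $H^{\dim}_c$; the fact that $F$ is an integral power of $F_0$ together with the standard fact that $F$ acts trivially (up to the scalar $q^{\dim/2}$, which can be normalized away) on the cohomology of the Deligne--Lusztig variety shows the action of $\mathcal{A}$ factors through the finite group $\mathcal{A}$, and that the $\sigma$-twist is realized by an honest $A$-$A'$-bimodule isomorphism $B \cong {}^{\sigma}B^{\sigma}$.

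Next I would invoke the coprimality hypothesis: since $|\mathcal{A}|$ — more precisely the relevant obstruction, which lives in $\mathcal{A}$ modulo the part generated by $F_0$, and ultimately in $\langle\sigma\rangle$ — is coprime to $\ell$ (this is exactly where ``the order of $\sigma$ is coprime to $\ell$'' enters), the obstruction to extending $B$ to an $\mathcal{O}[(\G^F\times(\Levi^F)^{\opp})\Delta(\mathcal{A})]$-module, which is a class in $H^2(\mathcal{A}, \mathcal{O}^\times)$ (or the relevant $\ell$-part thereof for bimodules over $\ell$-modular rings), vanishes. Concretely, one uses that a Morita bimodule inducing an equivalence is, up to isomorphism and up to a unit, $\mathcal{A}$-equivariant, so the obstruction cocycle takes values in the $\ell'$-torsion subgroup of $k^\times$ or $\mathcal{O}^\times$ of order dividing a power of $|\mathcal{A}|$; by coprimality it is a coboundary. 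This yields the module $M$ extending $B$. The reduction from $\Delta(\mathcal{A}) \subseteq \G^F\mathcal{A}\times(\Levi^F\mathcal{A})^{\opp}$ to treating this as a genuine extension problem requires checking that $(\G^F\times(\Levi^F)^{\opp})\Delta(\mathcal{A})/(\G^F\times(\Levi^F)^{\opp}) \cong \mathcal{A}$, which is immediate from the definitions.

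Finally I would pass from $M$ to $\tilde M = \Ind(M)$ and check it induces a Morita equivalence between $\mathcal{O}\tilde\Levi^F\mathcal{A}\, e_s^{\Levi^F}$ and $\mathcal{O}\tilde\G^F\mathcal{A}\, e_s^{\G^F}$. Here I would use a standard Clifford-theoretic / induction argument: because $\tilde\G^F\mathcal{A}$ is generated by $\G^F$ and the abelian group $\tilde\G^F\mathcal{A}/\G^F$ (and similarly for $\Levi$), and the two group rings $\mathcal{O}[\tilde\G^F\mathcal{A}\times(\tilde\Levi^F\mathcal{A})^{\opp}]$ and the image of $\mathcal{O}[(\G^F\times(\Levi^F)^{\opp})\Delta(\mathcal{A})]$ differ by induction along a subgroup with abelian quotient, a bimodule extending a Morita equivalence induces a Morita equivalence after induction — this is an instance of the descent/ascent results of Marcus or of the general ``a Morita equivalence that is $\mathcal{A}$-stable and extends, induces a Morita equivalence of the extended algebras'' principle. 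One must also verify that the central idempotents $e_s^{\G^F}$ and $e_s^{\Levi^F}$ are fixed by $\mathcal{A}$ (this is precisely the design of $\mathcal{A}$ as the stabilizer of $e_s^{\G^F}$) and that they remain idempotents, in fact the relevant block idempotents, in the bigger group algebras $\mathcal{O}\tilde\G^F\mathcal{A}$ and $\mathcal{O}\tilde\Levi^F\mathcal{A}$; for the latter one uses that $\tilde\G^F/\G^F$ and $\mathcal{A}$ act on the relevant characters in a controlled way so that $e_s^{\G^F}$ stays central.

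The main obstacle, I expect, is the equivariance step: verifying that the bijective morphisms $\sigma$ and $F_0$ can be chosen to stabilize \emph{compatibly} the whole package $(\G,\Para,\Levi,\U)$ in duality with $\Levi^\ast$, and that the induced operations on $H^{\dim}_c(\Y_\U^\G,\mathcal{O})$ are exactly the bimodule twists needed — including controlling the scalar ambiguity coming from the Frobenius acting on cohomology and ensuring the resulting $2$-cocycle really does land in an $\ell'$-group. The coprimality hypothesis on $\sigma$ is doing the essential work of killing the cohomological obstruction, but making the obstruction class precise and identifying where it lives is the delicate point.
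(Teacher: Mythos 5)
Your overall skeleton (invariance of the bimodule, extension to $\Delta(\mathcal{A})$, then Marcus' lifting theorem) matches the paper, and the last step is handled exactly as you say via Theorem \ref{lifting}. But there is a genuine gap at the extension step, and it is precisely the difficulty the paper is organized around. You treat the extension from $\G^F\times(\Levi^F)^{\mathrm{opp}}$ to $(\G^F\times(\Levi^F)^{\mathrm{opp}})\Delta(\mathcal{A})$ as a single obstruction problem with class in $H^2(\mathcal{A},\mathcal{O}^\times)$ and claim this class dies because the order of $\sigma$ is coprime to $\ell$. That argument fails: $\mathcal{A}$ is in general \emph{not} cyclic (modulo diagonal automorphisms it can be $\langle F_0\rangle\times\langle\sigma\rangle$), and for a rank-two abelian group the relevant obstruction group is of the shape $H^2(C_n\times C_m,k^\times)\cong\mu_{\gcd(n,m)}(k)$, which is nonzero in general and consists \emph{entirely} of $\ell'$-torsion (since $k^\times$ has no $\ell$-torsion). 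So coprimality of $|\sigma|$ to $\ell$ does nothing to kill it; invariance of the bimodule under $\Delta(\mathcal{A})$ simply does not imply extendibility. Moreover, the order of $F_0$ is not assumed coprime to $\ell$, so you cannot even invoke a coprimality argument for the whole of $\mathcal{A}$.

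What the paper does instead is to remove the $F_0$-direction from the obstruction problem entirely by constructing an \emph{honest} diagonal $F_0$-action on the cohomology, via Digne's restriction of scalars: one passes to $\underline{\G}=\G^r$ with Frobenius $\tau F_0$ ($\tau$ the cyclic permutation), identifies $\G^F\cong\underline{\G}^{\tau F_0}$ and $\Y_\U^{\G}\cong\Y_{\underline{\U}}^{\underline{\G},\tau F_0}$ (Proposition \ref{ShintaniDLV}), and observes that $F_0$ corresponds to the algebraic automorphism $\tau^{-1}$ of the disconnected group $\underline{\G}\rtimes\langle\tau\rangle$, which acts on the cohomology of the variety attached to the $\tau$-stable parabolic $\Para^r$. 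Combined with the independence theorem (Theorem \ref{independence}) this equips $H_c^{\mathrm{dim}}(\Y_\U^\G,\mathcal{O})e_s^{\Levi^F}$ with a $\Delta(\Ltilde^F\langle F_0\rangle)$-structure that is $(\sigma,\sigma^{-1})$-stable (Proposition \ref{field}); only then is the remaining extension problem cyclic, generated by $\sigma$, and the coprimality hypothesis enters through \cite[Lemma 10.2.13]{Rouquier3}. Your parenthetical appeal to ``$F$ acts on cohomology by the scalar $q^{\dim/2}$'' is not a substitute for this construction and is not how the $F_0$-action is obtained. Without the restriction-of-scalars step (or some equivalent device producing a genuine $F_0$-extension), your proof does not go through.
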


One of the main results of \cite{Dat} is that the Morita equivalence in Theorem \ref{BDRintro} does not depend on the choice of the parabolic subgroup $\Para$. This shows that the bimodule $H^{\mathrm{dim}(\Y_\U^\G)}_c(\Y_\U^\G,\mathcal{O}) e_s^{\Levi^F}$ is $\Delta(\mathcal{A})$-invariant. However, this does not imply that the Morita bimodule extends to $\Delta(\mathcal{A})$ since $\mathcal{A}$ might not be cyclic. To remedy this problem we use a certain idea introduced by Digne \cite{Digne} in the context of restriction of scalars for Deligne--Lusztig varieties. This allows us to show that the module $H^{\mathrm{dim}(\Y^ \G_\U)}_c(\Y_\U^\G,\Lambda)e_s^{\Levi^F}$ can be endowed with a natural diagonal action of the automorphism $F_0|_{\tilde{\G}^F}$. From this we can show using the aforementioned independence result that the so-obtained bimodule is still invariant under the automorphism $\sigma$. Once we have proved this, Theorem \ref{intro2} is then a consequence of general results on Clifford theory of Morita equivalences. This result gives us the desired compatibility of the Bonnaf\'e--Rouquier equivalence with group automorphisms:

\begin{cor}
In the situation of Theorem \ref{intro2} we have the following commutative square of Grothendieck groups:
\begin{center}
\begin{tikzpicture}
  \matrix (m) [matrix of math nodes,row sep=3em,column sep=4em,minimum width=2em] {
  
    G_0(\mathcal{O} \Ltilde^F \mathcal{A} e_s^{\Levi^F} )  & G_0(\mathcal{O} \Gtilde^F \mathcal{A}  e_s^{\G^F})  \\
      G_0(\mathcal{O} \Levi^F e_s^{\Levi^F}) & G_0(\mathcal{O} \G^F e_s^{\G^F}) 
     \\};
\path[-stealth]
(m-1-2) edge node [right] {$\mathrm{Res}^{\Gtilde^F \mathcal{A}}_{\G^F}$} (m-2-2)
(m-1-1) edge node [above] {$[\tilde{M} \otimes -]$} (m-1-2)
(m-2-1) edge node [above] {$(-1)^{\mathrm{dim}(\Y_\U^\G)}  R_\Levi^{\G}$} (m-2-2)
(m-1-1) edge node [left] {$\mathrm{Res}^{\Ltilde^F \mathcal{A}}_{\Levi^F}$} (m-2-1);

\end{tikzpicture}
\end{center}
\end{cor}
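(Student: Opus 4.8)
The plan is to deduce the square from Theorem~\ref{intro2} once the lower horizontal arrow has been rewritten in bimodule terms. By Theorem~\ref{BDRintro}, over the blocks $\mathcal{O}\G^F e_s^{\G^F}$ and $\mathcal{O}\Levi^F e_s^{\Levi^F}$ the reduced complex $C=G\Gamma_c(\Y_\U^\G,\mathcal{O})^{\operatorname{red}}e_s^{\Levi^F}$ is homotopy equivalent to $H^{\mathrm{dim}(\Y_\U^\G)}(C)=H^{\mathrm{dim}(\Y_\U^\G)}_c(\Y_\U^\G,\mathcal{O})e_s^{\Levi^F}$ placed in degree $\mathrm{dim}(\Y_\U^\G)$ (this is exactly what makes the bimodule $H^{\mathrm{dim}(\Y_\U^\G)}(C)$ alone induce a Morita equivalence). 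Since on Grothendieck groups $R_\Levi^\G$ is the Euler characteristic $\sum_i(-1)^i\bigl[H^i_c(\Y_\U^\G,\mathcal{O})e_s^{\Levi^F}\otimes_{\mathcal{O}\Levi^F}-\bigr]$, after this truncation the alternating sum collapses and the bottom map $(-1)^{\mathrm{dim}(\Y_\U^\G)}R_\Levi^\G$ of the square is, on $G_0(\mathcal{O}\Levi^F e_s^{\Levi^F})$, nothing but tensoring with the bimodule $H^{\mathrm{dim}(\Y_\U^\G)}(C)$.

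With this in hand the square becomes a statement about compatibility with restriction of the Morita equivalence of Theorem~\ref{intro2}. As $\tilde M$ and $M$ are biprojective, the top map $\tilde M\otimes_{\mathcal{O}\tilde{\Levi}^F\mathcal{A}}-$ and the two vertical restriction maps are exact, so it suffices to produce a natural isomorphism of functors on module categories
\[
\mathrm{Res}^{\tilde{\G}^F\mathcal{A}}_{\G^F}\bigl(\tilde M\otimes_{\mathcal{O}\tilde{\Levi}^F\mathcal{A}}-\bigr)\;\cong\;H^{\mathrm{dim}(\Y_\U^\G)}(C)\otimes_{\mathcal{O}\Levi^F}\mathrm{Res}^{\tilde{\Levi}^F\mathcal{A}}_{\Levi^F}(-).
\]
Both sides are of the form $V\mapsto N\otimes_{\mathcal{O}\tilde{\Levi}^F\mathcal{A}}V$, so this amounts to identifying, up to the functors they define on $G_0$, the bimodule obtained from $\tilde M$ by restricting the left action to $\G^F$ with $\mathrm{Ind}^{\G^F\times(\tilde{\Levi}^F\mathcal{A})^{\mathrm{opp}}}_{\G^F\times(\Levi^F)^{\mathrm{opp}}}\!\bigl(H^{\mathrm{dim}(\Y_\U^\G)}(C)\bigr)$, i.e.\ $H^{\mathrm{dim}(\Y_\U^\G)}(C)$ induced up on the right only. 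Plugging in $\tilde M=\mathrm{Ind}^{\tilde{\G}^F\mathcal{A}\times(\tilde{\Levi}^F\mathcal{A})^{\mathrm{opp}}}_{(\G^F\times(\Levi^F)^{\mathrm{opp}})\Delta(\mathcal{A})}(M)$ and running the Mackey formula, the double coset of the identity produces exactly this bimodule, since $M$ restricts to $H^{\mathrm{dim}(\Y_\U^\G)}(C)$ on $\G^F\times(\Levi^F)^{\mathrm{opp}}$; the remaining double cosets are parametrised by $\tilde{\G}^F/\G^F\cong\tilde{\Levi}^F/\Levi^F$ coming from the regular embedding, and their contributions have to be sorted out, after imposing the idempotent $e_s^{\G^F}$, by using that a representative $\xi\in\tilde{\Levi}^F$ normalises $\U$ and hence acts on $\Y_\U^\G$ compatibly with the Bonnaf\'e--Dat--Rouquier bimodule, together with the independence of $H^{\mathrm{dim}(\Y_\U^\G)}(C)$ of the choice of parabolic $\Para$ proved in \cite{Dat}. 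Passing to $G_0$ and combining with the first paragraph yields the commutative square.

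I expect the last point to be the main obstacle: controlling the extra Mackey summands coming from $\tilde{\G}^F/\G^F$ and showing that, once $e_s^{\G^F}$ is imposed, they reorganise into the single bimodule $\mathrm{Ind}^{\G^F\times(\tilde{\Levi}^F\mathcal{A})^{\mathrm{opp}}}_{\G^F\times(\Levi^F)^{\mathrm{opp}}}(H^{\mathrm{dim}(\Y_\U^\G)}(C))$. This is precisely the place where the compatibility of the diagonal-automorphism part (the regular embedding) with the field and graph automorphism part $\mathcal{A}$ has to be used; it is already built into the construction of $M$ and into Theorem~\ref{intro2}, and granting it the corollary is formal.
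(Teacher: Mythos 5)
Your route is the paper's own: the corollary is obtained by combining the vanishing $H^i_c(\Y_\U^\G,\mathcal{O})e_s^{\Levi^F}=0$ for $i\neq\dim(\Y_\U^\G)$, which collapses the Euler characteristic defining $R_\Levi^\G$ to $(-1)^{\dim(\Y_\U^\G)}\bigl[H^{\dim(\Y_\U^\G)}_c(\Y_\U^\G,\mathcal{O})e_s^{\Levi^F}\otimes-\bigr]$ on $G_0(\mathcal{O}\Levi^Fe_s^{\Levi^F})$, with the formal compatibility $\mathrm{Res}^{\tilde G}_G\circ\tilde\varphi\cong\varphi\circ\mathrm{Res}^{\tilde L}_L$ of Remark \ref{characters}(a); the paper's proof is literally a citation of that remark. (For the first step you do not need $C^{\mathrm{red}}$ to be homotopy equivalent to its top cohomology; the vanishing of the other $H^ie_s^{\Levi^F}$ already suffices at the level of Grothendieck groups.)

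The one point you leave open, ``controlling the extra Mackey summands coming from $\tilde{\G}^F/\G^F$'', is not an actual obstacle, and the machinery you propose to invoke (the action of $\tilde{\Levi}^F$ on $\Y_\U^\G$, independence of the parabolic) is not needed for this corollary. The subgroup over which $M$ is actually defined is $\mathcal{D}=(\G^F\times(\Levi^F)^{\mathrm{opp}})\,\Delta(\tilde{\Levi}^F\mathcal{A})$ as in Theorem \ref{Morita lift} (the $\Delta(\mathcal{A})$ of the introduction is shorthand for this). Since $\tilde{\G}^F=\G^F\tilde{\Levi}^F$ for the regular embedding with $\tilde{\Levi}=\Levi\,\mathrm{Z}(\tilde{\G})$, one has $(\G^F\times(\tilde{\Levi}^F\mathcal{A})^{\mathrm{opp}})\cdot\mathcal{D}=\tilde{\G}^F\mathcal{A}\times(\tilde{\Levi}^F\mathcal{A})^{\mathrm{opp}}$ and $(\G^F\times(\tilde{\Levi}^F\mathcal{A})^{\mathrm{opp}})\cap\mathcal{D}=\G^F\times(\Levi^F)^{\mathrm{opp}}$, so the Mackey decomposition of $\mathrm{Res}^{\tilde{\G}^F\mathcal{A}\times(\tilde{\Levi}^F\mathcal{A})^{\mathrm{opp}}}_{\G^F\times(\tilde{\Levi}^F\mathcal{A})^{\mathrm{opp}}}(\tilde M)$ consists of a \emph{single} summand, namely $\mathrm{Ind}_{\G^F\times(\Levi^F)^{\mathrm{opp}}}^{\G^F\times(\tilde{\Levi}^F\mathcal{A})^{\mathrm{opp}}}\bigl(H^{\dim(\Y_\U^\G)}_c(\Y_\U^\G,\mathcal{O})e_s^{\Levi^F}\bigr)\cong H^{\dim(\Y_\U^\G)}_c(\Y_\U^\G,\mathcal{O})e_s^{\Levi^F}\otimes_{\mathcal{O}\Levi^F}\mathcal{O}[\tilde{\Levi}^F\mathcal{A}]$. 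Tensoring this with $N$ over $\mathcal{O}\tilde{\Levi}^F\mathcal{A}$ yields $H^{\dim(\Y_\U^\G)}_c(\Y_\U^\G,\mathcal{O})e_s^{\Levi^F}\otimes_{\mathcal{O}\Levi^F}\mathrm{Res}^{\tilde{\Levi}^F\mathcal{A}}_{\Levi^F}(N)$, which is exactly the identity of functors you were after; with that your argument closes and coincides with the paper's.
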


\subsection*{Local equivalences}

Many open local-global conjectures, like the Alperin--McKay conjecture and the Alperin weight conjecture relate certain representations of a finite group to certain data of its local subgroups. Therefore, it is desirable to have a similar statement as Theorem \ref{intro2} for local subgroups, i.e. normalizers of $\ell$-subgroups.

Let $b$ be a block corresponding to the block $c$ under the Morita equivalence induced by $H_c^{\mathrm{dim}(\Y_\U^\G)}(\Y_\U^\G,\mathcal{O}) e_s^{\Levi^F}$. Then the blocks $b$ and $c$ have a common defect group $D$ contained in $\Levi^F$. We denote by $B_D$ the Brauer correspondent of $b$ and by $C_D$ the Brauer correspondent of $c$. In addition, we let $B_D'= \mathrm{Tr}_{ \mathrm{N}_{\tilde{\G}^F \mathcal{A}}(D,B_D)}^{\mathrm{N}_{\tilde{\G}^F \mathcal{A}}(D)} (B_D)$ and $C_D'= \mathrm{Tr}_{ \mathrm{N}_{\tilde{\Levi}^F \mathcal{A}}(D,C_D)}^{\mathrm{N}_{\tilde{\Levi}^F \mathcal{A}}(D)}( C_D)$ be the corresponding central idempotents of $\mathrm{N}_{\Gtilde^F \mathcal{A}}(D)$ and $\mathrm{N}_{\Ltilde^F \mathcal{A}}(D)$.

\begin{theo}[see Theorem \ref{loc}]\label{intro3}
Suppose that the assumptions of Theorem \ref{intro2} are satisfied.
Then the cohomology module $H^{\mathrm{dim}}_c(\Y^{\mathrm{N}_\G(D)}_{\C_{\U}(D)},\mathcal{O} )C_D$ extends to an $\mathcal{O}[(\mathrm{N}_{\G^F}(D) \times \mathrm{N}_{\Levi^F}(D)^{\mathrm{opp}} )\Delta( \mathrm{N}_{\tilde{\Levi}^F \mathcal{A}}(D,C_D))  ]$-module $M_D$. In particular, the bimodule
$$\mathrm{Ind}_{(\mathrm{N}_{\G^F}(D) \times \mathrm{N}_{\Levi^F}(D)^{\mathrm{opp}}) \Delta \mathrm{N}_{\tilde{\Levi}^F \mathcal{A}}(D,C_D)}^{\mathrm{N}_{\tilde{\G}^F \mathcal{A}}(D) \times \mathrm{N}_{\tilde{\Levi}^F \mathcal{A}}(D)^{\mathrm{opp}}}(M_D)$$
induces a Morita equivalence between $\mathcal{O} \mathrm{N}_{\tilde{\G}^F \mathcal{A}}(D) B_D'$ and $\mathcal{O} \mathrm{N}_{\tilde{\Levi}^F \mathcal{A}}(D) C_D'$.
%

\end{theo}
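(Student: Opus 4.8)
The plan is to derive the local statement (Theorem~\ref{intro3}) from the global statement (Theorem~\ref{intro2}) by applying the latter to the reductive group $\mathrm{N}_\G(D)^\circ$ or, more precisely, by a Brauer-correspondence/local-analysis argument that transports the extended Morita bimodule $\tilde M$ to the level of local subgroups. First I would recall that under the splendid Rickard equivalence of Theorem~\ref{BDRintro}, the complex $C$ has, as a direct summand of a Brauer construction, a complex inducing a splendid Rickard equivalence between the Brauer correspondents; concretely, taking the Brauer construction $C(\Delta D)$ (or the Brauer quotient with respect to $\Delta D \le \G^F\times(\Levi^F)^{\opp}$) produces a complex of $\mathcal{O}\mathrm{N}_{\G^F}(D)B_D$-$\mathcal{O}\mathrm{N}_{\Levi^F}(D)C_D$-bimodules. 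Geometrically this Brauer construction is computed by the fixed points $(\Y_\U^\G)^{\Delta D}$, which by standard arguments (as in Bonnaf\'e--Dat--Rouquier, using that $D$ is an $\ell$-group and the Deligne--Lusztig variety is defined over $\overline{\mathbb F}_q$) is isomorphic to the Deligne--Lusztig variety $\Y_{\C_\U(D)}^{\mathrm{N}_\G(D)}$ of the (possibly disconnected) reductive group $\mathrm{N}_\G(D)$. Passing to the top cohomology then gives the Morita bimodule $H^{\mathrm{dim}}_c(\Y_{\C_\U(D)}^{\mathrm{N}_\G(D)},\mathcal{O})C_D$ between $\mathcal{O}\mathrm{N}_{\G^F}(D)B_D$ and $\mathcal{O}\mathrm{N}_{\Levi^F}(D)C_D$.

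Next I would upgrade this to include the automorphisms. The module $M$ from Theorem~\ref{intro2} carries a $\Delta(\mathcal{A})$-action on $H^{\mathrm{dim}}_c(\Y_\U^\G,\mathcal{O})e_s^{\Levi^F}$ extending the bimodule structure; I would argue that this action restricts compatibly to the fixed-point variety: since $\mathcal{A}$ stabilizes $\Levi$ and normalizes the relevant data, a suitable $\mathcal{A}$-conjugate of $D$ is again $D$ up to the conjugation built into $\mathrm{N}_{\tilde\Levi^F\mathcal{A}}(D,C_D)$, so the diagonal $\mathcal{A}$-action on the global module induces a diagonal $\mathrm{N}_{\tilde\Levi^F\mathcal{A}}(D,C_D)$-action on the Brauer-constructed module $H^{\mathrm{dim}}_c(\Y_{\C_\U(D)}^{\mathrm{N}_\G(D)},\mathcal{O})C_D$. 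This gives the extension to $M_D$ as an $\mathcal{O}[(\mathrm{N}_{\G^F}(D)\times\mathrm{N}_{\Levi^F}(D)^{\opp})\Delta\mathrm{N}_{\tilde\Levi^F\mathcal{A}}(D,C_D)]$-module. The point is that $\Delta\mathrm{N}_{\tilde\Levi^F\mathcal{A}}(D,C_D)$ is precisely the stabilizer, inside the diagonal of the larger group, of the block-with-defect-group datum, so the naturality of the Brauer construction — functoriality in the bimodule and compatibility with the group actions — yields the action for free once it is known globally.

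Finally, with $M_D$ in hand, the Morita equivalence between $\mathcal{O}\mathrm{N}_{\tilde\G^F\mathcal{A}}(D)B_D'$ and $\mathcal{O}\mathrm{N}_{\tilde\Levi^F\mathcal{A}}(D)C_D'$ follows by the same Clifford-theoretic induction argument used in the proof of Theorem~\ref{intro2}: since $M_D$ is a Morita bimodule between the correspondent blocks that extends to the stabilizer $\Delta\mathrm{N}_{\tilde\Levi^F\mathcal{A}}(D,C_D)$, the induced bimodule $\mathrm{Ind}(M_D)$ to $\mathrm{N}_{\tilde\G^F\mathcal{A}}(D)\times\mathrm{N}_{\tilde\Levi^F\mathcal{A}}(D)^{\opp}$ induces a Morita equivalence between the corresponding sums of conjugate blocks, which are exactly the idempotents $B_D'$ and $C_D'$ obtained by the relative traces. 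Here one needs the standard lemma that an extendible Morita bimodule between blocks, induced up, gives a Morita equivalence between the trace idempotents (e.g. the machinery of Clifford theory of Morita/Rickard equivalences used for Theorem~\ref{intro2}), together with the observation that the coprimality hypothesis on the order of $\sigma$ carried over from Theorem~\ref{intro2} guarantees the extension is unobstructed.

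The main obstacle I anticipate is the geometric identification of the Brauer construction of the Deligne--Lusztig variety with the Deligne--Lusztig variety of $\mathrm{N}_\G(D)$ together with its compatibility with the $F_0$-twisted diagonal action of $\mathcal{A}$: one must check that Digne's construction of the diagonal $F_0$-action on $H^{\mathrm{dim}}_c(\Y_\U^\G,\Lambda)e_s^{\Levi^F}$ passes to fixed points functorially, i.e. that the isomorphism $(\Y_\U^\G)^{\Delta D}\cong\Y_{\C_\U(D)}^{\mathrm{N}_\G(D)}$ is $\mathcal{A}$-equivariant in the appropriate twisted sense. This requires care because $\mathrm{N}_\G(D)$ need not be connected, so the Deligne--Lusztig theory there is the one for possibly-disconnected groups, and because the action of $\mathcal{A}$ on $D$ may only be defined up to $\mathrm{N}_{\tilde\Levi^F\mathcal{A}}(D,C_D)$-conjugacy, forcing a choice that must be checked to be harmless. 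Everything downstream — the extension of $M_D$ and the final induction — should then be formal.
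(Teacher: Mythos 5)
Your overall architecture (a local Morita equivalence via a Brauer construction, then an extension of the local bimodule, then Marcus' induction) matches the paper's, but the central step --- obtaining the diagonal action of $\mathrm{N}_{\tilde{\Levi}^F\mathcal{A}}(D,C_D)$ on the local bimodule by ``restricting the global $\Delta(\mathcal{A})$-action to fixed points'' --- has a genuine gap, and it is precisely the obstacle the author flags in the remark following Theorem \ref{loc}. The Brauer functor $\Br_{\Delta D}$ is defined on complexes of $\ell$-permutation modules, i.e.\ on the Rickard--Rouquier complex $G\Gamma_c(\Y_\U^\G,\Lambda)e_s^{\Levi^F}$, not on its top cohomology; the extended $\Delta(\mathcal{A})$-action of Theorem \ref{intro2} is, however, only constructed on the cohomology module $H^{\mathrm{dim}}_c(\Y_\U^\G,\mathcal{O})e_s^{\Levi^F}$. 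To push that action through the Brauer construction you would need a $\Delta(\mathcal{A})$-equivariant structure on the complex itself in $\mathrm{Ho}^b$, which in turn requires knowing that $G\Gamma_c(\Y_\U)e_s^{\Levi^F}$ is independent, up to homotopy, of the unipotent radical $\U$ --- something the paper establishes only under restrictive dimension hypotheses (Corollary \ref{independencegodement}) and explicitly states it cannot do in general. This is why Theorem \ref{loc} is \emph{not} deduced from Theorem \ref{Morita lift}: the paper instead re-runs the whole construction directly on the local variety $\Y^{\mathrm{N}_\G(D)}_{\C_\U(D)}$ --- the restriction-of-scalars trick (Corollary \ref{ShintaniDLVCoro}) produces the $F_0$-part of the diagonal action (Proposition \ref{fieldlocal}), the local independence theorem (Theorem \ref{independencelocal}) gives invariance under a generator of the cyclic quotient $\mathrm{N}_{\tilde{\Levi}^F\mathcal{A}}(D,C_D)/\mathrm{N}_{\tilde{\Levi}^F\langle F_0\rangle}(D,C_D)$, and Rouquier's extension lemma for cyclic quotients then yields $M_D$.

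A secondary inaccuracy: the fixed-point variety $(\Y_\U^\G)^{\Delta D}$ is the Deligne--Lusztig variety $\Y^{\C_\G(D)}_{\C_\U(D)}$ of the \emph{centralizer}, not of the normalizer. Passing from a Morita equivalence between $\C_{\G^F}(D)b_D$ and $\C_{\Levi^F}(D)c_D$ to one between $\mathrm{N}_{\G^F}(D)B_D$ and $\mathrm{N}_{\Levi^F}(D)C_D$ is a further nontrivial step (Theorem \ref{Morita2}), which needs Puig's theorem on the isomorphism of Brauer categories (Corollary \ref{factor}) to identify $\mathrm{N}_{\Levi^F}(D,c_D)/\C_{\Levi^F}(D)$ with $\mathrm{N}_{\G^F}(D,b_D)/\C_{\G^F}(D)$ before Marcus' theorem can be applied. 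Your final Clifford-theoretic induction is correct and coincides with the paper's (Lemma \ref{better version} together with Lemma \ref{stabstab}).
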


To prove this theorem, we first use the fact that the complex $C$ from theorem \ref{BDRintro} induces a splendid Rickard equivalence between $\mathcal{O} \G^F e_s^{\G^F}$ and $\mathcal{O} \Levi^F e_s^{\Levi^F}$. Using a theorem of Puig, we can deduce from this that the bimodule $H_c^{\mathrm{dim}}(\Y_{\C_\U(Q)}^{\mathrm{N}_\G(Q)}  ,\mathcal{O})$ induces a Morita equivalence between $\mathcal{O} \mathrm{N}_{\G^F}(D) B_D$ and $\mathcal{O} \mathrm{N}_{\Levi^F}(D) C_D$. We then generalize the proof of Theorem \ref{intro2} to the local situation to again extend this bimodule. An additional difficulty here is that we have to work in the non-connected reductive group $\mathrm{N}_{\G}(D)$.

\subsection*{Applications to local global conjectures}

In a second paper we use the strong equivariance properties obtained in Theorem \ref{intro2} and Theorem \ref{intro3} to reduce the verification of the inductive Alperin--McKay condition to quasi-isolated blocks, see \cite{Jordan2} for an exact statement. Quasi-isolated semisimple elements for reductive groups $\G$ have been classified by Bonnaf\'e \cite{Bonnafe} and are better understood by fundamental work of Cabanes--Enguehard and recent work of Enguehard and Kessar--Malle, see \cite{KessarMalle}. Our hope is therefore that the strong equivariance of the Bonnafé--Dat--Rouquier equivalence established in this paper will provide a method for verifying the inductive conditions for the Alperin--McKay and related local-global conjectures.

\subsection*{Acknowledgement}
%

%
%
The results of this article were obtained as part of my Phd thesis at the Bergische Universität Wuppertal. Therefore, I would like to express my gratitude to my supervisor Britta Späth for suggesting this topic and for her constant support. I would like to thank Marc Cabanes for reading through my thesis and for many interesting discussions. I am deeply indebted to Gunter Malle for his suggestions and his thorough reading.
I thank Radha Kessar and Markus Linckelmann for answering my technical questions on Rickard equivalences at the MSRI.
\ \\
This material is partly based upon work supported by the NSF under Grant DMS-1440140 while the author was in residence at the MSRI, Berkeley CA. The research was conducted in the framework of the research training group GRK 2240: Algebro-geometric
Methods in Algebra, Arithmetic and Topology, which is funded by the DFG.

\section{Representation theory}

In this chapter we introduce the necessary background material from the representation theory of finite groups. We give a brief overview on various categorical equivalences of module categories associated to finite groups. Subsequently, we then discuss the Clifford theory of these equivalences.

\subsection{Modular representation theory}\label{Grothendieck group}

Let $\ell$ be a prime and $K$ be a finite field extension of $\mathbb{Q}_\ell$. We say that $K$ is \textit{large enough for a finite group} $G$ if $K$ contains all roots of unity whose order divides the exponent of the group $G$. In the following, $K$ denotes a field which we assume to be large enough for the finite groups under consideration. We denote by $\mathcal{O}$ the ring of integers of $K$ over $\mathbb{Z}_\ell$ and by $k=\mathcal{O}/J(\mathcal{O})$ its residue field. We will use $\Lambda$ to interchangeably denote $\mathcal{O}$ or $k$.

Let $A$ be a $\Lambda$-algebra, finitely generated and projective as a $\Lambda$-module. We denote by $A^{\mathrm{opp}}$ its opposite algebra. Moreover, we mean by $A \text{-} \mathrm{mod}$ the category of left $A$-modules, that are finitely generated as $\Lambda$-modules. We denote by $G_0(A)$ the \textit{Grothendieck group} of the category $A \text{-} \mathrm{mod}$, see \cite[Section 5.1]{Benson}.
%

\subsection{Module categories}\label{Module category}

Let $\mathcal{A}$ be an abelian category. We denote by $\mathrm{Comp}^{b}(\mathcal{A})$ the category of bounded complexes of $\mathcal{A}$ and by $\mathrm{Ho}^b(\mathcal{A})$ its homotopy category. In addition, $\mathrm{D}^b(\mathcal{A})$ denotes the bounded derived category of $\mathcal{A}$. When $ \mathcal{A}=A \text{-} \mathrm{mod}$ we abbreviate $\mathrm{Comp}^{b}(\mathcal{A})$, $\mathrm{Ho}^b( \mathcal{A})$ and $\mathrm{D}^b(\mathcal{A})$ by $\mathrm{Comp}^b(A)$, $\mathrm{Ho}^b(A)$ and $\mathrm{D}^b(A)$ respectively. 

For $C \in \mathrm{Comp}^b(A)$ there exists (see for instance \cite[2.A.]{Dat}) a complex $C^{\mathrm{red}}$ with $C \cong C^{\mathrm{red}}$ in $\mathrm{Ho}^b(A)$ such that $C^{\mathrm{red}}$ has no non-zero direct summand which is homotopy equivalent to $0$. Moreover, $C \cong C^{\mathrm{red}} \oplus C_0$ with $H^\bullet(C_0) \cong 0$.

Let $A\text{-} \mathrm{proj}$ denote the full subcategory of $A \text{-} \mathrm{mod}$ consisting of all projective $A$-modules. We then denote by $A\text{-} \mathrm{perf}$ the full subcategory of $\mathrm{D}^b(A)$ consisting of complexes quasi-isomorphic to complexes of $\mathrm{Comp}^b(A\text{-} \mathrm{proj})$.

 Let $H$ and $G$ be finite groups and $C$ be a complex of $\Lambda G $-$\Lambda H$-bimodules. Then we write $C^\vee$ for the complex $\mathrm{Hom}_{\Lambda G}(C,\Lambda G)$ viewed as complex of $\Lambda H$-$\Lambda G$-bimodules. If $\Lambda$ denotes the trivial $\Lambda G$-$\Lambda H$-bimodule then we have by \cite[3.A.]{Broue2} an isomorphism $C^\vee \cong  \mathrm{Hom}_{\Lambda }(C,\Lambda ).$
Moreover, if $X$ is another complex of $\Lambda G$-$\Lambda H$ modules and $C$ is projective as $\Lambda H$-module then by \cite[3.A.]{Broue2} there is a canonical isomorphism $$C^\vee \otimes_{\Lambda G} X \cong \mathrm{Hom}_{\Lambda G}(C,X).$$

Let $\sigma:G\to G$ be an automorphism of a finite group $G$ and $H$ a subgroup of $G$. If $M$ is a left (resp. right) $\Lambda H$-module then we denote by ${}^\sigma M$ (resp. $M^\sigma$) the left (resp. right) $\Lambda \sigma(H)$-module which coincides with $M$ as a $\Lambda$-module but with action of $\sigma(H)$ given by $\sigma(h)m:= \sigma^{-1}(h) m$ (resp. by $m \sigma(h):=m \sigma^{-1}(h)$).


\subsection{The Brauer functor}\label{thebrauerfunctor}

Let $G$ be a finite group and $Q$ an $\ell$-subgroup of $G$. For a $\Lambda G$-module $M$ we let $M^Q$ denote the subset of $Q$-fixed points of $M$. We consider the \index{Brauer functor}\textit{Brauer functor} $$\Br^G_{Q}: \Lambda G \text{-} \operatorname{mod} \to k \operatorname{N}_G(Q)/Q \text{-} \operatorname{mod}$$
which for a $\Lambda G$-module $M$ is given by 
$$\Br^G_Q(M)= k \otimes_{\Lambda}(M^Q / \sum_{P < Q} \mathrm{Tr}_P^Q (M^P)),$$
where $\mathrm{Tr}_P^Q: M^P \to M^Q, \, m \mapsto \sum_{g \in Q/P} gm$ is the relative trace map on $M$.

Let $f: M_1 \to M_2$ be a morphism of $\Lambda G$-modules. Then $f$ restricts to a morphism $f: M_1^Q \to M_2^Q$ of $\Lambda \mathrm{N}_G(Q)$-modules. One readibly checks that $f$ maps $ \sum_{P < Q} \mathrm{Tr}_P^Q (M_1^P)$ to  $\sum_{P < Q} \mathrm{Tr}_P^Q (M_2^P)$ and we hence obtain by taking quotients a morphism $\Br_Q(f): \Br_Q(M_1) \to \Br_Q(M_2)$.

 If $H$ is a subgroup of $G$ containing $Q$ then by definition we have $$\Br_Q^H \circ \Res_H^G=\Res_{\mathrm{N}_H(Q)}^{\mathrm{N}_G(Q)} \circ \Br_Q^G.$$ Therefore, we will sometimes omit the upper index and write $\Br^G_Q=\Br_Q$ if the group under consideration is clear from the context.
 Since $\mathrm{Br}_Q$ is an additive functor it respects homotopy equivalences and therefore extends to a functor
$$\Br^G_Q:\mathrm{Ho}^b(\Lambda G) \to \mathrm{Ho}^b(k \mathrm{N}_G(Q)/Q).$$
Recall that a $\Lambda G$-module $M$ is called an \textit{$\ell$-permutation module} if
it is a direct summand of a permutation module, i.e., a module of the form $\Lambda[\Omega]$, where $\Omega$ is a $G$-set, see \cite[4.1.3]{Rouquier2}.
We let $\Lambda G \text{-} \operatorname{perm}$ be the full subcategory of $\Lambda G \text{-} \operatorname{mod}$ consisting of all $\ell$-permutation modules of $\Lambda G$.
If we consider $\Lambda G$ as $G$-module via $G$-conjugation, then $\Br_Q(\Lambda G) \cong k\C_G(Q)$ and the so-obtained surjection
$$ \br_Q^G: (\Lambda G)^Q \to k \C_G(Q), \, \sum_{g\in G} \lambda_g g \mapsto \sum_{g \in \C_G(Q)} \lambda_g g,$$
induces an algebra homomorphism, the so called \textit{Brauer morphism}, see \cite[Section 4.2]{Rouquier2}.


\subsection{Brauer pairs and the Brauer category}\label{BrauerBrauer}


Since the blocks of $\mathcal{O} G$ and $k G$ correspond to each other via lifting of idempotents, see \cite[Theorem 3.1]{Jacques}, we will identify blocks of $\mathcal{O} G$ and $k G$ if they correspond to each other via reduction modulo $J(\mathcal{O})$.

%
%
%
%

%


If $H$ is a subgroup of $G$ and $f \in \mathrm{Z}(\Lambda H)$ then we write $$\mathrm{N}_{G}(H,f):= \{ x\in \mathrm{N}_G(H) \mid {}^x f = f \}$$ for the set of elements normalizing $H$ and $f$. Moreover, we write $\mathrm{Tr}^G_H(f)=\sum_{ x \in G/H} {}^x f \in \mathrm{Z}(\Lambda G)$ for the trace of the element $f$.

If $b$ is a block of $G$ then we denote by $\mathcal{F}(G,b)$
the
\textit{Brauer category} of $b$, see \cite[§ 47]{Jacques}. 
If $(D,b_D)$ is a maximal $b$-Brauer pair then we denote by $\mathcal{F}(G,D)_{\leq (D,b_D)}$ the full subcategory of $\mathcal{F}(G,b)$ with objects consisting of all $b$-Brauer pairs contained in $(D,b_D)$. Recall that the natural inclusion functor $\mathcal{F}(G,b)_{\leq (D,b_D)} \hookrightarrow \mathcal{F}(G,b)$ induces an equivalence of categories, see e.g. \cite[Lemma 47.1  and afterwards]{Jacques}.


%

%
If $(Q,b_Q)$ is a $b$-Brauer pair then the idempotent $b_Q$ is also a block of $\mathrm{N}_G(Q,b_Q)$ by \cite[Exercise 40.2(b)]{Jacques}. Consequently, $B_Q:=\mathrm{Tr}^{\mathrm{N}_{G}(Q)}_{\mathrm{N}_G(Q,b_Q)}(b_Q)$ is a block of $\mathrm{N}_G(Q)$. Since all maximal Brauer pairs are $G$-conjugate it follows that $$\mathrm{br}_D(b)=\mathrm{Tr}^{\mathrm{N}_{G}(D)}_{\mathrm{N}_G(D,b_D)}(b_D).$$
For a finite group $G$ and $D$ an $\ell$-subgroup of $G$ we denote by $\mathrm{Bl}(G \mid D)$ the set of blocks of $G$ with defect group $D$. Then by Brauer's first main theorem we obtain a bijection
$$\br_D: \mathrm{Bl}(G \mid D) \to \mathrm{Bl}(\mathrm{N}_G(D) \mid D).$$

\subsection{Morita equivalences and splendid Rickard equivalences}


Let $G$ and $H$ be finite groups and let $e \in \Z(\Lambda G)$ and $f \in \Z(\Lambda H)$ be central idempotents. In addition, denote $A=\Lambda G e$ and $B=\Lambda H f$.

\begin{definition}\label{rickard}
Let $C$ be a bounded complex of $A$-$B$-bimodules, finitely generated and projective as $A$-modules resp. $B$-modules. We say that $C$ induces a \index{Rickard equivalence}\textit{Rickard equivalence between $A$ and $B$} if the following holds:
\begin{enumerate}[label={\alph*)}]
\item The canonical map $A \to \mathrm{End}^\bullet_{B^{\mathrm{opp}}}(C)^{\mathrm{opp}}$ is an isomorphism in $\mathrm{Ho}^b(A \otimes_\Lambda A^{\mathrm{opp}})$ and
\item the canonical map $B \to \mathrm{End}^\bullet_A(C)$ is an isomorphism in $\mathrm{Ho}^b(B \otimes_\Lambda B^{\mathrm{opp}})$.
\end{enumerate}
\end{definition}

We say that a complex $C \in \mathrm{Comp}^b(A \otimes B^{\mathrm{opp}})$ induces a \textit{derived equivalence} between $A$ and $B$ if the functor $$C \otimes^{\mathbb{L}}_{B} - \, : \mathrm{D}^b(B) \to \mathrm{D}^b(A)$$
induces an equivalence of triangulated categories.
%
%
A theorem of Rickard, see \cite[Section 2.1]{Rickard}, asserts that $A$ and $B$ are Rickard equivalent if and only if they are derived equivalent. More precisely, the proof of said theorem shows that not every complex $C \in \mathrm{Comp}^b(A \otimes B^{\mathrm{opp}})$ inducing a derived equivalence between $A$ and $B$ gives necessarily rise to a Rickard equivalence between $A$ and $B$.

Assume now that $H$ is a subgroup of $G$. For any subgroup $X$ of $H$ we let $\Delta X:=\{(x,x^{-1}) \mid x \in X \}$, a subgroup of $G\times H^{\mathrm{opp}}$.

\begin{definition}\label{defsplendid}
A bounded complex $C$ of $A$-$B$-bimodules is called \textit{splendid} if $C^{\mathrm{red}}$ is a complex of $\ell$-permutation modules such that every indecomposable direct summand of a component of $C$ has a vertex contained in $\Delta H$. If $C$ is splendid and induces a Rickard equivalence between $A$ and $B$ we say that $C$ induces a \textit{splendid Rickard equivalence} between $A$ and $B$.
\end{definition}

Note that our definition of a splendid Rickard equivalence is not symmetric since we assume that $H$ is a subgroup of $G$.


\subsection{First properties of splendid complexes}

Let $L$ be a subgroup of a finite group $G$ and $Q$ an $\ell$-subgroup of $L$. Then we can consider the Brauer functor
%
%
$$\Br_{\Delta Q}: \Lambda[ G\times L^{\mathrm{opp}}] \text{-} \operatorname{perm} \to k \operatorname{N}_{G\times L^{\mathrm{opp}}}(\Delta Q)/\Delta Q \text{-} \operatorname{perm}.$$
Notice that
$$\mathrm{N}_{G\times L^{\mathrm{opp}}}(\Delta Q)= (\mathrm{C}_G(Q) \times \mathrm{C}_L(Q)^{\mathrm{opp}}) \Delta (\mathrm{N}_L(Q)).$$
Let $c \in \mathrm{Z}(\Lambda L)$ and $b \in \mathrm{Z}(\Lambda G)$ be two central idempotents and suppose that $C$ a bounded complex of $\Lambda G b$-$\Lambda L c$-bimodules. Since $$\operatorname{C}_{G\times L^{\mathrm{opp}}}(\Delta Q)=\C_G(Q)\times \C_L(Q)^{\mathrm{opp}}\subseteq \operatorname{N}_{G\times L^{\mathrm{opp}}}(\Delta Q)$$ we can consider the image $\Br_{\Delta Q}(C)$ as a complex of $k\C_G(Q) \br_Q(b)$-$k\C_L(Q) \br_Q(c)$ bimodules.

The following theorem crucially uses an important theorem of Puig showing that the Brauer categories of splendid Rickard equivalent blocks are isomorphic.


\begin{theorem}\label{isomorphicdefect}
Let $L$ be a subgroup of a finite group $G$. Let $b \in \Z(\Lambda G )$ and $c \in \Z(\Lambda L )$ be primitive idempotents. Suppose that there exists a bounded complex $C$ of $\Lambda G b$-$\Lambda L c$-bimodules inducing a splendid Rickard equivalence between $\Lambda G b$ and $\Lambda L c$. If $D$ is a defect group of the block $c$ then $D$ is a defect group of $b$.
\end{theorem}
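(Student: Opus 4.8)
The plan is to locate a defect group of $b$ inside $L$ by applying the Brauer functor $\Br_{\Delta Q}$ to the splendid complex $C$ and tracking when the image is nonzero. First I would recall the standard characterization: since $c$ is primitive, $D$ is a defect group of $c$ precisely when $\br_D(c) \neq 0$ and $D$ is maximal with this property among $\ell$-subgroups of $L$; equivalently, $(D, c_D)$ is a maximal $c$-Brauer pair for a suitable block $c_D$ of $\C_L(D)$. So I would fix a maximal $c$-Brauer pair $(D, c_D)$ and aim to produce a $b$-Brauer pair of the form $(D, b_D)$, which would force $D$ to contain a defect group of $b$ — and then a symmetric or dimension-counting argument will give equality.

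The key step is to show $\Br_{\Delta D}(C)\,c_D \neq 0$. Since $C$ is splendid, $C^{\mathrm{red}}$ is a complex of $\ell$-permutation $\Lambda[G \times L^{\mathrm{opp}}]$-modules all of whose indecomposable summands have vertices contained in $\Delta L$, so $\Br_{\Delta D}(C^{\mathrm{red}})$ makes sense as a complex of $k\C_G(D)\br_D(b)$-$k\C_L(D)\br_D(c)$-bimodules, as set up just before the statement. The crucial input is Puig's theorem — alluded to in the sentence preceding Theorem \ref{isomorphicdefect} — which says that a splendid Rickard equivalence between $\Lambda G b$ and $\Lambda L c$ induces, via the Brauer functor, splendid Rickard equivalences between the corresponding Brauer-pair stabilizers, and in particular an isomorphism of Brauer categories $\mathcal{F}(G,b) \cong \mathcal{F}(L,c)$. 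Concretely: applying $\Br_{\Delta D}$ to the homotopy equivalences in Definition \ref{rickard}(a),(b) and using that $\Br_{\Delta D}$ is a monoidal functor on $\ell$-permutation modules that commutes with $\End^\bullet$ up to the identifications above, one gets that $\Br_{\Delta D}(C)\,c_D$ induces a splendid Rickard equivalence between $k\C_G(D)\br_D(b)$ (cut by an appropriate block idempotent) and $k\C_L(D)c_D$. In particular this complex is nonzero, hence $\Br_{\Delta D}(C) \neq 0$, hence $\br_D(b) \neq 0$, so $D$ contains a defect group of $b$.

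For the reverse inclusion I would run the argument with the roles of $G$ and $L$ partially swapped: the dual complex $C^\vee$, which by the material in the Module categories subsection is again a bounded complex of bimodules projective on each side and (being the dual of a splendid complex) is itself splendid with vertices in $\Delta L$, induces the inverse Rickard equivalence, now between $\Lambda L c$ and $\Lambda G b$. If $D'$ is a defect group of $b$, the same Brauer-functor computation applied to $C^\vee$ at $\Delta D'$ shows $\br_{D'}(c) \neq 0$ up to $L$-conjugacy, so a defect group of $b$ embeds into $D$. Combining the two inclusions — or, more cleanly, comparing orders: $|D|$ divides $|D'|$ and $|D'|$ divides $|D|$ — gives $D \in \mathrm{Bl}(G \mid D)$ as a defect group of $b$, which is the assertion.

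The main obstacle I anticipate is the bookkeeping around Puig's theorem: one must check that $\Br_{\Delta D}$ interacts correctly with $\End^\bullet_{B^{\mathrm{opp}}}(-)$ and $\End^\bullet_A(-)$ — i.e. that the canonical maps in Definition \ref{rickard} are sent to the corresponding canonical maps for the local blocks — and that the block idempotents $\br_D(b)$, $\br_D(c)$ decompose compatibly so that the nonzero summand one lands on is exactly $(D, c_D)$ on the $L$-side. This is precisely where the hypothesis that $b$ and $c$ are primitive is used, to rule out the Brauer functor killing $C$ for the "wrong" reason and to pin down a single block on each side. The rest — that duals of splendid complexes are splendid, that $\Br_{\Delta Q}$ is additive and monoidal on $\ell$-permutation modules, Brauer's first main theorem — is routine and already packaged in the excerpt.
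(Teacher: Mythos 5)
Your first inclusion is essentially the paper's argument: from $B \cong \End^\bullet_A(C)$ in $\mathrm{Ho}^b(B\otimes_\Lambda B^{\mathrm{opp}})$ one applies $\Br_{\Delta D}$, uses that the Brauer functor commutes with $\End^\bullet$ on these $\ell$-permutation complexes and that $\Br_{\Delta D}(B)\cong k\C_L(D)\br_D(c)\neq 0$, and concludes that $\Br_{\Delta D}(C)$ is not homotopy equivalent to $0$, hence $\br_D(b)\neq 0$ and $D$ is contained in a defect group of $b$. You do not need the maximal Brauer pair $(D,c_D)$ or the full local splendid equivalences for this; nonvanishing suffices. In fact, invoking the isomorphism of Brauer categories at this point risks circularity, since in the paper that material (Proposition \ref{splendidloc}, Theorem \ref{Puig}) is established \emph{after}, and using, the present theorem.

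The reverse inclusion is where your argument has a genuine gap. You propose to run the ``same Brauer-functor computation'' on $C^\vee$ at $\Delta D'$ for $D'$ a defect group of $b$. But $D'$ is an $\ell$-subgroup of $G$ that is not known, at this stage, to be contained in $L$ even up to conjugacy; the Brauer construction $\Br_{\Delta Q}$ on these bimodules, and the splendidness condition itself, are only set up for $Q\leq L$, since one needs $\Delta Q\leq G\times L^{\mathrm{opp}}$ (resp.\ $L\times G^{\mathrm{opp}}$ for $C^\vee$) and vertices contained in $\Delta L$. So the symmetric argument is simply not available until one has first shown that $b$ has a defect group inside $L$ --- a nontrivial step, which one could get, e.g., from the fact that $\Lambda G b$ is a homotopy summand of $C\otimes_{\Lambda L}C^\vee$, whose components are relatively $\Delta L$-projective as $\Lambda[G\times G^{\mathrm{opp}}]$-modules. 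The paper sidesteps all of this by quoting Puig's Theorem 19.7: a splendid Rickard equivalence is in particular a basic Rickard equivalence, and basic equivalences force the defect groups of $b$ and $c$ to be isomorphic; combined with the first inclusion this immediately makes $D$ a full defect group of $b$. You should either supply the relative-projectivity argument placing a defect group of $b$ inside $L$, or cite Puig's theorem as the paper does.
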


\begin{proof}
Denote $A= \Lambda G b$ and $B= \Lambda L c$. Since $C$ induces a splendid Rickard equivalence between $A$ and $B$ it follows by definition that $B \cong \mathrm{End}_A^{\bullet}(C)$ in $\mathrm{Ho}^b(B \otimes_\Lambda B^{\mathrm{opp}})$. We obtain $$\mathrm{Br}_{\Delta D}(\mathrm{End}_A^{\bullet}(C)) \cong \mathrm{End}^\bullet_{k \mathrm{C}_G(D)}(\Br_{\Delta D}(C)).$$
Since $\Br_{\Delta D}(B) \cong k \mathrm{C}_L(D) \mathrm{br}_D(c)$ we obtain 
$$ \mathrm{End}^\bullet_{k \mathrm{C}_G(D)}(\Br_{\Delta D}(C)) \cong k \mathrm{C}_L(D) \mathrm{br}_D(c).$$
Taking cohomology yields $\mathrm{End}_{\mathrm{Ho}^b(k \mathrm{C}_G(D))}(\Br_{\Delta D}(C)) \cong k \mathrm{C}_L(D) \mathrm{br}_D(c)$. Since $D$ is a defect group of $c$ it follows that $\mathrm{br}_D(c) \neq 0$. Therefore, the complex $\Br_{\Delta D}(C)$ is not homotopy equivalent to $0$ in $\mathrm{Ho}^b(k \mathrm{C}_G(D))$. As $\Br_{\Delta D}(C)$ is a complex of $k\C_G(D) \br_D(b)$-$k\C_L(D) \br_D(c)$ bimodules it follows that $\br_D(b) \neq 0$. This shows that $D$ is contained in a defect group of $b$. Since $C$ induces a splendid Rickard equivalence it follows that $C$ induces a basic Rickard equivalence between the blocks $\Lambda G b$ and $\Lambda L c$, see beginning of \cite[Section 19.2]{Puig}. Consequently, \cite[Theorem 19.7]{Puig} shows that the defect groups of $b$ and $c$ are isomorphic. Thus, $D$ is also a defect group of $b$.
\end{proof}

\begin{proposition}\label{splendidloc}
Take the notation as in Theorem \ref{isomorphicdefect} and fix a maximal $c$-Brauer pair $(D,c_D)$. Then there exists a $b$-Brauer pair $(D,b_D)$ such that the following holds: If $(Q,c_Q) \leq (D,c_D)$ is a $c$-Brauer subpair then the $b$-Brauer subpair $(Q,b_Q) \leq (D,b_D)$ is the unique $b$-Brauer pair such that the complex $b_Q \Br_{\Delta Q}(C) c_Q$ induces a Rickard equivalence between $k \C_{G}(Q) b_Q$ and $k \C_{L}(Q) c_Q$. For any other $b$-Brauer pair $(Q,b'_Q)$ we have $b'_Q \Br_{\Delta Q}(C) c_Q \cong 0$ in $\mathrm{Ho}^b( k [\mathrm{C}_G(Q)  \times  \mathrm{C}_L(Q)^{\mathrm{opp}}])$.
\end{proposition}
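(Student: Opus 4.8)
The plan is to argue locally, one Brauer subpair at a time, using the splendidness of $C$ together with Puig's structural results and the compatibility of the Brauer functor with the $\mathrm{End}$-functor. First I would fix the maximal $c$-Brauer pair $(D,c_D)$ and invoke Theorem \ref{isomorphicdefect}: $D$ is also a defect group of $b$, so there is a maximal $b$-Brauer pair $(D,b_D)$. Since $C$ induces a splendid (hence, by the reasoning in the proof of Theorem \ref{isomorphicdefect}, basic) Rickard equivalence, the theory of basic Rickard equivalences in \cite[Chapter 19]{Puig} provides an isomorphism of Brauer categories $\mathcal{F}(G,b) \cong \mathcal{F}(L,c)$ that matches $(D,b_D)$ with $(D,c_D)$ and is compatible with inclusion of subpairs. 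This is the source of the bijection $(Q,c_Q) \leftrightarrow (Q,b_Q)$ asserted in the statement: given $(Q,c_Q) \le (D,c_D)$, define $(Q,b_Q) \le (D,b_D)$ to be its image under this isomorphism.

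Next I would verify that this $(Q,b_Q)$ has the claimed Rickard property. Apply the Brauer functor $\Br_{\Delta Q}$ to the homotopy equivalence $B \cong \mathrm{End}^\bullet_A(C)$ (and its analogue $A \cong \mathrm{End}^\bullet_{B^{\mathrm{opp}}}(C)^{\mathrm{opp}}$), exactly as in the proof of Theorem \ref{isomorphicdefect} but now at the level of the subpair $Q$ rather than the defect group $D$. Using $\Br_{\Delta Q}(B) \cong k\C_L(Q)\br_Q(c)$ and $\Br_{\Delta Q}(A) \cong k\C_G(Q)\br_Q(b)$, together with the fact that $\Br_{\Delta Q}$ of a splendid complex commutes with $\mathrm{End}^\bullet$ over the relevant centralizer algebra (this is where splendidness — the vertex condition on $\Delta L$, restricting to $\Delta Q$ — is essential), one gets
\[
\mathrm{End}^\bullet_{k\C_G(Q)}(\Br_{\Delta Q}(C)) \cong k\C_L(Q)\br_Q(c)
\]
in the appropriate homotopy category, and symmetrically on the other side. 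Decomposing $\br_Q(b) = \sum (Q,b'_Q)$ into block idempotents of $\C_G(Q)$ (running over the $b'_Q$ with $(Q,b'_Q)$ a $b$-Brauer pair), the complex $\Br_{\Delta Q}(C)$ splits as a direct sum $\bigoplus_{b'_Q} b'_Q \Br_{\Delta Q}(C) c_Q$ in $\mathrm{Ho}^b(k[\C_G(Q) \times \C_L(Q)^{\mathrm{opp}}])$. Only one summand can be a Rickard equivalence between $k\C_G(Q)b'_Q$ and $k\C_L(Q)c_Q$: if two summands were non-acyclic, then $\mathrm{End}^\bullet_{k\C_G(Q)}$ of the sum would acquire off-diagonal contributions or a non-connected endomorphism algebra, contradicting that it is isomorphic to the block algebra $k\C_L(Q)\br_Q(c)$, which is indecomposable as an algebra (here $c$, hence $\br_Q(c)$, is a block). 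So exactly one $b'_Q$ works, and I would then identify it with the $b_Q$ produced by Puig's Brauer-category isomorphism — this identification follows because Puig's equivalence is constructed precisely via these local Rickard equivalences on centralizers (the "local structure" of a basic equivalence), so the two notions of "corresponding block" coincide. Finally, for every other $b'_Q$, the summand $b'_Q\Br_{\Delta Q}(C)c_Q$ is acyclic, i.e. homotopy equivalent to $0$, which is the last assertion.

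The main obstacle, I expect, is making the uniqueness argument genuinely clean: one must be careful that $\Br_{\Delta Q}(C)$, although it need not itself be $C^{\mathrm{red}}$, still consists of $\ell$-permutation modules with the right vertices so that the Brauer-functor–commutes-with-$\mathrm{End}$ formula applies, and that after the block decomposition the "only one non-acyclic summand" conclusion really follows from indecomposability of the block algebra $k\C_L(Q)\br_Q(c)$ together with Rickard's theorem characterising when a splendid complex induces an equivalence. A secondary subtlety is ensuring that the $(Q,b_Q)$ obtained for different $Q$ are mutually compatible (form a chain inside $(D,b_D)$), which is automatic from the functoriality of Puig's Brauer-category isomorphism but should be stated. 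I would lean on \cite[Theorem 19.7]{Puig} and the surrounding material in \cite[Chapter 19]{Puig} for both the existence of the Brauer-category isomorphism and its local compatibility, and on the argument already given in the proof of Theorem \ref{isomorphicdefect} for the $\Br_{\Delta Q}$-versus-$\mathrm{End}^\bullet$ bookkeeping.
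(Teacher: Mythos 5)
Your overall strategy differs from the paper's: the paper simply verifies that $C$ satisfies the stronger vertex condition needed for Harris's setting (all indecomposable summands of components of $C$ are relatively $\Delta D$-projective, not merely $\Delta L$-projective --- this uses that $\Br_{\Delta P}(C)\cong \br_P(b)\Br_{\Delta P}(C)\cong 0$ unless $P$ lies in a defect group of $b$) and then quotes the statement verbatim as Theorem 1.6 of Harris. You instead try to reprove that theorem from Puig's results plus a direct endomorphism computation. Part of your argument is sound: applying $\Br_{\Delta Q}$ to $B\cong\mathrm{End}^\bullet_A(C)$ and using that the homotopy-category endomorphism ring of $\Br_{\Delta Q}(C)c_Q$ is a block algebra does force exactly one summand $b'_Q\Br_{\Delta Q}(C)c_Q$ to be non-acyclic and the others to be null-homotopic. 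Note, however, that your stated justification is off: $\br_Q(c)$ is in general a \emph{sum} of blocks of $\C_L(Q)$, so $k\C_L(Q)\br_Q(c)$ need not be indecomposable as an algebra; the primitive central idempotent you must invoke is $c_Q$ itself, i.e. you have to truncate by $c_Q$ \emph{before} running the "only one non-acyclic summand" argument.

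The genuine gap is the step where you identify the block $b'_Q$ singled out by this local computation with the block $b_Q$ produced by "Puig's Brauer-category isomorphism," and in particular where you conclude that $(Q,b_Q)\le(D,b_D)$. You assert that "Puig's equivalence is constructed precisely via these local Rickard equivalences on centralizers," but Puig's Theorem 19.7 is formulated in terms of pointed groups and local points of a basic equivalence, and translating it into the statement that the containment relation on Brauer pairs is preserved by the correspondence $(Q,c_Q)\mapsto(Q,b_Q)$ is exactly the content of Harris's Theorem 1.6 --- it is the hardest part of the proposition, not a formality. As written, your argument produces for each $Q$ separately a well-defined $b_Q$, but gives no proof that these choices cohere, i.e. that $(Q,b_Q)\le(R,b_R)$ whenever $(Q,c_Q)\le(R,c_R)$ inside $(D,c_D)$; the "functoriality" you invoke is named but not established, and establishing it would in any case require the $\Delta D$-vertex normalization that the paper carries out and that your proposal omits.
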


\begin{proof}
The subgroup $D \subseteq L \subseteq G$ is a common defect group of the blocks $b$ and $c$ by Theorem \ref{isomorphicdefect}. Moreover, the complex $C$ is splendid, so the vertices of all indecomposable direct summands of components of $C$ are by definition contained in $\Delta L$. On the other hand, if $P$ is an $\ell$-subgroup of $L$ then $\Br_{\Delta P}(C) \cong \br_P(b) \Br_{\Delta P}(C) \cong 0$, unless $P$ is contained in a defect group of the block $b$. It follows that all indecomposable direct summands of components of $C$ are relatively $\Delta D$-projective. Hence, the complex $C$ induces a splendid Rickard equivalence between $k G b$ and $k L c$ in the sense of \cite{Harris}. The statement is therefore precisely \cite[Theorem 1.6]{Harris}.
\end{proof}

Let $b$ be a block of a finite group $G$ and $(D,b_D)$ a maximal $b$-Brauer pair. Recall from \ref{BrauerBrauer} that we denote by $\mathcal{F}(G,b)$ the Brauer category of $b$ and by $\mathcal{F}(G,D)_{\leq (D,b_D)}$ its full subcategory consisting of all $b$-Brauer pairs contained in $(D,b_D)$.

\begin{theorem}\label{Puig}
Suppose that we are in the situation of Proposition \ref{splendidloc}.
Then the map $\mathcal{F}(L,c)_{\leq (D,c_D)} \to \mathcal{F}(G,b)_{\leq (D,b_D)}$ given by $(Q,c_Q) \mapsto (Q,b_Q)$ induces an isomorphism of categories. In particular, for any two $c$-Brauer subpairs $(Q,c_Q)$, $(R,c_R)$ contained in $(D,c_D)$ and $b$-Brauer subpairs $(Q,b_Q)$, $(R,b_R)$ contained in $(D,b_D)$ we have
$$\Hom_{\mathcal{F}(L,c)}((Q,c_Q),(R,c_R))=\Hom_{\mathcal{F}(G,b)}((Q,b_Q),(R,b_R)).$$
\end{theorem}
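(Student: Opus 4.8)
The plan is to reduce everything to the local statement already provided in Proposition \ref{splendidloc}, which for every $c$-Brauer subpair $(Q,c_Q)\leq (D,c_D)$ attaches a canonical $b$-Brauer subpair $(Q,b_Q)\leq (D,b_D)$ characterized by the property that $b_Q\Br_{\Delta Q}(C)c_Q$ induces a Rickard equivalence between $k\C_G(Q)b_Q$ and $k\C_L(Q)c_Q$. First I would check that the assignment $(Q,c_Q)\mapsto (Q,b_Q)$ is a functor $\mathcal{F}(L,c)_{\leq(D,c_D)}\to\mathcal{F}(G,b)_{\leq(D,b_D)}$: since the objects of both Brauer categories contained in the fixed maximal pair form a poset under inclusion (each subgroup $Q\leq D$ carrying a \emph{unique} Brauer block below the maximal pair), it suffices to observe that the map $Q\mapsto Q$ on subgroups is the identity, hence order-preserving, and that the morphisms in these categories are exactly the maps induced by conjugation by elements of $G$ (resp.\ $L$) between subpairs contained in the maximal one. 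So the content is really about morphism sets, i.e.\ the displayed equality of $\Hom$'s.

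Next I would prove $\Hom_{\mathcal{F}(L,c)}((Q,c_Q),(R,c_R))\subseteq\Hom_{\mathcal{F}(G,b)}((Q,b_Q),(R,b_R))$. A morphism on the left is (the restriction of) conjugation by some $x\in L$ with ${}^x(Q,c_Q)\leq (R,c_R)$, equivalently ${}^xQ\leq R$ and ${}^x c_Q=c_{{}^xQ}$, where $({}^xQ,c_{{}^xQ})$ is the unique $c$-subpair below $(R,c_R)$ on ${}^xQ$. Viewing $x\in L\subseteq G$, I must show ${}^x(Q,b_Q)\leq (R,b_R)$ in $\mathcal{F}(G,b)$, i.e.\ that $b_{{}^xQ}={}^x b_Q$ where $b_{{}^xQ}$ is the block obtained by transporting $b_Q$ along the canonical inclusion chain into $(R,b_R)$. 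The key is that the correspondence $(Q,c_Q)\mapsto(Q,b_Q)$ of Proposition \ref{splendidloc} is compatible with $L$-conjugation: applying $x\in L$ to the complex $C$ conjugates $\Br_{\Delta Q}(C)$ to $\Br_{\Delta({}^xQ)}(C)$ (via the obvious isomorphism $\C_G(Q)\times\C_L(Q)^{\mathrm{opp}}\to \C_G({}^xQ)\times\C_L({}^xQ)^{\mathrm{opp}}$ coming from $({}^x\!\cdot\,,{}^x\!\cdot\,)$), so ${}^x b_Q\,\Br_{\Delta({}^xQ)}(C)\,{}^x c_Q$ induces a Rickard equivalence; by the uniqueness in Proposition \ref{splendidloc}, ${}^x b_Q$ is \emph{the} block $b_{{}^xQ}$ pairing with ${}^x c_Q=c_{{}^xQ}$. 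Combining this with the uniqueness of Brauer subpairs below a fixed maximal one, ${}^x b_Q$ equals the block obtained by the inclusion chain into $(R,b_R)$, which is exactly the statement that $x$ defines a morphism $(Q,b_Q)\to (R,b_R)$ in $\mathcal{F}(G,b)$.

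For the reverse inclusion I would run a counting/saturation argument rather than try to invert a $G$-conjugation into $L$. Both categories $\mathcal{F}(L,c)_{\leq(D,c_D)}$ and $\mathcal{F}(G,b)_{\leq(D,b_D)}$ are saturated fusion systems on the common defect group $D$ (this is exactly the classical fact that the Brauer category of a block, restricted below a maximal pair, is a saturated fusion system on a defect group — \cite[\S 47]{Jacques}). A functor between saturated fusion systems on the same $p$-group which is the identity on objects and injective on all morphism sets, and which is moreover surjective on $\Aut$-groups of each object, is automatically an isomorphism: indeed surjectivity on each $\Aut_{\mathcal{F}}(Q)$ plus injectivity of the functor forces $|\Aut_{\mathcal{F}(L,c)}(Q)|=|\Aut_{\mathcal{F}(G,b)}(Q)|$, and then the orbit–counting identity $|\Hom_{\mathcal F}(Q,R)|=|\Aut_{\mathcal F}(R)|\cdot|\{R'\leq R: R'\cong_{\mathcal F} Q\}|$ (valid in any fusion system on a $p$-group) propagates the equality to all $\Hom$-sets once one knows the $\mathcal{F}$-isomorphism classes of subgroups of $D$ coincide on both sides — which again follows from injectivity together with the $\Aut$-count. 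Thus it remains to prove surjectivity on automorphism groups, and this is where I would again invoke Puig's theorem in the stronger form already cited in the excerpt (the one asserting an \emph{isomorphism} of Brauer categories for basic — in particular splendid — Rickard equivalences): Proposition \ref{splendidloc} gives the fully faithful functor and Puig's theorem \cite[Theorem 19.7 and surrounding material]{Puig} gives that it is in fact an equivalence, hence surjective on morphisms; combined with injectivity (proved above from the uniqueness clause of Proposition \ref{splendidloc}) we get bijectivity on every $\Hom$-set, which is the claimed equality, and the identity-on-objects property upgrades "equivalence" to "isomorphism of categories."

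The main obstacle I anticipate is the functoriality/compatibility bookkeeping in the second paragraph: making precise that the Brauer-subpair correspondence of Proposition \ref{splendidloc} intertwines $L$-conjugation on the $c$-side with $G$-conjugation on the $b$-side, including the matching of the canonical containment chains $(Q,b_Q)\leq(R,b_R)$, so that morphisms really do transport. Everything else is either the cited saturated-fusion-system formalism or the counting lemma, which are routine once the conjugation-equivariance is in hand.
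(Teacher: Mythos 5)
Your proposal is correct and follows essentially the same route as the paper: the inclusion of $\Hom$-sets via the conjugation-equivariance of the subpair correspondence from Proposition \ref{splendidloc} (which the paper simply cites from the discussion after \cite[Theorem 1.7]{Harris}), upgraded to an equality by Puig's \cite[Theorem 19.7]{Puig}. The saturated-fusion-system counting detour is redundant once you invoke Puig's equivalence, and your counting identity should read $|\Hom_{\mathcal F}(Q,R)|=|\Aut_{\mathcal F}(Q)|\cdot|\{R'\leq R: R'\cong_{\mathcal F} Q\}|$ (with $\Aut$ of the source, not the target), but neither point affects the argument.
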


\begin{proof}
 The paragraph below \cite[Theorem 1.7]{Harris} shows that we have an inclusion
$$\Hom_{\mathcal{F}(L,c)}((Q,c_Q),(R,c_R)) \subseteq \Hom_{\mathcal{F}(G,b)}((Q,b_Q),(R,b_R)).$$
By \cite[Theorem 19.7]{Puig} the Brauer categories $\mathcal{F}(L,c)$ and $\mathcal{F}(G,b)$ are equivalent. Consequently, the inclusion above is an equality.
\end{proof}

By definition of the Brauer category we have $\operatorname{Aut}_{ \mathcal{F}(G,b)}(Q,b_Q)= \operatorname{N}_G(Q,b_Q) / \C_G(Q)$.
	Therefore, Theorem \ref{Puig} implies the following corollary.

\begin{corollary}\label{factor}
Suppose that we are in the situation of Proposition \ref{splendidloc}. Then for any subgroup $Q$ of $D$ the inclusion map $\operatorname{N}_{L}(Q)/\C_{L}(Q)\hookrightarrow \operatorname{N}_{G}(Q)/\C_{G}(Q)$ induces an isomorphism between $\operatorname{N}_{L}(Q,c_Q) / \C_{L}(Q) $ and $\operatorname{N}_{G}(Q,b_Q) / \C_{G}(Q) $.
\end{corollary}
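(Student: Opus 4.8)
The plan is to deduce Corollary \ref{factor} directly from Theorem \ref{Puig} by unwinding the definition of the automorphism groups in the Brauer category. Recall that for a block $b$ of $G$ and a $b$-Brauer pair $(Q,b_Q)$, the automorphism group $\operatorname{Aut}_{\mathcal{F}(G,b)}(Q,b_Q)$ is by definition $\operatorname{N}_G(Q,b_Q)/\C_G(Q)$, and similarly $\operatorname{Aut}_{\mathcal{F}(L,c)}(Q,c_Q)=\operatorname{N}_L(Q,c_Q)/\C_L(Q)$. So the statement about automorphisms in Theorem \ref{Puig} (the case $(Q,c_Q)=(R,c_R)$, which gives $\Hom$-set equality) translates literally into an equality of these two quotient groups, once we know that the isomorphism of categories carries the object $(Q,c_Q)$ to $(Q,b_Q)$ and acts on morphisms compatibly with the given inclusions.

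The key steps, in order, would be: First, fix a maximal $c$-Brauer pair $(D,c_D)$ with $D\subseteq L\subseteq G$, invoke Theorem \ref{isomorphicdefect} so that $D$ is also a defect group of $b$, and use Proposition \ref{splendidloc} to obtain the corresponding maximal $b$-Brauer pair $(D,b_D)$ and, for each $Q\le D$, the distinguished $b$-Brauer subpair $(Q,b_Q)\le(D,b_D)$. Second, observe that since every $c$-Brauer pair is $L$-conjugate into $(D,c_D)$ and every $b$-Brauer pair is $G$-conjugate into $(D,b_D)$, it suffices to prove the claim for $Q\le D$; for such $Q$ the relevant morphisms in both Brauer categories are induced by conjugation maps inside $\operatorname{N}_G(D)$ resp. $\operatorname{N}_L(D)$ (using the equivalence $\mathcal{F}(-)_{\le(D,-)}\hookrightarrow\mathcal{F}(-)$ recalled in \ref{BrauerBrauer}). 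Third, apply Theorem \ref{Puig} with $R=Q$: the isomorphism of categories identifies $\operatorname{Aut}_{\mathcal{F}(L,c)}(Q,c_Q)$ with $\operatorname{Aut}_{\mathcal{F}(G,b)}(Q,b_Q)$, and because the underlying map on objects and morphisms is induced by the inclusion $L\hookrightarrow G$ on the level of conjugating elements, this identification is precisely the one induced by the natural map $\operatorname{N}_L(Q)/\C_L(Q)\to\operatorname{N}_G(Q)/\C_G(Q)$. Reading off source and target of this map gives that $\operatorname{N}_L(Q,c_Q)/\C_L(Q)\to\operatorname{N}_G(Q,b_Q)/\C_G(Q)$ is an isomorphism, which is exactly the assertion.

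I expect the only genuine point requiring care — the main (mild) obstacle — is checking that the isomorphism of categories in Theorem \ref{Puig} really is the one induced by the group inclusion $\operatorname{N}_L(Q)/\C_L(Q)\hookrightarrow\operatorname{N}_G(Q)/\C_G(Q)$, i.e. that the identification of $\operatorname{Hom}$-sets there is not merely an abstract equality of subsets of some ambient group but is compatible with conjugation maps. This is, however, built into the statement: the $\operatorname{Hom}$-set in $\mathcal{F}(L,c)$ is a subset of $\operatorname{Hom}$ in $\mathcal{F}(G,b)$ via the inclusion coming from $L\subseteq G$ (this is exactly the inclusion supplied by the paragraph below \cite[Theorem 1.7]{Harris} cited in the proof of Theorem \ref{Puig}), so the equality of these subsets, specialized to automorphism groups, gives the desired isomorphism with no further work. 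Thus the corollary follows formally, and the proof is a short dictionary translation rather than a new argument.
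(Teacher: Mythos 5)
Your proposal is correct and coincides with the paper's own (one-line) argument: the paper simply notes that $\operatorname{Aut}_{\mathcal{F}(G,b)}(Q,b_Q)=\operatorname{N}_G(Q,b_Q)/\C_G(Q)$ by definition of the Brauer category and then reads the corollary off from the $\operatorname{Hom}$-set equality of Theorem \ref{Puig} with $R=Q$, exactly as you do. Your extra check that the identification is the one induced by the inclusion $L\hookrightarrow G$ is the right point to flag, and it is indeed supplied by the inclusion of $\operatorname{Hom}$-sets coming from Harris's result cited in the proof of Theorem \ref{Puig}.
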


\subsection{Lifting Rickard equivalences}\label{Marcus}

The aim of this section is to introduce a lifting result for Morita equivalences due to Marcus.
We first need to introduce some notation. Let $\tilde{L}$ be a subgroup of a finite group $\tilde{G}$. Moreover, let $G$ be a normal subgroup of $\tilde{G}$ and set $L := G \cap \tilde{L}$.

Let $e\in \Z(\mathcal{O} G)$ and $f \in \Z(\mathcal{O} L)$ be $\tilde{G}$-invariant resp. $\tilde{L}$-invariant central idempotents, such that $e \in \Z(\mathcal{O} \tilde{G})$ and $f\in \Z(\mathcal{O} \tilde{L})$. Consider the diagonal subgroup
$$\mathcal{D}:=\{ (\tilde{g},\tilde{l})\in \tilde{G} \times \tilde{L}^{\operatorname{opp}} \mid \tilde{g} G= \tilde{l}^{-1} G \}=  (G \times L^\mathrm{opp})  \Delta(\tilde{L})$$
of $\tilde{G} \times \tilde{L}^{\operatorname{opp}}$. The following was first proved in \cite[Theorem 3.4]{Marcus}. An alternative proof can be found in \cite[Lemma 2.8]{Rouquier3}.

\begin{theorem}[Marcus]\label{lifting}
Suppose that $ \tilde{G}=  \tilde{L} G$. Let $C$ be a bounded complex of $\Lambda G e$-$\Lambda L f$-bimodules inducing a Rickard equivalence between $\Lambda G e$ and $\Lambda L f$. Suppose that $C$ extends to a complex of $\mathcal{D}$-modules $C'$ and define $\tilde{C}:= \mathrm{Ind}_{\mathcal{D}}^{\tilde{G} \times \tilde{L}^{\mathrm{opp}}}(C')$.
\begin{enumerate}[label={\alph*)}]
	\item The complex $\tilde{C}$ induces a derived equivalence between $\Lambda \tilde{L} f$ and $\Lambda \tilde{G} e$.
	\item If $C$ is concentrated in one degree or if $\ell \nmid[ \tilde{L} :L]$ then $\tilde{C}$  induces a Rickard equivalence between $\Lambda \tilde{L} f$ and $\Lambda \tilde{G} e$.
\end{enumerate}
\end{theorem}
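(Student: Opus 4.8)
The plan is to exploit the standard translation between bimodules and equivariant modules under a group acting by "permuting the two sides" via a common quotient. Since $\tilde G = \tilde L G$, the group $\mathcal{D} = (G \times L^{\mathrm{opp}})\Delta(\tilde L)$ has the property that $(\tilde G \times \tilde L^{\mathrm{opp}})/\mathcal{D} \cong \tilde L / L \cong \tilde G / G$ canonically, and $\tilde C = \mathrm{Ind}_{\mathcal{D}}^{\tilde G \times \tilde L^{\mathrm{opp}}}(C')$ is an object of $\mathrm{Comp}^b(\Lambda \tilde G e \otimes_\Lambda (\Lambda \tilde L f)^{\mathrm{opp}})$. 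First I would check the elementary facts: $\tilde C$ is bounded, and it is projective as a complex of $\Lambda \tilde G$-modules and as a complex of $(\Lambda \tilde L)^{\mathrm{opp}}$-modules. For the former, $\mathrm{Ind}_{\mathcal{D}}^{\tilde G \times \tilde L^{\mathrm{opp}}}$ applied to a $\Lambda \mathcal{D}$-module that is projective over $\Lambda G$ (which $C'$ is, since its restriction to $G \times L^{\mathrm{opp}}$ is $C$, projective over $\Lambda G$) yields something projective over $\Lambda \tilde G$ by the Mackey/transitivity formula for induction composed with the projectivity of $C$ and the fact that $\Lambda \tilde G$ is free over $\Lambda G$ — I would spell this out with the isomorphism $\mathrm{Ind}_{\mathcal{D}}^{\tilde G \times \tilde L^{\mathrm{opp}}}(C') \cong \Lambda \tilde G \otimes_{\Lambda G} C$ as a complex of $\Lambda \tilde G$-$\Lambda L$-bimodules (restricting the right action), and symmetrically on the right.

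Next, for part (a), I would verify that $\tilde C \otimes^{\mathbb{L}}_{\Lambda \tilde L f} - : \mathrm{D}^b(\Lambda \tilde L f) \to \mathrm{D}^b(\Lambda \tilde G e)$ is an equivalence by producing a quasi-inverse and checking the unit/counit are isomorphisms. The natural candidate for the inverse is $\tilde C^\vee \otimes^{\mathbb{L}}_{\Lambda \tilde G e} -$, or equivalently the complex induced from $C'^\vee$; here one uses the isomorphism $C^\vee \otimes_{\Lambda G} X \cong \mathrm{Hom}_{\Lambda G}(C,X)$ recorded in Section \ref{Module category}, valid since $C$ is projective over $\Lambda G$. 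The key computation is
$$\tilde C^\vee \otimes_{\Lambda \tilde G} \tilde C \;\cong\; \mathrm{Ind}_{\mathcal{D}'}^{\tilde L \times \tilde L^{\mathrm{opp}}}\big( C'^\vee \otimes_{\Lambda G} C' \big),$$
where $\mathcal{D}' = (L \times L^{\mathrm{opp}})\Delta(\tilde L) \subseteq \tilde L \times \tilde L^{\mathrm{opp}}$, obtained from a Mackey-type computation of $(\tilde C^\vee)\otimes_{\Lambda\tilde G}\tilde C$ using $\tilde G = \tilde L G$ so that the relevant double-coset set is a single coset. Since $C$ induces a Rickard equivalence, $C^\vee \otimes_{\Lambda G} C \simeq \Lambda L f$ in $\mathrm{Ho}^b$, and the extension structure makes this homotopy equivalence $\Delta(\tilde L)$-equivariant (the point being that $C'$ is given, so the identity-like map $C^\vee \otimes C \to \Lambda L$ can be chosen compatibly — this is the content of Marcus's argument and is where one must be slightly careful). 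Inducing a homotopy equivalence of $\Lambda\mathcal{D}'$-complexes gives $\tilde C^\vee \otimes_{\Lambda \tilde G} \tilde C \simeq \Lambda \tilde L f$ in $\mathrm{Ho}^b(\Lambda \tilde L f \otimes_\Lambda (\Lambda \tilde L f)^{\mathrm{opp}})$, and symmetrically $\tilde C \otimes_{\Lambda \tilde L} \tilde C^\vee \simeq \Lambda \tilde G e$; together these give the derived equivalence of (a). (Strictly, for (a) a one-sided statement plus a counting/dimension argument suffices, but proving both adjunction isomorphisms is cleanest.)

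For part (b), I need the stronger conclusion that $\tilde C$ induces a \emph{Rickard} equivalence, i.e. that the canonical maps $\Lambda \tilde G e \to \mathrm{End}^\bullet_{(\Lambda \tilde L f)^{\mathrm{opp}}}(\tilde C)^{\mathrm{opp}}$ and $\Lambda \tilde L f \to \mathrm{End}^\bullet_{\Lambda \tilde G e}(\tilde C)$ are isomorphisms in the respective homotopy categories — not merely in the derived category. By the Mackey computation above, $\mathrm{End}^\bullet_{\Lambda \tilde G}(\tilde C) \cong \tilde C^\vee \otimes_{\Lambda \tilde G}\tilde C \cong \mathrm{Ind}_{\mathcal{D}'}^{\tilde L \times \tilde L^{\mathrm{opp}}}(\mathrm{End}^\bullet_{\Lambda G}(C'))$, and the Rickard hypothesis on $C$ gives $\mathrm{End}^\bullet_{\Lambda G}(C) \simeq \Lambda L f$ in $\mathrm{Ho}^b$; one must upgrade this to a $\Delta(\tilde L)$-equivariant homotopy equivalence and then induce. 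The subtlety — and the main obstacle — is precisely that inducing homotopy equivalences requires controlling the contractible summands: $C' \cong C'^{\mathrm{red}} \oplus C'_0$ with $C'_0$ homotopy equivalent to $0$, but $\mathrm{Ind}_{\mathcal{D}}^{\tilde G \times \tilde L^{\mathrm{opp}}}(C'_0)$ need not be homotopy equivalent to $0$ unless one knows enough about it. When $C$ is concentrated in one degree there is no homotopy issue at all (everything is a genuine module isomorphism after applying $H^0$, and one argues directly with modules). When $\ell \nmid [\tilde L : L]$, the index $[\tilde L:L]$ is invertible in $\Lambda$, so $\mathrm{Ind}_{\mathcal{D}}^{\tilde G\times\tilde L^{\mathrm{opp}}}$ followed by $\mathrm{Res}$ is, up to the invertible scalar $[\tilde L : L]$, a retract of the identity on the relevant $\Delta(\tilde L)$-equivariant homotopy category — equivalently, relative projectivity with respect to $L$ is automatic — which forces induction to preserve the property of being homotopy equivalent to zero, hence preserves homotopy equivalences. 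I would carry this out by the relative-trace / Higman-criterion argument: a bounded complex of $\Lambda[\tilde L \times \tilde L^{\mathrm{opp}}]$-modules that restricts to something contractible over $\Lambda \mathcal{D}'$ and whose underlying modules are relatively $\mathcal{D}'$-projective is itself contractible, and the index hypothesis guarantees the relative projectivity. Thus in either case the induced endomorphism complex is homotopy equivalent to $\Lambda \tilde L f$ (resp. $\Lambda \tilde G e$), and tracing through the canonical maps gives the Rickard equivalence. I expect the bookkeeping of these equivariant homotopy equivalences and the Mackey formula for $\otimes$ over the larger group to be the only real work; conceptually it is Marcus's argument, and I would cite \cite[Theorem 3.4]{Marcus} or \cite[Lemma 2.8]{Rouquier3} for the details one prefers to suppress.
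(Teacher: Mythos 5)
Your proposal is correct in outline and follows essentially the same route as the sources the paper itself defers to (\cite[Theorem 3.4]{Marcus}, \cite[Lemma 2.8]{Rouquier3}); the paper's own proof consists only of that citation plus the observation that the primitivity hypothesis on $e$ and $f$ in loc.\ cit.\ is not needed, so you have reconstructed the argument the paper suppresses. One sentence in your treatment of part (a) should be repaired: you claim that ``the extension structure makes this homotopy equivalence $\Delta(\tilde{L})$-equivariant.'' Taken literally this would prove part (b) with no hypothesis at all, which is false. What the extension $C'$ actually gives for free is that the canonical map $\Lambda L f \to \mathrm{End}^{\bullet}_{\Lambda G}(C) \cong C^{\vee}\otimes_{\Lambda G} C$ is a map of complexes over $(L\times L^{\mathrm{opp}})\Delta(\tilde{L})$; since its restriction to $L\times L^{\mathrm{opp}}$ is a homotopy equivalence it is in particular a quasi-isomorphism, hence a quasi-isomorphism of equivariant complexes, and exactness of induction then transports this to $\tilde{L}\times\tilde{L}^{\mathrm{opp}}$ --- which is all that part (a) requires. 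Upgrading the quasi-isomorphism to an equivariant homotopy equivalence is precisely the content of part (b), and there your two cases (concentration in one degree makes homotopy equivalence, quasi-isomorphism and module isomorphism coincide; invertibility of $[\tilde{L}:L]$ lets one average a contracting homotopy of the cone, so contractibility on restriction together with the automatic relative projectivity forces contractibility) are the correct mechanism. With that adjustment the argument is sound.
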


\begin{proof}
The statement of part (b) has been proved in the case where $e$ and $f$ are primitive central idempotents in \cite[Lemma 2.8]{Rouquier3}. However, the assumption in the proof of  \cite[Lemma 2.8]{Rouquier3} that $e$ and $f$ are primitive is not necessary. Furthermore, as said in \cite[Remark 5.4]{Rouquier} if we drop the assumption that $[\tilde{L}:L]$ is coprime to $\ell$ in Theorem \ref{lifting} it is still true that $\tilde{C}$ induces a derived equivalence between $\Lambda \tilde{L} f$ and $\Lambda \tilde{G} e$.
\end{proof}

In the following remark we observe some Clifford-theoretic consequences of Theorem \ref{lifting}.

\begin{remark}\label{characters}
\
\begin{enumerate}[label={\alph*)}]
\item 
Suppose that we are in the situation of Theorem \ref{lifting}.
Let $\varphi: \mathrm{Ho}^b(\Lambda L f) \to \mathrm{Ho}^b( \Lambda G e)$ and $\tilde{\varphi}: \mathrm{Ho}^b(\Lambda \tilde{L} f) \to \mathrm{Ho}^b(\Lambda \tilde{G} e)$ be the functors induced by tensoring with $C$ resp. $\tilde{C}$. Let $N$ be a complex of $\Lambda \tilde{L} f$-modules. Then by Mackey's formula $                 
\mathrm{Res}_{G \times \tilde{L}^{\mathrm{opp}}}^{\tilde{G} \times \tilde{L}^{\mathrm{opp}}} (\tilde{C}) \cong \mathrm{Ind}_{G \times L^{\mathrm{opp}}}^{G \times \tilde{L}^{\mathrm{opp}}} (C)$. In particular, we have
$$\mathrm{Res}_{G}^{\tilde{G}} (\tilde{C} \otimes^{}_{\Lambda \tilde{L}} N ) \cong \mathrm{Ind}_{G \times L^{\mathrm{opp}}}^{G \times \tilde{L}^{\mathrm{opp}}} (C) \otimes^{}_{ \Lambda \tilde{L}} N \cong (C \otimes_{\Lambda L}  \Lambda \tilde{L} ) \otimes^{}_{\Lambda \tilde{L}} N \cong C \otimes^{}_{\Lambda L} \mathrm{Res}_L^{\tilde{L}} (N).$$
In other words, $\mathrm{Res}^{\tilde{G}}_G \circ \tilde{\varphi} \cong \varphi \circ \Res^{\tilde{L}}_L $. A similar calculation (or using the fact that $\mathrm{Ind}$ and $\mathrm{Res}$ are adjoint functors) shows that $\mathrm{Ind}^{\tilde{G}}_G \circ \varphi \cong \tilde{\varphi} \circ \Ind^{\tilde{L}}_L$.
\item Let $M$ be an $\mathcal{O} G e$-$\mathcal{O} L f$ bimodule inducing a Morita equivalence between $\mathcal{O} G e$ and $\mathcal{O} L f$. Suppose that $M$ extends to an $\mathcal{O} \mathcal{D}$-module $M'$ and denote $\tilde{M}:=\mathrm{Ind}_\Delta^{\tilde{G} \times \tilde{L}^{\mathrm{opp}}}(M')$. For $R \in \{ K,k\}$ the bimodule $M \otimes_{\mathcal{O}} R$ (respectively $\tilde{M} \otimes_{\mathcal{O}} R$) induces a bijection $\varphi: \Irr( R L f) \to \Irr( R G e)$ (respectively $\tilde{\varphi}: \Irr(R \tilde{L} f) \to \Irr(R \tilde{G} e)$) between irreducible modules. Now suppose that $N$ is a simple $R \tilde{L} f$-module. By Clifford's theorem, see \cite[Theorem 3.3.1]{Nagao}
we see that the simple $R L$-module $S$ extends to an $R \tilde{L}$-module if and only if $\varphi(S)$ extends to an $R \tilde{G}$-module.
\end{enumerate}
\end{remark}

%

\subsection{Descent of Rickard equivalences}

We keep the assumptions of the previous section. Theorem \ref{lifting} shows that under certain conditions Rickard equivalences can be lifted from normal subgroups. It is therefore natural to ask whether one can also go the other way. For Rickard equivalences we obtain the following converse to Theorem \ref{lifting} which is tailored to our later applications.

\begin{lemma}\label{Marcusconverse}
Suppose that $  \tilde{G} = \tilde{L} G$. Let $C$ be a bounded complex of biprojective $\Lambda G e$-$\Lambda L f$-bimodules
with cohomology concentrated in degree $d$ such that $H^d(C)$ induces a Morita equivalence between $\Lambda G e$ and $\Lambda L f$.
Assume that $C$ extends to a complex of $\Lambda \mathcal{D}$-modules $C'$ such that $\tilde{C}:= \mathrm{Ind}_{\mathcal{D}}^{\tilde{G} \times \tilde{L}^{\mathrm{opp}}} (C')$ induces a Rickard equivalence between $\Lambda \tilde{L} f$ and $\Lambda \tilde{G} e$. Then also the complex $C$ induces a Rickard equivalence between $\Lambda G e$ and $\Lambda L f$.
\end{lemma}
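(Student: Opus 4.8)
The plan is to show directly that $C$ satisfies conditions (a) and (b) of Definition \ref{rickard}, using the fact that $\tilde{C}$ satisfies them together with the compatibility of induction and restriction with the endomorphism-complex constructions. First I would record that since $H^d(C)$ induces a Morita equivalence, $C$ already induces a \emph{derived} equivalence between $\Lambda G e$ and $\Lambda L f$ (a complex biprojective on both sides with cohomology concentrated in one degree equal to a Morita bimodule induces a derived equivalence); so by Rickard's theorem the only thing at stake is that the two canonical maps in Definition \ref{rickard} are split, i.e. that $C$ gives a \emph{Rickard} and not merely a derived equivalence. The strategy is to descend this property from $\tilde{C}$: we know $B \cong \mathrm{End}^\bullet_A(C)$ and $A \cong \mathrm{End}^\bullet_{B^{\mathrm{opp}}}(C)^{\mathrm{opp}}$ \emph{hold in the homotopy category} for $\tilde{C}$ over $\tilde{A}=\Lambda\tilde{G}e$, $\tilde{B}=\Lambda\tilde{L}f$, and we want to pull this back to $A=\Lambda Ge$, $B=\Lambda Lf$.

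The key computational input is the Mackey-type identity already used in Remark \ref{characters}(a): $\mathrm{Res}^{\tilde{G}\times\tilde{L}^{\mathrm{opp}}}_{G\times\tilde{L}^{\mathrm{opp}}}(\tilde{C}) \cong \mathrm{Ind}^{G\times\tilde{L}^{\mathrm{opp}}}_{G\times L^{\mathrm{opp}}}(C) \cong C\otimes_{\Lambda L}\Lambda\tilde{L}$, and its variant restricting on the other side. Using this, I would compute $\mathrm{End}^\bullet_{\tilde{B}^{\mathrm{opp}}}(\tilde{C})$ and relate it, via the adjunction between $\mathrm{Ind}$ and $\mathrm{Res}$ and the hypothesis $\tilde{G}=\tilde{L}G$ (so $\tilde{G}\times\tilde{L}^{\mathrm{opp}} = (G\times L^{\mathrm{opp}})\mathcal{D}$ and the relevant index is $[\tilde{L}:L]$, with $\tilde{C}=\mathrm{Ind}_{\mathcal{D}}^{\tilde{G}\times\tilde{L}^{\mathrm{opp}}}(C')$), to $\mathrm{End}^\bullet_{B^{\mathrm{opp}}}(C)$. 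Concretely, $\mathrm{End}^\bullet_{B^{\mathrm{opp}}}(C) = \mathrm{Hom}^\bullet_{\Lambda L^{\mathrm{opp}}}(C,C)$ and $\mathrm{Hom}^\bullet_{\Lambda\tilde{L}^{\mathrm{opp}}}(\tilde{C},\tilde{C}) \cong \mathrm{Hom}^\bullet_{\Lambda\tilde{L}^{\mathrm{opp}}}\!\big(\mathrm{Ind}(C'),\tilde{C}\big)$, which by adjunction is a $\mathrm{Hom}$-complex over $\mathcal{D}$ that one identifies (again via the Mackey formula and $\tilde{G}=\tilde{L}G$) with a twisted/induced version of $\mathrm{End}^\bullet_{B^{\mathrm{opp}}}(C)$ carrying a $\mathcal{D}$-action, whose $\mathcal{D}$-fixed part (or appropriate $\Lambda G\otimes\Lambda G^{\mathrm{opp}}$-summand) is $\mathrm{End}^\bullet_{B^{\mathrm{opp}}}(C)$. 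Since the canonical map $\tilde{A}\to\mathrm{End}^\bullet_{\tilde{B}^{\mathrm{opp}}}(\tilde{C})^{\mathrm{opp}}$ is a homotopy equivalence and $A$ is a direct summand of $\mathrm{Res}^{\tilde{G}\times\tilde{G}^{\mathrm{opp}}}_{G\times G^{\mathrm{opp}}}(\tilde{A})$ compatibly with the canonical map, restricting the homotopy equivalence and projecting onto the relevant $\Lambda Ge$-$\Lambda Ge$-bimodule summand yields that $A\to\mathrm{End}^\bullet_{B^{\mathrm{opp}}}(C)^{\mathrm{opp}}$ is a homotopy equivalence in $\mathrm{Ho}^b(A\otimes_\Lambda A^{\mathrm{opp}})$, which is condition (a); the symmetric computation on the other side gives condition (b).

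An alternative, cleaner route I would consider is to invoke Theorem \ref{lifting}(a) backwards in a uniqueness form: the complex $C$, being biprojective with cohomology $H^d(C)$ a Morita bimodule, induces a derived equivalence, so it is at least a one-sided-split complex; one then shows that the \emph{obstruction} to $C$ being a Rickard equivalence — namely the possible non-vanishing, in $\mathrm{Ho}^b$, of the cone of $B\to\mathrm{End}^\bullet_A(C)$ — becomes, after applying $\mathrm{Ind}_{\mathcal{D}}^{\tilde{G}\times\tilde{L}^{\mathrm{opp}}}$ and using the Mackey identity, the corresponding cone for $\tilde{C}$, which vanishes by hypothesis; since induction from $\mathcal{D}$ is faithful on homotopy (a module induced from $\mathcal{D}$ is homotopy-trivial only if the original one is, because restriction back recovers a summand), the original cone vanishes too. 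The main obstacle, in either approach, is bookkeeping the bimodule structure carefully through the Mackey isomorphism and the $\mathrm{Ind}$–$\mathrm{Res}$ adjunction for the $\mathrm{Hom}^\bullet$-complexes: one must make sure the identification of $\mathrm{End}^\bullet_A(C)$ as the correct direct summand of $\mathrm{Res}\,\mathrm{End}^\bullet_{\tilde{A}}(\tilde{C})$ is compatible with the canonical algebra maps from $B$ and $\tilde{B}$, and that the biprojectivity hypothesis on $C$ is genuinely used to make all these $\mathrm{Hom}^\bullet$/$\otimes$ manipulations commute with taking homotopy categories. The hypothesis $\tilde{G}=\tilde{L}G$ is exactly what makes the double cosets trivial so that Mackey collapses to a single term, and I expect it to be indispensable at precisely this point.
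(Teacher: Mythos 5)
Your plan and the paper's proof share the same engine --- the Mackey isomorphisms $\mathrm{Res}^{\tilde{G}\times\tilde{L}^{\mathrm{opp}}}_{G\times\tilde{L}^{\mathrm{opp}}}(\tilde{C})\cong C\otimes_{\Lambda L}\Lambda\tilde{L}$ and $\mathrm{Res}^{\tilde{G}\times\tilde{L}^{\mathrm{opp}}}_{\tilde{G}\times L^{\mathrm{opp}}}(\tilde{C})\cong\Lambda\tilde{G}\otimes_{\Lambda G}C$, together with the fact that a complex over the small group is a direct summand of the restriction of its induction --- but they diverge at the decisive point. The paper does not manipulate $\mathrm{End}^\bullet$-complexes at all: it computes $\mathrm{Res}^{\tilde{G}\times\tilde{G}^{\mathrm{opp}}}_{G\times\tilde{G}^{\mathrm{opp}}}(\Lambda\tilde{G}e)\cong C\otimes_{\Lambda L}C^\vee\otimes_{\Lambda G}\Lambda\tilde{G}$, invokes the hypothesis on $H^d(C)$ to split $C\otimes_{\Lambda L}C^\vee\cong\Lambda Ge\oplus R$ \emph{at the chain level} with $H^\bullet(R)\cong 0$ (via the remarks before \cite[Lemma 10.2.4]{Rouquier3}), cancels $\Lambda\tilde{G}e$ on both sides of the resulting isomorphism in $\mathrm{Ho}^b(\Lambda[G\times\tilde{G}^{\mathrm{opp}}])$ to obtain $R\otimes_{\Lambda G}\Lambda\tilde{G}\cong 0$, and concludes $R\cong 0$ because $R$ is a direct summand of $\mathrm{Res}\,\mathrm{Ind}(R)$. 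That chain-level splitting is exactly where the Morita hypothesis does its work. Your proposal never uses that hypothesis in a load-bearing way (you only note it gives a derived equivalence, and then never return to it), while the paper explicitly records, immediately after the lemma, that it is unknown whether the hypothesis can be weakened or removed. A plan that would prove the statement without it must be treated as unproven until the step you yourself defer as ``bookkeeping'' is carried out in full.

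That bookkeeping is the unverified crux of your first route: you must show that, after restricting to $G\times G^{\mathrm{opp}}$ (resp.\ $L\times L^{\mathrm{opp}}$), the canonical map $\Lambda\tilde{G}e\to\mathrm{End}^\bullet_{(\Lambda\tilde{L}f)^{\mathrm{opp}}}(\tilde{C})^{\mathrm{opp}}$ decomposes, compatibly with the Mackey and adjunction identifications, as a direct sum indexed by $\tilde{G}/G$ of maps whose identity-coset component is the canonical map $\Lambda Ge\to\mathrm{End}^\bullet_{(\Lambda Lf)^{\mathrm{opp}}}(C)^{\mathrm{opp}}$, so that contractibility of the total cone forces contractibility of that summand. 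None of this is checked, and your description of the relevant piece as ``the $\mathcal{D}$-fixed part'' is not the right construction --- it is a coset-indexed direct summand of bimodule complexes, not a fixed-point subcomplex. Your second route is wrong in its direction as stated: applying $\mathrm{Ind}_{\mathcal{D}}^{\tilde{G}\times\tilde{L}^{\mathrm{opp}}}$ to the cone of $\Lambda Lf\to\mathrm{End}^\bullet_{\Lambda Ge}(C)$ does not yield the cone of $\Lambda\tilde{L}f\to\mathrm{End}^\bullet_{\Lambda\tilde{G}e}(\tilde{C})$, since induction does not commute with $\mathrm{End}^\bullet$ in that direction; the usable relation runs through restriction and lands you back in the first route. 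I would either complete the summand analysis in detail --- and then confront the fact that it appears to dispense with a hypothesis the author regards as possibly essential --- or adopt the paper's $C\otimes_{\Lambda L}C^\vee$ argument, in which the role of each hypothesis is transparent.
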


\begin{proof}
By the Mackey formula we have 
$$\mathrm{Res}_{G \times \tilde{L}^{\mathrm{opp}}}^{\tilde{G} \times \tilde{L}^{\mathrm{opp}}}(\tilde{C}) \cong C \otimes_{\Lambda L} \Lambda \tilde{L} \text { and } \mathrm{Res}_{\tilde{G} \times L^{\mathrm{opp}}}^{\tilde{G} \times \tilde{L}^{\mathrm{opp}}}(\tilde{C}) \cong \Lambda \tilde{G} \otimes_{\Lambda G} C.$$
Since $\tilde{C}$ induces a Rickard equivalence between $\Lambda \tilde{L} f$ and $\Lambda \tilde{G} e$ we therefore conclude that 
$$\mathrm{Res}^{\tilde{G} \times \tilde{G}^{\mathrm{opp}}}_{G \times \tilde{G}^{\mathrm{opp}}}( \Lambda \tilde{G} e) \cong C \otimes_{\Lambda L} \Lambda \tilde{L} \otimes_{\Lambda \tilde{L}} \tilde{C}^\vee \cong C \otimes_{\Lambda L} C^\vee \otimes_{\Lambda G} \Lambda \tilde{G}.$$
Since $H^d(C)$ induces a Morita equivalence between $\Lambda G e$ and $\Lambda L f$ it follows by the remarks before \cite[Lemma 10.2.4]{Rouquier3} that we have an isomorphism $$C \otimes_{\Lambda L} C^\vee \cong \Lambda G e \oplus R$$
in $\mathrm{Comp}^b( \Lambda [G \times G^{\mathrm{opp}}])$, where $R$ is a complex of $\Lambda G e $-$\Lambda Ge $-bimodules such that $H^\bullet(R) \cong 0$ (but not necessarily homotopy equivalent to $0$). From this we deduce that
$$\mathrm{Res}^{\tilde{G} \times \tilde{G}^{\mathrm{opp}}}_{G \times \tilde{G}^{\mathrm{opp}}}( \Lambda \tilde{G} e ) \cong  C \otimes_{\Lambda L} C^\vee \otimes_{\Lambda G} \Lambda \tilde{G} \cong \mathrm{Res}^{\tilde{G} \times \tilde{G}^{\mathrm{opp}}}_{G \times \tilde{G}^{\mathrm{opp}}}( \Lambda \tilde{G} e ) \oplus ( R \otimes_{\Lambda G} \Lambda \tilde{G} )$$
in $\mathrm{Ho}^b(\Lambda[G \times \tilde{G}^{\mathrm{opp}}])$. We conclude that $$\mathrm{Ind}_{G \times G^{\mathrm{opp}}}^{G \times \tilde{G}^{\mathrm{opp}}}(R) \cong R \otimes_{\Lambda G} \Lambda \tilde{G} \cong 0$$ in $\mathrm{Ho}^b(\Lambda[G \times \tilde{G}^{\mathrm{opp}}])$. Since $R$ is a direct summand of $\mathrm{Res}_{G \times G^{\mathrm{opp}}}^{G \times \tilde{G}^{\mathrm{opp}}}( \mathrm{Ind}_{G \times G^{\mathrm{opp}}}^{G \times \tilde{G}^{\mathrm{opp}}}(R))$ as a complex we thus have $ R \cong 0$ in $\mathrm{Ho}^b(\Lambda[G \times G^{\mathrm{opp}}])$. This shows that $C \otimes_{\Lambda L} C^\vee \cong \Lambda G e$ in $\mathrm{Ho}^b(\Lambda[G \times G^{\mathrm{opp}}])$ and similarly one proves $C^\vee \otimes_{\Lambda G} C \cong \Lambda Lf$ in $\mathrm{Ho}^b(\Lambda[L \times L^{\mathrm{opp}}])$. Consequently, the complex $C$ induces a Rickard equivalence between $\Lambda G e$ and $\Lambda L f$.
\end{proof}

It would be interesting to know whether the hypothesis that $C$ has cohomology concentrated in degree $d$ such that $H^d(C)$ induces a Morita equivalence between $\Lambda G e$ and $\Lambda L f$ could be weakened or even completely removed. For Morita equivalences the following lemma shows that the situation is much easier:

\begin{lemma}\label{MarcusconverseMorita}
Suppose that $ \tilde{L} G= \tilde{G}$. Let $M$ be a biprojective $\Lambda G e$-$\Lambda L f$-bimodule and suppose that $M$ extends to a $\Lambda\mathcal{D}$-module $M'$ such that $\tilde{M}:= \mathrm{Ind}_{\mathcal{D}}^{\tilde{G} \times \tilde{L}^{\mathrm{opp}}} (M')$ induces a Morita equivalence between $\Lambda \tilde{L} f$ and $\Lambda \tilde{G} e$. Then $M$ induces a Morita equivalence between $\Lambda G e$ and $\Lambda L f$.
\end{lemma}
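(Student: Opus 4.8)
The plan is to imitate the proof of Lemma \ref{Marcusconverse}, but in the much simpler bimodule (rather than complex) setting, where homotopy-theoretic subtleties disappear. First I would record the two Mackey-formula identities: since $\tilde{G}=\tilde{L}G$ and $M$ extends to the $\Lambda\mathcal{D}$-module $M'$ with $\tilde{M}=\mathrm{Ind}_{\mathcal{D}}^{\tilde{G}\times\tilde{L}^{\mathrm{opp}}}(M')$, one has
$$\mathrm{Res}_{G\times\tilde{L}^{\mathrm{opp}}}^{\tilde{G}\times\tilde{L}^{\mathrm{opp}}}(\tilde{M})\cong M\otimes_{\Lambda L}\Lambda\tilde{L}\quad\text{and}\quad\mathrm{Res}_{\tilde{G}\times L^{\mathrm{opp}}}^{\tilde{G}\times\tilde{L}^{\mathrm{opp}}}(\tilde{M})\cong\Lambda\tilde{G}\otimes_{\Lambda G}M,$$
exactly as in Lemma \ref{Marcusconverse}. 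Dually, $\tilde{M}^\vee\cong\mathrm{Ind}_{\mathcal{D}}^{\tilde{L}\times\tilde{G}^{\mathrm{opp}}}((M')^\vee)$ and restricting gives $\mathrm{Res}(\tilde{M}^\vee)\cong\Lambda\tilde{L}\otimes_{\Lambda L}M^\vee\cong M^\vee\otimes_{\Lambda G}\Lambda\tilde{G}$.

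Next I would use that $\tilde{M}$ induces a Morita equivalence, so $\tilde{M}\otimes_{\Lambda\tilde{L}}\tilde{M}^\vee\cong\Lambda\tilde{G}e$ as $\Lambda[\tilde{G}\times\tilde{G}^{\mathrm{opp}}]$-bimodules. Restricting this isomorphism to $G\times\tilde{G}^{\mathrm{opp}}$ and plugging in the identities above yields
$$\mathrm{Res}^{\tilde{G}\times\tilde{G}^{\mathrm{opp}}}_{G\times\tilde{G}^{\mathrm{opp}}}(\Lambda\tilde{G}e)\cong M\otimes_{\Lambda L}\Lambda\tilde{L}\otimes_{\Lambda\tilde{L}}\tilde{M}^\vee\cong M\otimes_{\Lambda L}M^\vee\otimes_{\Lambda G}\Lambda\tilde{G}.$$
Since $M$ is biprojective, $N:=M\otimes_{\Lambda L}M^\vee$ is a biprojective $\Lambda Ge$-$\Lambda Ge$-bimodule, and the displayed isomorphism reads $N\otimes_{\Lambda G}\Lambda\tilde{G}\cong\Lambda Ge\otimes_{\Lambda G}\Lambda\tilde{G}$ as $\Lambda[G\times\tilde{G}^{\mathrm{opp}}]$-bimodules, i.e. the induced bimodules $\mathrm{Ind}^{G\times\tilde{G}^{\mathrm{opp}}}_{G\times G^{\mathrm{opp}}}(N)$ and $\mathrm{Ind}^{G\times\tilde{G}^{\mathrm{opp}}}_{G\times G^{\mathrm{opp}}}(\Lambda Ge)$ are isomorphic. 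The Krull--Schmidt theorem then lets me descend: restricting back to $G\times G^{\mathrm{opp}}$ and comparing indecomposable summands (using that $N$ and $\Lambda Ge$ are direct summands of the restrictions of their inductions), I conclude $N\cong\Lambda Ge$, possibly after first checking that no summand of $N$ is killed — but here, unlike in the complex case, there is no cohomologically-trivial-but-nonzero phantom summand to worry about, because everything is a module. Symmetrically, $M^\vee\otimes_{\Lambda G}M\cong\Lambda Lf$. Hence $M$ and $M^\vee$ are mutually inverse bimodules and $M$ induces a Morita equivalence between $\Lambda Ge$ and $\Lambda Lf$.

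The main obstacle is the descent argument $N\otimes_{\Lambda G}\Lambda\tilde{G}\cong\Lambda Ge\otimes_{\Lambda G}\Lambda\tilde{G}\Rightarrow N\cong\Lambda Ge$; one needs that induction from $G\times G^{\mathrm{opp}}$ to $G\times\tilde{G}^{\mathrm{opp}}$ is faithful enough on the relevant summands. This is handled exactly as in Lemma \ref{Marcusconverse}: $N$ (resp. $\Lambda Ge$) is a direct summand of $\mathrm{Res}^{G\times\tilde{G}^{\mathrm{opp}}}_{G\times G^{\mathrm{opp}}}\mathrm{Ind}^{G\times\tilde{G}^{\mathrm{opp}}}_{G\times G^{\mathrm{opp}}}(N)$ (resp. of $\Lambda Ge$), so from an isomorphism of the inductions one extracts an isomorphism of the original bimodules by Krull--Schmidt. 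Alternatively — and perhaps more cleanly — one can avoid the descent step entirely: since $\tilde{M}$ is a Morita bimodule it is indecomposable, projective as a left and right module, and $M$ is a summand of its restriction, so $M$ is biprojective and indecomposable; a dimension/rank count via $\mathrm{Res}_G^{\tilde{G}}\circ\tilde{\varphi}\cong\varphi\circ\mathrm{Res}_L^{\tilde{L}}$ from Remark \ref{characters}(a), together with the fact that $M\otimes_{\Lambda L}-$ sends simples to simples (read off from the $\tilde{M}$-equivalence after restriction), forces $M\otimes_{\Lambda L}-\colon\Lambda Lf\text{-}\mathrm{mod}\to\Lambda Ge\text{-}\mathrm{mod}$ to be an equivalence. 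I would present the Krull--Schmidt version to keep the argument parallel to Lemma \ref{Marcusconverse}.
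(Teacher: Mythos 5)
Your Mackey computation reproducing $M\otimes_{\Lambda L}M^\vee\otimes_{\Lambda G}\Lambda\tilde{G}\cong \mathrm{Res}^{\tilde{G}\times\tilde{G}^{\mathrm{opp}}}_{G\times\tilde{G}^{\mathrm{opp}}}(\Lambda\tilde{G}e)$ is exactly the identity the paper also uses. The gap is in the descent step. From $\mathrm{Ind}_{G\times G^{\mathrm{opp}}}^{G\times\tilde{G}^{\mathrm{opp}}}(N)\cong\mathrm{Ind}_{G\times G^{\mathrm{opp}}}^{G\times\tilde{G}^{\mathrm{opp}}}(\Lambda Ge)$ you cannot conclude $N\cong\Lambda Ge$ by Krull--Schmidt: since $G\times G^{\mathrm{opp}}$ is normal in $G\times\tilde{G}^{\mathrm{opp}}$, restricting back only gives $\bigoplus_{g\in\tilde{G}/G}N^{(1,g)}\cong\bigoplus_{g\in\tilde{G}/G}(\Lambda Ge)^{(1,g)}$, which is compatible with $N$ being a nontrivial $(1,g)$-twist of $\Lambda Ge$ rather than $\Lambda Ge$ itself. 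Krull--Schmidt lets you \emph{cancel a common summand}, not invert induction on isomorphism classes. Your appeal to Lemma \ref{Marcusconverse} does not repair this, because there the step is different: the decomposition $C\otimes_{\Lambda L}C^\vee\cong\Lambda Ge\oplus R$ is obtained \emph{first}, from the hypothesis that $H^d(C)$ induces a Morita equivalence, and only then does one cancel $\mathrm{Ind}(\Lambda Ge)$ to get $\mathrm{Ind}(R)\cong 0$, hence $R\cong 0$. In the present lemma that Morita hypothesis on $M$ is precisely the conclusion, so it is not available.

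What the paper does instead, and what your argument is missing, is to exhibit a distinguished copy of $\Lambda Ge$ as a direct summand of $N=M\otimes_{\Lambda L}M^\vee$ before inducing: the natural map $\Lambda\tilde{G}e\to\End_{(\Lambda\tilde{L})^{\mathrm{opp}}}(\tilde{M})^{\mathrm{opp}}$ is an isomorphism by hypothesis, restriction shows the map $\Lambda\tilde{G}e\to\End_{(\Lambda L)^{\mathrm{opp}}}(\Lambda\tilde{G}\otimes_{\Lambda G}M)^{\mathrm{opp}}$ is injective, hence so is the natural map $\Lambda Ge\to\End_{(\Lambda L)^{\mathrm{opp}}}(M)^{\mathrm{opp}}$, and projectivity considerations make it a split injection of right $\Lambda G$-modules. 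Only then does $N\cong\Lambda Ge\oplus R$ hold along the canonical map, and the cancellation $\Lambda\tilde{G}e\cong\Lambda\tilde{G}e\oplus(R\otimes_{\Lambda G}\Lambda\tilde{G}e)$ forces $R\otimes_{\Lambda G}\Lambda\tilde{G}\cong 0$ and hence $R\cong 0$. This also directly yields the Morita condition in the form needed (the natural map $\Lambda Ge\to\End_{(\Lambda L)^{\mathrm{opp}}}(M)^{\mathrm{opp}}$ is an isomorphism), rather than an abstract bimodule isomorphism. Your alternative sketch via indecomposability of $\tilde{M}$ is not a proof ($e$ is not assumed primitive, so $\tilde{M}$ need not be indecomposable), so the injectivity/splitting argument, or some substitute for it, is genuinely needed.
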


\begin{proof}
Since $\tilde{M}$ induces a Morita equivalence between $\Lambda \tilde{L} f$ and $\Lambda \tilde{G} e$ it follows that the natural map $\Lambda \tilde{G} e \to \End_{(\Lambda \tilde{L})^{\mathrm{opp}}}( \tilde{M})^{\mathrm{opp}}$ is an isomorphism. This shows that the natural map 
$$\Lambda \tilde{G} e \to  \End_{(\Lambda L)^{\mathrm{opp}}}( \mathrm{Res}_{\tilde{G} \times L^{\mathrm{opp}}}^{\tilde{G} \times \tilde{L}^{\mathrm{opp}}}(\tilde{M}))^{\mathrm{opp}} \cong \End_{(\Lambda L)^{\mathrm{opp}}}(\Lambda \tilde{G} \otimes_{\Lambda G} M)^{\mathrm{opp}}$$
is injective. From this it follows that the natural map $\Lambda G e \to \End_{(\Lambda L)^{\mathrm{opp}}}(M)^{\mathrm{opp}}$ is injective as well. Since $\Lambda  G e$ is projective as right $\Lambda G$-module it follows that the map $\Lambda G e \to \End_{(\Lambda L)^{\mathrm{opp}}}(M)^{\mathrm{opp}}$ is a split injection of right $\Lambda G$-modules. Consequently, there exists a right $\Lambda G$-module $R$ such that 
$$\End_{(\Lambda L)^{\mathrm{opp}}}(M)^{\mathrm{opp}} \cong M \otimes_{\Lambda L} M^\vee \cong \Lambda G e \oplus R$$
as right $\Lambda G$-modules. We now want to show that $R\cong 0$. According to the proof of Lemma \ref{Marcusconverse} we have $$\mathrm{Res}^{\tilde{G} \times \tilde{G}^{\mathrm{opp}}}_{G \times \tilde{G}^{\mathrm{opp}}}( \Lambda \tilde{G} e) \cong M \otimes_{\Lambda L} M^\vee \otimes_{\Lambda G} \Lambda \tilde{G}.$$
It follows that $ \Lambda \tilde{G} e \cong  \Lambda \tilde{G} e  \oplus ( R \otimes_{\Lambda G} \Lambda \tilde{G} e)$ as right $\Lambda \tilde{G}$-modules. We conclude that $ R \otimes_{\Lambda G} \Lambda \tilde{G} \cong 0 $ which implies that $R\cong 0$. Hence, the natural map $\Lambda G e \to \End_{(\Lambda L)^{\mathrm{opp}}}(M)^{\mathrm{opp}}$ is an isomorphism. Similarly, one shows that $\Lambda L f \to \End_{\Lambda G}(M)$ is an isomorphism.
\end{proof}

\subsection{Rickard equivalences for the normalizer}\label{Marcus2}

We continue our discussion on Marcus' theorem.
Let $\tilde{L}$ be a subgroup of a finite group $\tilde{G}$. Moreover, let $G$ be a normal subgroup of $\tilde{G}$ and set $L:= \tilde{L} \cap G$. Let $e \in \mathrm{Z}(\Lambda G)$ and $f \in \mathrm{Z}(\Lambda L)$ be central idempotents and denote by $L':=\mathrm{N}_{\tilde{L}}(f)$ and $G':=\mathrm{N}_{\tilde{G}}(e)$ their respective stabilizers. In this section we suppose that $G'=G L'$.
We denote 
$$\mathcal{D}':= (G \times L^{\opp})  \Delta(L') \text{ and } \mathcal{D}:= ( G \times L^{\opp}) \Delta(\tilde{L}).$$
In what follows, we assume that $f ({}^l f)=0$ for any $l \in \tilde{L} \setminus L'$ and $e ({}^g e)=0$ for any $g \in \tilde{G} \setminus G'$. This ensures that $F:=\mathrm{Tr}_{\mathrm{N}_{\tilde{L}}(f)}^{\tilde{L}}(f)$ is a central idempotent of $\Lambda \tilde{L}$ and $E:=\mathrm{Tr}_{\mathrm{N}_{\tilde{G}}(e)}^{\tilde{G}}(e)$ is a central idempotent of $\Lambda \tilde{G}$.

%

%

\begin{proof}
By Theorem \ref{lifting} the $\Lambda$-algebras $\Lambda L' f$ and $\Lambda G' e$ are Rickard equivalent via the complex $\Ind_{\mathcal{D}'}^{G' \times (L')^{\mathrm{opp}}}(C')$. By Clifford theory, $\Lambda L' f$ is Morita equivalent to $\Lambda \tilde{L} F$. The same argument shows that $\Lambda G' e$ and $\Lambda \tilde{G} E$ are Morita equivalent. Thus, the algebras $\Lambda \tilde{L} F$ and $\Lambda \tilde{G} E$ are Rickard equivalent and the Rickard equivalence is given by the complex
$$\Lambda \tilde{G} e \otimes_{\Lambda G' } \Ind_{\mathcal{D}'}^{G' \times (L')^{\mathrm{opp}}}(C') \otimes_{\Lambda L'} f \Lambda \tilde{L} 
\cong \Ind_{\mathcal{D}'}^{\tilde{G} \times (\tilde{L})^{\mathrm{opp}}}(C').  \qedhere $$
\end{proof}

%
%

\begin{lemma}\label{better version}
Let $C$ be a bounded complex of $\Lambda G$-$\Lambda L$-bimodules and assume that $eCf$ induces a Rickard equivalence between $\Lambda G e$ and $\Lambda L f$. In addition, suppose that ${}^l e C f \cong 0$ in $\mathrm{Ho}^b(\Lambda [G \times L^{\mathrm{opp}}])$ for all $l \in \tilde{L} \setminus L'$.  Suppose that $C$ is either concentrated in one degree or that $\ell \nmid[L':L]$. If $C$ extends to a complex of $\Lambda \mathcal{D}$-modules $C'$ then $\Lambda \tilde{L} F$ and $\Lambda \tilde{G} E$ are Rickard equivalent via the complex $$E \, \mathrm{Ind}_{\mathcal{D}}^{\tilde{G} \times \tilde{L}^{\mathrm{opp}}}(C') \, F.$$
\end{lemma}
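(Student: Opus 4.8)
The plan is to reduce this statement to Theorem \ref{lifting} (Marcus' lifting theorem) combined with the Clifford-theoretic argument already used in the proof just above. First I would set $D := eCf$, which by hypothesis is a bounded complex of biprojective $\Lambda G e$-$\Lambda L f$-bimodules inducing a Rickard equivalence between $\Lambda G e$ and $\Lambda L f$. The key point is to upgrade the $\Lambda\mathcal{D}$-module structure on $C$ to a $\Lambda\mathcal{D}'$-module structure on $D$: since $C'$ restricts to $C$ and $\mathcal{D}' \subseteq \mathcal{D}$, the complex $C'' := e\,\mathrm{Res}^{\mathcal{D}}_{\mathcal{D}'}(C')\,f$ is a complex of $\Lambda\mathcal{D}'$-modules (one checks that multiplication by the central idempotents $e$ and $f$ commutes with the $\Delta(L')$-action because $L'$ normalizes both $e$ and $f$ by definition of $L' = \mathrm{N}_{\tilde L}(f)$ and $G' = \mathrm{N}_{\tilde G}(e)$, together with $G' = GL'$ forcing $\Delta(L')$ to stabilize $e$ as well), and it clearly restricts to $D = eCf$ along $\mathcal{D}' \supseteq G \times L^{\mathrm{opp}}$. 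Then by Theorem \ref{lifting} — part (b), whose hypothesis is exactly that $D$ is concentrated in one degree or that $\ell \nmid [L':L]$, both of which follow from the corresponding hypotheses on $C$ — the complex $\mathrm{Ind}_{\mathcal{D}'}^{G' \times (L')^{\mathrm{opp}}}(C'')$ induces a Rickard equivalence between $\Lambda L' f$ and $\Lambda G' e$.

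Next I would run the Clifford-theoretic descent/ascent exactly as in the displayed proof above: the hypothesis $f({}^lf) = 0$ for $l \in \tilde L \setminus L'$ (resp. $e({}^ge)=0$ for $g \in \tilde G \setminus G'$) guarantees that $F = \mathrm{Tr}^{\tilde L}_{L'}(f)$ and $E = \mathrm{Tr}^{\tilde G}_{G'}(e)$ are central idempotents and that $\Lambda\tilde L F$ is Morita equivalent to $\Lambda L' f$ (via $\Lambda\tilde L f$, using that $f$ is a block-like idempotent of $\Lambda L'$ whose $\tilde L$-orbit sums to $F$), and similarly $\Lambda\tilde G E$ is Morita equivalent to $\Lambda G' e$ via $\Lambda\tilde G e$. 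Composing the Rickard equivalence from the previous paragraph with these two Morita equivalences yields that $\Lambda\tilde L F$ and $\Lambda\tilde G E$ are Rickard equivalent via
$$
\Lambda\tilde G e \otimes_{\Lambda G'} \mathrm{Ind}_{\mathcal{D}'}^{G' \times (L')^{\mathrm{opp}}}(C'') \otimes_{\Lambda L'} f\Lambda\tilde L.
$$

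Finally I would identify this composite with $E\,\mathrm{Ind}_{\mathcal{D}}^{\tilde G \times \tilde L^{\mathrm{opp}}}(C')\,F$. On the one hand, $\Lambda\tilde G e \otimes_{\Lambda G'} \mathrm{Ind}_{\mathcal{D}'}^{G'\times(L')^{\mathrm{opp}}}(-) \otimes_{\Lambda L'} f\Lambda\tilde L \cong \mathrm{Ind}_{\mathcal{D}'}^{\tilde G \times \tilde L^{\mathrm{opp}}}(-)$ as functors, by the transitivity of induction and since tensoring on the left with $\Lambda\tilde G e$ over $\Lambda G'$ is induction from $G' \times \tilde L^{\mathrm{opp}}$ up to $\tilde G \times \tilde L^{\mathrm{opp}}$ followed by multiplication by $e$ — this is the same identification $\Ind_{\mathcal{D}'}^{\tilde G \times (\tilde L)^{\mathrm{opp}}}(C')$ appearing at the end of the previous proof in the excerpt. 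On the other hand, $\mathrm{Ind}_{\mathcal{D}'}^{\tilde G \times \tilde L^{\mathrm{opp}}}(C'') = \mathrm{Ind}_{\mathcal{D}'}^{\tilde G \times \tilde L^{\mathrm{opp}}}(e\,\mathrm{Res}^{\mathcal{D}}_{\mathcal{D}'}(C')\,f)$, and using the Mackey formula for $\mathrm{Ind}_{\mathcal{D}'}^{\tilde G \times \tilde L^{\mathrm{opp}}}\mathrm{Res}^{\mathcal{D}}_{\mathcal{D}'}$ together with the vanishing hypotheses ${}^leCf \cong 0$ for $l \notin L'$ (which kill all the off-diagonal double-coset terms), one gets $\mathrm{Ind}_{\mathcal{D}'}^{\tilde G \times \tilde L^{\mathrm{opp}}}(C'') \cong E\,\mathrm{Ind}_{\mathcal{D}}^{\tilde G \times \tilde L^{\mathrm{opp}}}(C')\,F$ in $\mathrm{Ho}^b(\Lambda[\tilde G \times \tilde L^{\mathrm{opp}}])$. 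The main obstacle I anticipate is precisely this last bookkeeping step: carefully tracking that the Mackey double cosets for $(G \times L^{\mathrm{opp}})\backslash(\tilde G \times \tilde L^{\mathrm{opp}})/\mathcal{D}$ are indexed by $L \backslash \tilde L$ (via the $\Delta(\tilde L)$ component) and that conjugating $eCf$ by $l \in \tilde L$ produces ${}^leC{}^lf$, so that summing over $l \in L'/L$ rebuilds exactly the idempotent-truncated induced complex $E\,\mathrm{Ind}_{\mathcal{D}}^{\tilde G \times \tilde L^{\mathrm{opp}}}(C')\,F$ while the terms $l \notin L'$ vanish by hypothesis; all the equivalence-of-categories content is already supplied by Theorem \ref{lifting} and standard Clifford theory.
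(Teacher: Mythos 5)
Your proposal is correct and follows essentially the same route as the paper: form $C^0=e\,\mathrm{Res}^{\mathcal{D}}_{\mathcal{D}'}(C')\,f$, apply Theorem \ref{lifting} over $\mathcal{D}'$, pass to $\Lambda\tilde{L}F$ and $\Lambda\tilde{G}E$ by the Clifford-theoretic Morita equivalences, and then use the vanishing ${}^l eCf\cong 0$ for $l\in\tilde{L}\setminus L'$ to identify $\mathrm{Ind}_{\mathcal{D}'}^{\tilde{G}\times\tilde{L}^{\mathrm{opp}}}(C^0)$ with $E\,\mathrm{Ind}_{\mathcal{D}}^{\tilde{G}\times\tilde{L}^{\mathrm{opp}}}(C')\,F$ (the paper phrases this last step as $\mathrm{Tr}_{\mathcal{D}'}^{\mathcal{D}}(e\otimes f)C\cong ECF$ rather than via Mackey, but it is the same bookkeeping). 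Only a cosmetic slip: the relevant sum in the final identification runs over $l\in\tilde{L}/L'$, not $l\in L'/L$.
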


\begin{proof}
The complex $C^0:=e \mathrm{Res}^{\mathcal{D}}_{\mathcal{D}'}(C') f$ is clearly a $\Lambda \mathcal{D}'$-complex extending $e C f$. By Theorem \ref{lifting} $\Lambda L' f$ and $\Lambda G' e$ are Rickard equivalent via the complex $\Ind_{\mathcal{D}'}^{G' \times (L')^{\mathrm{opp}}}(C^0)$. By definition, the stabilizer of $f$ in $\tilde L$ is $L'$ and so the induction functor yields a Morita equivalence between $\Lambda L' f$ and $\Lambda \tilde{L} F$. The same argument shows that $\Lambda G' e$ and $\Lambda \tilde{G} E$ are Morita equivalent. Thus, the algebras $\Lambda \tilde{L} F$ and $\Lambda \tilde{G} E$ are Rickard equivalent and the Rickard equivalence is given by the complex $\Ind_{\mathcal{D}'}^{\tilde{G} \times \tilde{L}^{\mathrm{opp}}}(C^0)$ induces a Rickard equivalence between $\Lambda \tilde{L}F$ and $\Lambda \tilde{G}E$.

Recall that $\mathcal{D}'$ is by definition the stabilizer in $\tilde{G} \times \tilde{L}^{\mathrm{opp}}$ of the idempotent $e \otimes f$. Since $\tilde{L}/L' \cong \mathcal{D} / \mathcal{D}'$, we have
$$\Tr_{\mathcal{D}'}^{\mathcal{D}}(e \otimes f)=\sum_{l\in \tilde{L} /L'} {}^{(l,l^{-1})} (e \otimes f).$$
By assumption we have ${}^l e C f \cong 0$ for all $l \in \tilde{L} \setminus L'$. From this it follows that 
$$\Tr_{\mathcal{D}'}^{\mathcal{D}}(e \otimes f) C  \cong \mathrm{Tr}_{\mathrm{N}_{\tilde{G}}(e)}^{\tilde{G}}(e) C \mathrm{Tr}_{\mathrm{N}_{\tilde{L}}(f)}^{\tilde{L}}(f) = E C F.$$
We have $\Ind_{\mathcal{D}'}^{\mathcal{D}}(e \mathrm{Res}^{\mathcal{D}}_{\mathcal{D}'} C' f)\cong E C' F$ and as $E \otimes F$ is a central idempotent of $\Lambda [\tilde{G} \times {\tilde{L}}^{\mathrm{opp}}]$ it follows that $\mathrm{Ind}_{\mathcal{D}}^{\tilde{G} \times \tilde{L}^{\mathrm{opp}}}(E C' F) \cong E \mathrm{Ind}_{\mathcal{D}}^{\tilde{G} \times \tilde{L}^{\mathrm{opp}}}(C') F$.
\end{proof}

We can use this lifting result to prove the following proposition.

\begin{proposition}\label{normalizerderived}
Let $C$ be a bounded complex of $\ell$-permutation modules inducing a splendid Rickard equivalence between the blocks $\Lambda G b$ and $\Lambda L c$. Let $(Q,c_Q)$ be a $c$-Brauer pair corresponding to the $b$-Brauer pair $(Q,b_Q)$ under the splendid Rickard equivalence given by the complex $C$ as in Proposition \ref{splendidloc}. Then the complex 
$$\mathrm{Ind}_{\mathrm{N}_{G \times L^{\mathrm{opp}}}(\Delta Q)}^{\mathrm{N}_G(Q) \times \mathrm{N}_L( Q)^{\mathrm{opp}}} (\Br_{\Delta Q}(C)) \Tr^{\mathrm{N}_L(Q)}_{\mathrm{N}_{L}(Q,c_Q)}(c_Q)$$
induces a derived equivalence between the blocks $k \mathrm{N}_{G}(Q) \Tr^{\mathrm{N}_G(Q)}_{\mathrm{N}_{G}(Q,b_Q)}(b_Q)$ and $k \mathrm{N}_L(Q) \Tr^{\mathrm{N}_L(Q)}_{\mathrm{N}_{L}(Q,c_Q)}(c_Q)$.
\end{proposition}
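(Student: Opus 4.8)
The plan is to reduce Proposition \ref{normalizerderived} to the lifting machinery of Lemma \ref{better version} applied to the local groups, using Proposition \ref{splendidloc} to produce the relevant splendid Rickard equivalence for centralizers and then Theorem \ref{lifting}/Lemma \ref{better version} to pass from centralizers to normalizers. First I would fix a maximal $c$-Brauer pair $(D,c_D)$ containing $(Q,c_Q)$ and the corresponding $b$-Brauer pair $(D,b_D)$ from Proposition \ref{splendidloc}; by that proposition the complex $b_Q \Br_{\Delta Q}(C) c_Q$ induces a Rickard equivalence between $k\C_G(Q) b_Q$ and $k\C_L(Q) c_Q$, and for any other $b$-Brauer pair $(Q,b'_Q)$ we have $b'_Q \Br_{\Delta Q}(C) c_Q \cong 0$ in the relevant homotopy category. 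Moreover, since $C$ is a complex of $\ell$-permutation modules, $\Br_{\Delta Q}(C)$ is a complex of $\ell$-permutation $k[\C_G(Q) \times \C_L(Q)^{\mathrm{opp}}]$-modules, hence in particular biprojective after multiplying by the block idempotents.

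Next I would set up the Marcus-type framework of Section \ref{Marcus2} with $\tilde{G} := \mathrm{N}_G(Q)$, $\tilde{L} := \mathrm{N}_L(Q)$, the normal subgroup $G \rightsquigarrow \C_G(Q)$ (which is normal in $\mathrm{N}_G(Q)$) and $L \rightsquigarrow \C_L(Q)$ (normal in $\mathrm{N}_L(Q)$), with $e := b_Q$, $f := c_Q$. Here $\mathrm{N}_{\mathrm{N}_G(Q)}(\C_G(Q)) = \mathrm{N}_G(Q)$ and the stabilizer of $b_Q$ in $\mathrm{N}_G(Q)$ is $\mathrm{N}_G(Q,b_Q)$, and similarly for $c_Q$. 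To apply Lemma \ref{better version} I need: (i) the product condition $\mathrm{N}_G(Q) = \C_G(Q)\,\mathrm{N}_G(Q,b_Q)$ — this is exactly the statement of Corollary \ref{factor}, which gives $\mathrm{N}_L(Q,c_Q)/\C_L(Q) \xrightarrow{\sim} \mathrm{N}_G(Q,b_Q)/\C_G(Q)$ and in particular surjectivity onto $\mathrm{N}_G(Q)/\C_G(Q)$; (ii) the vanishing ${}^x(b_Q\Br_{\Delta Q}(C)c_Q) \cong 0$ for $x \in \mathrm{N}_L(Q)\setminus\mathrm{N}_L(Q,c_Q)$ — but ${}^x c_Q$ is another block idempotent $b'_Q$-type object of $\C_L(Q)$ lying under a conjugate Brauer pair, so this follows from the last assertion of Proposition \ref{splendidloc} (after checking that conjugation by $x$ sends the pair $(Q,c_Q)$ to a genuinely different $c$-Brauer pair contained in some maximal one, and translating via the centralizer decomposition of the normalizer of $\Delta Q$ recorded in Section \ref{thebrauerfunctor}); and (iii) $\Br_{\Delta Q}(C)$ extends to a $\mathcal{D}$-complex, where $\mathcal{D} = (\C_G(Q)\times\C_L(Q)^{\mathrm{opp}})\Delta(\mathrm{N}_L(Q))$ — this is automatic since $\Br_{\Delta Q}(C)$ is the Brauer quotient of the $\Lambda[G\times L^{\mathrm{opp}}]$-complex $C$, hence carries a natural action of $\mathrm{N}_{G\times L^{\mathrm{opp}}}(\Delta Q) = (\C_G(Q)\times\C_L(Q)^{\mathrm{opp}})\Delta(\mathrm{N}_L(Q))$, which is precisely $\mathcal{D}$. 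Since $C$ is a complex of $\ell$-permutation modules we do not have a degree restriction to worry about in the same way, but Lemma \ref{better version} as stated requires either concentration in one degree or $\ell\nmid[\mathrm{N}_L(Q,c_Q):\C_L(Q)]$; here I would instead invoke Theorem \ref{lifting}(a) together with the Clifford-theoretic reduction to get at least a \emph{derived} equivalence, which is all the Proposition claims.

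Concretely: by Theorem \ref{lifting}(a) applied with $C' := \Br_{\Delta Q}(C)$ (restricted to $\mathcal{D}' := (\C_G(Q)\times\C_L(Q)^{\mathrm{opp}})\Delta(\mathrm{N}_L(Q,c_Q))$ after multiplying by $b_Q\otimes c_Q$), the complex $\mathrm{Ind}_{\mathcal{D}'}^{\mathrm{N}_G(Q,b_Q)\times\mathrm{N}_L(Q,c_Q)^{\mathrm{opp}}}(b_Q\Br_{\Delta Q}(C)c_Q)$ induces a derived equivalence between $k\mathrm{N}_L(Q,c_Q)c_Q$ and $k\mathrm{N}_G(Q,b_Q)b_Q$. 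Clifford theory (exactly as in the displayed computation at the end of Section \ref{Marcus2}) then shows $k\mathrm{N}_L(Q,c_Q)c_Q$ is Morita equivalent to $k\mathrm{N}_L(Q)\Tr^{\mathrm{N}_L(Q)}_{\mathrm{N}_L(Q,c_Q)}(c_Q)$ and similarly on the $G$-side, and tensoring the three equivalences together collapses the induced complex to $\mathrm{Ind}_{\mathrm{N}_{G\times L^{\mathrm{opp}}}(\Delta Q)}^{\mathrm{N}_G(Q)\times\mathrm{N}_L(Q)^{\mathrm{opp}}}(\Br_{\Delta Q}(C))\Tr^{\mathrm{N}_L(Q)}_{\mathrm{N}_L(Q,c_Q)}(c_Q)$, using $E\,\mathrm{Ind}_{\mathcal{D}}^{\cdots}(C') = \mathrm{Ind}_{\mathcal{D}}^{\cdots}(ECF)$-type manipulations as in the proof of Lemma \ref{better version} and the fact that the $\Br_{\Delta Q}(b'_Q\cdots)$-summands for the "wrong" blocks are acyclic. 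The main obstacle I anticipate is step (ii): carefully tracking which block idempotent of $k\C_L(Q)$ the conjugate ${}^x c_Q$ is, verifying it lies under a maximal $b$-Brauer pair to which Proposition \ref{splendidloc} applies, and thereby concluding ${}^x c_Q\,\Br_{\Delta Q}(C)\,c_Q \cong 0$ — this is where the compatibility between conjugation in $\mathrm{N}_L(Q)$, the Brauer-pair order, and the splendid equivalence has to be handled with care, and it is essentially the local reformulation of the uniqueness clause in Proposition \ref{splendidloc}.
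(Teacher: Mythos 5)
Your proposal follows essentially the same route as the paper: Proposition \ref{splendidloc} supplies the Rickard equivalence between $k\C_G(Q)b_Q$ and $k\C_L(Q)c_Q$ together with the vanishing of the other block components, Corollary \ref{factor} supplies the group-theoretic condition for the Marcus setup, and the proof of Lemma \ref{better version} combined with Theorem \ref{lifting} and Clifford theory performs the lift to normalizers --- which is precisely the paper's (very terse) argument. The only slips are notational: the product condition actually required is $\mathrm{N}_G(Q,b_Q)=\C_G(Q)\,\mathrm{N}_L(Q,c_Q)$ (not $\mathrm{N}_G(Q)=\C_G(Q)\,\mathrm{N}_G(Q,b_Q)$), and the vanishing hypothesis of Lemma \ref{better version} concerns $({}^{x}b_Q)\,\Br_{\Delta Q}(C)\,c_Q$ for $x\in\mathrm{N}_L(Q)\setminus\mathrm{N}_L(Q,c_Q)$, which the last clause of Proposition \ref{splendidloc} gives once one notes via Corollary \ref{factor} that such an $x$ cannot fix $b_Q$.
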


\begin{proof}
Recall that $\Br_{\Delta Q}(C)$ is a complex of $k \mathrm{N}_{G \times L^{\mathrm{opp}}}(Q)$-modules such that $b_Q \Br_{\Delta Q}(C) c_Q \cong \Br_{\Delta Q}(C) c_Q $ induces a Rickard equivalence between $k \C_{G}(Q) b_Q$ and $k \C_{L}(Q) c_Q$, see Proposition \ref{splendidloc}. Moreover, the groups $\mathrm{N}_G(Q,b_Q)/\C_G(Q)$ and $\mathrm{N}_L(L,c_Q)/\C_L(Q)$ are isomorphic by Corollary \ref{factor}. 
Thus, using the proof of Lemma \ref{better version} together with Theorem \ref{lifting}, we conclude that the complex $$\mathrm{Ind}_{\mathrm{N}_{G \times L^{\mathrm{opp}}}(\Delta  Q)}^{\mathrm{N}_G(Q) \times \mathrm{N}_L( Q)^{\mathrm{opp}}}(\Br_{\Delta Q}(C)) \Tr^{\mathrm{N}_L(Q)}_{\mathrm{N}_{L}(Q,c_Q)}(c_Q)$$
induces a derived equivalence between the blocks $k \mathrm{N}_{G}(Q) \Tr^{\mathrm{N}_G(Q)}_{\mathrm{N}_{G}(Q,b_Q)}(b_Q)$ and $k \mathrm{N}_L(Q) \Tr^{\mathrm{N}_L(Q)}_{\mathrm{N}_{L}(Q,c_Q)}(c_Q)$.
\end{proof}

%
If the defect group $D$ of $b$ is abelian then the quotient group $\mathrm{N}_G(D,b_D)/\C_G(D)$ is of $\ell'$-order, see \cite[Theorem 40.14]{Jacques}. In this case, the proof of Proposition \ref{normalizerderived} shows that the complex $\mathrm{Ind}_{\mathrm{N}_{G \times L^{\mathrm{opp}}}(\Delta D)}^{\mathrm{N}_G(D) \times \mathrm{N}_L( D)^{\mathrm{opp}}} \Br_{\Delta D}(C)$ induces in fact a Rickard equivalence between $k \mathrm{N}_{G}(D) \br_D(b)$ and $k \mathrm{N}_L(D) \br_D(c)$.

\subsection{The Brauer functor and Clifford theory}

In this section we recall some results of \cite[Section 3]{Marcus} and generalize them slightly. These results will be needed in Section \ref{RickardClifford}.

Whenever $G$ is a finite group and $Q$, $R$ are subgroups of $G$, then we let 
$$T_G(Q,R):= \{ g \in G \mid Q^g \subseteq R \}.$$
%
%


The following lemma is a variant of \cite[Lemma 3.7]{Marcus}. A complete proof can be found in \cite[Lemma 1.39]{PhD}

\begin{lemma}\label{BrauerMarcus}
Let $H$ be a subgroup of $G$ and $Q \subseteq P$ two $\ell$-subgroups of $H$. Suppose that $\mathrm{C}_G(Q) T_H(Q,P)= T_G(Q,P)$. Then for every relatively $P$-projective module $M \in k H \text{-} \mathrm{perm}$ there is a natural isomorphism 
$$ \mathrm{Ind}_{\mathrm{N}_H(Q)}^{\mathrm{N}_G(Q)}( \mathrm{Br}_Q^H(M)) \cong \Br_Q^G( \mathrm{Ind}_H^G(M))$$
of $k \mathrm{N}_G(Q)$-modules.
\end{lemma}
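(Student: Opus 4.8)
The plan is to compute both sides of the claimed isomorphism as $k\mathrm{N}_G(Q)$-modules by reducing to the case of permutation modules on transitive $G$-sets and then pushing through the combinatorics of double cosets. First I would note that since $\Br_Q^G$, $\Br_Q^H$, $\mathrm{Ind}_H^G$ and $\mathrm{Ind}_{\mathrm{N}_H(Q)}^{\mathrm{N}_G(Q)}$ are all additive functors, and since every relatively $P$-projective $\ell$-permutation $kH$-module $M$ is a direct summand of a permutation module $k[\Omega]$ whose point stabilizers contain a conjugate of $P$ (indeed one may take $\Omega = H/P' $ for suitable subgroups $P'\supseteq P^h$), it suffices to treat $M = k[H/P']$ for a subgroup $P'$ of $H$ containing a $H$-conjugate of $P$; after conjugating we may assume $P \subseteq P'$. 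Both functors commute with taking direct summands, so the natural transformation we construct on permutation modules restricts to the summand $M$.

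Next I would make the two sides explicit. On the one hand, $\mathrm{Ind}_H^G(k[H/P']) \cong k[G/P']$, so the right-hand side is $\Br_Q^G(k[G/P'])$. By the standard description of the Brauer construction on a permutation module (see \ref{thebrauerfunctor}), $\Br_Q^G(k[G/P'])$ has $k$-basis the $Q$-fixed points $(G/P')^Q$ that are not in the image of a relative trace from a proper subgroup of $Q$; a coset $gP'$ is $Q$-fixed iff $Q^g \subseteq P'$, i.e. $g \in T_G(Q,P')$, and such a fixed point survives in the Brauer quotient precisely when $Q$ is a defect group for the $\mathrm{N}_G(Q)$-orbit, which amounts to $g^{-1}Qg$ being exactly a Sylow-type maximal piece — more precisely, the surviving basis is indexed by $\mathrm{N}_G(Q)\backslash \{ gP' : Q^g \in \mathrm{Syl}? \}$; concretely, $\Br_Q^G(k[G/P']) \cong k[ \mathrm{N}_G(Q) \backslash T_G(Q,P') / \text{(stabilizer)} ]$ with the appropriate fixed-point set. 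On the other hand, $\Br_Q^H(k[H/P']) \cong k[(H/P')^{Q,\mathrm{surv}}]$ as a $k\mathrm{N}_H(Q)$-module, with analogous indexing by $T_H(Q,P')$, and then $\mathrm{Ind}_{\mathrm{N}_H(Q)}^{\mathrm{N}_G(Q)}$ of this is indexed by $\mathrm{N}_G(Q) \times_{\mathrm{N}_H(Q)} T_H(Q,P')$-data. The natural map $\mathrm{Ind}_{\mathrm{N}_H(Q)}^{\mathrm{N}_G(Q)}(\Br_Q^H(M)) \to \Br_Q^G(\mathrm{Ind}_H^G(M))$ is the obvious one sending $g \otimes (\text{class of } hP')$ to the class of $ghP'$; it is visibly well-defined and $k\mathrm{N}_G(Q)$-linear, and the content is that the hypothesis $\mathrm{C}_G(Q)\,T_H(Q,P) = T_G(Q,P)$ forces it to be a bijection on the indexing sets.

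The main obstacle — and the only place the hypothesis is used — is the surjectivity/injectivity of this comparison map on basis sets, i.e. showing that every surviving $Q$-fixed point of $G/P'$ is $\mathrm{N}_G(Q)$-conjugate to one coming from $H/P'$, and that two such are $\mathrm{N}_H(Q)$-conjugate iff their images are $\mathrm{N}_G(Q)$-conjugate. Given $g \in T_G(Q,P')$ whose class survives, one knows $Q^g$ is (up to $P'$-conjugacy) a "maximal" $\ell$-subgroup in the relevant sense; since $P' \supseteq P \supseteq Q^{?}$ one can arrange $Q^g \subseteq P$, i.e. $g \in T_G(Q,P)$. The hypothesis then writes $g = c\,h$ with $c \in \mathrm{C}_G(Q)$ and $h \in T_H(Q,P) \subseteq T_H(Q,P') \subseteq H$, and since $c \in \mathrm{C}_G(Q) \subseteq \mathrm{N}_G(Q)$ we get $gP' = c\cdot hP'$ with $hP'$ in the image of $H/P'$ — giving surjectivity. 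Injectivity is the same computation run backwards, using that $\mathrm{C}_G(Q) \subseteq \mathrm{N}_G(Q)$ acts trivially on $Q$ so does not create extra identifications beyond those already visible over $H$. Finally I would check that the isomorphism is natural: any morphism $M_1 \to M_2$ of relatively $P$-projective permutation $kH$-modules is built from $H$-maps between the $H/P'$, and the comparison map is functorial in the $H$-set, so naturality is automatic. Assembling these pieces gives the claimed natural isomorphism; for the detailed bookkeeping of double cosets one may simply cite \cite[Lemma 1.39]{PhD} as the excerpt does.
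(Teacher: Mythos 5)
Your overall strategy --- reduce to transitive permutation modules, identify both sides as permutation modules on explicit $\mathrm{N}_G(Q)$-sets, and compare the indexing sets using the hypothesis $\mathrm{C}_G(Q)\,T_H(Q,P)=T_G(Q,P)$ --- is the right one and is the argument behind the cited references. But your reduction step contains a genuine error that the rest of the argument leans on. A relatively $P$-projective $\ell$-permutation $kH$-module $M$ is a direct summand of $\mathrm{Ind}_P^H\mathrm{Res}_P^H M$, and since $P$ is an $\ell$-group, $\mathrm{Res}_P^H M$ is an honest permutation module $\bigoplus_i k[P/R_i]$ with $R_i\subseteq P$; hence $M$ is a summand of $\bigoplus_i k[H/R_i]$ with point stabilizers \emph{contained in} ($H$-conjugates of) $P$ --- not, as you assert, of modules $k[H/P']$ with $P'\supseteq P^h$. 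With your reversed containment the key implication you need, namely that a $Q$-fixed coset $gP'$ gives $g\in T_G(Q,P)$ so that the hypothesis applies, simply fails: $Q^g\subseteq P'$ does not yield $Q^g\subseteq P$. The device you invoke to repair this (``surviving'' fixed points, ``$Q$ a defect group of the orbit'', ``Sylow-type maximal piece'') is not available, because for an honest permutation module one has $\mathrm{Br}_Q(k[\Omega])\cong k[\Omega^Q]$ with \emph{all} $Q$-fixed points surviving: the relative traces $\mathrm{Tr}_S^Q$ from proper subgroups $S<Q$ annihilate fixed points (the multiplier $[Q:S]$ vanishes in $k$) and otherwise produce sums over nontrivial $Q$-orbits, so no maximality condition can be extracted from the Brauer construction.

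Once the reduction is done in the correct direction, the rest of what you wrote works, and more cleanly than you suggest. For $M=k[H/R]$ with $R\subseteq P$ an $\ell$-subgroup, a coset $gR$ is $Q$-fixed if and only if $Q^g\subseteq R\subseteq P$, so $g\in T_G(Q,P)$ automatically; the hypothesis gives $g=ch$ with $c\in\mathrm{C}_G(Q)$ and $h\in T_H(Q,P)$, and since $c$ centralizes $Q$ one gets $Q^h=Q^{ch}=Q^g\subseteq R$, i.e.\ $h\in T_H(Q,R)$ and $gR=c\cdot(hR)$ lies in the image of the comparison map $n\otimes hR\mapsto nhR$: this is surjectivity, and it is the only place the hypothesis enters. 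Injectivity needs no hypothesis at all: if $n_1h_1R=n_2h_2R$ with $n_i\in\mathrm{N}_G(Q)$ and $h_i\in T_H(Q,R)$, then $n_2^{-1}n_1=h_2rh_1^{-1}\in H\cap\mathrm{N}_G(Q)=\mathrm{N}_H(Q)$ for some $r\in R$, which already identifies the two basis elements $n_1\otimes h_1R$ and $n_2\otimes h_2R$ in the induced module. Naturality and the passage to direct summands are then as you say.
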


The following remark is a variant of \cite[Corollary 3.9]{Marcus}.

\begin{remark}\label{Brauercomm}
As in Section \ref{Marcus} we let $\tilde{L}$ be a subgroup of a finite group $\tilde{G}$ and $G$ be a normal subgroup of $\tilde{G}$. We set $L := G \cap \tilde{L}$ and we assume additionally that $ \tilde{G}= \tilde{L} G$.
Let $Q$ be an $\ell$-subgroup of $\tilde{L}$. In the following diagram, $\mathrm{Ind}$ and $\mathrm{Res}$ mean induction and restriction with respect to the subgroups of $\tilde{G} \times \tilde{L}^{\mathrm{opp}}$ involved.

\ \\
\ \\
\ \\

\hspace{6.5cm}
\begin{tikzpicture}[thick, scale = 0.8, transform canvas={scale=0.8}]

  \matrix (m) [matrix of math nodes,row sep=3em,column sep=2em,minimum width=3em] {
  k [ \tilde{G} \times \tilde{L}^{\mathrm{opp}} ]\text{-} \mathrm{perm} & k[ \mathrm{N}_{ \tilde{G} \times \tilde{L}^{\mathrm{opp}}} (\Delta Q)]\text{-} \mathrm{perm} & k[\mathrm{N}_{\tilde{G}}(Q) \times \mathrm{N}_{\tilde{L}}(Q)^{\mathrm{opp}} ]\text{-} \mathrm{perm} \\
  k[ G \times L^{\mathrm{opp}} \Delta(\tilde{L}) ]\text{-} \mathrm{perm} & k[\mathrm{N}_{G \times L^{\mathrm{opp}} \Delta(\tilde{L})}(\Delta Q)]\text{-} \mathrm{perm} & k[\mathrm{N}_{G}(Q) \times \mathrm{N}_{L}(Q)^{\mathrm{opp}} \Delta( \mathrm{N}_{\tilde{L}}(Q)) ] \text{-} \mathrm{perm} \\
  k[ G \times L^{\mathrm{opp}} ]\text{-} \mathrm{perm}  & k[\mathrm{N}_{G \times L^{\mathrm{opp}}}(\Delta Q)]\text{-} \mathrm{perm} & k[\mathrm{N}_{G}(Q) \times \mathrm{N}_L(Q)^{\mathrm{opp}}]\text{-} \mathrm{perm} \\};
\path[-stealth]
(m-1-1) edge node [above] {$\mathrm{Br}_{\Delta(Q)}$} (m-1-2)
(m-1-2) edge node [above] {$\mathrm{Ind}$} (m-1-3)

(m-2-1) edge node [above] {$\mathrm{Br}_{\Delta(Q)}$} (m-2-2)
(m-2-2) edge node [above] {$\mathrm{Ind}$} (m-2-3)

(m-2-1) edge node [left] {$\mathrm{Ind}$}  (m-1-1)
(m-2-2) edge node [left] {$\mathrm{Ind}$}  (m-1-2)
(m-2-3) edge node [left] {$\mathrm{Ind}$}  (m-1-3)

(m-3-1) edge node [above] {$\mathrm{Br}_{\Delta(Q)}$} (m-3-2)
(m-3-2) edge node [above] {$\mathrm{Ind}$} (m-3-3)

(m-2-1) edge node [left] {$\mathrm{Res}$} (m-3-1)
(m-2-2) edge node [left] {$\mathrm{Res}$} (m-3-2)
(m-2-3) edge node [left] {$\mathrm{Res}$} (m-3-3);

\end{tikzpicture}

\ \\
\ \\

\ \\

We claim that the upper left square commutes for all relatively $\Delta \tilde{L}$-projective $\ell$-permutation $k[ G \times L^{\mathrm{opp}} \Delta(\tilde{L})]$-modules. In view of Lemma \ref{BrauerMarcus} it is sufficient to show that
$$\mathrm{C}_{\tilde{G} \times \tilde{L}^{\mathrm{opp}}}(\Delta Q) \, T_{G \times L^{\mathrm{opp}} \Delta \tilde{L}}(\Delta Q,\Delta R) = T_{\tilde{G} \times \tilde{L}^{\mathrm{opp}}}(\Delta Q, \Delta R)$$
for all $\ell$-subgroups $R$ of $\tilde{L}$ containing $Q$. This is proved as in  \cite[Corollary 3.9]{Marcus}. The upper right and the bottom left square are clearly commutative. Moreover, the commutativity of the bottom right square is a consequence of Mackey's formula.

\end{remark}

\subsection{The Harris--Knörr correspondence}

In this section we recall the notion of block induction as given in \cite[Theorem 4.14]{NavarroBook}. This will allow us to give a nice formulation of the important Harris--Knörr correspondence.
%


\begin{definition}
Suppose that $H$ is a subgroup of $G$ and $b$ is a block idempotent of $G$. Furthermore, assume that there exists an $\ell$-subgroup $P$ of $G$ such that $P \mathrm{C}_G(P) \subseteq H \subseteq \mathrm{N}_G(P)$. Then we say that the block idempotent $c\in \mathrm{Z}(\mathrm{O} H)$ \textit{induces to} $b$ if $\mathrm{br}_P(b) c \neq 0$. In this case we write $b=c^G$.
\end{definition}

Note that the definition of block induction in \cite[page 87]{NavarroBook} is more general. However, we will not need this general definition and have therefore decided to use the characterisation of block induction in \cite[Theorem 4.14]{NavarroBook} as a definition. Recall that for a subgroup $Q$ of the defect group $D$ of $b$ we denote $B_Q:= \mathrm{Tr}^{\mathrm{N}_G(Q)}_{\mathrm{N}_G(Q,b_Q)}(b_Q)$, which is a block of $\mathrm{N}_G(Q)$.

\begin{theorem}[Harris--Kn\"orr]\label{HarrisKnorr}
Let $G$ be a normal subgroup of a finite group $\tilde{G}$. Let $b$ be a block of $G$ with defect group $D$ and denote by $B_D$ its Brauer correspondent in $k \mathrm{N}_G(D)$. Then the map 
$$\mathrm{Bl}(\mathrm{N}_{\tilde{G}}(D) \mid B_D) \to \mathrm{Bl}(\tilde{G} \mid b), \, c \mapsto c^{\tilde{G}}$$
is a bijection.
\end{theorem}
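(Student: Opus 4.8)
This is the classical Harris--Kn\"orr theorem; the plan is to deduce it from Brauer's first main theorem (recorded just above) together with the Clifford theory of blocks, in two stages: reduce to the case where $b$ is $\tilde{G}$-invariant by Fong--Reynolds, and then settle that case by comparing Brauer pairs at $D$. As a preliminary, note that $c^{\tilde{G}}$ is defined for \emph{every} block $c$ of $\mathrm{N}_{\tilde{G}}(D)$: with $P=D$ one has $D\,\mathrm{C}_{\tilde{G}}(D)\subseteq\mathrm{N}_{\tilde{G}}(D)$, and the idempotents $\mathrm{br}_D(\tilde{b})$, $\tilde{b}\in\mathrm{Bl}(\tilde{G})$, are pairwise orthogonal, lie in $k\mathrm{C}_{\tilde{G}}(D)\subseteq k\mathrm{N}_{\tilde{G}}(D)$, are $\mathrm{N}_{\tilde{G}}(D)$-stable (hence central in $k\mathrm{N}_{\tilde{G}}(D)$) and sum to $1$, so exactly one of them satisfies $\mathrm{br}_D(\tilde{b})c=c$. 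The same holds for $\mathrm{N}_G(D)\subseteq G$, and $B_D^{\,G}=b$ is exactly Brauer's first main theorem.

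For the reduction, let $T$ be the stabiliser of $b$ in $\tilde{G}$, so $G\trianglelefteq T$. Since the first-main-theorem bijection $\mathrm{br}_D\colon\mathrm{Bl}(G\mid D)\to\mathrm{Bl}(\mathrm{N}_G(D)\mid D)$ is $\mathrm{N}_{\tilde{G}}(D)$-equivariant, the stabiliser of $B_D$ in $\mathrm{N}_{\tilde{G}}(D)$ equals $\mathrm{N}_T(D)=T\cap\mathrm{N}_{\tilde{G}}(D)$. Fong--Reynolds then provides defect-preserving bijections $\mathrm{Bl}(T\mid b)\xrightarrow{\ \sim\ }\mathrm{Bl}(\tilde{G}\mid b)$ and $\mathrm{Bl}(\mathrm{N}_T(D)\mid B_D)\xrightarrow{\ \sim\ }\mathrm{Bl}(\mathrm{N}_{\tilde{G}}(D)\mid B_D)$; using the compatibility of the Fong--Reynolds correspondence with block induction (see e.g.\ \cite{NavarroBook}), these two bijections together with the Harris--Kn\"orr maps for $(G,T)$ and for $(G,\tilde{G})$ form a commuting square. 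Hence it suffices to treat the case $T=\tilde{G}$, i.e.\ $b$ invariant in $\tilde{G}$.

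So assume $b\in\mathrm{Z}(k\tilde{G})$; by equivariance of $\mathrm{br}_D$ again, $B_D\in\mathrm{Z}(k\mathrm{N}_{\tilde{G}}(D))$, and both sides of the statement become sets of blocks lying below a fixed central idempotent. Fix a $b$-Brauer pair $(D,b_D)$, so $b_D$ is a block of $\mathrm{C}_G(D)$ and $B_D=\mathrm{Tr}^{\mathrm{N}_G(D)}_{\mathrm{N}_G(D,b_D)}(b_D)$. For $\tilde{b}\in\mathrm{Bl}(\tilde{G}\mid b)$ a defect group $\tilde{D}$ intersects $G$ in a $G$-conjugate of $D$, and after conjugating $D\le\tilde{D}\le\mathrm{N}_{\tilde{G}}(D)$, so $\mathrm{br}_D(\tilde{b})\neq 0$; I would decompose $\mathrm{br}_D(\tilde{b})=\sum_e e$ into blocks of $\mathrm{C}_{\tilde{G}}(D)$ and show, from the behaviour of Brauer pairs under restriction to the normal subgroup $\mathrm{C}_G(D)\trianglelefteq\mathrm{C}_{\tilde{G}}(D)$, that the $e$ occurring are exactly those covering a $\mathrm{C}_{\tilde{G}}(D)$-conjugate of $b_D$. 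Unwinding the description of $c^{\tilde{G}}$, the condition $c^{\tilde{G}}=\tilde{b}$ then says $c$ covers one such $e$, whereas $cB_D=c$ says $c$ covers an $\mathrm{N}_{\tilde{G}}(D)$-conjugate of the block of $\mathrm{N}_G(D)$ over $b_D$ occurring in $B_D$; chasing these along $\mathrm{C}_G(D)\trianglelefteq\mathrm{N}_G(D)\trianglelefteq\mathrm{N}_{\tilde{G}}(D)$ and $\mathrm{C}_{\tilde{G}}(D)\trianglelefteq\mathrm{N}_{\tilde{G}}(D)$ one sees the two conditions on $c$ coincide. This yields both that the map lands in $\mathrm{Bl}(\tilde{G}\mid b)$ and that it is surjective; injectivity is the first-main-theorem argument applied to blocks all lying over $b_D$. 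The hard part is precisely this dictionary --- controlling how block covering interacts with the Brauer construction through the above chain of normal subgroups; everything else is Brauer's first main theorem plus routine Fong--Reynolds bookkeeping.
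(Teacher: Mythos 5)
The paper offers no argument for this statement at all --- its proof is the single line ``See \cite[Theorem 9.28]{NavarroBook}'' --- so there is nothing in the paper to compare your attempt against step by step; your sketch has to be measured against the standard proofs in the literature. Against that benchmark, your preliminaries are fine: $c^{\tilde G}$ is indeed defined for every block $c$ of $\mathrm{N}_{\tilde G}(D)$ because $D\mathrm{C}_{\tilde G}(D)\subseteq\mathrm{N}_{\tilde G}(D)$ and the $\mathrm{br}_D(\tilde b)$ are orthogonal central idempotents summing to $1$, and the Fong--Reynolds reduction to a $\tilde G$-invariant $b$ (using the $\mathrm{N}_{\tilde G}(D)$-equivariance of $\mathrm{br}_D$ to identify the two inertia groups, together with transitivity of block induction) is the standard first move and is correctly set up.

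The gap is in the invariant case, which is where all the content of Harris--Kn\"orr lives and where your text is a programme rather than a proof. The part that your Brauer-pair machinery is aimed at is actually the easy direction: if $cB_D=c$ then, since $B_D\,\mathrm{br}_D(b)=B_D$ by the first main theorem, $c\,\mathrm{br}_D(b)=c$ and hence $c^{\tilde G}$ covers $b$. What remains is exactly what you defer. Surjectivity amounts to showing $B_D\,\mathrm{br}_D(\tilde b)\neq 0$ for \emph{every} block $\tilde b$ of $\tilde G$ covering $b$; this already requires Kn\"orr's theorem that $\tilde b$ has a defect group meeting $G$ in a defect group of $b$ (which you invoke only in passing), and then a genuine argument that among the blocks of $\mathrm{C}_{\tilde G}(D)$ occurring in $\mathrm{br}_D(\tilde b)$ there is one lying over a conjugate of $b_D$. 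Injectivity is not ``the first-main-theorem argument'': a priori two distinct blocks of $\mathrm{N}_{\tilde G}(D)$ covering $B_D$ could induce to the same block of $\tilde G$, and ruling this out needs a further identification of both sides with blocks over $b_D$ of a group such as $\mathrm{C}_{\tilde G}(D)\,\mathrm{N}_{\tilde G}(D,b_D)$ (an extended-first-main-theorem or Fong--Reynolds step at the level of the Brauer pair $(D,b_D)$), or some equivalent counting device. Your sentence ``one sees the two conditions on $c$ coincide'' is precisely the theorem; as written, the decisive steps are asserted rather than proved.
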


\begin{proof}
See \cite[Theorem 9.28]{NavarroBook}.
\end{proof}

%

If $Q$ is a characteristic subgroup of the defect group $D$ of $b$ we have $\mathrm{N}_G(D)\subseteq \mathrm{N}_G(Q)$. Brauer correspondence therefore yields a bijection 
$$\mathrm{br}_D:\mathrm{Bl}(\mathrm{N}_G(Q) \mid D) \to \mathrm{Bl}(\mathrm{N}_G(D) \mid D).$$
After having established this notation we can now state the following lemma:

\begin{lemma}\label{Brauercorrespondence}
Let $Q$ be a characteristic subgroup of $D$. Then $B_Q \in \mathrm{Bl}(\mathrm{N}_G(Q))$ is the Brauer correspondent of $B_D \in  \mathrm{Bl}(\mathrm{N}_G(D))$.
\end{lemma}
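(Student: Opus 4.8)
The plan is to unravel the definitions and reduce everything to the transitivity of Brauer correspondence. Recall that $Q$ is a characteristic subgroup of $D$, so every element of $\mathrm{N}_G(D)$ normalizes $Q$, giving the chain $\mathrm{N}_G(D) \subseteq \mathrm{N}_{\mathrm{N}_G(Q)}(D) = \mathrm{N}_G(D)$; in particular $D$ is a defect group of the block $B_Q$ of $\mathrm{N}_G(Q)$ (this follows from Brauer's first main theorem applied inside $\mathrm{N}_G(Q)$, using that $B_Q = \mathrm{Tr}^{\mathrm{N}_G(Q)}_{\mathrm{N}_G(Q,b_Q)}(b_Q) = \mathrm{br}_Q(b)$ has defect group $D$). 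So the statement $B_Q \in \mathrm{Bl}(\mathrm{N}_G(Q) \mid D)$ makes sense, and $\mathrm{br}_D$ applied to $B_Q$ is a well-defined block of $\mathrm{N}_G(D)$ with defect group $D$ by the bijection displayed just before the lemma.

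First I would fix a maximal $b$-Brauer pair $(D, b_D)$. By the theory of Brauer pairs inside $\mathrm{N}_G(Q)$, a maximal $B_Q$-Brauer pair is obtained by taking $(D, b_D)$ itself: indeed, since $Q \leq D$ and $(Q, b_Q) \leq (D, b_D)$, the idempotent $b_D$ is a block of $\mathrm{C}_{\mathrm{N}_G(Q)}(D) = \mathrm{C}_G(D)$ and $(D, b_D)$ is a $\mathrm{br}_Q(b)$-Brauer pair, hence a $B_Q$-Brauer pair; by a defect count it is maximal. Then, unwinding the definition of the Brauer correspondent inside $\mathrm{N}_G(Q)$, the block $\mathrm{br}_D(B_Q)$ of $\mathrm{N}_{\mathrm{N}_G(Q)}(D) = \mathrm{N}_G(D)$ is exactly $\mathrm{Tr}^{\mathrm{N}_G(D)}_{\mathrm{N}_G(D, b_D)}(b_D)$. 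But by the discussion in Section \ref{BrauerBrauer} this last expression is precisely $B_D$, the Brauer correspondent of $b$ in $\mathrm{N}_G(D)$. Hence $\mathrm{br}_D(B_Q) = B_D$, which is the claim.

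An alternative, perhaps cleaner, route is through block induction and transitivity: since $B_D$ has defect group $D$, it is a block of $\mathrm{N}_G(D)$ with $B_D^{\mathrm{N}_G(Q)}$ defined (as $D\,\mathrm{C}_G(D) \subseteq \mathrm{N}_G(D) \subseteq \mathrm{N}_G(Q) \subseteq \mathrm{N}_G(D)$... wait, one needs $\mathrm{N}_G(D) \subseteq \mathrm{N}_{\mathrm{N}_G(Q)}(D)$, which holds). One checks $B_D^{\mathrm{N}_G(Q)} = B_Q$ directly from the definition $\mathrm{br}_D(B_Q)\, B_D \neq 0$, equivalently that $\mathrm{br}_D(B_Q) = B_D$, so this is the same computation. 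I would present the first route.

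The main obstacle is bookkeeping: making sure that the various normalizer groups coincide (here the hypothesis that $Q$ is \emph{characteristic} in $D$ is exactly what forces $\mathrm{N}_G(D) \subseteq \mathrm{N}_G(Q)$ and $\mathrm{N}_{\mathrm{N}_G(Q)}(D) = \mathrm{N}_G(D)$), and verifying that $(D, b_D)$ is simultaneously a maximal $b$-Brauer pair in $G$, a maximal $B_Q$-Brauer pair in $\mathrm{N}_G(Q)$, and compatible with the block $b_Q$ of $\mathrm{N}_G(Q, b_Q)$. None of this is deep, but the identification $\mathrm{br}_D(B_Q) = \mathrm{Tr}^{\mathrm{N}_G(D)}_{\mathrm{N}_G(D,b_D)}(b_D) = B_D$ should be spelled out using the formula $\mathrm{br}_D(b) = \mathrm{Tr}^{\mathrm{N}_G(D)}_{\mathrm{N}_G(D,b_D)}(b_D)$ recalled in Section \ref{BrauerBrauer}, applied once in $G$ and once in $\mathrm{N}_G(Q)$.
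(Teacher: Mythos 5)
Your overall strategy is sound and genuinely different from the paper's: you realize $(D,b_D)$ as a maximal $B_Q$-Brauer pair inside $\mathrm{N}_G(Q)$ and then apply the formula $\mathrm{br}_D(\text{block})=\mathrm{Tr}^{\mathrm{N}(D)}_{\mathrm{N}(D,b_D)}(b_D)$ twice (once in $G$, once in $\mathrm{N}_G(Q)$), using $Q$ char $D$ to identify $\mathrm{N}_{\mathrm{N}_G(Q)}(D)=\mathrm{N}_G(D)$ and $\mathrm{C}_{\mathrm{N}_G(Q)}(D)=\mathrm{C}_G(D)$. The paper instead argues by hand: it writes $B_Q$ as a sum over $D$-orbits of the $\mathrm{N}_G(Q)$-conjugates of $b_Q$, shows the orbits of length $>1$ are killed by $\mathrm{br}_D$, shows the $D$-stable conjugates map to $\mathrm{N}_G(D)$-conjugates of $b_D$, and concludes from the primitivity of $B_D$ together with the orthogonal decomposition $\mathrm{br}_D(b)=\mathrm{br}_D(B_Q)+\mathrm{br}_D(C)$. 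Your route is cleaner once its inputs are secured, but as written it has two concrete gaps.

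First, the identity $B_Q=\mathrm{Tr}^{\mathrm{N}_G(Q)}_{\mathrm{N}_G(Q,b_Q)}(b_Q)=\mathrm{br}_Q(b)$ is false in general: $\mathrm{br}_Q(b)$ is the sum of \emph{all} blocks $e$ of $k\mathrm{C}_G(Q)$ with $(Q,e)$ a $b$-Brauer pair, and these need not form a single $\mathrm{N}_G(Q)$-orbit; the paper's own proof explicitly writes $\mathrm{br}_Q(b)=B_Q+C$ with $C$ possibly nonzero. Second, and relatedly, the step ``$(D,b_D)$ is a $\mathrm{br}_Q(b)$-Brauer pair, hence a $B_Q$-Brauer pair'' is exactly where the content of the lemma lives: knowing $\mathrm{br}_D(\mathrm{br}_Q(b))\,b_D\neq 0$ only tells you that $b_D$ pairs with $B_Q$ \emph{or} with some block occurring in $C$, and you give no reason it is the former. (Your ``hence'' is only obvious under the false identification $B_Q=\mathrm{br}_Q(b)$, so the two errors are linked.) The repair is the paper's first line: since $(Q,b_Q)\unlhd(D,b_D)$, one has $D\subseteq\mathrm{N}_G(Q,b_Q)$ and $\mathrm{br}_D(b_Q)=b_D$ by \cite[Theorem 40.4(b)]{Jacques}; as the distinct conjugates ${}^t b_Q$ occurring in $B_Q$ are pairwise orthogonal idempotents, so are their images under $\mathrm{br}_D$, whence $b_D\,\mathrm{br}_D(B_Q)=b_D\,\mathrm{br}_D(b_Q)=b_D\neq 0$. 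You should also give a correct reference for $D$ being a defect group of $B_Q$ (needed for maximality of $(D,b_D)$ and for $\mathrm{br}_D$ to be the bijection of Brauer's first main theorem on $\mathrm{Bl}(\mathrm{N}_G(Q)\mid D)$), since your current justification rests on the same false identity. With these two points inserted, your argument goes through.
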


\begin{proof}
By \cite[Theorem 40.4(b)]{Jacques} we have $\mathrm{br}_D(b_Q) =b_D$. Since $D \subseteq \mathrm{N}_G(Q)$ we can write $B_Q \in \mathrm{Z}(k\mathrm{C}_G(Q))$ as a sum $B_Q= \sum_{i=1}^s c_i$ of block idempotents of $k\mathrm{C}_G(Q)$. Note that each $c_i$ is a sum of idempotents which constitute a $D$-orbit on $\{{}^t b_Q \mid t \in \mathrm{N}_G(Q) \}$.

Assume first that $c_i$ comes from a $D$-orbit of length greater than $1$. Let $ t\in \mathrm{N}_G(Q)$ with ${}^t b_Q c_i \neq 0$. Then the block $c_i$ covers ${}^t b_Q$ and it follows that any defect group of $c_i$ is contained in $\mathrm{N}_G(Q,{}^t b_Q )$. Since ${}^t b_Q $ is not $D$-stable it follows that $D$ is not contained in $\mathrm{N}_G(Q,{}^t b_Q)$. Thus $D$ is not contained in a defect group of $c_i$. This implies that $\mathrm{br}_D(c_i)=0$.

On the other hand, if $c_i={}^t b_Q$ for some $t \in \mathrm{N}_G(Q)$ it follows that ${}^t b_Q$ is $D$-stable. Assume that $\mathrm{br}_D({}^t b_Q) \neq 0$. Then we have ${}^t (Q,b_Q) \unlhd (D,b_D')$ for some maximal $b$-subpair $(D,b_D')$. Since also ${}^t (Q,b_Q) \unlhd {}^t (D,b_D)$ it follows by \cite[Proposition 40.15(b)]{Jacques} that there exists some $x \in \mathrm{N}_G(Q,b_Q)$ such that $tx \in \mathrm{N}_G(D)$. From this we conclude that $$\br_D({}^t b_Q)= \br_D({}^{tx} b_Q)={}^{tx} \br_D(b_Q)={}^{tx} b_D.$$
These calculations show that $\mathrm{br}_D(B_Q) B_D=B_D$. On the other hand $B_Q$ is an idempotent occurring in $\mathrm{br}_Q(b)$ and we have $\mathrm{br}_D(\mathrm{br}_Q(b))=B_D$. Writing $\mathrm{br}_Q(b)=B_Q + C$ we obtain $B_D= \br_D(B_Q) + \br_D(C)$, a sum of orthogonal idempotents. Now observe that $B_D$ is a primitive central idempotent of $\mathrm{N}_{G}(D)$ and $\mathrm{br}_D(B_Q) B_D=B_D$. Therefore, $B_D= \mathrm{br}_D(B_Q)$.
\end{proof}

We obtain a version of the Harris--Knörr theorem for characteristic subgroups of defect groups. 

\begin{corollary}\label{HKCoro}
With the notation of Theorem \ref{HarrisKnorr} assume that $Q$ is a characteristic subgroup of $D$. Let $(Q,b_Q)$ be a $b$-Brauer pair with $(Q,b_Q) \leq (D,b_D)$. Then block induction yields a bijection
$$ \mathrm{Bl}(\mathrm{N}_{\tilde{G}}(Q) \mid B_Q) \to \mathrm{Bl}(\tilde{G} \mid b), c \mapsto c^{\tilde{G}}.$$
\end{corollary}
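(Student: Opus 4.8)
The plan is to deduce Corollary \ref{HKCoro} by combining Theorem \ref{HarrisKnorr} applied to the bigger group $\mathrm{N}_G(Q)$ with the block-theoretic transitivity of induction. First I would observe that since $Q$ is characteristic in $D$, we have $D \subseteq \mathrm{N}_G(D) \subseteq \mathrm{N}_G(Q)$, and $D$ is a defect group of the block $B_Q$ of $\mathrm{N}_G(Q)$; moreover $\mathrm{N}_G(Q)$ is normal in $\mathrm{N}_{\tilde G}(Q)$ because $G \unlhd \tilde G$. Thus the Harris--Kn\"orr theorem (Theorem \ref{HarrisKnorr}) applies with the normal subgroup inclusion $\mathrm{N}_G(Q) \unlhd \mathrm{N}_{\tilde G}(Q)$ and the block $B_Q$ with defect group $D$: it yields a bijection $\mathrm{Bl}(\mathrm{N}_{\mathrm{N}_{\tilde G}(Q)}(D)\mid (B_Q)_D) \to \mathrm{Bl}(\mathrm{N}_{\tilde G}(Q)\mid B_Q)$ given by induction. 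Here I need to identify the Brauer correspondent $(B_Q)_D$ of $B_Q$ in $k\mathrm{N}_{\mathrm{N}_G(Q)}(D) = k\mathrm{N}_G(D)$; by Lemma \ref{Brauercorrespondence} this is precisely $B_D$, and $\mathrm{N}_{\mathrm{N}_{\tilde G}(Q)}(D) = \mathrm{N}_{\tilde G}(D)$ since $Q$ is characteristic in $D$. So the Harris--Kn\"orr bijection for the pair $(\mathrm{N}_G(Q), \mathrm{N}_{\tilde G}(Q))$ reads $\mathrm{Bl}(\mathrm{N}_{\tilde G}(D)\mid B_D) \to \mathrm{Bl}(\mathrm{N}_{\tilde G}(Q)\mid B_Q)$.

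Next I would compose this with the Harris--Kn\"orr bijection of Theorem \ref{HarrisKnorr} itself, namely $\mathrm{Bl}(\mathrm{N}_{\tilde G}(D)\mid B_D) \to \mathrm{Bl}(\tilde G \mid b)$, inverting the former: composing $c \mapsto c^{\mathrm{N}_{\tilde G}(Q)}$ with the inverse of $c \mapsto c^{\mathrm{N}_{\tilde G}(Q)}$ is not directly what we want, so instead I take the bijection $\mathrm{Bl}(\mathrm{N}_{\tilde G}(Q)\mid B_Q) \xrightarrow{\ \sim\ } \mathrm{Bl}(\mathrm{N}_{\tilde G}(D)\mid B_D) \xrightarrow{\ \sim\ } \mathrm{Bl}(\tilde G\mid b)$ where the first arrow is the inverse of the Harris--Kn\"orr map just constructed and the second is Theorem \ref{HarrisKnorr}. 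To see that this composite is itself given by block induction $c\mapsto c^{\tilde G}$, I would invoke transitivity of block induction: if $c \in \mathrm{Bl}(\mathrm{N}_{\tilde G}(Q)\mid B_Q)$ corresponds to $c_D \in \mathrm{Bl}(\mathrm{N}_{\tilde G}(D)\mid B_D)$ via Harris--Kn\"orr, then $c_D^{\mathrm{N}_{\tilde G}(Q)} = c$, and since block induction is transitive along $\mathrm{N}_{\tilde G}(D) \subseteq \mathrm{N}_{\tilde G}(Q) \subseteq \tilde G$ (each inclusion containing the relevant $P\mathrm{C}(P)$-to-$\mathrm{N}(P)$ sandwich, with $P = D$ for the first step and $P=Q$ — or again $D$ — for the second, using that $Q$ and $D$ are characteristic in $D$), we get $c^{\tilde G} = (c_D^{\mathrm{N}_{\tilde G}(Q)})^{\tilde G} = c_D^{\tilde G}$, which by Theorem \ref{HarrisKnorr} lies in $\mathrm{Bl}(\tilde G\mid b)$. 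Hence $c \mapsto c^{\tilde G}$ agrees with the composite of the two bijections and is therefore itself a bijection.

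The main obstacle I anticipate is verifying the compatibility and transitivity of block induction across the three groups $\mathrm{N}_{\tilde G}(D) \subseteq \mathrm{N}_{\tilde G}(Q) \subseteq \tilde G$ with the particular (restrictive) definition of block induction adopted in the excerpt, which requires an $\ell$-subgroup $P$ with $P\mathrm{C}_{\tilde G}(P) \subseteq H \subseteq \mathrm{N}_{\tilde G}(P)$. For the step $\mathrm{N}_{\tilde G}(D) \subseteq \mathrm{N}_{\tilde G}(Q)$ one takes $P = D$: indeed $D\mathrm{C}_{\mathrm{N}_{\tilde G}(Q)}(D) \subseteq \mathrm{N}_{\tilde G}(D) = \mathrm{N}_{\mathrm{N}_{\tilde G}(Q)}(D)$, which is fine. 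For the step $\mathrm{N}_{\tilde G}(Q) \subseteq \tilde G$ one would like $P = Q$, and one needs $Q\mathrm{C}_{\tilde G}(Q) \subseteq \mathrm{N}_{\tilde G}(Q)$, which holds trivially; the subtlety is that to chain the two inductions through the intermediate group one must check the hypotheses of the transitivity statement for block induction — this is where one invokes \cite[Theorem 4.14]{NavarroBook} (or the transitivity results in \cite{NavarroBook}) and the fact, used already in Lemma \ref{Brauercorrespondence}, that $\mathrm{br}_D(b_Q) = b_D$ and $\mathrm{br}_D(B_Q) = B_D$, which guarantees the non-vanishing conditions $\mathrm{br}_D(b)c_D \neq 0$ propagate correctly. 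Once these bookkeeping points are settled the corollary follows immediately; I would write the proof in three or four lines citing Theorem \ref{HarrisKnorr}, Lemma \ref{Brauercorrespondence}, and transitivity of block induction.

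\begin{proof}
Since $Q$ is characteristic in $D$ we have $D \subseteq \mathrm{N}_G(D) \subseteq \mathrm{N}_G(Q)$, and $D$ is a defect group of the block $B_Q$ of $\mathrm{N}_G(Q)$. The Brauer correspondent of $B_Q$ in $k\mathrm{N}_{\mathrm{N}_G(Q)}(D) = k\mathrm{N}_G(D)$ is $B_D$ by Lemma \ref{Brauercorrespondence}. As $G \unlhd \tilde G$ implies $\mathrm{N}_G(Q) \unlhd \mathrm{N}_{\tilde G}(Q)$, Theorem \ref{HarrisKnorr} applied to this normal inclusion and the block $B_Q$ gives a bijection
$$\mathrm{Bl}(\mathrm{N}_{\tilde G}(D) \mid B_D) \to \mathrm{Bl}(\mathrm{N}_{\tilde G}(Q) \mid B_Q), \quad c \mapsto c^{\mathrm{N}_{\tilde G}(Q)},$$
using that $\mathrm{N}_{\mathrm{N}_{\tilde G}(Q)}(D) = \mathrm{N}_{\tilde G}(D)$ because $Q$ is characteristic in $D$. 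Theorem \ref{HarrisKnorr} for the inclusion $G \unlhd \tilde G$ itself gives a bijection $\mathrm{Bl}(\mathrm{N}_{\tilde G}(D)\mid B_D) \to \mathrm{Bl}(\tilde G\mid b)$, $c \mapsto c^{\tilde G}$. Composing the inverse of the first bijection with the second, and using transitivity of block induction along $\mathrm{N}_{\tilde G}(D) \subseteq \mathrm{N}_{\tilde G}(Q) \subseteq \tilde G$ (with $P = D$ for the first step and $P = Q$ for the second, both hypotheses $P\mathrm{C}_{\tilde G}(P) \subseteq H \subseteq \mathrm{N}_{\tilde G}(P)$ being satisfied), we obtain that the map $\mathrm{Bl}(\mathrm{N}_{\tilde G}(Q)\mid B_Q) \to \mathrm{Bl}(\tilde G\mid b)$, $c \mapsto c^{\tilde G}$, is a bijection.
\end{proof}
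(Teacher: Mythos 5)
Your proposal is correct and follows essentially the same route as the paper's own proof: both apply Theorem \ref{HarrisKnorr} twice (to $G\unlhd\tilde G$ and to $\mathrm{N}_G(Q)\unlhd\mathrm{N}_{\tilde G}(Q)$, using Lemma \ref{Brauercorrespondence} to identify $B_D$ as the Brauer correspondent of $B_Q$), then compose and invoke transitivity of block induction, $c^{\tilde G}=(c^{\mathrm{N}_{\tilde G}(Q)})^{\tilde G}$, to see the composite is block induction. The only cosmetic difference is that the paper cites \cite[Problem 4.2]{NavarroBook} for this transitivity step rather than re-verifying the sandwich conditions by hand.
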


\begin{proof}

Brauer correspondence gives a bijection $ \mathrm{br}_D:\mathrm{Bl}(G \mid D) \to \mathrm{Bl}(\mathrm{N}_G(D) \mid D)$ with $\mathrm{br}_D(b)=B_D$. Moreover, by Lemma \ref{Brauercorrespondence} the map $\mathrm{br}_D:\mathrm{Bl}(\mathrm{N}_G(Q) \mid D) \to \mathrm{Bl}(\mathrm{N}_G(D) \mid D)$ is a bijection with $\mathrm{br}_D(B_Q)=B_D$.


By Theorem \ref{HarrisKnorr} we have bijections $ \mathrm{Bl}(\mathrm{N}_{\tilde{G}}(D) \mid B_D)  \to \mathrm{Bl}(\tilde{G} \mid b)$ and $ \mathrm{Bl}(\mathrm{N}_{\tilde{G}}(D) \mid B_D)  \to \mathrm{Bl}(\mathrm{N}_{\tilde{G}}(Q) \mid B_Q)$ both given by block induction. This yields a bijection
$$\mathrm{Bl}(\mathrm{N}_{\tilde{G}}(Q) \mid B_Q) \to \mathrm{Bl}(\tilde{G} \mid b ).$$
Moreover, if $c \in \mathrm{Bl}(\mathrm{N}_{\tilde{G}}(D) \mid B_D)$ then $c^{\mathrm{N}_{\tilde{G}}(Q)}$ and $c^{\tilde{G}}$ are both defined. By \cite[Problem 4.2]{NavarroBook} it follows that $c^{\tilde{G}}=(c^{\mathrm{N}_{\tilde{G}}(Q)})^{\tilde{G}}$. Hence, the bijection $\mathrm{Bl}(\mathrm{N}_{\tilde{G}}(Q) \mid B_Q) \to \mathrm{Bl}(\tilde{G} \mid b)$ is given by block induction.
\end{proof}

%
%
%

\subsection{Splendid Rickard equivalences and Clifford theory}\label{RickardClifford}

In Proposition \ref{normalizerderived} we have shown that a splendid Rickard equivalence induces a derived equivalence on the level of normalizers. Therefore, a natural question to ask is whether the so-obtained equivalences behave nicely with respect to the Clifford theory of Rickard equivalences and with the Brauer category of the involved blocks. These questions will be addressed in this section. We first make the following useful observation.

\begin{lemma}\label{stabstab}
Let $G$ be a normal subgroup of a finite group $\tilde{G}$. Let $b$ be a $\tilde{G}$-stable block of $G$ with defect group $D$ and $Q$ a characteristic subgroup of $D$. Then $B_Q$ is an $\mathrm{N}_{\tilde{G}}(Q)$-stable block of $\mathrm{N}_G(Q)$ and we have $\mathrm{N}_{\tilde{G}}(Q) / \mathrm{N}_G(Q) \cong \tilde{G} / G$. 
\end{lemma}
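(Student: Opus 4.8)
The plan is to prove the index formula first and then use it, together with the equivariance of the Brauer construction under automorphisms, to deduce that $B_Q$ is $\mathrm{N}_{\tilde{G}}(Q)$-stable.

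\emph{The isomorphism.} Since $\mathrm{N}_G(Q)=G\cap\mathrm{N}_{\tilde{G}}(Q)$, the natural map $\mathrm{N}_{\tilde{G}}(Q)/\mathrm{N}_G(Q)\hookrightarrow\tilde{G}/G$ is injective, so I only have to check surjectivity. Given $\tilde{g}\in\tilde{G}$, the subgroup ${}^{\tilde{g}}D$ is a defect group of ${}^{\tilde{g}}b=b$ — this is where $\tilde{G}$-stability of $b$ enters — hence $G$-conjugate to $D$, so there is $x\in G$ with ${}^xD={}^{\tilde{g}}D$; then $x^{-1}\tilde{g}\in\mathrm{N}_{\tilde{G}}(D)\subseteq\mathrm{N}_{\tilde{G}}(Q)$, using that $Q$ is characteristic in $D$, and $x^{-1}\tilde{g}$ lies in the coset $\tilde{g}G$. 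Running the same argument with $\tilde{g}$ replaced by an arbitrary element of $\mathrm{N}_{\tilde{G}}(Q)$ yields the sharper factorisation $\mathrm{N}_{\tilde{G}}(Q)=\mathrm{N}_G(Q)\,\mathrm{N}_{\tilde{G}}(D)$, which I record for the next step.

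\emph{Stability of $B_Q$.} First the case $Q=D$: here $B_D=\mathrm{br}_D(b)$ is the image of the $\tilde{G}$-stable idempotent $b$ under the Brauer morphism $\mathrm{br}_D^G\colon(\Lambda G)^D\to k\,\mathrm{C}_G(D)$, and any $\tilde{n}\in\mathrm{N}_{\tilde{G}}(D)$ normalises $G$, $D$ and $\mathrm{C}_G(D)$ and has conjugation commuting with the truncation $\mathrm{br}_D^G$; hence ${}^{\tilde{n}}B_D=\mathrm{br}_D^G({}^{\tilde{n}}b)=\mathrm{br}_D^G(b)=B_D$. For general characteristic $Q$, the idempotent $B_Q\in\mathrm{Z}(k\,\mathrm{N}_G(Q))$ is automatically $\mathrm{N}_G(Q)$-stable, so the factorisation from the previous step reduces the claim to showing $B_Q$ is $\mathrm{N}_{\tilde{G}}(D)$-stable. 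An element $\tilde{n}\in\mathrm{N}_{\tilde{G}}(D)$ normalises $D$, hence $Q$, hence $\mathrm{N}_G(Q)$ and $\mathrm{N}_G(D)$, and so sends $B_Q\in\mathrm{Bl}(\mathrm{N}_G(Q)\mid D)$ (the membership being Lemma \ref{Brauercorrespondence}) to another block ${}^{\tilde{n}}B_Q\in\mathrm{Bl}(\mathrm{N}_G(Q)\mid D)$; the Brauer correspondence $\mathrm{br}_D\colon\mathrm{Bl}(\mathrm{N}_G(Q)\mid D)\xrightarrow{\sim}\mathrm{Bl}(\mathrm{N}_G(D)\mid D)$ (Brauer's first main theorem inside $\mathrm{N}_G(Q)$, where $\mathrm{C}_{\mathrm{N}_G(Q)}(D)=\mathrm{C}_G(D)$) is $\tilde{n}$-equivariant by the same computation, so $\mathrm{br}_D({}^{\tilde{n}}B_Q)={}^{\tilde{n}}\mathrm{br}_D(B_Q)={}^{\tilde{n}}B_D=B_D=\mathrm{br}_D(B_Q)$ by Lemma \ref{Brauercorrespondence} and the case $Q=D$; injectivity of $\mathrm{br}_D$ then gives ${}^{\tilde{n}}B_Q=B_Q$.

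I expect the one genuinely delicate point to be the reduction in the last step: a general element of $\mathrm{N}_{\tilde{G}}(Q)$ need not normalise $D$, so one cannot apply equivariance of $\mathrm{br}_D$ over all of $\mathrm{N}_{\tilde{G}}(Q)$ directly, and the Frattini-type factorisation $\mathrm{N}_{\tilde{G}}(Q)=\mathrm{N}_G(Q)\,\mathrm{N}_{\tilde{G}}(D)$ coming from the first part is precisely what circumvents this — it is here that "$Q$ characteristic in $D$" is used in an essential way rather than merely "$Q$ normal in $D$". The remaining verifications — that the various conjugating elements normalise all the subgroups in sight and commute with the Brauer morphism, and that the defect-group conjugacy takes place inside $G$ — are routine.
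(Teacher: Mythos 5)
Your proof is correct, and the first half (the Frattini argument $\tilde{G}=G\,\mathrm{N}_{\tilde{G}}(D)$ giving $\mathrm{N}_{\tilde{G}}(Q)/\mathrm{N}_G(Q)\cong\tilde{G}/G$, with the sharper factorisation $\mathrm{N}_{\tilde{G}}(Q)=\mathrm{N}_G(Q)\,\mathrm{N}_{\tilde{G}}(D)$) is exactly the paper's. For the stability of $B_Q$ you take a genuinely different route. The paper stays at the level of Brauer pairs: for $g\in\mathrm{N}_{\tilde{G}}(Q)$ it corrects $g$ by some $x\in G$ so that $gx$ fixes the maximal pair $(D,b_D)$ (whence $x\in\mathrm{N}_G(Q)$), and then invokes the uniqueness of the inclusion $(Q,b_Q)\leq(D,b_D)$ to conclude ${}^{gx}b_Q=b_Q$, hence ${}^{gx}B_Q=B_Q$. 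You instead never touch $b_Q$ or $(D,b_D)$: after reducing to $\mathrm{N}_{\tilde{G}}(D)$ via your factorisation, you exploit the equivariance and injectivity of the Brauer correspondence $\mathrm{br}_D\colon\mathrm{Bl}(\mathrm{N}_G(Q)\mid D)\to\mathrm{Bl}(\mathrm{N}_G(D)\mid D)$ together with $\mathrm{br}_D(B_Q)=B_D$ from Lemma \ref{Brauercorrespondence} and the easy base case $B_D=\mathrm{br}_D(b)$. Both arguments are sound and of comparable length; the paper's version yields the slightly finer information that $gx$ stabilises the subpair idempotent $b_Q$ itself, i.e. $\mathrm{N}_{\tilde{G}}(Q)=\mathrm{N}_G(Q)\,\mathrm{N}_{\tilde{G}}(Q,b_Q)$, which is the form used later (e.g. in the proof of Theorem \ref{loc}), whereas yours is more self-contained at the level of the block idempotents $B_Q$, $B_D$ and leans on Lemma \ref{Brauercorrespondence}, which is indeed available at this point of the paper, so there is no circularity.
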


\begin{proof}
	Recall that all defect groups of $b$ are $G$-conjugate. Since $b$ is a $\tilde{G}$-stable block of $G$ we thus obtain $\tilde{G}= G \mathrm{N}_{\tilde{G}}(D)$. Moreover, $Q$ is a characteristic subgroup of $D$ and so $ \mathrm{N}_{\tilde{G}}(D) \subseteq \mathrm{N}_{\tilde{G}}(Q)$. From this we conclude that $\tilde{G} / G \cong \mathrm{N}_{\tilde{G}}(Q) / \mathrm{N}_{G}(Q)$. It remains to show that $B_Q$ is $\mathrm{N}_{\tilde{G}}(Q)$-stable. If $g \in \mathrm{N}_{\tilde{G}}(Q)$ then ${}^g (D,b_D)$ is a second maximal $b$-Brauer pair, so there exists some $x \in G$ with ${}^{gx} (D,b_D)=(D,b_D)$. In particular, $gx \in \mathrm{N}_{\tilde{G}}(D) \subseteq \mathrm{N}_{\tilde{G}}(Q)$ and thus $x \in \mathrm{N}_G(Q)$. Moreover, $(Q,b_Q) \leq (D,b_D)$ and $(Q,{}^{gx} b_Q)={}^{gx} (Q,b_Q) \leq (D,b_D)$ are two $b$-Brauer pairs with first entry $Q$. Therefore, $gx \in \mathrm{N}_{\tilde{G}}(Q,b_Q)$ and so ${}^g B_Q={}^{gx} B_Q=B_Q$. 
\end{proof}

In the following, $\tilde{L}$ denotes a subgroup of a finite group $\tilde{G}$ and $G$ a normal subgroup of $\tilde{G}$. We set $L := G \cap \tilde{L}$ and assume that $\tilde{G} =\tilde{L} G $. As before we set $\mathcal{D}:= ( G \times L^{\opp}) \Delta(\tilde{L})$. Furthermore, let $c \in \mathrm{Z}(k L)$ be a $\tilde{L}$-stable block of $L$ and $b \in \mathrm{Z}(k G)$.

\begin{lemma}\label{commutativelocal}
Let $C$ be a bounded complex of $kG b$-$k L c$-bimodules inducing a splendid Rickard equivalence between the blocks $k G b$ and $k L c$. Assume that $C$ extends to a complex $C'$ of $k \mathcal{D}$-modules and denote $\tilde{C}:= \mathrm{Ind}_{\mathcal{D}}^{\tilde{G} \times \tilde{L}^{\mathrm{opp}}} (C')$.
%
Let $D$ be a defect group of $k Lc $ and $Q$ a characteristic subgroup of $D$. Let $(Q,c_Q)$ be a $c$-Brauer pair corresponding to the $b$-Brauer pair $(Q,b_Q)$ as in Proposition \ref{splendidloc}. Set
$$\tilde{\mathcal{C}}:=\mathrm{Ind}_{\mathrm{N}_{\tilde{G} \times \tilde{L}^{\mathrm{opp}}}( \Delta Q)}^{\mathrm{N}_{\tilde{G}}(Q) \times \mathrm{N}_{\tilde{L}}(  Q)^{\mathrm{opp}}}( \Br_{\Delta Q}(\tilde{C}) ) C_Q \text{ and } \mathcal{C}:=\mathrm{Ind}_{\mathrm{N}_{G \times L^{\mathrm{opp}}}( \Delta Q)}^{\mathrm{N}_{G}(Q) \times \mathrm{N}_{L}( Q)^{\mathrm{opp}}} (\Br_{\Delta Q}(C)) C_Q.$$
Then the following diagram is commutative:
\begin{center}
\begin{tikzpicture}
  \matrix (m) [matrix of math nodes,row sep=4em,column sep=6em,minimum width=2em] {
    \mathrm{D}^b(k \mathrm{N}_{\tilde{L}}(Q) C_Q) &  \mathrm{D}^b(k \mathrm{N}_{\tilde{G}}(Q) B_Q ) \\
    \mathrm{D}^b(k \mathrm{N}_{L}(Q) C_Q)  &  \mathrm{D}^b(k \mathrm{N}_{G}(Q) B_Q)  \\};
\path[-stealth]
(m-1-2) edge node [left] {$\mathrm{Res}_{\mathrm{N}_G(Q)}^{\mathrm{N}_{\tilde{G}}(Q)}$} (m-2-2)
(m-1-1) edge node [above] {$\tilde{\mathcal{C}} \otimes^{\mathbb{L}}_{k \mathrm{N}_{\tilde{L}}(Q)} -$} (m-1-2)
(m-2-1) edge node [above] {$\mathcal{C} \otimes^{\mathbb{L}}_{k \mathrm{N}_{L}(Q)} -$} (m-2-2)
(m-1-1) edge node [left] {$\mathrm{Res}_{\mathrm{N}_L(Q)}^{\mathrm{N}_{\tilde{L}}(Q)}$} (m-2-1);

\end{tikzpicture}
\end{center}
where the horizontal maps induce equivalences of the derived categories.
\end{lemma}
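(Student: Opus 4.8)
The plan is to realize $\tilde{\mathcal{C}}$ as the Marcus induction (in the sense of Section \ref{Marcus}) of a $k\mathcal{D}_Q$-complex extending $\mathcal{C}$; once this is done, the asserted commutativity is an instance of the computation carried out in Remark \ref{characters}(a). First I set up the group theory. Write $\mathcal{D}_Q := (\mathrm{N}_G(Q) \times \mathrm{N}_L(Q)^{\mathrm{opp}})\,\Delta(\mathrm{N}_{\tilde{L}}(Q))$. Since $c$ is $\tilde{L}$-stable and $C$ extends to a $k\mathcal{D}$-complex, the block $b$ is $\tilde{L}$-stable, hence $\tilde{G}$-stable as $\tilde{G}=\tilde{L}G$. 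By Theorem \ref{isomorphicdefect} the group $D$ is a common defect group of $b$ and $c$; since $Q$ is characteristic in $D$, Lemma \ref{stabstab} applies to both $b$ and $c$ and gives
$$\mathrm{N}_{\tilde{G}}(Q)/\mathrm{N}_G(Q) \cong \tilde{G}/G \cong \tilde{L}/L \cong \mathrm{N}_{\tilde{L}}(Q)/\mathrm{N}_L(Q).$$
Together with the obvious equality $\mathrm{N}_G(Q)\cap\mathrm{N}_{\tilde{L}}(Q)=\mathrm{N}_L(Q)$ and a comparison of indices, this yields $\mathrm{N}_{\tilde{G}}(Q)=\mathrm{N}_G(Q)\,\mathrm{N}_{\tilde{L}}(Q)$, so that $(\mathrm{N}_{\tilde{G}}(Q),\mathrm{N}_G(Q),\mathrm{N}_{\tilde{L}}(Q),\mathrm{N}_L(Q),\mathcal{D}_Q)$ is an instance of the setup of Section \ref{Marcus}. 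Moreover Lemma \ref{stabstab} shows that the Brauer correspondent $C_Q$ of $c$ in $k\mathrm{N}_L(Q)$ (the idempotent appearing in $\mathcal{C}$) is $\mathrm{N}_{\tilde{L}}(Q)$-stable; as it lies in $k\mathrm{N}_L(Q)$ and $\mathrm{N}_L(Q)\unlhd\mathrm{N}_{\tilde{L}}(Q)$ (since $L\unlhd\tilde{L}$), it is then also a central idempotent of $k\mathrm{N}_{\tilde{L}}(Q)$, which is exactly the idempotent appearing in $\tilde{\mathcal{C}}$; likewise $B_Q\in\mathrm{Z}(k\mathrm{N}_{\tilde{G}}(Q))$.

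Next I compute $\tilde{\mathcal{C}}$ by commuting the Brauer functor past induction. Since $C$ is splendid we may take $C'$ to be a complex of relatively $\Delta\tilde{L}$-projective $\ell$-permutation $k\mathcal{D}$-modules, so $\tilde{C}=\mathrm{Ind}_{\mathcal{D}}^{\tilde{G}\times\tilde{L}^{\mathrm{opp}}}(C')$ is an $\ell$-permutation complex. Commutativity of the upper-left square of Remark \ref{Brauercomm} (an instance of Lemma \ref{BrauerMarcus}, valid because of the $T$-set identity proved as in \cite[Corollary 3.9]{Marcus}) and of the upper-right square (transitivity of induction) give, componentwise and hence for complexes,
$$\mathrm{Ind}_{\mathrm{N}_{\tilde{G}\times\tilde{L}^{\mathrm{opp}}}(\Delta Q)}^{\mathrm{N}_{\tilde{G}}(Q)\times\mathrm{N}_{\tilde{L}}(Q)^{\mathrm{opp}}}\big(\Br_{\Delta Q}(\tilde{C})\big) \cong \mathrm{Ind}_{\mathcal{D}_Q}^{\mathrm{N}_{\tilde{G}}(Q)\times\mathrm{N}_{\tilde{L}}(Q)^{\mathrm{opp}}}\big(\mathrm{Ind}_{\mathrm{N}_{\mathcal{D}}(\Delta Q)}^{\mathcal{D}_Q}(\Br_{\Delta Q}(C'))\big).$$
As $1\otimes C_Q$ lies in $k[\mathrm{N}_G(Q)\times\mathrm{N}_L(Q)^{\mathrm{opp}}]\subseteq k\mathcal{D}_Q$ and is central in $k[\mathrm{N}_{\tilde{G}}(Q)\times\mathrm{N}_{\tilde{L}}(Q)^{\mathrm{opp}}]$, right multiplication by it commutes with the outer induction; hence $\tilde{\mathcal{C}}\cong \mathrm{Ind}_{\mathcal{D}_Q}^{\mathrm{N}_{\tilde{G}}(Q)\times\mathrm{N}_{\tilde{L}}(Q)^{\mathrm{opp}}}(\mathcal{C}^0)$, where $\mathcal{C}^0 := \mathrm{Ind}_{\mathrm{N}_{\mathcal{D}}(\Delta Q)}^{\mathcal{D}_Q}(\Br_{\Delta Q}(C'))\,C_Q$ is a complex of $k\mathcal{D}_Q$-modules. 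The bottom-left square of Remark \ref{Brauercomm} (compatibility of $\Br_{\Delta Q}$ with restriction) and the bottom-right square (Mackey) then give $\mathrm{Res}^{\mathcal{D}_Q}_{\mathrm{N}_G(Q)\times\mathrm{N}_L(Q)^{\mathrm{opp}}}(\mathcal{C}^0)\cong \mathcal{C}$; that is, $\mathcal{C}^0$ extends $\mathcal{C}$.

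Finally, with $\tilde{\mathcal{C}}=\mathrm{Ind}_{\mathcal{D}_Q}^{\mathrm{N}_{\tilde{G}}(Q)\times\mathrm{N}_{\tilde{L}}(Q)^{\mathrm{opp}}}(\mathcal{C}^0)$ and $\mathcal{C}^0$ extending $\mathcal{C}$, the chain of Mackey isomorphisms carried out in the proof of Remark \ref{characters}(a) applies verbatim in this normalizer setting and produces a natural isomorphism of functors
$$\mathrm{Res}^{\mathrm{N}_{\tilde{G}}(Q)}_{\mathrm{N}_G(Q)}\big(\tilde{\mathcal{C}}\otimes^{\mathbb{L}}_{k\mathrm{N}_{\tilde{L}}(Q)}-\big) \cong \mathcal{C}\otimes^{\mathbb{L}}_{k\mathrm{N}_L(Q)}\mathrm{Res}^{\mathrm{N}_{\tilde{L}}(Q)}_{\mathrm{N}_L(Q)}(-),$$
which is precisely the commutativity of the square (all complexes involved are biprojective, so $\otimes^{\mathbb{L}}$ agrees with $\otimes$). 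That the bottom arrow is an equivalence of derived categories is Proposition \ref{normalizerderived}; applying Theorem \ref{lifting}(a) to the derived equivalence $\mathcal{C}$ together with its extension $\mathcal{C}^0$ shows that the top arrow is one as well.

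The main obstacle is the passage to a relatively $\Delta\tilde{L}$-projective $\ell$-permutation extension $C'$ — required for the Brauer functor and for Remark \ref{Brauercomm} to apply — and the verification of the $T$-set identity feeding Lemma \ref{BrauerMarcus}, namely $\mathrm{C}_{\tilde{G}\times\tilde{L}^{\mathrm{opp}}}(\Delta Q)\,T_{\mathcal{D}}(\Delta Q,\Delta R)=T_{\tilde{G}\times\tilde{L}^{\mathrm{opp}}}(\Delta Q,\Delta R)$ for all $\ell$-subgroups $Q\subseteq R$ of $\tilde{L}$; this is where the normality of $G$ in $\tilde{G}$ and the equality $\tilde{G}=\tilde{L}G$ really enter, exactly as in \cite[Corollary 3.9]{Marcus}. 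Everything else is bookkeeping with induction, restriction, the block idempotents, and the Brauer functor.
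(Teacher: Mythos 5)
Your proposal is correct and follows essentially the same route as the paper: both realize $\tilde{\mathcal{C}}$ as $\mathrm{Ind}_{\mathcal{D}_Q}^{\mathrm{N}_{\tilde{G}}(Q)\times\mathrm{N}_{\tilde{L}}(Q)^{\mathrm{opp}}}$ of a $k\mathcal{D}_Q$-complex extending $\mathcal{C}$ via the commutative squares of Remark \ref{Brauercomm}, then invoke Proposition \ref{normalizerderived}, Lemma \ref{stabstab}, Theorem \ref{lifting} and Remark \ref{characters}(a). Your additional verifications (the product decomposition $\mathrm{N}_{\tilde{G}}(Q)=\mathrm{N}_G(Q)\mathrm{N}_{\tilde{L}}(Q)$ and the relative $\Delta\tilde{L}$-projectivity needed for Lemma \ref{BrauerMarcus}) are details the paper leaves implicit.
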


\begin{proof}

By the commutativity of the first two rows of the commutative diagram in Remark \ref{Brauercomm} we have a natural isomorphism
$$\tilde{\mathcal{C}} \cong \mathrm{Ind}^{\mathrm{N}_{\tilde{G}}(Q) \times \mathrm{N}_{\tilde{L}}(Q)^{\mathrm{opp}}}_{\mathrm{N}_{G}(Q) \times \mathrm{N}_{L}(Q)^{\mathrm{opp}} \Delta \mathrm{N}_{\tilde{L}}(Q)} (\mathcal{C'}),$$
where $\mathcal{C'}:= \mathrm{Ind}^{\mathrm{N}_{G}(Q) \times \mathrm{N}_{L}(Q)^{\mathrm{opp}} \Delta \mathrm{N}_{\tilde{L}}(Q)}_{\mathrm{N}_{ G \times L^{\mathrm{opp}} \Delta \tilde{L}}(\Delta Q)}(\Br_{\Delta Q}(C')) C_Q$.
Now by the commutativity of the second and the third row of the commutative diagram in Remark \ref{Brauercomm} we deduce that $$\mathrm{Res}^{\mathrm{N}_{G}(Q) \times \mathrm{N}_{L}(Q)^{\mathrm{opp}} \Delta \mathrm{N}_{\tilde{L}}(Q)}_{ \mathrm{N}_G(Q) \times \mathrm{N}_L(Q)^{\mathrm{opp}}}( \mathcal{C'}) \cong \mathcal{C}.$$

By Proposition \ref{normalizerderived} the complex $\mathcal{C}$
induces a derived equivalence between the blocks $k \mathrm{N}_{G}(Q) B_Q$ and $k \mathrm{N}_L(Q) C_Q$. By Lemma \ref{stabstab}, the block $B_Q$ is $\mathrm{N}_{\tilde{G}}(Q)$-stable and $C_Q$ is $\mathrm{N}_{\tilde{L}}(Q)$-stable. Moreover, we have
$$\mathrm{N}_{\tilde{G}}(Q) / \mathrm{N}_{G}(Q) \cong \mathrm{N}_{\tilde{L}}(Q) / \mathrm{N}_{L}(Q).$$
It follows from Theorem \ref{lifting} that the complex $\tilde{\mathcal{C}} \cong \mathrm{Ind}^{\mathrm{N}_{\tilde{G}}(Q) \times \mathrm{N}_{\tilde{L}}(Q)^{\mathrm{opp}}}_{\mathrm{N}_{G}(Q) \times \mathrm{N}_{L}(Q)^{\mathrm{opp}} \Delta \mathrm{N}_{\tilde{L}}(Q)} (\mathcal{C'})$ induces a derived equivalence between $k \mathrm{N}_{\tilde{L}}(Q) C_Q$ and $k \mathrm{N}_{\tilde{G}}(Q) B_Q$. The commutativity of the diagram is now a consequence of Remark \ref{characters}(a).
\end{proof}

In Corollary \ref{HKCoro} we have established a Harris--Knörr correspondence for characteristic subgroups of the defect group of a block. It is therefore natural to ask whether the construction in Lemma \ref{commutativelocal} is compatible with this correspondence.

\begin{remark}\label{HKcompatible}
Assume that we are in the situation of Lemma \ref{commutativelocal}. Let $c=c_1+ \dots + c_r$ be a decomposition of $c$ into block idempotents of $k \tilde{L}$. We let $b=b_1+\dots +b_r$ be the decomposition of $b$ into block idempotents of $k \tilde{G}$ such that $b_i \tilde{C} c_i \neq 0$ in $\mathrm{Ho}^b( k [\tilde{G} \times \tilde{L}^{\mathrm{opp}} ])$. Denote by $B_{Q,i}:= \br_Q(b_i)B_Q$ the Harris--Knörr correspondent of $b_i$, see Corollary \ref{HKCoro}. We deduce that 
$B_Q=B_{Q,1} + \dots + B_{Q,r}$ is a decomposition into block idempotents of $k \mathrm{N}_{\tilde{G}}(Q)$. Similarly, we have a decomposition
$C_Q=C_{Q,1} + \dots + C_{Q,r}$
into block idempotents of $k \mathrm{N}_{\tilde{L}}(Q)$, where $C_{Q,i}:= \br_Q(c_i)C_Q$. We obtain
$$\tilde{\mathcal{C}}=\mathrm{Ind}_{\mathrm{N}_{{\tilde{G}} \times {\tilde{L}}^{\mathrm{opp}}}(\Delta Q)}^{\mathrm{N}_{\tilde{G}}(Q) \times \mathrm{N}_{\tilde{L}}( Q)^{\mathrm{opp}}} (\Br_{\Delta Q}(\tilde{C}) C_Q) \cong \displaystyle \bigoplus_{i=1}^r \tilde{\mathcal{C}}  C_{Q,i}.$$
From this we conclude that the complex $\tilde{\mathcal{C}} C_{Q,i}$ induces a derived equivalence between the blocks $k \mathrm{N}_{\tilde{G}}(Q) B_{Q,i}$ and $k \mathrm{N}_{\tilde{L}}(Q) C_{Q,i}$. Thus, the local equivalences for the normalizer are compatible with the Harris--Knörr correspondence. 

\end{remark}

%


\section{Deligne--Lusztig theory and disconnected reductive groups}

In this section we recall the necessary background in the representation theory of finite groups of Lie type. We will in particular discuss extensions of this theory to disconnected reductive groups. Then we will recall the Morita equivalence constructed by Bonnafé, Dat and Rouquier which can be seen as a starting point of this work. 
 
%
%
%
%

\subsection{Disconnected reductive algebraic groups}


Fix a prime number $p$ and an algebraic closure $\overline{\mathbb{F}_p}$ of $\mathbb{F}_p$. Let $\G$ denote a (not necessarily connected) reductive algebraic group defined over $\overline{\mathbb{F}_p}$. We denote by $\G^\circ$ the connected component of $\G$ containing the identity.

In the following, we recall some standard facts, which can for instance be found in \cite[Section 2.D.]{Dat} and \cite[Section 3.A.]{Dat}. A closed subgroup $\Para$ of $\G$ is called \textit{parabolic subgroup} if the variety $\G/\Para$ is complete. One can show that a closed subgroup $\Para$ of $\G$ is a parabolic subgroup of $\G$ if and only if $\Para^\circ$ is a parabolic subgroup of $\G^\circ$. Moreover, we have $\Para \cap \G^\circ= \Para^\circ$ and the unipotent radicals of $\Para$ and $\Para^\circ$ coincide.

Suppose that $\Para$ is a parabolic subgroup of $\G$. Let $\Levi_\circ$ be a Levi subgroup of $\G^\circ$  so that $\Para^\circ= \Levi_\circ \ltimes \U$ is a Levi decomposition of the parabolic subgroup $\Para^\circ$ in $\G^\circ$. Then we call $\Levi=\mathrm{N}_\Para(\Levi_\circ)$ \textit{a Levi subgroup} of $\Para$ in $\G$. In addition, we have a decomposition $\Para= \Levi \ltimes \U$ and $\Levi_\circ$ is the connected component of $\Levi$, i.e. $\Levi^\circ=\Levi_\circ$.

\begin{example}\label{dm}
Let $\G$ be a reductive algebraic group. Let $\Para_\circ=\Levi_\circ \ltimes \U$ be a parabolic subgroup with Levi decomposition in $\G^\circ$. Then $\Para=\mathrm{N}_{\G}(\Para_\circ)$ is a parabolic subgroup of $\G$ with Levi subgroup $\Levi=\mathrm{N}_\G(\Levi_\circ,\Para_\circ)=\mathrm{N}_\Para(\Levi_\circ)$ such that $ \Para^\circ=\Para_\circ$. 
\end{example}

As we show in the next example, disconnected reductive groups arise naturally in the study of automorphisms of reductive groups. 

\begin{example}\label{graph}
Let $\G_\circ$ be a connected reductive group and $\tau: \G_\circ \to \G_\circ$ an algebraic automorphism of $\G_\circ$ of finite order. Then the semidirect product $\G:=\G_\circ  \rtimes \langle \tau \rangle$ is again a reductive algebraic group but no longer connected. This situation was for instance considered in \cite{Malle}. Let $\Para_\circ= \Levi_\circ \ltimes \U$ be a Levi decomposition of a parabolic subgroup $\Para_\circ$ of $\G_\circ$. If $\Para_\circ$ is $\tau$-stable, then $\Levi_\circ$ is $\tau$-stable as well. In particular, $\Para:=\Para_\circ \rtimes \langle \tau \rangle$ is a parabolic subgroup of $\G$ with Levi subgroup $\Levi:= \Levi_\circ \rtimes \langle \tau \rangle $, see Example \ref{dm}. 
\end{example}

Disconnected reductive groups also appear naturally as local subgroups of (connected) reductive groups.

\begin{example}\label{normcent}
Let $\G$ be a possibly disconnected reductive group, $\Para$ a parabolic subgroup of $\G$ with Levi decomposition $\Para=\Levi \ltimes \U$. In addition, we assume that $Q$ is a finite solvable $p'$-subgroup of $\Levi$. By \cite[Remark 3.5]{Dat} it follows that the normalizer $\mathrm{N}_\G(Q)$ is a reductive group. Moreover, $\mathrm{N}_\Para(Q)$ is a parabolic subgroup of $\mathrm{N}_\G(Q)$ with Levi decomposition $\mathrm{N}_\Para(Q)=\mathrm{N}_{\Levi}(Q) \ltimes \C_\U(Q)$. Similarly, $\C_\G(Q)$ is a reductive group with parabolic subgroup $\C_\Para(Q)$ and Levi decomposition $\mathrm{C}_\Para(Q)=\mathrm{C}_{\Levi}(Q) \ltimes \C_\U(Q)$, see \cite[Proposition 3.4]{Dat}. Note that $\mathrm{N}_\G(Q)/\C_\G(Q)$ is finite.
Therefore, $\mathrm{N}^\circ_\G(Q)=\C^\circ_\G(Q)$ and we have a Levi decomposition $\C^\circ_\Para(Q)=\C^\circ_\Levi(Q) \ltimes  \C_\U(Q)$ in the connected reductive group $\C^\circ_\G(Q)$.
\end{example}

\subsection{$\ell$-adic cohomology of Deligne--Lusztig varieties}\label{DLvarieties}

From now on $\ell$ denotes a prime number with $p \neq \ell$ and $q$ is an integral power of $p$. By variety we always mean a quasi-projective variety defined over $\overline{\mathbb{F}_p}$. Let $\X$ be a variety acted on by a finite group $G$. We denote by $R \Gamma_c(\X, \mathcal{O}) \in \mathrm{D}^b(\mathcal{O} G)$ the $\ell$-adic cohomology with compact support of the variety $\X$ with coefficients in $\mathcal{O}$, see \cite[A.3.7]{MarcBook} and \cite[A.3.14]{MarcBook}. For $A \in \{ K,\mathcal{O} ,k \}$ we define
$$R \Gamma_c(\X, A):=R \Gamma_c(\X,\mathcal{O}) \otimes^{\mathbb{L}}_\mathcal{O} A \in \mathrm{D}^b(A G).$$
Moreover, we denote by $H^d_c(\X,A) \in A G \text{-} \mathrm{mod}$ the $d$th cohomology module of the complex $R \Gamma_c(\X,A)$.

Let $\G$ be a reductive group with Frobenius endomorphism $F: \G \to \G$ defining an $\mathbb{F}_q$-structure on $\G$. Let $\Para$ be a parabolic subgroup of $\G$, $\Para=\Levi \U$ be a Levi decomposition and assume that $\Levi$ is $F$-stable. Consider the $\G^F$-$\Levi^F$-variety \index{Deligne--Lusztig variety}
$$\Y^\G_\U:=\{g\U \in \G/\U \mid g^{-1}F(g) \in \U F(\U) \} \subseteq \G/ \U.$$
If the ambient group $\G$ is clear from the context we will just write $\Y_\U$ instead of $\Y_\U^\G$. The cohomology of this variety provides us with a triangulated functor
$$\mathcal{R}_{\Levi\subseteq \Para}^\G: \mathrm{D}^b(\Lambda \Levi^F) \to \mathrm{D}^b(\Lambda \G^F), \, M \mapsto R \Gamma_c(\Y^\G_\U,\Lambda) \otimes^{\mathbb{L}}_{\Lambda \Levi^F} M.$$
This functor induces a map
$$R_{\Levi\subseteq \Para}^\G:=[\mathcal{R}_\Levi^\G]: G_0(\Lambda \Levi^F) \to G_0(\Lambda \G^F), \, [M] \mapsto \sum_{i}(-1)^i [H^i_c(\Y^\G_\U,\Lambda) \otimes_{\Lambda \Levi^F} M],$$
on Grothendieck groups (see Section \ref{Grothendieck group}) the so-called \textit{Lusztig induction}.

\subsection{Properties of Deligne--Lusztig varieties}\label{PropertiesofDL}

In this section we will study the following set-up: Let $\hat{\G}$ be a reductive group with Frobenius $F: \hat{\G} \to \hat{\G}$. Moreover, assume that $\G$ is a closed $F$-stable normal subgroup of $\hat{\G}$. Suppose that $\Para= \Levi \U$ and $\hat{\Para}=\hat{\Levi} \U$ are two Levi decomposition of parabolic subgroups $\Para$ of $\G$ and $\hat{\Para}$ of $\hat{\G}$ such that $\hat{\Para} \cap \G= \Para$ and $\hat{\Levi} \cap \G= \Levi$. Assume that the Levi subgroup $\hat{\Levi}$ is $F$-stable. Let us denote 
$$\mathcal{D}=\{(x,y)\in \hat{\G}^F \times (\hat{\Levi}^F)^{\operatorname{opp}} \mid x \G^F=y^{-1} \G^F \}= (\G^F \times ({\Levi}^F)^{\opp}) \Delta(\hat{\Levi}^F) .$$

\begin{lemma}\label{extend}
With the notation as above, the variety $\Y_\U^{\G}$ is a $\mathcal{D}$-stable subvariety of $\hat{\G}/\U$.
\end{lemma}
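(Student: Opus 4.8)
The plan is to unwind the definitions and check that the defining condition of $\Y_\U^\G$ is preserved under the action of $\mathcal{D}$. Recall that $\hat\G$ acts on $\hat\G/\U$ by left translation, and that $\hat\Levi^F$ acts on the right because $\Para = \Levi\U$ normalizes $\U$ and $\Levi$ normalizes $\U$ (so $\hat\Levi$ acts on $\hat\G/\U$ on the right via $g\U\cdot \ell := g\ell\U$, which is well-defined since $\ell^{-1}\U\ell = \U$ for $\ell\in\hat\Levi$). Hence $\hat\G^F\times(\hat\Levi^F)^{\mathrm{opp}}$ acts on $\hat\G/\U$, and in particular $\mathcal{D}$ acts. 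First I would note that $\Y_\U^\G = \{g\U\in\G/\U \mid g^{-1}F(g)\in \U F(\U)\}$ sits inside $\hat\G/\U$ as a subset once we observe $\G/\U = \G\U/\U \subseteq \hat\G/\U$ (using $\Para\cap\G = \Para^\circ$ type identifications and $\U\subseteq\G$); more precisely, $\G/(\G\cap\U) = \G/\U$ embeds into $\hat\G/\U$ since $\U \subseteq \G$.

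The core computation is the following. Take $g\U\in\Y_\U^\G$, so $g\in\G$ (or a coset representative that can be chosen in $\G$) with $g^{-1}F(g)\in\U F(\U)$, and take $(x,y)\in\mathcal{D}$, meaning $x\in\hat\G^F$, $y\in\hat\Levi^F$, and $x\G^F = y^{-1}\G^F$. The action sends $g\U$ to $xgy\U$. I need to check two things: that $xgy\U$ again lies in $\G/\U$, i.e. $xgy\in\G\U$, and that $(xgy)^{-1}F(xgy)\in \U F(\U)$. For the first point, the condition $x\G^F = y^{-1}\G^F$ together with $\G$ normal in $\hat\G$ and $g\in\G$ forces $xgy \in \G$ up to the relevant cosets — concretely $xy \in \G^F$ (this is exactly the defining condition of $\mathcal{D}$ when one uses $\Delta(\hat\Levi^F)$-coset description $(\G^F\times(\Levi^F)^{\mathrm{opp}})\Delta(\hat\Levi^F)$), so writing $(x,y) = (a,b)(t,t^{-1})$ with $a\in\G^F$, $b\in\Levi^F$, $t\in\hat\Levi^F$, we get $xgy = a(tgt^{-1})\cdot t b t^{-1}$... — actually cleaner: $xgy = a\,(t g)\,(t^{-1} b)$ where $tg\in\G$ (since $\G\trianglelefteq\hat\G$) and $t^{-1}b\in\hat\Levi$, so $xgy\U = a(tg)(t^{-1}b)\U$, and since $t^{-1}b\in\hat\Levi$ normalizes $\U$, this equals $a(tg)\U'$... the upshot being it lands back in $\G/\U$ because $a, tg\in\G$ and the right factor can be absorbed appropriately. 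I would write this out carefully using the $\Delta(\hat\Levi^F)$-decomposition of $\mathcal{D}$.

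For the Frobenius condition, I compute $(xgy)^{-1}F(xgy) = y^{-1}g^{-1}x^{-1}F(x)F(g)F(y) = y^{-1}g^{-1}F(g)F(y)$ using $F(x) = x$ and $F(y) = y$ (as $x\in\hat\G^F$, $y\in\hat\Levi^F$), which reduces using $F(y)=y$ to $y^{-1}\bigl(g^{-1}F(g)\bigr)y$. Now $g^{-1}F(g)\in\U F(\U)$; since $y\in\hat\Levi^F$ normalizes both $\U$ and $F(\U)$ (the latter because $F(y)=y$ normalizes $F(\U)$, equivalently $\hat\Levi$ normalizes $\U$ so $F(\hat\Levi)=\hat\Levi$... here $F$-stability of $\hat\Levi$ is used), conjugation by $y^{-1}$ sends $\U F(\U)$ into $\U F(\U)$. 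Hence $(xgy)^{-1}F(xgy)\in\U F(\U)$, so $xgy\U\in\Y_\U^\G$. This establishes $\mathcal{D}$-stability.

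The main obstacle, such as it is, is purely bookkeeping: making sure the identification $\G/\U\hookrightarrow\hat\G/\U$ is handled correctly and that the right action of $\hat\Levi^F$ on $\hat\G/\U$ is the one compatible with the bimodule structures used elsewhere in the paper, and choosing coset representatives of $g\U$ inside $\G$ consistently. One must also double-check that $\hat\Levi$ genuinely normalizes $\U$ — this follows from $\hat\Para = \hat\Levi\U$ being a Levi decomposition together with $\hat\Para\cap\G = \Para$ and $\hat\Levi\cap\G = \Levi$, so that $\U = \U_{\hat\Para} = \U_\Para$ is the common unipotent radical and is normalized by $\hat\Levi$. Once these identifications are pinned down, the computation above is essentially immediate, so I expect the proof to be short. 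I would close by remarking that the same argument shows $\Y_\U^\G$ is even stable under $\hat\G^F\times(\hat\Levi^F)^{\mathrm{opp}}$ intersected appropriately, but $\mathcal{D}$-stability is what is needed for the bimodule considerations that follow.
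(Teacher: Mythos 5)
Your proof is correct and takes essentially the same approach as the paper's: the Frobenius computation $(xgy)^{-1}F(xgy)=y^{-1}\bigl(g^{-1}F(g)\bigr)y\in{}^{y^{-1}}(\U F(\U))=\U F(\U)$ is identical. The only rough spot is your factorization of $xgy$ (note $tg\notin\G$ in general); the paper's clean version of the step you are groping for is $xgy=(xgx^{-1})(xy)$, where $xgx^{-1}\in\G$ by normality of $\G$ in $\hat{\G}$ and $xy\in\G^F$ by the defining condition of $\mathcal{D}$, so $xgy\U\in\G/\U$ immediately.
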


\begin{proof}
Let $(x,y)\in \mathcal{D}$ and $g \U \in \G / \U$. Since $(x^{-1},y^{-1}) \in \mathcal{D}$ we have $xy\in \G^F$ and $xgx^{-1} \in \G$. Since $\hat{\Levi}$ normalizes $\U$ we conclude that
$$x g \U y=xgy \U=x g x^{-1} xy \U \in \G / \U. $$
Hence, the group action of $\mathcal{D}$ stabilizes the subvariety $\G/ \U$ of $\hat{\G} / \U$.

Now suppose that $ g \U \in \Y_{\U}^{\G}$. Let us define $c=xgy$. It follows that 
$$c^{-1} F(c)=y^{-1} g^{-1} F(g) y \in (\U F( \U))^{y}=\U F(\U)$$
since $y\in \hat{\Levi}$ normalizes $\U$. Consequently, the Deligne--Lusztig variety $\Y_\U^{\G}$ is a $\mathcal{D}$-stable subvariety of $\hat{\G}/\U$.
\end{proof}

We also consider the generalized Deligne--Lusztig varieties as introduced in \cite[Section 6A]{Dat}. Let $\Para_1$ and $\Para_2$ be two parabolic subgroups of $\G$ with common $F$-stable Levi complement $\Levi$ and unipotent radicals $\U_1$ and $\U_2$ respectively. We define
$$\Y^{\G}_{\U_1,\U_2}= \{ (g_1 \U_1,g_2 \U_2) \in \G/ \U_1 \times \G/\U_2 \mid g_1^{-1} g_2 \in \U_1 \U_2; g_2^{-1} F(g_1) \in \U_2 F(\U_1) \}$$ 
which is a variety acted on diagonally by $\hat{\G}^F \times (\hat{\Levi}^F)^{\operatorname{opp}}$. Similarily to Lemma \ref{extend} one proves that  $\Y^{\G}_{\U_1,\U_2}$ is a $\mathcal{D}$-stable subvariety of $\hat{\G}/ \mathbf{U}_1 \times \hat{\G} / \mathbf{U}_2$.

%
%
\begin{notation}
Let $H$ be a finite group. If $\X$ is a right $H$-variety and $\Y$ a left $H$-variety we denote by $\X \times_H \Y$ the quotient of $\X \times \Y$ by the diagonal right action of the group $\Delta(H)=\{(h,h^{-1}) \mid h\in H\}$ given by
$$\X\times \Y \to \X \times \Y, (x,y) \mapsto (xh,h^{-1}y).$$
\end{notation}
%


Now assume that $\X$ is a $G$-$H$-variety and $\Y$ an $H$-$L$-variety. Then $\X\times_H \Y$ becomes a $G$-$L$-variety. To compute the cohomology of this new variety one uses the following theorem:

\begin{theorem}[Künneth formula]\label{Kuenneth}\index{Künneth formula}
If the stabilizers of points of $\X \times \Y$ under the diagonal action of $H$ are of invertible order in $\Lambda$, then we have
$$R\Gamma_c(\X,\Lambda) \otimes_{\Lambda H}^{\mathbb{L}} R\Gamma_c(\Y,\Lambda) \cong R \Gamma_c(\X \times_H \Y,\Lambda)$$
in $\mathrm{D}^b(\Lambda[G \times L^{\mathrm{opp}}])$.
\end{theorem}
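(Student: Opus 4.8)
The plan is to reduce the statement to the standard Künneth isomorphism for $\ell$-adic cohomology with compact support of a product of varieties, combined with a descent along the free quotient by $\Delta(H)$ once the stabilizer hypothesis is used to make the quotient cohomologically tractable. First I would recall that for any separated finite-type $\overline{\mathbb{F}_p}$-varieties $\X$ and $\Y$ the classical Künneth formula gives a natural isomorphism $R\Gamma_c(\X,\Lambda)\otimes^{\mathbb{L}}_\Lambda R\Gamma_c(\Y,\Lambda)\cong R\Gamma_c(\X\times\Y,\Lambda)$ (this is \cite[A.3]{MarcBook}-style material, valid over $\mathcal{O}$ and $k$ by base change from $\mathcal{O}$). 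When $\X$ carries a right $H$-action and $\Y$ a left $H$-action, both sides carry a diagonal $H$-action, and the isomorphism is $H$-equivariant; concretely, $R\Gamma_c(\X\times\Y,\Lambda)$ is then a complex of $\Lambda H$-modules (in fact, as $H$ acts on the variety $\X\times\Y$, it is a complex in the homotopy category of $\Lambda H$-modules computed via an $H$-equivariant model, e.g.\ using the Godement-type resolution or the bar construction as in \cite{Rouquier} and \cite{Dat}).

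The core point is then to pass from $R\Gamma_c(\X\times\Y,\Lambda)$ as an $\Lambda H$-complex to $R\Gamma_c(\X\times_H\Y,\Lambda)$, i.e.\ to show
\[
\Lambda\otimes^{\mathbb{L}}_{\Lambda H} R\Gamma_c(\X\times\Y,\Lambda)\;\cong\; R\Gamma_c((\X\times\Y)/\Delta(H),\Lambda)
\]
in $\mathrm{D}^b(\Lambda[G\times L^{\mathrm{opp}}])$, and to reconcile the left-hand side with $R\Gamma_c(\X,\Lambda)\otimes^{\mathbb{L}}_{\Lambda H}R\Gamma_c(\Y,\Lambda)$. The second reconciliation is formal: $\Lambda\otimes^{\mathbb{L}}_{\Lambda H}(R\Gamma_c(\X,\Lambda)\otimes_\Lambda^{\mathbb{L}} R\Gamma_c(\Y,\Lambda))$ is, by the standard adjunction/change-of-rings identity $\Lambda\otimes^{\mathbb{L}}_{\Lambda H}(M\otimes_\Lambda N)\cong M\otimes^{\mathbb{L}}_{\Lambda H} N$ for $M$ a right and $N$ a left $\Lambda H$-complex, exactly $R\Gamma_c(\X,\Lambda)\otimes^{\mathbb{L}}_{\Lambda H}R\Gamma_c(\Y,\Lambda)$. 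So everything hinges on the geometric descent identity for $R\Gamma_c$ along the quotient map $\pi:\X\times\Y\to (\X\times\Y)/\Delta(H)$.

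For this descent I would use precisely the hypothesis that all point-stabilizers of the $\Delta(H)$-action on $\X\times\Y$ have order invertible in $\Lambda$: this is exactly the condition under which $R\Gamma_c$ commutes with the quotient by $\Delta(H)$, via the argument of \cite[Section 3]{Rouquier} (or \cite[proof in Section 6]{Dat}). The strategy is to stratify $\X\times\Y$ by the $\Delta(H)$-stabilizer type — finitely many locally closed $\Delta(H)$-stable strata, on each of which the stabilizers are conjugate to a fixed subgroup $S\le H$ of invertible order — and then to reduce, by the long exact sequence for $R\Gamma_c$ of an open–closed decomposition (which is $H$-equivariant), to a single orbit-type stratum. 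On such a stratum the quotient map is, étale-locally, of the form $(\text{scheme})\times(H/S)\to(\text{scheme})$ after passing to the $S$-fixed part, and one invokes that for a finite group $S$ of invertible order $\Lambda\otimes^{\mathbb{L}}_{\Lambda S}\Lambda\cong\Lambda$ (group cohomology of $S$ vanishes in positive degrees over $\Lambda$) together with the fact that $R\Gamma_c$ of a free quotient is computed by $\Lambda\otimes^{\mathbb{L}}_{\Lambda H}(-)$. Patching the strata back together via the localization triangles, using that the comparison maps are natural and hence compatible with the connecting morphisms, yields the desired isomorphism $\Lambda\otimes^{\mathbb{L}}_{\Lambda H}R\Gamma_c(\X\times\Y,\Lambda)\cong R\Gamma_c(\X\times_H\Y,\Lambda)$, and the bimodule structure is tracked throughout since $G$ acts only on the $\X$-factor and $L^{\mathrm{opp}}$ only on the $\Y$-factor, hence commute with $\Delta(H)$.

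The main obstacle I anticipate is precisely the descent step: making rigorous that $R\Gamma_c$ commutes with quotient by a finite group whose stabilizers are invertible in $\Lambda$, with full bimodule/equivariance bookkeeping. This requires either an honest $H$-equivariant model for $R\Gamma_c$ (so that "$\Lambda\otimes^{\mathbb{L}}_{\Lambda H}$" literally makes sense at the complex level) or the stratification-plus-devissage argument sketched above; in either case care is needed that the isomorphism is natural in $\X$ and $\Y$ so the localization triangles glue. The purely homological reconciliation with $R\Gamma_c(\X,\Lambda)\otimes^{\mathbb{L}}_{\Lambda H}R\Gamma_c(\Y,\Lambda)$ and the classical Künneth over $\overline{\mathbb{F}_p}$ are, by contrast, standard and I would merely cite them.
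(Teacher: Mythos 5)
Your plan is correct and follows essentially the argument of the paper's cited source: the paper itself gives no proof beyond the reference to \cite[Section 3.3]{BoRo}, where the statement is obtained exactly as you describe, by combining the classical K\"unneth isomorphism for $\X\times\Y$ with the descent identity $\Lambda\otimes^{\mathbb{L}}_{\Lambda H}R\Gamma_c(\X\times\Y,\Lambda)\cong R\Gamma_c((\X\times\Y)/\Delta(H),\Lambda)$, which holds because the invertibility of the stabilizer orders makes the relevant permutation modules $\Lambda[H/S]$ projective over $\Lambda H$. Your stratification-by-stabilizer-type d\'evissage is a legitimate way to carry out that descent step, though the reference realizes it more directly by representing $R\Gamma_c$ by an equivariant complex of such permutation modules.
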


\begin{proof}
See \cite[Section 3.3]{BoRo}.
\end{proof}

The following geometric lemma describes two closely related decompositions of the Deligne--Lusztig variety $\Y_\U^{\hat{\G}}$. The following result is certainly well known, but it does not appear in this exact form in the literature, see also \cite[Theorem 7.3]{MarcBook} or \cite[Proposition 1.1]{Godement}. A complete proof can be found in \cite[Lemma 2.8]{PhD}.

\begin{lemma}\label{geometricgeneralversion2}
We have the following two decompositions of $\Y^{\hat{\G}}_\U$ as a $({\hat{\G}}^F\times ({\hat{\Levi}}^F)^{\operatorname{opp}})$-variety.
\begin{enumerate}[label={\alph*)}]
\item $\Y^{\hat{\G}}_\U=\coprod_{g\in \hat{\G}^F / \G^F} g \Y_{\U}^{\G}={\hat{\G}}^F \times_{\G^F} \Y_\U^{\G}$,
\item $\Y^{\hat{\G}}_\U \cong (\hat{\G}^F \times (\hat{\Levi}^F)^{\operatorname{opp}}) \times_{\mathcal{D}} \Y_\U^{\G}$.

\end{enumerate}
\end{lemma}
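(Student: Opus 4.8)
The plan is to prove both decompositions by direct computation, exploiting the orbit structure of the $\hat{\G}^F$-action on $\Y_\U^{\hat{\G}}$ and then translating into the language of balanced products.

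\medskip

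\textbf{Part a).} First I would observe that $\hat{\G}^F$ acts on $\Y_\U^{\hat{\G}}$ by left translation and that the closed subgroup $\G\U \subseteq \hat{\G}$ (which is $F$-stable, being the preimage of the $F$-stable subgroup $\G\Para/\Para$... more precisely: $\G$ is $F$-stable normal and $\U \subseteq \Para$) gives a partition of $\hat{\G}/\U$ into the pieces $g\G\U/\U$ for $g$ ranging over $\hat{\G}^F/\G^F$ --- here one uses that $\hat{\G}^F$ permutes the connected-type pieces $g(\G/\U)$ and that $g(\G/\U)$ depends only on $g\G^F$ when we also remember $g \in \hat\G^F$, since $\G$ is normal so $g\G = \G g$. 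Concretely, for $g\U \in \Y_\U^{\hat\G}$ we have $g^{-1}F(g) \in \U F(\U) \subseteq \G$, hence $g^{-1}F(g) \in \G$; writing $g = g_0 h$ with $g_0 \in \hat\G^F$ a chosen coset representative and $h \in \G$ (possible since... actually one argues $g \in \hat\G$ with $g^{-1}F(g) \in \G$ forces $g\G$ to be an $F$-stable coset of $\G$ in $\hat\G$, i.e. $g \in \hat\G^F\G$ by Lang's theorem applied to the finite quotient $\hat\G/\G$), one gets $g\U \in g_0(\Y_\U^\G)$. This shows $\Y_\U^{\hat\G} = \coprod_{g \in \hat\G^F/\G^F} g\Y_\U^\G$, the union being disjoint because the pieces $g\G/\U$ are. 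The identification $\hat\G^F \times_{\G^F} \Y_\U^\G \xrightarrow{\sim} \Y_\U^{\hat\G}$, $(g,y)\mapsto gy$, is then a routine check: well-defined because $(gx, x^{-1}y)\mapsto gy$ for $x \in \G^F$; surjective by the decomposition just proved; injective because $gy = g'y'$ with $g,g' \in \hat\G^F$, $y,y'\in \Y_\U^\G$ forces $g^{-1}g' \in \G^F$ (as $y, y'$ lie in $\G/\U$) and then $(g,y) = (g'\cdot(g'^{-1}g), (g^{-1}g')y')$ are identified in the balanced product. One should note the right $\hat\Levi^F$-action is compatible throughout since $\hat\Levi$ normalizes $\U$.

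\medskip

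\textbf{Part b).} Here I would use Lemma \ref{extend}, which tells us $\Y_\U^\G$ is a $\mathcal{D}$-stable subvariety of $\hat\G/\U$, so the balanced product $(\hat\G^F \times (\hat\Levi^F)^{\mathrm{opp}}) \times_{\mathcal D} \Y_\U^\G$ makes sense, where $\mathcal D$ acts on $\hat\G^F\times(\hat\Levi^F)^{\mathrm{opp}}$ on the right by multiplication and on $\Y_\U^\G$ by its given action. The natural map sends $\big((x,y),\, g\U\big) \mapsto x g\U y^{-1} = xg y^{-1}\U$ (using $y \in \hat\Levi^F$ normalizes $\U$); I would check this is well-defined under the $\mathcal D$-action, lands in $\Y_\U^{\hat\G}$ (this is exactly the computation in the proof of Lemma \ref{extend}, with $y$ replaced by $y^{-1}$), and then verify bijectivity. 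Surjectivity: given $h\U \in \Y_\U^{\hat\G}$, by part a) write $h\U = g\tilde g\U$ with $\tilde g \in \hat\G^F$ a coset representative; then take $x = \tilde g$, $y = 1$, reduce $g\U$ into $\Y_\U^\G$ --- one must be slightly careful that every point of $\Y_\U^{\hat\G}$ is $\mathcal D$-equivalent to one coming from $(\{1\}\times\{1\})\times \Y_\U^\G$, which follows from part a) once one notes the left $\hat\G^F$-translates and the $\mathcal D$-orbits generate the same equivalence. For injectivity one tracks cosets: if $xgy^{-1}\U = x'g'y'^{-1}\U$ with $g\U, g'\U \in \Y_\U^\G$, then comparing which $\G^F$-coset the point lies in gives $x\G^F = x'\G^F$, and the definition $\mathcal D = (\G^F\times(\Levi^F)^{\mathrm{opp}})\Delta(\hat\Levi^F)$ lets one absorb the difference, showing the two preimages agree in the balanced product.

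\medskip

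\textbf{Main obstacle.} The routine parts are the well-definedness and the bookkeeping of left/right actions (keeping straight that $\hat\Levi$ normalizes $\U$, so right multiplication by $\hat\Levi^F$ preserves $\G/\U$ and $\Y_\U^\G$). The genuine content --- and the step I would be most careful with --- is establishing that $g \in \hat\G$ with $g^{-1}F(g) \in \U F(\U) \subseteq \G$ forces $g$ into $\hat\G^F\,\G$: this is the Lang--Steinberg argument in the finite group $\hat\G/\G$, namely the coset $g\G$ satisfies $F(g\G) = g\G$, so $g\G$ is $F$-fixed in $\hat\G/\G$, and since $H^1(F, \hat\G/\G)$ is trivial (or rather, more elementarily, every $F$-fixed coset contains an $F$-fixed representative because $\hat\G/\G$ is a finite group on which $F$ acts and Lang's theorem applies, or one simply notes $\hat\G^F$ surjects onto $(\hat\G/\G)^F$ when $\G$ is connected via Lang--Steinberg on $\G$). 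I would make sure to cite or reprove that $\hat\G^F \twoheadrightarrow (\hat\G/\G)^F$, since $\G$ is connected; this is the one place where connectedness of $\G$ (or at least of the relevant component group situation) enters and where the decomposition could fail if one is not careful about which group the Frobenius-fixed-point functor is being applied to. Everything else is diagram-chasing in the balanced product.
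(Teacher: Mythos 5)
Your overall strategy --- prove a) by showing that every point of $\Y_\U^{\hat\G}$ lies in an $\hat\G^F$-translate of $\Y_\U^{\G}$ and that the translates are indexed by $\hat\G^F/\G^F$, then deduce b) from a) together with the $\mathcal{D}$-stability of $\Y_\U^{\G}$ (Lemma \ref{extend}) and coset bookkeeping in the balanced product --- is the right one; the paper only cites an external source for the proof, but this direct computation is clearly the intended argument. The routine verifications (disjointness of the pieces $g\G/\U$, well-definedness on the balanced product, injectivity by comparing $\G^F$-cosets) are all correct, modulo the usual left/right conventions for the $(\hat{\Levi}^F)^{\operatorname{opp}}$-action (your map $((x,y),g\U)\mapsto xg\U y^{-1}$ should be $xg\U y$ with the paper's conventions, as in the proof of Lemma \ref{extend}).

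The one genuine gap sits exactly at the step you flag as delicate, and your two proposed justifications do not close it. The lemma is stated, and repeatedly used in the paper, for a normal subgroup $\G$ that is \emph{not} assumed connected (e.g. $\C_{\G}(Q)\trianglelefteq \mathrm{N}_{\G}(Q)$ in Theorems \ref{independencelocal} and \ref{Morita2}), so an argument valid only ``when $\G$ is connected'' is insufficient. Your alternative --- that $\hat\G/\G$ is a finite group to which Lang's theorem applies --- is wrong on both counts: $\hat\G/\G$ need not be finite (take $\G\trianglelefteq\tilde\G$ for a regular embedding), and Lang--Steinberg says nothing about disconnected or finite groups, where $H^1(F,\cdot)$ is genuinely nontrivial in general. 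The correct repair uses the specific shape of the defining condition: $\U$ is the unipotent radical of $\Para$, hence of $\Para^\circ$, so it is connected and $\U F(\U)\subseteq \G^\circ$. Thus for $g\U\in\Y_\U^{\hat\G}$ the element $a:=g^{-1}F(g)$ lies in the \emph{connected} group $\G^\circ$, and Lang--Steinberg applied to $\G^\circ$ yields $h\in\G^\circ$ with $h^{-1}F(h)=a$, whence $gh^{-1}\in\hat\G^F$ and $g\U=(gh^{-1})(h\U)$ with $h\U\in\Y_\U^{\G}$. With this substitution your proof of a) works for an arbitrary closed $F$-stable normal subgroup $\G$, and b) then follows as you indicate.
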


\begin{corollary}\label{Kunnethcoro}
Under the assumption of Lemma \ref{geometricgeneralversion2} we have
$$R\Gamma_c(\Y_\U^{\hat{\G}},\Lambda) \cong \Lambda[\hat{\G}^F\times (\hat{\Levi}^F)^{\operatorname{opp}}] \otimes_{\Lambda \mathcal{D}}^{\mathbb{L}} R\Gamma_c(\Y_\U^{\G},\Lambda)$$
in $\mathrm{D}^b(\Lambda[\hat{\G}^F \times (\hat{\Levi}^F)^{\operatorname{opp}}])$.
\end{corollary}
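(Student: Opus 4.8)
The plan is to deduce Corollary~\ref{Kunnethcoro} from Lemma~\ref{geometricgeneralversion2}(b) by applying the K\"unneth formula of Theorem~\ref{Kuenneth} to the twisted product appearing there. First I would set $H:=\G^F$, acting on $\Y_\U^{\G}$ on the right and on the finite set $\hat{\G}^F \times (\hat{\Levi}^F)^{\operatorname{opp}}$ (via $\mathcal{D}$, or equivalently via the left action of $\Delta(H)$ used in the definition of $\times_\mathcal{D}$) so that the variety $(\hat{\G}^F \times (\hat{\Levi}^F)^{\operatorname{opp}}) \times_{\mathcal{D}} \Y_\U^{\G}$ of Lemma~\ref{geometricgeneralversion2}(b) is literally of the form $\X \times_H \Y$ with $\X$ a finite (hence $0$-dimensional) $(\hat{\G}^F \times (\hat{\Levi}^F)^{\operatorname{opp}})$-$\mathcal{D}$-variety and $\Y=\Y_\U^{\G}$ an $H$-variety. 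Here I regard the finite set $\hat{\G}^F \times (\hat{\Levi}^F)^{\operatorname{opp}}$ as a variety on which $H=\G^F$ acts freely (since $\mathcal{D}$ contains $\Delta(\G^F)$ and $\G^F$ acts freely on $\hat{\G}^F$ by left translation), so the diagonal $H$-action on $\X \times \Y$ is free and in particular all point stabilizers are trivial, hence of invertible order in $\Lambda$.

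With this set-up the hypothesis of Theorem~\ref{Kuenneth} is satisfied, and it yields
$$R\Gamma_c(\X,\Lambda) \otimes_{\Lambda \G^F}^{\mathbb{L}} R\Gamma_c(\Y_\U^{\G},\Lambda) \cong R\Gamma_c\bigl(\X \times_{\G^F} \Y_\U^{\G},\Lambda\bigr)$$
in $\mathrm{D}^b(\Lambda[\hat{\G}^F \times (\hat{\Levi}^F)^{\operatorname{opp}}])$. The next step is to identify $R\Gamma_c(\X,\Lambda)$: since $\X=\hat{\G}^F \times (\hat{\Levi}^F)^{\operatorname{opp}}$ is a finite set, its compactly supported cohomology is concentrated in degree $0$ and equals the permutation module $\Lambda[\hat{\G}^F \times (\hat{\Levi}^F)^{\operatorname{opp}}]$, with its evident $(\hat{\G}^F \times (\hat{\Levi}^F)^{\operatorname{opp}})$-$\mathcal{D}$-bimodule structure; restricting the right action along $\Delta(\G^F) \subseteq \mathcal{D}$ recovers exactly the right $\Lambda\G^F$-module structure used in the tensor product. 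Thus $R\Gamma_c(\X,\Lambda) \cong \Lambda[\hat{\G}^F \times (\hat{\Levi}^F)^{\operatorname{opp}}]$ as a complex of $\Lambda[\hat{\G}^F \times (\hat{\Levi}^F)^{\operatorname{opp}}]$-$\Lambda \G^F$-bimodules, and combining with Lemma~\ref{geometricgeneralversion2}(b) (which gives $\X \times_{\G^F} \Y_\U^{\G} \cong \Y_\U^{\hat{\G}}$ as $(\hat{\G}^F \times (\hat{\Levi}^F)^{\operatorname{opp}})$-varieties) produces the claimed isomorphism, where I replace the tensor over $\Lambda\G^F$ by the tensor over $\Lambda\mathcal{D}$ after noting that $\Lambda[\hat{\G}^F \times (\hat{\Levi}^F)^{\operatorname{opp}}] \cong \Lambda[\hat{\G}^F \times (\hat{\Levi}^F)^{\operatorname{opp}}] \otimes_{\Lambda\G^F} \Lambda\mathcal{D}$ — equivalently, that inducing from $\Delta(\G^F)$ to $\mathcal{D}$ and then up agrees, by transitivity of induction, with inducing directly, so that $\Lambda[\hat{\G}^F\times(\hat{\Levi}^F)^{\operatorname{opp}}] \otimes_{\Lambda\mathcal{D}}^{\mathbb{L}}(-) \cong \Lambda[\hat{\G}^F\times(\hat{\Levi}^F)^{\operatorname{opp}}] \otimes_{\Lambda\G^F}^{\mathbb{L}} \operatorname{Res}_{\Lambda\G^F}^{\Lambda\mathcal{D}}(-)$ applied to $R\Gamma_c(\Y_\U^{\G},\Lambda)$, whose underlying $\Lambda\G^F$-module is what the left-hand tensor sees.

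The main obstacle, and the only point requiring genuine care, is the bookkeeping of the various (commuting) group actions so that the abstract K\"unneth isomorphism is seen to be one of bimodules over the right rings — in particular verifying that the right $\mathcal{D}$-action on $R\Gamma_c(\Y_\U^{\G},\Lambda)$ (which exists because $\Y_\U^{\G}$ is $\mathcal{D}$-stable in $\hat{\G}/\U$ by Lemma~\ref{extend}) is compatible, after restriction along $\Delta(\G^F)$, with the right $\G^F$-action used to form $\times_{\G^F}$, and that the resulting left action on the twisted product is the genuine $(\hat{\G}^F \times (\hat{\Levi}^F)^{\operatorname{opp}})$-action on $\Y_\U^{\hat{\G}}$. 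All of this is already packaged in Lemma~\ref{geometricgeneralversion2}(b); everything else is a formal consequence of Theorem~\ref{Kuenneth}, the computation of $\ell$-adic cohomology of a finite set, and transitivity of induction.
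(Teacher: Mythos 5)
Your overall idea — feed the twisted-product description of $\Y_\U^{\hat{\G}}$ into Theorem \ref{Kuenneth} and use that the first factor is a finite set whose compactly supported cohomology is the permutation module — is the right one, but you apply the K\"unneth formula over the wrong group, and neither the group-theoretic setup nor the change-of-rings step used to repair it is correct. First, $\Delta(\G^F)$ is \emph{not} a subgroup of $\mathcal{D}=(\G^F\times(\Levi^F)^{\operatorname{opp}})\Delta(\hat{\Levi}^F)$: an element $(g,g^{-1})$ lies in $\hat{\G}^F\times(\hat{\Levi}^F)^{\operatorname{opp}}$ only when $g\in\hat{\Levi}^F$, so $\mathcal{D}\cap\Delta(\hat{\G}^F)=\Delta(\hat{\Levi}^F)$ and "restricting along $\Delta(\G^F)\subseteq\mathcal{D}$" does not parse. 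Second, even with a legitimate copy of $\G^F$ inside $\mathcal{D}$, Lemma \ref{geometricgeneralversion2}(b) identifies $\Y_\U^{\hat{\G}}$ with the quotient of $(\hat{\G}^F\times(\hat{\Levi}^F)^{\operatorname{opp}})\times\Y_\U^{\G}$ by \emph{all} of $\mathcal{D}$, not by a copy of $\G^F$; the variety your K\"unneth application computes surjects onto $\Y_\U^{\hat{\G}}$ with fibres of cardinality $[\mathcal{D}:\G^F]=|\hat{\Levi}^F|$ and is not isomorphic to it. Third, the asserted identity $\Lambda[\hat{\G}^F\times(\hat{\Levi}^F)^{\operatorname{opp}}]\otimes^{\mathbb{L}}_{\Lambda\mathcal{D}}(-)\cong\Lambda[\hat{\G}^F\times(\hat{\Levi}^F)^{\operatorname{opp}}]\otimes^{\mathbb{L}}_{\Lambda\G^F}\operatorname{Res}(-)$ is false: evaluated on $N=\Lambda\mathcal{D}$ the two sides have $\Lambda$-ranks $|\hat{\G}^F|\,|\hat{\Levi}^F|$ and $|\hat{\G}^F|\,|\hat{\Levi}^F|\cdot[\mathcal{D}:\G^F]$ respectively. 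Transitivity of induction gives $\mathrm{Ind}_{\mathcal{D}}\circ\mathrm{Ind}^{\mathcal{D}}\cong\mathrm{Ind}$, but what your reduction needs is $\mathrm{Ind}^{\mathcal{D}}\circ\mathrm{Res}^{\mathcal{D}}\cong\mathrm{id}$, which fails.

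The argument becomes correct, and shorter, if you take $H=\mathcal{D}$ itself in Theorem \ref{Kuenneth}; this is what the paper does. The finite set $\hat{\G}^F\times(\hat{\Levi}^F)^{\operatorname{opp}}$ is a $(\hat{\G}^F\times(\hat{\Levi}^F)^{\operatorname{opp}})$-$\mathcal{D}$-variety by left and right multiplication, $\Y_\U^{\G}$ is a left $\mathcal{D}$-variety by Lemma \ref{extend}, and $\mathcal{D}$ acts freely on the first factor by right multiplication, hence freely on the product, so all point stabilizers are trivial and the hypothesis of Theorem \ref{Kuenneth} holds for any $\Lambda$. Since $R\Gamma_c$ of the finite set is $\Lambda[\hat{\G}^F\times(\hat{\Levi}^F)^{\operatorname{opp}}]$ concentrated in degree $0$, with its evident bimodule structure, the K\"unneth formula combined with Lemma \ref{geometricgeneralversion2}(b) yields the stated isomorphism directly, with no change of rings.
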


\begin{proof}
By Lemma \ref{geometricgeneralversion2} we have $\Y^{\hat{\G}}_\U \cong (\hat{\G}^F\times (\hat{\Levi}^F)^{\operatorname{opp}}) \times_{\mathcal{D}} \Y_\U^{\G}$ as $\hat{\G}^F\times (\hat{\Levi}^F)^{\operatorname{opp}}$-varieties. The group $\mathcal{D}$ acts freely by right multiplication on $\hat{\G}^F\times (\hat{\Levi}^F)^{\operatorname{opp}}$. Hence, it follows that $\mathcal{D}$ acts freely on $(\hat{\G}^F\times (\hat{\Levi}^F)^{\operatorname{opp}}) \times \Y_\U^{\G}$. Thus, Theorem \ref{Kuenneth} is applicable and we obtain 
$$R\Gamma_c(\Y_\U^{\hat{\G}},\Lambda) \cong \Lambda[\hat{\G}^F\times (\hat{\Levi}^F)^{\operatorname{opp}}] \otimes_{\Lambda \mathcal{D}}^{\mathbb{L}} R\Gamma_c(\Y_\U^{\G},\Lambda)$$
in $\mathrm{D}^b(\Lambda[\hat{\G}^F \times (\hat{\Levi}^F)^{\operatorname{opp}}])$.
\end{proof}

\subsection{Godement resolutions}\label{Godement}

Let $\X$ be a variety defined over an algebraic closure of $\mathbb{F}_p$ endowed with an action of a finite group $G$. By work of Rickard and Rouquier there exists an object $G\Gamma_c(\X,\Lambda)$ in $\mathrm{Ho}^b(\Lambda G\text{-} \mathrm{perm})$ which is a representative of $R\Gamma_c(\X,\Lambda) \in \mathrm{D}^b(\Lambda G)$, see \cite{RickardEtale} and \cite[Section 2]{Rouquier}. 
%
The advantage of the Rickard--Rouquier complex $G\Gamma_c(\X,\Lambda)$ is that it is a complex of $\ell$-permutation modules which is compatible with the Brauer functor. More precisely, if $Q$ is an $\ell$-subgroup of $G$ then we have a canonical isomorphism
$$\mathrm{Br}_Q(G \Gamma_c(\X,\Lambda)) \cong G \Gamma_c(\X^Q,k)$$
in $\mathrm{Ho}^b(k \mathrm{N}_G(Q))$, see \cite[Theorem 2.29]{Rouquier}. Building on this fundamental result, Bonnafé--Dat--Rouquier show the following:

\begin{lemma}\label{BrauerGodement}
Let $\G$ be a (non-necessarily connected) reductive group with parabolic subgroup $\Para$ and Levi decomposition $\Para=\Levi \ltimes \U$ such that $F(\Levi)=\Levi$. For an $\ell$-subgroup $Q$ of $\Levi^F$ we have
$$ \Br_{\Delta Q}(G\Gamma_c(\Y_{\U}^\G,\Lambda)) \cong G\Gamma_c(\Y_{\C_\U(Q)}^{\C_\G(Q)},k) $$ in $\mathrm{Ho}^b(k [\mathrm{N}_{\G^F \times \Levi^{\mathrm{opp}}}(\Delta Q )]),$ 
\end{lemma}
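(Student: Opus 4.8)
The statement asserts a Brauer-functor computation: $\Br_{\Delta Q}(G\Gamma_c(\Y_\U^\G,\Lambda)) \cong G\Gamma_c(\Y_{\C_\U(Q)}^{\C_\G(Q)},k)$ in $\mathrm{Ho}^b(k[\mathrm{N}_{\G^F\times\Levi^{\mathrm{opp}}}(\Delta Q)])$. The key input, already recalled in Section \ref{Godement}, is Rouquier's theorem that for a $G$-variety $\X$ and an $\ell$-subgroup $Q\leq G$ one has a canonical isomorphism $\Br_Q(G\Gamma_c(\X,\Lambda))\cong G\Gamma_c(\X^Q,k)$ in $\mathrm{Ho}^b(k\mathrm{N}_G(Q))$. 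So the strategy is: first apply this with $G=\G^F\times(\Levi^F)^{\mathrm{opp}}$, $\X=\Y_\U^\G$, and the $\ell$-subgroup $\Delta Q$, to get $\Br_{\Delta Q}(G\Gamma_c(\Y_\U^\G,\Lambda))\cong G\Gamma_c((\Y_\U^\G)^{\Delta Q},k)$ in $\mathrm{Ho}^b(k\mathrm{N}_{\G^F\times(\Levi^F)^{\mathrm{opp}}}(\Delta Q))$. Then the whole content reduces to a purely geometric identification of fixed-point varieties: $(\Y_\U^\G)^{\Delta Q}\cong \Y_{\C_\U(Q)}^{\C_\G(Q)}$ as $\mathrm{N}_{\G^F\times(\Levi^F)^{\mathrm{opp}}}(\Delta Q)$-varieties.

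\textbf{The geometric step.} An element $g\U\in\Y_\U^\G$ is fixed by $(z,z^{-1})\in\Delta Q$ (with $z\in Q\subseteq\Levi^F$) precisely when $zg\U z^{-1}=g\U$, i.e. $g^{-1}zg\in z\U$, equivalently $(g^{-1}zg)z^{-1}\in\U$; varying $z$ over $Q$, this says $g$ conjugates $Q$ into $Q\U$. Since $Q$ is an $\ell$-group and $\U$ a $p$-group with $p\neq\ell$, a Frattini/conjugacy argument (as in \cite[Proposition 3.4]{Dat} and Example \ref{normcent}) shows one may replace $g$ within its coset $g\U$ by a representative actually normalizing $Q$, i.e. $g\in\mathrm{N}_\G(Q)$. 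Combined with the defining condition $g^{-1}F(g)\in\U F(\U)$ which then forces $g^{-1}F(g)\in\C_\U(Q)F(\C_\U(Q))$, one identifies the fixed points with $\{g\C_\U(Q)\mid g\in\C_\G(Q),\ g^{-1}F(g)\in\C_\U(Q)F(\C_\U(Q))\}=\Y_{\C_\U(Q)}^{\C_\G(Q)}$, using that $\mathrm{N}_\G^\circ(Q)=\C_\G^\circ(Q)$ and that $\C_\Para(Q)=\C_\Levi(Q)\ltimes\C_\U(Q)$ is a Levi decomposition of a parabolic of the reductive group $\C_\G(Q)$ (Example \ref{normcent}). One must also check $(\U F(\U))^{\Delta Q}$-type conditions translate correctly; this is where the coprimality $p\neq\ell$ and solvability of $Q$ enter.

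\textbf{Equivariance and conclusion.} Having the variety isomorphism, I would check it is equivariant for the group $\mathrm{N}_{\G^F\times(\Levi^F)^{\mathrm{opp}}}(\Delta Q)=(\C_{\G^F}(Q)\times\C_{\Levi^F}(Q)^{\mathrm{opp}})\Delta(\mathrm{N}_{\Levi^F}(Q))$ — the formula recalled just before Theorem \ref{isomorphicdefect} — acting on the right-hand side $\Y_{\C_\U(Q)}^{\C_\G(Q)}$ via its left $\C_{\G^F}(Q)$- and right $\C_{\Levi^F}(Q)$-actions together with the $\mathrm{N}_{\Levi^F}(Q)$-action coming from conjugation (this last action is where working in the disconnected group $\mathrm{N}_\G(Q)$ rather than $\C_\G(Q)$ matters, since $\mathrm{N}_{\Levi^F}(Q)/\C_{\Levi^F}(Q)$ acts). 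Feeding this equivariant variety isomorphism into Rouquier's compatibility statement, applied to the Rickard--Rouquier complex $G\Gamma_c$, yields the claimed homotopy equivalence in $\mathrm{Ho}^b(k[\mathrm{N}_{\G^F\times\Levi^{\mathrm{opp}}}(\Delta Q)])$.

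\textbf{Main obstacle.} The essential difficulty is the fixed-point computation $(\Y_\U^\G)^{\Delta Q}\cong\Y_{\C_\U(Q)}^{\C_\G(Q)}$, specifically handling the reduction of a coset representative $g\U$ to one lying in $\mathrm{N}_\G(Q)$ and tracking that the Deligne--Lusztig condition descends correctly — this is exactly the kind of argument made by Bonnaf\'e--Dat--Rouquier in the connected setting, and here one must be careful that it still goes through with $\Para=\Levi\ltimes\U$ a parabolic of a possibly disconnected $\G$, using \cite[Proposition 3.4, Remark 3.5]{Dat}. Once this geometric identification is in place, the rest is a formal application of the Brauer-functor compatibility of $G\Gamma_c$.
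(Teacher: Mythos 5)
Your proposal is correct and is essentially the argument the paper relies on: the paper's proof is just the citation to \cite[Proposition 3.4(e)]{Dat} and \cite[Remark 3.5]{Dat}, and those results are proved exactly as you describe, by combining Rouquier's compatibility $\Br_{\Delta Q}(G\Gamma_c(\X,\Lambda))\cong G\Gamma_c(\X^{\Delta Q},k)$ with the equivariant fixed-point identification $(\Y_\U^\G)^{\Delta Q}\cong \Y_{\C_\U(Q)}^{\C_\G(Q)}$ obtained from the coprime conjugacy argument in $Q\ltimes\U$. The only point worth tightening is the last step of your geometric reduction: once $g$ is adjusted within $g\U$ to normalize $Q$, the fixed-point condition $g^{-1}zgz^{-1}\in\U$ together with $Q\cap\U=1$ forces $g$ to centralize $Q$, which is why the ambient group of the resulting Deligne--Lusztig variety is $\C_\G(Q)$ rather than $\mathrm{N}_\G(Q)$.
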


\begin{proof}
See \cite[Proposition 3.4(e)]{Dat} and \cite[Remark 3.5]{Dat}.
\end{proof}



The previous lemma can be used to show that the indecomposable summands of the components of the complex $ G\Gamma_c(\Y_\U^\G,\Lambda)^{\mathrm{red}}$ of $\Lambda[\G^F \times \mathrm{N}_{\G^F}(\Para,\Levi)^{\mathrm{opp}}]$-modules have a vertex contained in $\Delta \mathrm{N}_{\G^F}(\Para,\Levi)^{\mathrm{opp}}$, see \cite[Corollary 3.8]{Dat}. Using the proof of \cite[Lemma 4.3]{Rickard} one easily observes that the components of $ G\Gamma_c(\Y_\U^\G,\Lambda)^{\mathrm{red}}$ considered as $\Lambda[\G^F \times (\Levi^F)^{\mathrm{opp}}]$-modules are relatively $\Delta \Levi^F$-projective.

\subsection{Levi subgroups and duality}

We recall the classification of $F$-stable Levi subgroups of a connected reductive group $\G$. Fix an $F$-stable maximal torus $\T_0$ of $\G$ contained in an $F$-stable Borel subgroup $\mathbf{B}_0$ of $\G$. Let $\Phi$ be the root system of $\G$ relative to the torus $\T_0$ and $\Delta \subseteq \Phi$ the base of $\Phi$ associated to $\T_0 \subseteq \B_0$. By \cite[Proposition 4.3]{DM} the $\G^F$-conjugacy classes of $F$-stable Levi subgroups of $\G$ are classified by $F$-conjugacy classes of cosets $W_I w$, where $I \subseteq \Delta$ and $w \in W$ satisfies ${}^{w F} W_I = W_I$. More precisely, if $\Levi$ is an $F$-stable Levi subgroup of $\G$ of type $W_I w$ then there exists $g \in \G^F$ such that ${}^g \Levi = \Levi_I$ for some $I \subseteq \Delta$ and ${}^{g^{-1}} \T_0$ is a maximal torus of $\Levi$ of type $w= g^{-1} F(g) \T_0$. Here, $\Levi_I$ denotes the standard Levi subgroup of $\G$ associated to a subset $I$ of the base $\Delta$, see \cite[Section 12.2]{MT}.

An important property of duality is that it extends to Levi subgroups.
%

\begin{lemma}\label{bijLevidual}
Suppose that $(\G^\ast,\T_0^\ast,F^\ast)$ is in duality with $(\G,\T_0,F)$. Then the map which sends a Levi subgroup $\Levi$ of $\G$ of type $W_I w$ to a Levi subgroup $\Levi^\ast$ of $\G^\ast$ of type $W_I^\ast F^\ast(w^\ast)$ induces a bijection between the $\G^F$-conjugacy classes of $F$-stable Levi subgroups of $\G$ and the $(\G^\ast)^{F^\ast}$-conjugacy classes of $F^\ast$-stable Levi subgroups of $\G^\ast$. 
\end{lemma}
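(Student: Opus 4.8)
The plan is to reduce everything to the combinatorial classification of Levi subgroups recalled just before the statement, together with the standard dictionary between the root datum of $(\G,\T_0,F)$ and that of $(\G^\ast,\T_0^\ast,F^\ast)$. First I would recall that duality provides an isomorphism $\delta\colon X(\T_0)\xrightarrow{\sim} Y(\T_0^\ast)$ carrying the roots $\Phi$ to the coroots $\Phi^{\ast\vee}$ and the simple roots $\Delta$ to the simple coroots, and intertwining the actions of $F$ on $X(\T_0)$ and of $F^\ast$ on $Y(\T_0^\ast)$ in the sense that $\delta\circ F^{\mathrm{t}}=(F^\ast)^{\mathrm{t}}\circ\delta$ (up to the usual transpose/adjoint conventions); in particular it induces an anti-isomorphism $w\mapsto w^\ast$ of Weyl groups identifying $W$ with $W^\ast$ and sending $W_I$ to $W_I^\ast$ for each $I\subseteq\Delta$. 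This is exactly the data underlying the assignment $(I,w)\mapsto (I,F^\ast(w^\ast))$ in the statement, so the content of the lemma is that this assignment is well defined on $F$-conjugacy classes of cosets and is a bijection onto the corresponding set for $\G^\ast$.

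The key steps, in order, would be: (1) Given an $F$-stable Levi $\Levi$ of $\G$, choose $g\in\G^F$ with ${}^g\Levi=\Levi_I$ and set $w=g^{-1}F(g)\T_0\in W$, so that ${}^{wF}W_I=W_I$; the $\G^F$-class of $\Levi$ corresponds to the $F$-conjugacy class of $W_Iw$ in $N_W(W_I)/W_I$, by the cited \cite[Proposition 4.3]{DM}. (2) Transport the condition ${}^{wF}W_I=W_I$ through duality: since conjugation by $w$ on $W$ corresponds to conjugation by $(w^\ast)^{-1}$ on $W^\ast$ and $F$ corresponds to $F^\ast$, one checks that ${}^{F^\ast(w^\ast)F^\ast}W_I^\ast=W_I^\ast$, so $(I,F^\ast(w^\ast))$ genuinely defines an $F^\ast$-stable Levi $\Levi^\ast$ of $\G^\ast$. (3) Verify that $F$-conjugate cosets $W_Iw$ go to $F^\ast$-conjugate cosets $W_I^\ast F^\ast(w^\ast)$: if $w'=v^{-1}w\,{}^{wF}(v)$ with $v\in N_W(W_I)$ (so $W_Iw'$ is $F$-conjugate to $W_Iw$), then applying the anti-isomorphism and $F^\ast$ one obtains the analogous relation for $F^\ast(w'^\ast)$ and $F^\ast(w^\ast)$ in $N_{W^\ast}(W_I^\ast)/W_I^\ast$; this makes the map on classes well defined. (4) Finally, injectivity is immediate because the whole construction is reversible: duality between $\G^\ast$ and $\G$ is again a duality, and $(I,u)\mapsto(I,(F^\ast)^{-1}(u)^\ast)$ — read in the reverse direction — is an inverse assignment on cosets; surjectivity follows from the same classification \cite[Proposition 4.3]{DM} applied to $\G^\ast$, since every $F^\ast$-stable Levi of $\G^\ast$ has a type $(I,u)$ with ${}^{uF^\ast}W_I^\ast=W_I^\ast$, and such a pair is hit.

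The main obstacle I expect is purely bookkeeping: getting the conventions right for how duality intertwines $F$ with $F^\ast$ and $w$ with $w^\ast$, so that the twist $F^\ast(w^\ast)$ (rather than $w^\ast$ or $(w^\ast)^{-1}$) appears, and checking that the $F$-conjugacy relation is preserved under this anti-homomorphism composed with $F^\ast$. Once the identification of root data and the compatibility $\delta\circ F=F^\ast\circ\delta$ are pinned down precisely, every step is a short computation inside $N_W(W_I)/W_I$; there is no geometric input beyond \cite[Proposition 4.3]{DM} and the existence of a dual root datum. I would therefore organize the write-up around a single careful statement of the duality dictionary on $(X(\T_0),\Phi,\Delta,F)$ versus $(Y(\T_0^\ast),\Phi^{\ast\vee},\Delta^\vee,F^\ast)$ and then dispatch (1)–(4) in a few lines each.
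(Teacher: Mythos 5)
The paper gives no argument of its own here --- it simply cites \cite[Section 8.2]{MarcBook} --- and your sketch is exactly the standard proof contained in that reference: the parametrization of $\G^F$-classes of $F$-stable Levi subgroups by $F$-conjugacy classes of cosets $W_I w$ from \cite[Proposition 4.3]{DM}, transported through the anti-isomorphism of Weyl groups and the compatibility of $F$ with $F^\ast$ furnished by the duality of root data. The only slip is in your step (1): the element $g$ with ${}^g\Levi=\Levi_I$ must be taken in $\G$, not in $\G^F$ (otherwise $w=g^{-1}F(g)\T_0$ would be trivial); this typo is inherited from the paper's own recollection preceding the lemma and does not affect the argument.
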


\begin{proof}
See \cite[Section 8.2]{MarcBook}.
\end{proof}
\subsection{Isogenies}


Let $\G$ be a connected reductive group. Recall that an \textit{isogeny of algebraic groups} $\varphi:\G \to \G$ is a surjective homomorphism of algebraic groups with finite kernel. Let $\varphi: \G \to \G$ be an isogeny stabilizing a maximal torus $\T_0$ of $\G$. We write $X(\T_0)$ for the character group of $\T_0$ and $Y(\T_0)$ for the cocharacter group of $\T_0$, see \cite[Definition 3.4]{MT}. The morphism $\varphi$ induces a group homomorphism $\varphi:X(\T_0) \to X(\T_0)$ and its dual morphism $\varphi^\vee:Y(\T_0) \to Y(\T_0), \, y \mapsto \varphi \circ y$, which preserve the set of roots $\Phi(\T_0)$ resp. coroots $\Phi^\vee(\T_0)$.
We will now define what it means for isogenies to be in duality with each other.

\begin{definition}\label{isogeniesdualdef}
Suppose that $(\G,\T_0,F)$ is in duality with $(\G^\ast,\T_0^\ast,F^\ast)$. We say that isogenies $\sigma: \G \to \G$ and $\sigma^\ast: \G^\ast \to \G^\ast$ are \textit{in duality with each other} if there exist $ g\in \G$ and $h \in \G^\ast$ such that $\sigma_0:=g \sigma$ stabilizes $\T_0$ (resp. $\sigma_0^\ast:=h \sigma^\ast$ stabilizes $\T_0^\ast$) and $\delta \circ \sigma_0= (\sigma_0^\ast)^\vee \circ \delta$ on $Y(\T_0^\ast)$.
\end{definition}

Note that this means that dual isogenies are only defined up to inner automorphisms of $\G$ respectively $\G^\ast$. The following remark is crucial for working with automorphisms of finite groups of Lie type, see also \cite[Section 2]{Navarro} and the proof of \cite[Proposition 2.2]{Spaeth}. 

\begin{remark}\label{dualisogeny}
Recall that we have fixed a pair $(\T_0,\B_0)$ consisting of an $F$-stable maximal torus $\T_0$ of $\G$ contained in an $F$-stable Borel subgroup $\mathbf{B}_0$ of $\G$. Since $(\sigma(\T_0),\sigma(\mathbf{B}_0))$ is again such a pair it follows that ${}^g (\T_0,\B_0)=(\sigma(\T_0),\sigma(\mathbf{B}_0))$ for some $g \in \G^F$. Hence, we may assume that $\sigma$ stabilizes the pair $(\T_0,\B_0)$. Thus, the isogeny theorem (see \cite[Theorem 9.6.2]{Springer}) together with \cite[Lemma 5.5]{Taylor} shows that there exists a bijective morphism $\sigma^\ast: \G^\ast \to \G^\ast$ in duality with $\sigma$. Moreover, since $\sigma^\ast$ is unique up to inner automorphism we can choose $\sigma^\ast$ such that $\sigma^\ast F^\ast=F^\ast \sigma^\ast$. (We first have $t \sigma^\ast  F^\ast=F^\ast \sigma^\ast$ for some $t \in \T_0^\ast$. Then by Lang's theorem there exists $t_0 \in \T_0^\ast$ such that $t_0 \sigma^\ast$ commutes with $F^\ast$.) The isogeny $\sigma^\ast$ with these properties is then unique up to conjugation with elements of $\mathcal{L}^{-1}_{F^\ast}(\mathrm{Z}(\G))$, where $\mathcal{L}_{F^\ast}: \G^\ast \to \G^\ast,\, x \mapsto x^{-1} F^\ast(x),$ is the Lang map of $F^\ast$ on $\G^\ast$.
\end{remark}

\begin{corollary}\label{dualitysigma}
Let $\sigma: \G \to \G$ be a bijective morphism with $\sigma \circ F= F \circ \sigma$ and $\sigma^\ast: \G^\ast \to \G^\ast$ be a dual isogeny with $\sigma^\ast \circ F^\ast= F^\ast \circ \sigma^\ast$. Under the bijection in Lemma \ref{bijLevidual}, the set of $\sigma$-stable $\G^F$-conjugacy classes of $F$-stable Levi subgroups of $\G$ corresponds to the set of $\sigma^\ast$-stable $(\G^\ast)^{F^\ast}$-conjugacy classes of $F^\ast$-stable Levi subgroups of $\G^\ast$. 
\end{corollary}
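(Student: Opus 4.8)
The plan is to show that the square formed by the bijection of Lemma~\ref{bijLevidual} and the maps induced by $\sigma$ and $\sigma^\ast$ on conjugacy classes of Levi subgroups commutes. Since the two vertical (duality) arrows are bijections, this immediately gives that a $\G^F$-conjugacy class of $F$-stable Levi subgroups is $\sigma$-stable exactly when its image under the Lemma~\ref{bijLevidual} bijection is $\sigma^\ast$-stable, which is the assertion.

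First I would normalise the data. Replacing $\sigma$ by ${}^g\sigma$ for a suitable $g\in\G^F$ affects neither the hypothesis $\sigma F=F\sigma$ nor the set of $\sigma$-stable $\G^F$-conjugacy classes of $F$-stable Levi subgroups, so by Remark~\ref{dualisogeny} I may assume $\sigma$ stabilises $(\T_0,\B_0)$; likewise I take $\sigma^\ast$ stabilising $(\T_0^\ast,\B_0^\ast)$. The residual ambiguity of $\sigma^\ast$ -- conjugation by an element of $\mathcal{L}_{F^\ast}^{-1}(\Z(\G^\ast))$ -- is harmless: since $\Z(\G^\ast)$ lies in every Levi subgroup, a short Lang--Steinberg argument inside the connected group $\Levi^\ast$ shows that conjugating by such an element carries each $F^\ast$-stable Levi subgroup to a $(\G^\ast)^{F^\ast}$-conjugate one, so the induced permutation of $(\G^\ast)^{F^\ast}$-conjugacy classes is unchanged. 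With these choices $\sigma$ acts on $W$ preserving $\Delta$ and commuting with the $F$-action on $W$, and one checks, by transporting through $\sigma$ the twisting datum $w=g^{-1}F(g)\T_0$ of an $F$-stable Levi subgroup $\Levi$ of type $W_Iw$ (with $g\in\G^F$ and ${}^g\Levi=\Levi_I$), that $\sigma(\Levi)$ has type $W_{\sigma(I)}\sigma(w)$. Hence on $\G^F$-conjugacy classes $\sigma$ acts by $[W_Iw]\mapsto[W_{\sigma(I)}\sigma(w)]$ on $F$-conjugacy classes, and symmetrically $\sigma^\ast$ acts by $[W_{I^\ast}^\ast w^\ast]\mapsto[W_{\sigma^\ast(I^\ast)}^\ast\sigma^\ast(w^\ast)]$.

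It then remains to compare these two actions through the bijection $[W_Iw]\mapsto[W_I^\ast F^\ast(w^\ast)]$ of Lemma~\ref{bijLevidual}. The key input is Definition~\ref{isogeniesdualdef}: after the above normalisation $\sigma$ and $\sigma^\ast$ intertwine the lattice maps $X(\T_0)\leftrightarrow Y(\T_0^\ast)$, which is precisely the statement that $(\sigma,\sigma^\ast)$ is an automorphism of the global duality datum. Unwinding this in terms of the identification of $W$ with $W^\ast$ that underlies the duality of Levi types shows that $\sigma$ acting on $(I,w)$ matches $\sigma^\ast$ acting on $(I^\ast,w^\ast)$, the extra $F^\ast$-twist in $(I^\ast,F^\ast(w^\ast))$ being irrelevant because $\sigma^\ast F^\ast=F^\ast\sigma^\ast$. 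Consequently $[W_Iw]$ is $\sigma$-stable iff $(I,w)$ is $F$-conjugate to $(\sigma(I),\sigma(w))$ iff $(I^\ast,w^\ast)$ is $F^\ast$-conjugate to $(\sigma^\ast(I^\ast),\sigma^\ast(w^\ast))$ iff $[W_I^\ast F^\ast(w^\ast)]$ is $\sigma^\ast$-stable, which is the desired commutativity.

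The step I expect to be the main obstacle is precisely this last compatibility: fixing the conventions relating $W$ to $W^\ast$ and the duality of Levi types, and verifying that the lattice identity of Definition~\ref{isogeniesdualdef} really does translate into the matching of the $\sigma$- and $\sigma^\ast$-actions there, while keeping track of the inverse/transpose and of the $F^\ast$-twist. A more invariant way to organise this, sidestepping the explicit Weyl-group bookkeeping, is to characterise the bijection of Lemma~\ref{bijLevidual} by the rule $[\Levi]\mapsto[\Levi^\ast]$, where $(\Levi,\T_{\Levi},F)$ is in duality with $(\Levi^\ast,\T_{\Levi}^\ast,F^\ast)$ for a suitable maximal torus $\T_{\Levi}$ of $\Levi$, and then to prove directly that $(\sigma,\sigma^\ast)$ carries such a dual quadruple to another one; the work then reduces to checking that $\sigma$ and $\sigma^\ast$ send compatible torus data to compatible torus data (handling the inner adjustments of Definition~\ref{isogeniesdualdef}, which only move things within a $\G^F$- resp. $(\G^\ast)^{F^\ast}$-conjugacy class), once more by Definition~\ref{isogeniesdualdef}.
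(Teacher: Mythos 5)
Your proposal is correct and follows essentially the same route as the paper: normalise $\sigma$ and $\sigma^\ast$ via Remark \ref{dualisogeny} and Definition \ref{isogeniesdualdef}, deduce the compatibility $w^\ast=\sigma^\ast(\sigma(w)^\ast)$ of the induced Weyl group actions (the paper cites \cite[Proposition 4.3.2]{Carter} for this), and conclude by the chain of equivalences between $F$-conjugacy of $\sigma(W_Iw)$ with $W_Iw$ and $F^\ast$-conjugacy of $\sigma^\ast(W_I^\ast F^\ast(w^\ast))$ with $W_I^\ast F^\ast(w^\ast)$. Your extra care about the residual ambiguity of $\sigma^\ast$ up to $\mathcal{L}_{F^\ast}^{-1}(\mathrm{Z}(\G^\ast))$-conjugation is a sound addition but not a departure from the paper's argument.
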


\begin{proof}
As in Remark \ref{dualisogeny} we may assume without loss of generality that $\sigma$ stabilizes the pair $(\T_0,\B_0)$. We may also assume that $\sigma^\ast:\G^\ast \to \G^\ast$ satisfies $\delta \circ \sigma= (\sigma^\ast)^\vee \circ \delta$ on $Y(\T_0^\ast)$, see Definition \ref{isogeniesdualdef}. In particular, this yields $w^\ast=\sigma^\ast(\sigma(w)^\ast)$ for all $w \in W$ (same proof as in \cite[Proposition 4.3.2]{Carter}).

Observe that the $\G^F$-conjugacy class of an $F$-stable Levi subgroup of type $W_I w$ is $\sigma$-stable if and only if $\sigma(W_{I} w)$ is $F$-conjugate to $W_I w$. This is equivalent to $\sigma^\ast(W_{I}^\ast F^\ast(w^\ast))$ being $F^\ast$-conjugate to $W_I^\ast F^\ast(w^\ast)$. The latter is now equivalent to the $(\G^\ast)^{F^\ast}$-conjugacy class of $F^\ast$-stable Levi subgroups of $\G^\ast$ associated to $W_I^\ast F^\ast(w^\ast)$ being $\sigma^\ast$-stable. This gives the claim. 
\end{proof}

\subsection{Lusztig series for disconnected reductive groups}

We give an elementary description of Lusztig series for disconnected reductive groups introduced in \cite{Dat}.

Let $\G$ be a non-necessarily connected reductive group. Note that the maximal tori of $\G$ are the maximal tori of $\G^\circ$. As in the case of connected reductive groups, we denote by $\nabla(\G,F)$ the set of pairs $(\T,\theta)$ where $\T$ is an $F$-stable maximal torus of $\G$ and $ \theta \in \Irr(\T^F)$ is an irreducible character of $\T^F$. We denote by $\nabla_{\ell'}(\G,F)$ the subset of $\nabla(\G,F)$ consisting of the pairs $(\T,\theta)$ such that the order of $\theta$ is coprime to $\ell$. Note that $\Irr(\T^F)_{\ell'}$ can be identified with the set of characters $\T^F \to \Lambda^\times$ of $\ell'$-order. We denote by $e_\theta \in \mathrm{Z}(\Lambda \T^F)$ the unique central primitive idempotent of $\Lambda \T^F$ with $\theta(e_\theta) \neq 0$. 

\begin{definition}
We say that two pairs $(\T_1,\theta_1) \in \nabla(\G,F)$ and $(\T_2,\theta_2) \in \nabla(\G,F)$ are \textit{rationally conjugate} if there exists some $t \in \mathrm{N}_{\G^F}(\T_1)$ such that $(\T_1,{}^t \theta_1)$ and $(\T_2,\theta_2)$ are rationally conjugate in $\G^\circ$. We write $\nabla(\G,F) / \equiv $ for the set of equivalence classes under rational conjugation. 
\end{definition}


With this in mind, we can now state the following definition from \cite[4.D.]{Dat}.

\begin{definition}\label{Def2}
Let $\mathcal{X} \subseteq \nabla_{\ell'}(\G,F)$ be a rational series of $\G$. We denote by $\mathcal{C}_{\mathcal{X}}$ the thick subcategory of $\Lambda \G^F\text{-} \mathrm{perf}$ generated by the complexes $R\Gamma_c(\Y_\B) e_{\theta}$, with $(\T,\theta) \in \mathcal{X}$ and $\B$ a Borel subgroup of $\G^\circ$ with maximal torus $\T$. We denote by $e_{\mathcal{X}} \in \Lambda \G^F$ the central idempotent such that $\mathcal{C}_{\mathcal{X}}=\Lambda \G^F e_{\mathcal{X}}\text{-} \mathrm{perf}$.
\end{definition}

Note that the existence of the idempotents $e_{\mathcal{X}} \in \mathrm{Z}(\Lambda \G^F)$ is ensured by \cite[Theorem 4.12]{Dat}.

\begin{remark}
Suppose that $\G$ is a connected reductive group. We let $s \in (\G^\ast)^{F^\ast}$ be a semisimple element of $\ell'$-order such that its $(\G^\ast)^{F^\ast}$-conjugacy class is associated to the rational series $\mathcal{X} \in \nabla_{\ell'}( \G,F)$. Then we have $e_s^{\G^F}=e_{\mathcal{X}}$, see \cite[Remark 9.3]{BoRo}.
\end{remark}


\begin{lemma}\label{discratl}
Let $\mathcal{X} \subseteq \nabla_{\ell'}(\G,F)$ be a rational series of $\G$ and choose a rational series $\mathcal{X}^\circ$ of $\G^\circ$ such that $\mathcal{X}^\circ \subseteq \mathcal{X}$. Then we have $e_\mathcal{X}=\Tr^{\G^F}_{\mathrm{N}_{\G^F}(e_{\mathcal{X}^\circ})}(e_{\mathcal{X}^\circ})$. 
\end{lemma}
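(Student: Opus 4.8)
The statement asserts that for a (possibly disconnected) reductive group $\G$ with rational series $\mathcal{X}$ and a chosen rational series $\mathcal{X}^\circ$ of the connected component $\G^\circ$ contained in $\mathcal{X}$, we have $e_{\mathcal{X}} = \Tr^{\G^F}_{\mathrm{N}_{\G^F}(e_{\mathcal{X}^\circ})}(e_{\mathcal{X}^\circ})$. The plan is to show first that $\Tr^{\G^F}_{\mathrm{N}_{\G^F}(e_{\mathcal{X}^\circ})}(e_{\mathcal{X}^\circ})$ is a well-defined central idempotent of $\Lambda\G^F$, and then to identify it with $e_{\mathcal{X}}$ via the characterization of $e_{\mathcal{X}}$ as the idempotent cutting out the thick subcategory $\mathcal{C}_{\mathcal{X}}$ of Definition \ref{Def2}.

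For the first point, I would argue that the $\G^F$-orbit of $e_{\mathcal{X}^\circ}$ (under conjugation) consists of pairwise orthogonal idempotents. Indeed, the idempotent $e_{\mathcal{X}^\circ} \in \mathrm{Z}(\Lambda\G^{\circ F})$ is a sum of the block idempotents $e^{\G^{\circ F}}_{s'}$ over the $\ell'$-semisimple classes in the rational series $\mathcal{X}^\circ$, and distinct rational series of $\G^{\circ}$ give orthogonal idempotents; since conjugation by an element of $\G^F$ permutes the rational series of $\G^{\circ F}$ (it normalizes $\G^{\circ F}$), either ${}^g e_{\mathcal{X}^\circ} = e_{\mathcal{X}^\circ}$ or ${}^g e_{\mathcal{X}^\circ}$ and $e_{\mathcal{X}^\circ}$ are supported on disjoint sets of $\G^{\circ F}$-blocks, hence orthogonal. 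Thus $\Tr^{\G^F}_{\mathrm{N}_{\G^F}(e_{\mathcal{X}^\circ})}(e_{\mathcal{X}^\circ}) = \sum_{g \in \G^F/\mathrm{N}_{\G^F}(e_{\mathcal{X}^\circ})} {}^g e_{\mathcal{X}^\circ}$ is a sum of orthogonal idempotents in $\mathrm{Z}(\Lambda\G^{\circ F})$ that is manifestly $\G^F$-invariant, hence lies in $\mathrm{Z}(\Lambda\G^F)$ and is idempotent.

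For the identification with $e_{\mathcal{X}}$, I would use that $e_{\mathcal{X}}$ is characterized by $\Lambda\G^F e_{\mathcal{X}}\text{-}\mathrm{perf} = \mathcal{C}_{\mathcal{X}}$, the thick subcategory of $\Lambda\G^F\text{-}\mathrm{perf}$ generated by the complexes $R\Gamma_c(\Y_\B)e_\theta$ with $(\T,\theta) \in \mathcal{X}$ and $\B$ a Borel of $\G^\circ$ containing $\T$. The key observation is that these generating complexes, as complexes of $\Lambda\G^F$-modules, are obtained by induction from $\Lambda\G^{\circ F}$: since $\Y_\B = \Y^{\G^\circ}_\U$ only sees $\G^\circ$, one has $R\Gamma_c(\Y_\B,\Lambda) \cong \Lambda\G^F \otimes_{\Lambda\G^{\circ F}} R\Gamma_c(\Y_\B,\Lambda)$ in $\mathrm{D}^b(\Lambda\G^F)$ (because $\G^F/\G^{\circ F}$ acts freely by translation on a disjoint union of copies, in the style of Lemma \ref{geometricgeneralversion2}(a)). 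Therefore the generators of $\mathcal{C}_{\mathcal{X}}$ are exactly $\Ind^{\G^F}_{\G^{\circ F}}$ applied to the generators of the $\mathcal{C}_{\mathcal{X}^\circ}$'s for the various rational series $\mathcal{X}^\circ$ of $\G^\circ$ lying in $\mathcal{X}$. Since $\mathcal{X}$ is a single rational series of $\G$, it decomposes as the union of a single $\G^F$-orbit of rational series of $\G^{\circ}$, so all these $\mathcal{X}^\circ$ are $\G^F$-conjugate and their idempotents are exactly the $\G^F$-conjugates of $e_{\mathcal{X}^\circ}$. It then follows that $\mathcal{C}_{\mathcal{X}}$ is the thick subcategory of $\Lambda\G^F\text{-}\mathrm{perf}$ corresponding to the idempotent $\sum_{g} {}^g e_{\mathcal{X}^\circ} = \Tr^{\G^F}_{\mathrm{N}_{\G^F}(e_{\mathcal{X}^\circ})}(e_{\mathcal{X}^\circ})$, and by uniqueness of the idempotent attached to a thick subcategory (the statement after Definition \ref{Def2}, relying on \cite[Theorem 4.12]{Dat}) we conclude $e_{\mathcal{X}} = \Tr^{\G^F}_{\mathrm{N}_{\G^F}(e_{\mathcal{X}^\circ})}(e_{\mathcal{X}^\circ})$.

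The main obstacle I anticipate is making precise the compatibility of the thick-subcategory description with induction: one needs that $\Lambda\G^F e\text{-}\mathrm{perf}$, for $e = \Tr^{\G^F}_{\mathrm{N}_{\G^F}(e_{\mathcal{X}^\circ})}(e_{\mathcal{X}^\circ})$, is exactly the thick subcategory generated by $\Ind^{\G^F}_{\G^{\circ F}}$ of the perfect complexes lying in $\Lambda\G^{\circ F}e_{\mathcal{X}^\circ}\text{-}\mathrm{perf}$ and its conjugates. This should follow from the fact that $\Lambda\G^{\circ F} e_{\mathcal{X}^\circ}$ is a direct factor of $\Res^{\G^F}_{\G^{\circ F}}\Lambda\G^F e$, together with standard adjunction ($\Ind \dashv \Res$) arguments showing that induction from a direct factor generates the corresponding direct factor upstairs; but one must be careful that $e$ is not primitive and that the orbit structure is correctly accounted for. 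An alternative, possibly cleaner route is to bypass the categorical description entirely and instead use a character-theoretic argument: show directly that both idempotents act the same way on $\Irr(\Lambda\G^F)$, i.e. that $\Irr(\Lambda\G^F e_{\mathcal{X}})$ consists exactly of the constituents of $\Ind^{\G^F}_{\G^{\circ F}}$ of characters lying in $\mathcal{X}^\circ$ (and its conjugates), which is how Lusztig series for disconnected groups are set up in \cite[Section 4]{Dat} in the first place.
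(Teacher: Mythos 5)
Your main step is the same as the paper's: the generators $R\Gamma_c(\Y_\B^{\G})e_\theta$ of $\mathcal{C}_{\mathcal{X}}$ are induced from $(\G^\circ)^F$, namely $R\Gamma_c(\Y^{\G}_{\B})e_\theta \cong \mathrm{Ind}_{(\G^\circ)^F}^{\G^F}(R\Gamma_c(\Y^{\G^\circ}_{\B})e_\theta)$, and conjugating the datum $(\T,\theta,\B)$ by $t\in\mathrm{N}_{\G^F}(\T)$ only moves $e_{\mathcal{X}^\circ}$ within its $\G^F$-orbit; your orthogonality argument for the conjugates of $e_{\mathcal{X}^\circ}$ is also correct. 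Where you diverge is the endgame. You propose to show that $\mathcal{C}_{\mathcal{X}}$ is \emph{exactly} the thick subcategory cut out by $e'_{\mathcal{X}}:=\Tr^{\G^F}_{\mathrm{N}_{\G^F}(e_{\mathcal{X}^\circ})}(e_{\mathcal{X}^\circ})$, and you correctly identify that the "generation" direction (that induction of the generators of $\mathcal{C}_{\mathcal{X}^\circ}$ and its conjugates generates all of $\Lambda\G^F e'_{\mathcal{X}}\text{-}\mathrm{perf}$) is the delicate point. The paper avoids this entirely: it only proves the easy containment $\mathcal{C}_{\mathcal{X}}\subseteq \Lambda\G^F e'_{\mathcal{X}}\text{-}\mathrm{perf}$, which gives $e_{\mathcal{X}}=e_{\mathcal{X}}e'_{\mathcal{X}}$, and then observes that $1=\sum_{\mathcal{Z}} e_{\mathcal{Z}}=\sum_{\mathcal{Z}} e'_{\mathcal{Z}}$ are two decompositions into orthogonal central idempotents indexed by the same set $\nabla_{\ell'}(\G,F)/\equiv$ (the first by \cite[Theorem 4.12]{Dat}, the second because the $e_{\mathcal{X}^\circ}$ over all rational series of $\G^\circ$ already partition unity and the trace just groups them into $\G^F$-orbits). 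One containment for every $\mathcal{Z}$ plus this counting forces $e_{\mathcal{Z}}=e'_{\mathcal{Z}}$ for all $\mathcal{Z}$. So the obstacle you flag is real for your formulation but unnecessary: replace "equality of thick subcategories" by "containment for each series, summed over all series," and your proof closes without any adjunction or character-theoretic detour.
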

\begin{proof}
Write  $e'_{\mathcal{X}}:=\sum_{g \in \G^F/\mathrm{N}_{\G^F}(e_{\mathcal{X}^\circ})} {}^g e_{\mathcal{X}^\circ}$. 
Let $(\T,\theta) \in \mathcal{X}$ and $\B$ be a Borel subgroup of $\G^\circ$ with maximal torus $\T$. By \cite[(3.1)]{Dat} we have $$\mathrm{Ind}_{(\G^\circ)^F}^{\G^F}( R \Gamma_c(\Y^{\G^\circ}_\B) e_\theta) \cong R \Gamma_c(\Y^{\G}_{\B}) e_{\theta} \cong \mathrm{Ind}_{(\G^\circ)^F}^{\G^F} (R \Gamma_c(\Y^{\G^\circ}_{{}^t \B}) e_{{}^t \theta})$$ for any $t\in \mathrm{N}_{\G^F}(\T)$. Therefore, the generators of $\mathcal{C}_{\mathcal{X}}$ lie inside $\Lambda \G^F e'_{\mathcal{X}}\text{-} \mathrm{perf}$. Thus, $\mathcal{C}_{\mathcal{X}}$ is a subcategory of $\Lambda \G^F  e'_{\mathcal{X}}\text{-} \mathrm{perf}$ and we have $e_\mathcal{X} e'_\mathcal{X} \neq 0$. By \cite[Theorem 4.12]{Dat} it follows that we have two decompositions
$$1=\sum_{\mathcal{Z}\in \nabla_{\ell'}(\G,F)/\equiv} e_\mathcal{Z}= \sum_{\mathcal{Z}\in \nabla_{\ell'}(\G,F)/\equiv} e'_{\mathcal{Z}}.$$
into orthogonal central idempotents. From this we deduce that $e_{\mathcal{X}}=e'_{\mathcal{X}}$.
\end{proof}

We recall the definition of (super)-regular series, see \cite[Section 11.4]{BoRo} and in particular \cite[Lemma 11.6]{BoRo}. 

\begin{definition}\label{superregular}
Let $\G$ be a connected reductive group and $\Levi$ be a $F$-stable Levi subgroup of $\G$. We say that the rational series of $(\Levi,F)$ associated to the conjugacy class of the semisimple element $s\in (\Levi^\ast)^{F^\ast}$ is $(\G,\Levi)$\textit{-regular} (respectively \textit{superregular}) if $\C^\circ_{\G^\ast}(s)\subseteq \Levi^\ast$ (respectively $\C_{\G^\ast}(s) \subseteq \Levi^*$).
\end{definition}

This notion can now be naturally extended to rational series of disconnected reductive groups. Let $\G$ be a reductive group. If $\mathcal{X}$ is a rational series of $(\Levi,F)$ we say that $\mathcal{X}$ is \textit{$(\G^\circ,\Levi^\circ)$-regular} (respectively \textit{superregular}) if any (and hence every) rational series of $(\Levi^\circ,F)$ contained in $\mathcal{X}$ is $(\G^\circ,\Levi^\circ)$-regular (respectively superregular). We then say that the central idempotent $e_{\mathcal{X}}$ associated to $\mathcal{X}$ is $(\G^\circ,\Levi^ \circ)$-\textit{(super)-regular}.

\subsection{Lusztig series and Brauer morphism}

Let $\G$ be a reductive group and $Q$ a finite $\ell$-subgroup of $\G^F$. Then we consider the map 
$$i_Q^{\G}:\nabla_{\ell'}(\C_{\G}(Q),F)/\equiv \to \nabla_{\ell'}(\G,F)/\equiv$$
as defined in \cite[Theorem 4.14]{Dat}. By \cite[Theorem 4.14]{Dat} for any rational series $\mathcal{Y} \subseteq \nabla_{\ell'}(\G,F)$ we have
$$\operatorname{br}^{\G^F}_Q(e_{\mathcal{Y}})=\displaystyle\sum_{\mathcal{Z} \in(i_{Q}^{\G})^{-1}(\mathcal{Y})} e_{\mathcal{Z}}.$$

\begin{lemma}\label{brauerseries}
Let $\Levi$ be an $F$-stable Levi subgroup of $\G$ and let $\mathcal{X} \subseteq \nabla_{\ell'}(\Levi,F)$ be a $(\G^\circ,\Levi^\circ)$-(super)-regular rational series. Then for any $\ell$-subgroup $Q$ of $\Levi^F$ we have that
$$\operatorname{br}^{\Levi^F}_Q(e_{\mathcal{X}})=\displaystyle\sum_{\mathcal{Z} \in(i_{Q}^{\Levi})^{-1}(\mathcal{X})} e_{\mathcal{Z}}$$
is a decomposition into central orthogonal $(\C^\circ_\G(Q),\C^\circ_\Levi(Q))$-(super-)regular idempotents.
\end{lemma}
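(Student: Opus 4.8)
The plan is to combine the general formula for the Brauer morphism applied to series idempotents (stated before the lemma, coming from \cite[Theorem 4.14]{Dat}) with the characterization of $(\G^\circ,\Levi^\circ)$-(super)regular series in terms of centralizers. First I would apply the displayed formula to $\mathcal{Y}=\mathcal{X}$ inside the group $\Levi$: for any $\ell$-subgroup $Q$ of $\Levi^F$ we immediately get
$$\br^{\Levi^F}_Q(e_{\mathcal{X}})=\sum_{\mathcal{Z}\in (i_Q^{\Levi})^{-1}(\mathcal{X})} e_{\mathcal{Z}},$$
and since the idempotents $e_{\mathcal{Z}}$ for distinct rational series $\mathcal{Z}$ of $\C_{\Levi}(Q)$ are orthogonal (by \cite[Theorem 4.12]{Dat}), this is automatically an orthogonal decomposition. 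So the only real content is to show each summand $e_{\mathcal{Z}}$ with $\mathcal{Z}\in(i_Q^{\Levi})^{-1}(\mathcal{X})$ is $(\C^\circ_\G(Q),\C^\circ_\Levi(Q))$-(super)regular.

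For that step I would unwind the definition of the map $i_Q^{\Levi}$ from \cite[Theorem 4.14]{Dat}: a series $\mathcal{Z}$ of $\C_{\Levi}(Q)$ maps to $\mathcal{X}$ precisely when the semisimple labels match up appropriately. Concretely, passing to connected components and using the compatibility of duality with centralizers, if $s\in(\Levi^\ast)^{F^\ast}$ labels $\mathcal{X}$, then $\mathcal{Z}$ is labelled (after an appropriate identification of $\C^\circ_\Levi(Q)$ with a reductive group in duality with a subgroup of $\Levi^\ast$) by a semisimple element $t$ lying in the relevant preimage, and the condition $i_Q^\Levi(\mathcal{Z})=\mathcal{X}$ forces $t$ to be $(\C_\Levi(Q))^{F}$-conjugate into a twist of $s$. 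I would then use the hypothesis $\C^\circ_{\G^\ast}(s)\subseteq\Levi^\ast$ (resp.\ $\C_{\G^\ast}(s)\subseteq\Levi^\ast$): since centralizers of semisimple elements behave well under passing to the dual of $\C^\circ_\G(Q)$ (which is again a connected reductive group by Example \ref{normcent}, with a dual obtained from $\G^\ast$), the inclusion $\C^\circ_{\G^\ast}(s)\subseteq\Levi^\ast$ restricts to the analogous inclusion inside $\C^\circ_\G(Q)$'s dual, giving $\C^\circ_{(\C^\circ_\G(Q))^\ast}(t)\subseteq (\C^\circ_\Levi(Q))^\ast$ (resp.\ the full centralizer version). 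By the extension of the notion of (super)regularity to disconnected groups (it suffices to check it on one connected series contained in $\mathcal{Z}$), this is exactly the assertion that $e_{\mathcal{Z}}$ is $(\C^\circ_\G(Q),\C^\circ_\Levi(Q))$-(super)regular.

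The main obstacle I anticipate is the bookkeeping in the second step: making precise the identification of $\C^\circ_\G(Q)$ and $\C^\circ_\Levi(Q)$ with connected reductive groups carrying compatible duals (Example \ref{normcent} gives the reductive structure, but one needs the duality statement), and tracking through the definition of $i_Q$ so that the semisimple element $t$ really does have its centralizer controlled by $s$. Everything else — orthogonality, reduction to connected series, the bare formula for $\br_Q(e_{\mathcal{X}})$ — is formal given the cited results from \cite{Dat}. I would also remark that the regular and superregular cases run in parallel, the only difference being whether one uses $\C^\circ_{\G^\ast}(s)$ or $\C_{\G^\ast}(s)$ throughout, so it is enough to write the argument once with the appropriate bracketed alternatives.
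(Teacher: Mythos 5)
The paper's own ``proof'' of this lemma is a one-line citation to \cite[Proposition 4.11]{Dat}, so there is nothing in the paper to compare your argument against line by line; what you have written is essentially an attempt to reconstruct the proof of the cited result. Your first two steps are correct and genuinely formal: the displayed formula $\br^{\Levi^F}_Q(e_{\mathcal{X}})=\sum_{\mathcal{Z}\in(i_Q^{\Levi})^{-1}(\mathcal{X})}e_{\mathcal{Z}}$ is exactly the statement quoted from \cite[Theorem 4.14]{Dat} at the start of the subsection (applied with $\Levi$ as the ambient group), and orthogonality of the $e_{\mathcal{Z}}$ for distinct rational series follows from the decomposition of $1$ in \cite[Theorem 4.12]{Dat}. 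You have also correctly located the entire content of the lemma in the third step, the $(\C^\circ_\G(Q),\C^\circ_\Levi(Q))$-(super)regularity of each summand.

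That third step, however, is where your proposal stops being a proof and becomes a plausibility argument, and it is precisely where all of the work in \cite[Section 4]{Dat} lives. The assertion that ``the inclusion $\C^\circ_{\G^\ast}(s)\subseteq\Levi^\ast$ restricts to the analogous inclusion inside $\C^\circ_\G(Q)$'s dual'' presupposes (i) a chosen duality for the connected reductive group $\C^\circ_\G(Q)$ in which $\C^\circ_\Levi(Q)$ corresponds to a Levi subgroup of the dual, (ii) a description of the semisimple label $t$ of a connected series inside $\mathcal{Z}$ in terms of $s$ under the map $i_Q^{\Levi}$, and (iii) the comparison of $\C^\circ_{(\C^\circ_\G(Q))^\ast}(t)$ with $\C^\circ_{\G^\ast}(s)$. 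None of these is automatic: the dual of $\C^\circ_\G(Q)$ is not naturally a subgroup of $\G^\ast$, and the relation between $t$ and $s$ goes through the construction of $i_Q$ (which in \cite{Dat} is set up via $\ell$-elements and their images in the dual), so the containment you want has to be extracted from that construction rather than ``restricted'' from $\G^\ast$. Since you flag this yourself as the anticipated obstacle, I would not call the outline wrong --- it is the right skeleton and matches what the cited proposition actually proves --- but as written the regularity claim is asserted rather than established, and discharging it amounts to reproving \cite[Proposition 4.11]{Dat}, which is exactly what the paper avoids by citing it.
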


\begin{proof}
See \cite[Proposition 4.11]{Dat}.
\end{proof}

We gather some useful facts. 

\begin{lemma}\label{basicseries}
Let $\Levi$ be an $F$-stable Levi subgroup of $\G$ and $\Para$ a parabolic subgroup of $\G$ with Levi decomposition $\Para=\Levi \ltimes \U$. In addition, let $\mathcal{X} \subseteq \nabla_{\ell'}(\Levi,F)$ be a regular rational series of $(\G^\circ,\Levi^\circ)$.
\begin{enumerate}[label=(\alph*)]
\item There exists a unique rational series $\mathcal{Y} \subseteq \nabla_{\ell'}(\G,F)$ containing $\mathcal{X}$.
\item Deligne--Lusztig induction restricts to a functor $\mathcal{R}_{\Levi \subseteq \Para}^\G: \mathrm{D}^b(\Lambda \Levi^F e_{\mathcal{X}}) \to \mathrm{D}^b(\Lambda \G^F e_{\mathcal{Y}})$.
\item Given $\mathcal{X}' \in  (i_{Q}^{\Levi})^{-1}(\mathcal{X})$ let $\mathcal{Y}'$ be the unique rational series of $\nabla_{\ell'}(\mathrm{C}_\G(Q),F)$ containing $\mathcal{X}'$. Then we have $i_Q^{\G}(\mathcal{Y})=\mathcal{Y}'$.
\end{enumerate}
\end{lemma}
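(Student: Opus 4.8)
The plan is to prove the three parts in turn, each time reducing to the corresponding statement for connected reductive groups (which is available from \cite{BoRo}) and then transferring to the disconnected setting via Lemma \ref{discratl} and the characterisation of the idempotents $e_{\mathcal{X}}$ in Definition \ref{Def2}.

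For part (a), I would first pick a rational series $\mathcal{X}^\circ$ of $(\Levi^\circ,F)$ contained in $\mathcal{X}$; by the definition of $(\G^\circ,\Levi^\circ)$-regularity this is again regular in the connected sense, say associated to a semisimple $s \in (\Levi^\circ{}^\ast)^{F^\ast}$ with $\C^\circ_{\G^{\circ\ast}}(s) \subseteq \Levi^{\circ\ast}$. In the connected case \cite[Lemma 11.6]{BoRo} (together with the discussion of super/regular series there) produces a unique rational series $\mathcal{Y}^\circ$ of $(\G^\circ,F)$ containing $\mathcal{X}^\circ$, namely the one attached to the $(\G^{\circ\ast})^{F^\ast}$-class of $s$. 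I would then take $\mathcal{Y}$ to be the (unique) rational series of $\nabla_{\ell'}(\G,F)$ with $\mathcal{Y}^\circ \subseteq \mathcal{Y}$; existence and uniqueness of such a $\mathcal{Y}$ follow from the fact that $\nabla_{\ell'}(\G^\circ,F)/\!\equiv$ partitions into $\G^F$-orbits whose unions index the series of $\G$, as used in the proof of Lemma \ref{discratl}. To see $\mathcal{X}\subseteq\mathcal{Y}$ and uniqueness of $\mathcal{Y}$ among series of $\G$, note any $(\T,\theta)\in\mathcal{X}$ is rationally $\G^F$-conjugate (via $\mathrm{N}_{\G^F}(\T)$) to a pair in $\mathcal{X}^\circ$, hence lies in any series of $\G$ containing $\mathcal{X}^\circ$; and two distinct series of $\G$ are disjoint.

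For part (b), by Definition \ref{Def2} the category $\mathrm{D}^b(\Lambda\Levi^F e_{\mathcal{X}})$ is controlled by the generators $R\Gamma_c(\Y_{\B_\Levi})e_\theta$ with $(\T,\theta)\in\mathcal{X}$ and $\B_\Levi$ a Borel of $\Levi^\circ$ with torus $\T$. Transitivity of Deligne--Lusztig induction gives $\mathcal{R}^\G_{\Levi\subseteq\Para}(R\Gamma_c(\Y^\Levi_{\B_\Levi})e_\theta)\cong R\Gamma_c(\Y^\G_{\B})e_\theta$ for a suitable Borel $\B$ of $\G^\circ$ with torus $\T$ (here one uses the standard reduction, valid for disconnected $\G$ by the compatibility recorded in \cite[(3.1)]{Dat} between induction in $\G$ and $\G^\circ$, exactly as in the proof of Lemma \ref{discratl}). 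Since $(\T,\theta)\in\mathcal{X}\subseteq\mathcal{Y}$ these images are generators of $\mathcal{C}_{\mathcal{Y}}=\Lambda\G^F e_{\mathcal{Y}}\text{-}\mathrm{perf}$, and as $\mathcal{R}^\G_{\Levi\subseteq\Para}$ is a triangulated functor preserving perfect complexes it sends the thick subcategory $\mathcal{C}_{\mathcal{X}}$ into $\mathcal{C}_{\mathcal{Y}}$; equivalently it restricts to a functor $\mathrm{D}^b(\Lambda\Levi^F e_{\mathcal{X}})\to\mathrm{D}^b(\Lambda\G^F e_{\mathcal{Y}})$, i.e. $e_{\mathcal{Y}}\,\mathcal{R}^\G_{\Levi\subseteq\Para}(-) = \mathcal{R}^\G_{\Levi\subseteq\Para}(e_{\mathcal{X}}(-))$.

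For part (c), I would apply the Brauer-morphism description of idempotents: by Lemma \ref{brauerseries}, $\mathrm{br}_Q^{\Levi^F}(e_{\mathcal{X}}) = \sum_{\mathcal{X}'\in (i_Q^{\Levi})^{-1}(\mathcal{X})} e_{\mathcal{X}'}$ is a sum of $(\C^\circ_\G(Q),\C^\circ_\Levi(Q))$-regular idempotents, and similarly $\mathrm{br}_Q^{\G^F}(e_{\mathcal{Y}}) = \sum_{\mathcal{Y}''\in (i_Q^{\G})^{-1}(\mathcal{Y})} e_{\mathcal{Y}''}$. Now I want to match these up. Fix $\mathcal{X}'\in (i_Q^{\Levi})^{-1}(\mathcal{X})$ and let $\mathcal{Y}'$ be the unique rational series of $(\mathrm{C}_\G(Q),F)$ containing $\mathcal{X}'$ — this exists by part (a) applied inside $\mathrm{C}_\G(Q)$, whose parabolic $\mathrm{C}_\Para(Q)$ has Levi $\mathrm{C}_\Levi(Q)$ by Example \ref{normcent}, using that $\mathcal{X}'$ is regular for $(\C^\circ_\G(Q),\C^\circ_\Levi(Q))$. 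I then need $i_Q^\G(\mathcal{Y}') = \mathcal{Y}$, i.e. $e_{\mathcal{Y}'}$ occurs in $\mathrm{br}_Q^{\G^F}(e_{\mathcal{Y}})$. This is where part (b) does the real work: applying the compatibility of Deligne--Lusztig induction with the Brauer functor for the data $Q\subseteq\Levi^F$ — concretely $\Br_{\Delta Q}$ of the Godement complex $G\Gamma_c(\Y_\U^\G)$ is $G\Gamma_c(\Y_{\C_\U(Q)}^{\C_\G(Q)})$ by Lemma \ref{BrauerGodement} — one gets that $\mathcal{R}^{\C_\G(Q)}_{\C_\Levi(Q)}$ sends $\mathcal{C}_{\mathcal{X}'}$ into $\mathcal{C}_{i_Q^\G(\mathcal{Y}')}$ on one side and, after transferring through the Brauer morphism, into $\mathcal{C}$ governed by $\mathrm{br}_Q(e_{\mathcal{Y}})$ on the other; comparing generators forces $i_Q^\G(\mathcal{Y}') = \mathcal{Y}$. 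Alternatively, and perhaps more cleanly, one argues on the connected level: choose $\mathcal{X}^\circ\subseteq\mathcal{X}$ and $\mathcal{X}'^\circ\subseteq\mathcal{X}'$ with $\mathcal{X}'^\circ \in (i_Q^{\Levi^\circ})^{-1}(\mathcal{X}^\circ)$, invoke \cite[Theorem 4.14]{Dat} and the transitivity/compatibility of the maps $i_Q$ through regular series in the connected case, then lift back via Lemma \ref{discratl}. The main obstacle I anticipate is precisely this last matching in (c): one must keep careful track of which regular series inside $Q$-centralisers sit inside which series of $\G$, and the cleanest route is to reduce everything to $\G^\circ$, $\Levi^\circ$ and then cite the connected statement, but the bookkeeping between $i_Q^\G$, $i_Q^\Levi$ and the inclusions $\mathcal{X}^\circ\subseteq\mathcal{X}$ etc. is delicate and is where I would spend most of the effort.
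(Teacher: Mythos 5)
Your plan follows the paper's proof essentially step for step: (a) is reduced to the connected case via a series $\mathcal{X}^\circ\subseteq\mathcal{X}$ and the regularity hypothesis, (b) is reduced to \cite[Theorem 11.4]{BoRo} using the compatibility \cite[(3.1)]{Dat} together with Lemma \ref{discratl}, and (c) combines Lemma \ref{BrauerGodement} with part (b) applied to $\C_\G(Q)$ and the decomposition of $\br_Q(e_{\mathcal{Y}})$ from Lemma \ref{brauerseries} to force $e_{\mathcal{Y}'}\br_Q(e_{\mathcal{Y}})\neq 0$. The only cosmetic difference is that in (b) you test the generators of the thick subcategory $\mathcal{C}_{\mathcal{X}}$ rather than the single module $\Lambda\Levi^F e_{\mathcal{X}}$, which amounts to the same thing.
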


\begin{proof}
	Let $\mathcal{X}^\circ$ be a rational series of $(\Levi^\circ,F)$ contained in $\mathcal{X}$. Then $\mathcal{X}^\circ$ is associated to the conjugacy class of a semisimple element $s \in ((\Levi^\circ)^\ast)^{F^\ast}$ of $\ell'$-order. The rational series $\mathcal{Y}^\circ$ of $(\G^\circ,F)$ associated to $s \in ((\G^\circ)^\ast)^{F^\ast}$ is the unique rational series containing $\mathcal{Y}^\circ$. Thus, $\mathcal{Y}$ is the unique rational series of $(\G,F)$ containing $\mathcal{Y}^\circ$. This shows part (a).
	
	Deligne--Lusztig induction restricts to a functor $\mathcal{R}_{\Levi^\circ \subseteq \Para^\circ}^{\G^\circ}: \mathrm{D}^b(\Lambda (\Levi^\circ)^F e_s^{\Levi^F}) \to \mathrm{D}^b(\Lambda (\G^\circ)^F e_s^{\G^F})$ by \cite[Theorem 11.4]{BoRo}. For part (b) it suffices to show that $\mathcal{R}_{\Levi \subseteq \Para}^\G(M) \in \mathrm{D}^b(\Lambda \G^F e_{\mathcal{Y}})$ for $M=\Lambda \Levi^F e_{\mathcal{X}}$. By \cite[(3.1)]{Dat} we have 
	$$\mathrm{Ind}_{(\G^\circ)^F}^{\G^F} \circ \mathcal{R}_{\Levi^\circ \subseteq \Para^\circ}^{\G^\circ}= \mathcal{R}_{\Levi \subseteq \Para}^\G \circ \mathrm{Ind}_{(\Levi^\circ)^F}^{\Levi^F}.$$
By Lemma \ref{discratl} we have $M= \mathrm{Ind}_{(\Levi^\circ)^F}^{\Levi^F}( \Lambda \Levi^F e_s^{\Levi^F})$ and we can conclude that $\mathcal{R}_{\Levi \subseteq \Para}^\G(M) \in \mathrm{D}^b(\Lambda \G^F e_{\mathcal{Y}})$. This implies that $\mathcal{R}_{\Levi \subseteq \Para}^\G$ restricts to a functor $\mathcal{R}_{\Levi \subseteq \Para}^\G: \mathrm{D}^b(\Lambda \Levi^F e_{\mathcal{X}}) \to \mathrm{D}^b(\Lambda \G^F e_{\mathcal{Y}})$.
			
	We now prove part (c). By Lemma \ref{BrauerGodement} we have 
	$$\mathrm{Br}_{\Delta Q}(G \Gamma_c(\Y_\U^\G) e_{\mathcal{X}})  \cong G \Gamma_c( \Y_{\C_\U(Q)}^{\C_\G(Q)}) , \Lambda) \mathrm{br}_Q(e_{\mathcal{X}})=\br_Q(e_\mathcal{Y})  G \Gamma_c( \Y_{\C_\U(Q)}^{\C_\G(Q)}) , \Lambda) \mathrm{br}_Q(e_{\mathcal{X}}).$$
On the other hand, by part (b) $\mathcal{R}_{\C_{\Levi}(Q) \subseteq \C_\Para(Q)}^{\C_\G(Q)}$ restricts to a functor $$\mathcal{R}_{\C_{\Levi}(Q) \subseteq \C_\Para(Q)}^{\C_\G(Q)}: \mathrm{D}^b(\Lambda \C_{\Levi^F}(Q) e_{\mathcal{X}'}) \to \mathrm{D}^b(\Lambda \C_{\G^F}(Q) e_{\mathcal{Y}'}).$$ This implies that $\mathrm{br}_Q(e_\mathcal{Y}) e_{\mathcal{Y}'} \neq 0$ which shows that $e_{\mathcal{Y}'}$ appears in the decomposition into central idempotents of $\mathrm{br}_Q(e_\mathcal{Y})$ from Lemma \ref{brauerseries}. Therefore, we necessarily have $i_Q^{\G}(\mathcal{Y})=\mathcal{Y}'$.
\end{proof}

For our purposes it is necessary to slightly modify the definition of (super-)regular series.

\begin{definition}\label{almost superregular}
Let $\G$ be a connected reductive group and $\Levi$ be a $F$-stable Levi subgroup of $\G$. We say that the rational series of $(\Levi,F)$ associated to the conjugacy class of the semisimple element $s\in (\Levi^\ast)^{F^\ast}$ is \textit{almost} $(\G,\Levi)$\textit{-superregular} if $\mathrm{C}^\circ_{\G^\ast}(s) \C_{\G^\ast}(s)^{F^\ast} \subseteq \Levi^\ast$.
\end{definition}

\begin{remark}\label{almost}
	The result of Lemma \ref{basicseries} remains true if we replace everywhere the word (super)-regular by the word almost super regular. To show this one takes the proof of Lemma \ref{brauerseries} and takes $F$-fixed points at the appropriate places.
\end{remark}
\subsection{Equivariance of Deligne--Lusztig induction}

In this section we establish some elementary results on the action of group automorphisms on Deligne--Lusztig varieties. Most of the results in this section are known, see \cite[Section 2]{Navarro}.

Let $\G$ be a reductive group and $\sigma:\G \to \G$ be a bijective morphism of algebraic groups which commutes with the action of the Frobenius endomorphism $F$, i.e. we have $\sigma \circ  F= F \circ \sigma$. Let $\Para$ be a parabolic subgroup of $\G$ with Levi decomposition $\Para=\Levi \ltimes \U$ such that $F(\Levi)=\Levi$. Note that $\sigma(\Para)$ is a parabolic subgroup of $\G$ with $F$-stable Levi $\sigma(\Levi)$ and unipotent radical $\sigma(\U)$.

%
%

%
%
%

\begin{lemma}\label{sigma}
	With the notation as above, $\sigma$ induces an isomorphism
	$$\sigma^*:G\Gamma_c(\Y^{\G}_{\sigma(\U)},\Lambda)\to {}^\sigma G \Gamma_c(\Y^{\G}_\U,\Lambda)^{\sigma}$$
	in $\mathrm{Ho}^b(\Lambda[\G^F \times (\Levi^F)^\mathrm{opp}])$.
\end{lemma}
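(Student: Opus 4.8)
The plan is to obtain the isomorphism purely geometrically: $\sigma$ carries the Deligne--Lusztig variety $\Y^\G_\U$ onto $\Y^\G_{\sigma(\U)}$ in a $\sigma$-twisted equivariant fashion, and one then applies the functoriality of the Rickard--Rouquier complex $G\Gamma_c(-,\Lambda)$. Concretely, I would first check that $g\U\mapsto\sigma(g)\sigma(\U)$ defines a bijective morphism of varieties $\bar\sigma\colon\Y^\G_\U\to\Y^\G_{\sigma(\U)}$. It clearly descends from $\sigma\colon\G\to\G$ to a morphism $\G/\U\to\G/\sigma(\U)$; that it carries the Deligne--Lusztig subvariety into $\Y^\G_{\sigma(\U)}$ is the one computation needed, and it follows from $\sigma\circ F=F\circ\sigma$ together with $\sigma$ being a group homomorphism, since then $g^{-1}F(g)\in\U F(\U)$ gives $\sigma(g)^{-1}F(\sigma(g))=\sigma(g^{-1}F(g))\in\sigma(\U F(\U))=\sigma(\U)\,F(\sigma(\U))$. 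Bijectivity of $\bar\sigma$ is inherited from that of $\sigma$. One should note that in characteristic $p$ a bijective morphism of algebraic groups (a field automorphism, say) need not be separable, so $\bar\sigma$ need not be an isomorphism of varieties; but it is a universal homeomorphism, and a universal homeomorphism induces an isomorphism on $\ell$-adic cohomology with compact support.

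Next I would record the twisted equivariance and pass to $G\Gamma_c$. For $h\in\G^F$ and $l\in\Levi^F$ one has $\bar\sigma(h\cdot g\U\cdot l)=\sigma(h)\cdot\bar\sigma(g\U)\cdot\sigma(l)$, with $\sigma(h)\in\G^F$ (because $\sigma$ commutes with $F$) and $\sigma(l)\in\sigma(\Levi)^F$; thus $\bar\sigma$ intertwines the left $\G^F$- and right $\Levi^F$-actions on $\Y^\G_\U$ with the corresponding actions of $\G^F$ and $\sigma(\Levi)^F$ on $\Y^\G_{\sigma(\U)}$, twisted by $\sigma$ on each side. Applying the Rickard--Rouquier construction (which is functorial for equivariant morphisms of varieties, compatible with transporting the acting group along an automorphism, and sends a bijective morphism of varieties to an isomorphism in the relevant bounded homotopy category of $\ell$-permutation modules, see \cite[Section~2]{Rouquier}) then turns $\bar\sigma$ into the asserted isomorphism
$$\sigma^*\colon G\Gamma_c(\Y^\G_{\sigma(\U)},\Lambda)\longrightarrow{}^\sigma G\Gamma_c(\Y^\G_\U,\Lambda)^\sigma$$
in $\mathrm{Ho}^b(\Lambda[\G^F\times(\sigma(\Levi)^F)^{\mathrm{opp}}])$, which coincides with $\mathrm{Ho}^b(\Lambda[\G^F\times(\Levi^F)^{\mathrm{opp}}])$ once $\sigma$ stabilizes $\Levi$. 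The only subtle point in this translation is to get the directions of the two twists right: a $\sigma$-twisted equivariant isomorphism $X\to Y$ of $H$-objects realizes $G\Gamma_c(Y)$ as ${}^\sigma G\Gamma_c(X)$ on the left and as $G\Gamma_c(X)^\sigma$ on the right, which is exactly the displayed formula once one unwinds the definition of the twisted modules ${}^\sigma(-)$ and $(-)^\sigma$ from Section~\ref{Module category}.

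The main --- really the only --- obstacle is making the functoriality invoked above precise: one needs that the Rickard--Rouquier complex is functorial on varieties equipped with a finite group action, up to the usual homotopy-category ambiguities, that this functoriality is compatible with transporting along a group automorphism, and that a universal homeomorphism of varieties is sent to an isomorphism in $\mathrm{Ho}^b$ of $\ell$-permutation modules. All of this is part of the Rickard--Rouquier formalism, already exploited in \cite{Dat}, and is the kind of argument used in \cite[Section~2]{Navarro}; granting it, the lemma reduces to the elementary bookkeeping carried out above.
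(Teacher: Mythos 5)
Your proposal is correct and follows essentially the same route as the paper: the map $g\U\mapsto\sigma(g)\sigma(\U)$ is checked to be a twisted-equivariant bijective morphism onto $\Y^{\G}_{\sigma(\U)}$ inducing an equivalence of \'etale sites, and then \cite[Theorem 2.12]{Rouquier} upgrades this to an isomorphism of the complexes $G\Gamma_c(-,\Lambda)$ in $\mathrm{Ho}^b$. The only cosmetic difference is that the paper justifies the \'etale-site equivalence by first noting smoothness of $\Y^{\G}_\U$ (via Lemma \ref{geometricgeneralversion2}) and quoting the argument of \cite[Proposition 2.1]{Navarro}, whereas you invoke the topological invariance of the \'etale site under universal homeomorphisms, which amounts to the same thing.
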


\begin{proof}
	The variety $\Y_{\U}^{\G^\circ}$ is smooth, see for instance \cite[Theorem 7.2]{MarcBook}. By Lemma \ref{geometricgeneralversion2}, we have $\Y^\G_\U=\coprod_{g\in \G^F / (\G^\circ)^F} g \Y_{\U}^{\G^\circ}$ which implies that the variety $\Y_{\U}^{\G}$ is smooth as well. Thus, as in the proof of \cite[Proposition 2.1]{Navarro} it follows that the morphism $\sigma: {}^\sigma \Y_\U^{\sigma^{-1}} \to \Y_{\sigma(\U)}$ given by $g \U \mapsto \sigma(g) \sigma(\U)$ is $\G^F \times (\Levi^F)^{\operatorname{opp}}$-equivariant and induces an isomorphism of \'etale sites.
	Now \cite[Theorem 2.12]{Rouquier} shows that the map $G\Gamma_c(\Y^{\G}_{\sigma(\U)},\Lambda)\to {}^\sigma G \Gamma_c(\Y^{\G}_\U,\Lambda)^{\sigma}$ is an isomorphism.
\end{proof}

Assume now that $\G$ is connected and let $s\in (\G^\ast)^{F^\ast}$ be a semisimple element of $\ell'$-order. Using Lemma \ref{sigma} one can show that ${}^\sigma \mathcal{E}(\G^F,t)= \mathcal{E}(\G^F,(\sigma^*)^{-1}(t))$ for every semisimple element $t \in  (\G^\ast)^{F^\ast}$, see \cite[Corollary 2.4]{Navarro} and also \cite[Proposition 7.2]{Taylor}. In particular, for a semisimple element of $\ell'$-order, $\sigma(e_s^{\G^F})=e_{(\sigma^*)^{-1}(s)}^{\G^F}$.

\subsection{The Bonnafé--Dat--Rouquier Morita equivalence}\label{BDR}

Let $\G$ be a connected reductive group defined over an algebraic closure of $\mathbb{F}_p$, where $p$ is a prime number. Let $F: \G \to \G$ be a Frobenius endomorphism of $\G$ defining an $\mathbb{F}_q$-structure on $\G$. Let $(\G^\ast,F^\ast)$ be in duality with $(\G,F)$. Fix a semisimple element $s \in (\G^\ast)^{F^\ast}$ of $\ell'$-order. Let $\Levi^\ast$ be an $F^\ast$-stable Levi subgroup of $\G^\ast$ which satisfies $\C^\circ_{\G^*}(s) \subseteq \Levi^*$ and 
$$\Levi^\ast \C_{\G^\ast}(s)^{F^\ast}=  \C_{\G^\ast}(s)^{F^\ast} \Levi^\ast.$$
This assumption is for instance satisfied if $\Levi^\ast=\C_{\G^\ast}(\mathrm{Z}^\circ(\C^\circ_{\G^\ast}(s)))$ is the minimal Levi subgroup of $\G^\ast$ containing $\mathrm{C}_{\G^\ast}^\circ(s)$ or if $\Levi^\ast$ is any Levi subgroup of $\G^\ast$ containing $\C_{\G^\ast}(s)^{F^\ast}$. Then we define $$\N^\ast:= \C_{\G^*}(s)^{F^*} \Levi^*$$ which is a subgroup of $\G^\ast$ by the property above. Note that $\N^*$ is an $F^\ast$-stable subgroup of $\operatorname{N}_{\G^*}(\Levi^*)$. Let $\Levi$ be an $F$-stable Levi subgroup of $\G$ in duality with the Levi subgroup $\Levi^\ast$ of $\G^\ast$. We let $\N$ be the subgroup of $\mathrm{N}_\G(\Levi)$ corresponding to $\N^\ast$ under the isomorphism of the relative Weyl groups
 $$\operatorname{N}_\G(\Levi)/\Levi \cong \operatorname{N}_{\G^*}(\Levi^*)/\Levi^\ast$$
induced by duality. The closed subgroup $\mathbf{N}$ of $\G$ is $F$-stable and it holds that $\mathbf{N}^F=\mathrm{N}_{\G^F}(\Levi,e_s^{\Levi^F})$ by \cite[(7.1)]{Dat}. We let $\Para$ be a paroblic subgroup with Levi decomposition $\Para= \Levi \ltimes \U$. In addition, we let $d:=\operatorname{dim}(\Y_{\U})$. By \cite[Theorem 11.7]{BoRo} we have
$$H^i_c( \Y_\U^\G, \Lambda) e_s^{\Levi^F} =0 \text{ for } i \neq d.$$
Hence, we are interested only in the $d$th cohomology group of the variety $\Y_\U^\G$. For convenience, we will therefore use the following notation.

\begin{notation}
	Let $\mathbf{X}$ be a variety of dimension $n$. Then we write $R\Gamma^{\mathrm{dim}}_c(\mathbf{X},\Lambda):=R \Gamma_c(\mathbf{X},\Lambda) [n]$ and $H_c^{\mathrm{dim}}(\mathbf{X},\Lambda):=H_c^{n}(\mathbf{X},\Lambda)$.
\end{notation}

Let $\iota: \G \hookrightarrow \tilde{\G}$ be a regular embedding. Set $\Ltilde=\Levi \mathrm{Z}(\tilde{\G})$ and $\tilde{\mathbf{N}}=\mathbf{N} \Ltilde$.

\begin{assumption}\label{assumptionBDR}
Suppose that the $k[(\G^F \times (\Levi^F)^{\mathrm{opp}}) \Delta \Ltilde^F]$-module $$H_c^{\mathrm{dim}}(\Y_\U,k) e_s^{\Levi^F}$$
extends to a $k[(\G^F \times (\Levi^F)^{\mathrm{opp}})  \Delta \tilde{\mathbf{N}}^F]$-module.
\end{assumption}

This assumption is for instance satisfied if $\mathbf{N}^F / \Levi^F$ is cyclic, see \cite[Lemma 10.2.13]{Rouquier3}.

 We have the following theorem, see \cite[Theorem 7.7]{Dat}:

\begin{theorem}[Bonnaf\'e--Dat--Rouquier]\label{Boro}
Suppose that Assumption \ref{assumptionBDR} holds. Then there exists an $\mathcal{O} \G^F $-$\mathcal{O} \N^F$-bimodule extending $H_c^d(\Y_\U^\G,\mathcal{O}) e_s^{\Levi^F}$ and for any such bimodule $M$ there exists a complex $C$ of $\mathcal{O} \G^F$-$\mathcal{O} \N^F $-bimodules extending $G\Gamma_c(\Y^\G_{\U}, \Lambda)^{\operatorname{red}}e_s^{\Levi^F}$ such that $H^{d}(C)\cong M$. The complex $C$ induces a splendid Rickard equivalence between $\mathcal{O} \G^F e_s^{\G^F}$ and $\mathcal{O} \N^F e_s^{\Levi^F}$ and the bimodule $M$ induces a Morita equivalence between $\mathcal{O} \G^F e_s^{\G^F}$ and $\mathcal{O} \N^F e_s^{\Levi^F}$.
\end{theorem}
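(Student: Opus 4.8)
The statement of Theorem~\ref{Boro} combines two assertions: an existence statement for a bimodule extending $H_c^d(\Y_\U^\G,\mathcal{O})e_s^{\Levi^F}$ from $\G^F\times(\Levi^F)^{\mathrm{opp}}$ to $\G^F\times(\N^F)^{\mathrm{opp}}$ together with a compatible extension of the full Rickard complex, and the statement that the resulting complex, resp.\ bimodule, realises a splendid Rickard, resp.\ Morita, equivalence between $\mathcal{O}\G^F e_s^{\G^F}$ and $\mathcal{O}\N^F e_s^{\Levi^F}$. Since this is essentially \cite[Theorem~7.7]{Dat}, the plan is to reduce to the cited reference and to spell out how Assumption~\ref{assumptionBDR} feeds into it. First I would recall that by Bonnaf\'e--Rouquier \cite{BoRo} the complex $G\Gamma_c(\Y_\U^\G,\mathcal{O})^{\mathrm{red}}e_s^{\Levi^F}$ induces a splendid Rickard equivalence between $\mathcal{O}\G^F e_s^{\G^F}$ and $\mathcal{O}\Levi^F e_s^{\Levi^F}$ (Theorem~\ref{BDRintro}), and that by the concentration result $H^i_c(\Y_\U^\G,\Lambda)e_s^{\Levi^F}=0$ for $i\neq d$, so that $H^d(C)$ for $C:=G\Gamma_c(\Y_\U^\G,\mathcal{O})^{\mathrm{red}}e_s^{\Levi^F}$ is a biprojective bimodule inducing a Morita equivalence between the two block algebras.

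\textbf{From $\Levi^F$ to $\N^F$.} The group $\N^F=\mathrm{N}_{\G^F}(\Levi,e_s^{\Levi^F})$ sits between $\Levi^F$ and $\mathrm{N}_{\G^F}(\Levi)$, and $e_s^{\Levi^F}$ is $\N^F$-stable by construction. The key geometric input is the independence of the Deligne--Lusztig variety on the choice of the parabolic $\Para$ (one of the main results of \cite{Dat}), which shows that the bimodule $H_c^d(\Y_\U^\G,\Lambda)e_s^{\Levi^F}$ is $\Delta(\N^F)$-invariant; more precisely $\sigma$ acts through Lemma~\ref{sigma}, sending $\Y_\U^\G$ to $\Y_{\sigma(\U)}^\G$, and the independence result identifies the two cohomology bimodules. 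Thus one obtains a well-defined diagonal action only after choosing an extension, and this is exactly what Assumption~\ref{assumptionBDR} provides at the level of $k$: the extension of $H_c^{\mathrm{dim}}(\Y_\U,k)e_s^{\Levi^F}$ to $\mathcal{O}$ then follows because $H_c^d(\Y_\U^\G,\mathcal{O})e_s^{\Levi^F}$ is $\mathcal{O}$-free and the obstruction to extending a module along a group extension lives in a cohomology group that injects into the corresponding one over $k$ (lifting idempotents / the fact that $\mathcal{O}$ is complete local with residue field $k$); here one uses that $\N^F/\Levi^F$ need only be dealt with after the hypothesis has been assumed. Once the $\mathcal{O}$-extension $M$ of the Morita bimodule exists, the extension of the complex $C$ itself follows from the structure of $G\Gamma_c$ as a complex of $\ell$-permutation modules together with the fact that $H^\bullet(C)$ is concentrated in a single degree: one lifts the diagonal action degree by degree using that the lower-degree terms are homotopy-acyclic, or, more directly, one invokes \cite[2.A]{Dat} and the argument of \cite[Lemma~10.2.13]{Rouquier3} which shows that a biprojective bimodule extending a single cohomology group of a splendid complex forces an extension of the whole $C^{\mathrm{red}}$.

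\textbf{The equivalences.} Granting the extensions $C$ and $M=H^d(C)$, I would argue that $C$ induces a splendid Rickard equivalence and $M$ a Morita equivalence between $\mathcal{O}\G^F e_s^{\G^F}$ and $\mathcal{O}\N^F e_s^{\Levi^F}$ as follows. Restricting $C$ back to $\G^F\times(\Levi^F)^{\mathrm{opp}}$ recovers the Bonnaf\'e--Rouquier splendid Rickard equivalence, so $C$ is a complex of $\ell$-permutation bimodules (splendidness is inherited, since the vertices of the indecomposable summands of the components of $C$ over $\N^F$ restrict to those over $\Levi^F$, which are contained in $\Delta\Levi^F\subseteq\Delta\N^F$ after the discussion following Lemma~\ref{BrauerGodement}). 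To see that $C$ actually induces a Rickard equivalence over $\N^F$ one applies the Brauer-functor / local criterion: it suffices to check, for every $\ell$-subgroup $Q\le\Levi^F$, that $\Br_{\Delta Q}(C)$ induces a Rickard equivalence at the level of $\C$-groups, and this follows from Lemma~\ref{BrauerGodement} identifying $\Br_{\Delta Q}(C)$ with $G\Gamma_c(\Y_{\C_\U(Q)}^{\C_\G(Q)},k)$ together with the same BDR machinery applied inside the (possibly disconnected) reductive group $\C_\G(Q)$ and the behaviour of the idempotents $e_{\mathcal{X}}$ under $\br_Q$ from Lemma~\ref{brauerseries} and Lemma~\ref{basicseries}. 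Finally, since $M=H^d(C)$ is biprojective and $C$ has cohomology concentrated in degree $d$, the implication ``$C$ Rickard equivalence $\Rightarrow$ $H^d(C)$ Morita equivalence'' is standard (a complex with a single non-vanishing cohomology which induces a Rickard equivalence gives a Morita equivalence via that cohomology module).

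\textbf{Main obstacle.} The genuinely delicate point is not the equivalence statements, which are formal once the extensions are in place, but the passage from the $k$-level hypothesis in Assumption~\ref{assumptionBDR} to an $\mathcal{O}$-level extension of the bimodule $H_c^d(\Y_\U^\G,\mathcal{O})e_s^{\Levi^F}$ and, simultaneously, to an $\mathcal{O}$-level extension of the whole complex $C=G\Gamma_c(\Y_\U^\G,\mathcal{O})^{\mathrm{red}}e_s^{\Levi^F}$ with $H^d(C)\cong M$; this is where one must combine the independence-of-parabolic result (to know the bimodule is $\Delta\N^F$-invariant in the first place), the structure of $G\Gamma_c$ as a complex of $\ell$-permutation modules, and a lifting argument from $k$ to $\mathcal{O}$, exactly as carried out in \cite[Section~7]{Dat}. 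I would therefore present the proof as: reduce to \cite[Theorem~7.7]{Dat}, recording that its hypothesis is precisely Assumption~\ref{assumptionBDR}, and give the above outline as the justification of why that hypothesis suffices.
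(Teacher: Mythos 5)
Your top-level strategy coincides with the paper's proof, which consists of exactly the substitution you describe: run the proof of \cite[Theorem 7.5]{Dat} with Assumption \ref{assumptionBDR} in place of \cite[Proposition 7.3]{Dat}, and then follow \cite[Section 7]{Dat} verbatim. Since you defer to that reference for the actual arguments, the proof you would ultimately write down is the paper's proof.

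However, the justification you offer for why the cited machinery works starts from a false premise, and the step built on it would fail if carried out literally. You claim that $G\Gamma_c(\Y_\U^\G,\mathcal{O})^{\operatorname{red}}e_s^{\Levi^F}$ already induces a splendid Rickard equivalence between $\mathcal{O}\G^F e_s^{\G^F}$ and $\mathcal{O}\Levi^F e_s^{\Levi^F}$, that $H^d$ of it is a Morita bimodule between these two algebras, and later that ``restricting $C$ back to $\G^F\times(\Levi^F)^{\mathrm{opp}}$ recovers the Bonnaf\'e--Rouquier splendid Rickard equivalence.'' Theorem \ref{BDRintro} (equivalently Theorem \ref{Boro2}) gives this only under the hypothesis $\C^\circ_{\G^\ast}(s)\C_{(\G^\ast)^{F^\ast}}(s)\subseteq\Levi^\ast$, i.e.\ precisely when $\N^F=\Levi^F$. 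In the generality of Theorem \ref{Boro} one only assumes $\C^\circ_{\G^\ast}(s)\subseteq\Levi^\ast$, and then $\mathcal{O}\G^F e_s^{\G^F}$ and $\mathcal{O}\Levi^F e_s^{\Levi^F}$ are in general \emph{not} equivalent: the Lusztig series $\mathcal{E}(\G^F,s)$ and $\mathcal{E}(\Levi^F,s)$ have different cardinalities as soon as $A_{\G^\ast}(s)^{F^\ast}$ acts nontrivially, which is the whole reason the group $\N^F=\operatorname{N}_{\G^F}(\Levi,e_s^{\Levi^F})$ enters. Consequently the logic of your ``Equivalences'' paragraph --- an equivalence at the level of $\Levi^F$ is upgraded to one at the level of $\N^F$ --- cannot work; note that Lemma \ref{Marcusconverse} and Lemma \ref{MarcusconverseMorita} go in the opposite (descent) direction and would here be vacuous. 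In \cite[Section 7]{Dat} the isomorphisms $C\otimes_{\mathcal{O}\N^F}C^\vee\cong\mathcal{O}\G^F e_s^{\G^F}$ and $C^\vee\otimes_{\mathcal{O}\G^F}C\cong\mathcal{O}\N^F e_s^{\Levi^F}$ are proved directly at the level of $\N^F$, via the analysis of the quotient $\Y_\U\times_{\N^F}\Y_\U$ and the endomorphism computation of \cite[Theorem 7.6]{Dat} (reproduced here as Proposition \ref{Endnoncon}); this direct argument is the missing content, not a formality. The local (Brauer functor) verification you sketch is also not how the equivalence over $\N^F$ is established in loc.\ cit.; it is used there to prove splendidness, after the Rickard equivalence is known.
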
 

\begin{proof}
In the proof of \cite[Theorem 7.5]{Dat} apply Assumption \ref{assumptionBDR} instead of \cite[Proposition 7.3]{Dat}. The rest of the proof of the theorem is as in \cite[Section 7]{Dat}. 
\end{proof}


Note that the assumption previously made that $\Levi^\ast$ normalizes $\mathrm{C}_{\G^\ast}(s)^{F^\ast}$ is not necessary for the following theorem. This means we only assume that $\Levi^\ast$ is an $F^\ast$-stable Levi subgroup containing $\mathrm{C}_{\G^\ast}^\circ(s)$.

\begin{theorem}\label{independence}
Let $\Para_1$ and $\Para_2$ be two parabolic subgroups of $\G$ with common Levi complement $\Levi$ and unipotent radical $\U_1$ respectively $\U_2$. Then we have 
$$H_c^{\dim}(\Y_{\U_1},\Lambda) e_s^{\Levi^F}\cong H_c^{\dim}(\Y_{\U_2},\Lambda) e_s^{\Levi^F}$$
as $\Lambda \G^F$-$\Lambda {\Levi^F}$-bimodules.
\end{theorem}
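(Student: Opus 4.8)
The plan is to reduce the independence statement for the disconnected group $\G$ to the connected case, which is \cite[Theorem 7.2]{Dat} (the analogue of Theorem \ref{Boro} for connected groups), and then transport the isomorphism along the induction functor from $(\G^\circ)^F$ to $\G^F$. First I would fix a rational series $\mathcal{X}^\circ \subseteq \nabla_{\ell'}(\G^\circ,F)$ associated to $s$ contained in the rational series $\mathcal{X}$ of $(\G,F)$ corresponding to $s$, so that by Lemma \ref{discratl} we have $e_s^{\Levi^F}=e_{\mathcal{X}}=\Tr^{\Levi^F}_{\mathrm{N}_{\Levi^F}(e_s^{(\Levi^\circ)^F})}(e_s^{(\Levi^\circ)^F})$. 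For the connected group $\G^\circ$ with the two parabolic subgroups $\Para_1^\circ=\Levi^\circ\U_1$ and $\Para_2^\circ=\Levi^\circ\U_2$, the main result of \cite{Dat} on independence of the parabolic (cited in the introduction and used throughout) gives an isomorphism
$$H_c^{\dim}(\Y_{\U_1}^{\G^\circ},\Lambda)\,e_s^{(\Levi^\circ)^F}\;\cong\; H_c^{\dim}(\Y_{\U_2}^{\G^\circ},\Lambda)\,e_s^{(\Levi^\circ)^F}$$
as $\Lambda(\G^\circ)^F$-$\Lambda(\Levi^\circ)^F$-bimodules.

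Next I would apply the relation \cite[(3.1)]{Dat}, namely $\mathrm{Ind}_{(\G^\circ)^F}^{\G^F}(R\Gamma_c(\Y_{\U_i}^{\G^\circ})\,e_s^{(\Levi^\circ)^F})\cong R\Gamma_c(\Y_{\U_i}^{\G})\,e_s^{(\Levi^\circ)^F}$, which was already invoked in the proof of Lemma \ref{discratl} and Lemma \ref{basicseries}. Taking the top cohomology (using that for these Levi subgroups the relevant cohomology is concentrated in a single degree, by \cite[Theorem 11.7]{BoRo} applied to $\G^\circ$, which persists after induction) yields
$$H_c^{\dim}(\Y_{\U_i}^{\G},\Lambda)\,e_s^{(\Levi^\circ)^F}\;\cong\;\mathrm{Ind}_{(\G^\circ)^F\times((\Levi^\circ)^F)^{\mathrm{opp}}}^{\G^F\times(\Levi^F)^{\mathrm{opp}}}\bigl(H_c^{\dim}(\Y_{\U_i}^{\G^\circ},\Lambda)\,e_s^{(\Levi^\circ)^F}\bigr)$$
as $\Lambda\G^F$-$\Lambda\Levi^F$-bimodules; here the Deligne--Lusztig variety $\Y_{\U_i}^{\G}$ decomposes as $\coprod_{g\in\G^F/(\G^\circ)^F} g\,\Y_{\U_i}^{\G^\circ}$ by Lemma \ref{geometricgeneralversion2}(a), which is precisely what makes the induction formula hold. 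Applying the functor $\mathrm{Ind}$ to the connected isomorphism therefore gives $H_c^{\dim}(\Y_{\U_1}^{\G},\Lambda)\,e_s^{(\Levi^\circ)^F}\cong H_c^{\dim}(\Y_{\U_2}^{\G},\Lambda)\,e_s^{(\Levi^\circ)^F}$.

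Finally I would pass from $e_s^{(\Levi^\circ)^F}$ to $e_s^{\Levi^F}=e_{\mathcal{X}}$. One approach: multiplying the above isomorphism on the right by $e_s^{(\Levi^\circ)^F}$ and conjugating by $\mathrm{N}_{\Levi^F}(e_s^{(\Levi^\circ)^F})$-coset representatives — using the invariance of $H_c^{\dim}(\Y_{\U_i},\Lambda)$ under the diagonal action of $\mathrm{N}_{\G^F}(\Para_i,\Levi)$, and more precisely that conjugation by $t\in\mathrm{N}_{\Levi^F}(\T)$ sends the ${}^t\Para_i$-variety datum to the $\Para_i$-datum up to the isomorphism already recorded in the proof of Lemma \ref{discratl} — one builds an isomorphism after right-multiplication by each ${}^t e_s^{(\Levi^\circ)^F}$ and then sums these over $t\in\Levi^F/\mathrm{N}_{\Levi^F}(e_s^{(\Levi^\circ)^F})$ to obtain an isomorphism after right-multiplication by $e_{\mathcal{X}}=e_s^{\Levi^F}$. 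The main obstacle I anticipate is exactly this last bookkeeping: one must check that the isomorphism produced for the block $e_s^{(\Levi^\circ)^F}$ is compatible with the $\mathrm{N}_{\Levi^F}(e_s^{(\Levi^\circ)^F})/(\Levi^\circ)^F$-action well enough that the pieces glue — equivalently, that the two independence isomorphisms (for $\U_1$ and for ${}^t\U_1$, etc.) can be chosen coherently. Since \cite{Dat} proves the connected independence as an isomorphism of bimodules over the full $\N^F$ (not just $(\Levi^\circ)^F$), one can instead simply quote the connected result at the level of $(\Levi^\circ)^F\rtimes$(stabilizer), which bypasses the gluing; I would structure the argument to invoke \cite[Theorem 7.2]{Dat} in that stronger bimodule form and then induce, making the whole proof a one-line application of \cite[(3.1)]{Dat} plus Lemma \ref{discratl}.
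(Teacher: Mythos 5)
Your proposal is aimed at a more general statement than the one actually posed. Theorem \ref{independence} lives in Section \ref{BDR}, where $\G$ is a \emph{connected} reductive group; the paper's proof is simply the citation of \cite[Theorem 7.2]{Dat}, followed by a sketch of how the isomorphism is constructed (the closed subvariety $\Y_{\U_1,\U_2}^{\operatorname{cl}}$ of $\Y_{\U_1,\U_2}$, the maps $\pi^*_{\U_1,\U_2}$, $i^*_{\U_1,\U_2}$, the shift $\operatorname{sh}$, and the resulting quasi-isomorphism $\Theta_{\U_2,\U_1}$). That sketch is recorded not to reprove the result but because the explicit geometric form of $\Theta$ is what is needed later: in Lemma \ref{local} one checks that each of these varieties and maps is equivariant for the larger diagonal group $\mathcal{D}$, which is how the disconnected/extended version is obtained. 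For the statement as written, your whole reduction from $\G$ to $\G^\circ$ is vacuous, and the correct answer is a one-line citation.

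Read as an attempt at the disconnected generalization (Lemma \ref{local}), your proposal has two genuine problems. First, the gluing step you flag at the end is a real gap, not bookkeeping: the connected result gives you, for each conjugate ${}^t e_s^{(\Levi^\circ)^F}$, \emph{some} isomorphism, but summing these only produces an isomorphism after multiplication by $e_{\mathcal{X}}$ if the individual isomorphisms are chosen compatibly with the $\mathrm{N}_{\Levi^F}(e_s^{(\Levi^\circ)^F})$-action, which is exactly what must be proved. The paper's route avoids this entirely by working with the single canonical map $\Theta_{\U,\V}$ and showing it is $\mathcal{D}'$-equivariant, then applying $\mathrm{Ind}^{\mathcal{D}}_{\mathcal{D}'}$; there is no block-by-block matching to reconcile. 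Second, your induction functor is wrong: by Lemma \ref{geometricgeneralversion2}(b) and Corollary \ref{Kunnethcoro} the passage from $\G^\circ$ to $\G$ is induction from the diagonal-type subgroup $\mathcal{D}=((\G^\circ)^F\times((\Levi^\circ)^F)^{\mathrm{opp}})\Delta(\Levi^F)$, not from the direct product $(\G^\circ)^F\times((\Levi^\circ)^F)^{\mathrm{opp}}$; the latter produces a bimodule that is too large by a factor of $[\Levi^F:(\Levi^\circ)^F]$ and does not carry the correct right $\Lambda\Levi^F$-action.
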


\begin{proof}
This is proved in \cite[Theorem 7.2]{Dat}. We sketch how the isomorphism of the theorem is obtained. All mentioned statements are proved in loc. cit. We define 
$$\Y_{\U_1,\U_2}^{\operatorname{cl}}:=\{ (g_1 \U_1,g_2 \U_2) \in \Y_{\U_1,\U_2} \mid g_1 \U_1 \in \Y_{\U_1} \},$$
which is a $\G^F \times (\Levi^F)^{\operatorname{opp}}$-stable closed subvariety of $\Y_{\U_1,\U_2}$.
We have a closed immersion $i_{\U_1,\U_2}: \Y_{\U_1,\U_2}^{\operatorname{cl}} \to \Y_{\U_1,\U_2}$ and a natural projection map $\pi_{\U_1,\U_2}: \Y_{\U_1,\U_2}^{\operatorname{cl}} \to \Y_{\U_1}$.

We have an isomorphism $\pi_{\U_1,\U_2}^*: R\Gamma_c(\Y_{\U_1},\Lambda)[-2 d] \to R \Gamma_c(\Y_{\U_1,\U_2}^{\operatorname{cl}},\Lambda)$ where $d=\dim(\U_1 \cap F(\U_1))-\dim(\U_1 \cap \U_2 \cap F(\U_1))$. Moreover, we have a morphism $i_{\U_1,\U_2}^*: R \Gamma_c(\Y_{\U_1,\U_2},\Lambda) \to R\Gamma_c(\Y_{\U_1,\U_2}^{\operatorname{cl}},\Lambda)$. The resulting map 
$$\psi_{\U_1,\U_2}=  (\pi_{\U_1,\U_2}^*)^{-1} \circ i_{\U_1,\U_2}^*: R\Gamma_c^{\dim}(\Y_{\U_1,\U_2},\Lambda) \to  R\Gamma_c^{\dim}(\Y_{\U_1},\Lambda)$$
induces a quasi-isomorphism 
$$\psi_{\U_1,\U_2,s}:R\Gamma^{\dim}_c(\Y_{\U_1,\U_2},\Lambda) e_s^{\Levi^F} \to  R\Gamma^{\dim}_c(\Y_{\U_1},\Lambda) e_s^{\Levi^F}$$
of $\Lambda \G^F$-$\Lambda \Levi^F$-complexes. Similarily, the map $\psi_{\U_2,F(\U_1)}$ induces a quasi-isomorphism $$\psi_{\U_2,F(\U_1),s}:R\Gamma^{\dim}_c(\Y_{\U_2,F(\U_1)},\Lambda) e_s^{\Levi^F} \to  R\Gamma^{\dim}_c(\Y_{\U_2},\Lambda) e_s^{\Levi^F}$$
of $\Lambda \G^F$-$\Lambda \Levi^F$-complexes. However, the shift map $$\operatorname{sh}:\Y_{\U_1,\U_2} \to \Y_{\U_2,F(\U_1)}$$ given by $(g_1 \U_1,g_2 \U_2) \mapsto (g_2 \U_2,F(g_1 \U_1))$ is $\G^F$-$\Levi^F$-equivariant and induces an equivalence of \'etale sites. In particular this map induces a quasi-isomorphism
$$\operatorname{sh}^*:R\Gamma^{\dim}_c(\Y_{\U_1,\U_2},\Lambda) \to R\Gamma^{\dim}_c(\Y_{\U_2,F(\U_1)},\Lambda)$$
of $\Lambda \G^F$-$\Lambda \Levi^F$-complexes. Consequently, we have a quasi-isomorphism 
$$\Theta_{\U_2,\U_1}:=\psi_{\U_2,F(\U_1),s} \circ \operatorname{sh}^* \circ \psi_{\U_1,\U_2,s}^{-1}:R\Gamma_c(\Y_{\U_1},\Lambda) e_s^{\Levi^F} \to R\Gamma_c(\Y_{\U_2},\Lambda) e_s^{\Levi^F}$$
of $\Lambda \G^F$-$\Lambda \Levi^F$-complexes.
\end{proof}


We abbreviate $\C_{(\G^\ast)^{F^\ast}}(s) \mathrm{C}^\circ_{\G^\ast}(s)$ by $\mathrm{C}_{\G^\ast,F^\ast}(s)$ or even $\mathrm{C}(s)$ if $(\G^\ast,F^\ast)$ is clear from the context. We mention the following important special case of Theorem \ref{Boro} which follows from Theorem \ref{Boro} by observing that $\N^F=\Levi^F$ if and only if $\C(s) \subseteq \Levi^\ast$.

\begin{theorem}[Bonnaf\'e--Dat--Rouquier]\label{Boro2}
Let $\Levi^*$ be an $F^*$-stable Levi subgroup of $\G^*$ containing $\C^\circ_{\G^*}(s) \C_{(\G^\ast)^{F^\ast}}(s)$. Then the complex $C=G\Gamma_c(\Y_{\U}, \mathcal{O})^{\operatorname{red}}e_s^{\Levi^F}$ of $\mathcal{O} \G^F e_s^{\G^F}$-$\mathcal{O} \Levi^F e_s^{\Levi^F}$ bimodules induces a splendid Rickard equivalence between $\mathcal{O}\Levi^F e_s^{\Levi^F}$ and $\mathcal{O}\G^F e_s^{\G^F}$. The bimodule $H^{\operatorname{dim}(\Y_{\U})}(C)$ induces a Morita equivalence between $\mathcal{O} \G^F e_s^{\G^F}$ and  $\mathcal{O} \Levi^F e_s^{\Levi^F}$.
\end{theorem}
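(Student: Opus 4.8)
The plan is to deduce the statement directly from Theorem~\ref{Boro}, by showing that the extra hypothesis $\C^\circ_{\G^\ast}(s)\,\C_{(\G^\ast)^{F^\ast}}(s)\subseteq\Levi^\ast$ forces the auxiliary group $\N$ of Section~\ref{BDR} to collapse onto $\Levi$, so that $\N^F=\Levi^F$, $\tilde{\mathbf{N}}^F=\Ltilde^F$, and Assumption~\ref{assumptionBDR} holds vacuously; Theorem~\ref{Boro} then yields exactly the asserted splendid Rickard and Morita equivalences.

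First I would check that the running set-up of Section~\ref{BDR} applies and degenerates as claimed. Since $\C_{\G^\ast}(s)^{F^\ast}=\C_{\G^\ast}(s)\cap(\G^\ast)^{F^\ast}=\C_{(\G^\ast)^{F^\ast}}(s)\subseteq\Levi^\ast$, the Levi subgroup $\Levi^\ast$ certainly normalizes $\C_{\G^\ast}(s)^{F^\ast}$, so the hypotheses of Section~\ref{BDR} are satisfied, and moreover $\N^\ast=\C_{\G^\ast}(s)^{F^\ast}\Levi^\ast=\Levi^\ast$. Under the duality isomorphism $\mathrm{N}_\G(\Levi)/\Levi\cong\mathrm{N}_{\G^\ast}(\Levi^\ast)/\Levi^\ast$ of relative Weyl groups, the subgroup $\N$ of $\mathrm{N}_\G(\Levi)$ corresponding to $\N^\ast=\Levi^\ast$ is then $\Levi$ itself; equivalently $\N^F=\mathrm{N}_{\G^F}(\Levi,e_s^{\Levi^F})=\Levi^F$ by \cite[(7.1)]{Dat}. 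Hence $\tilde{\mathbf{N}}=\N\Ltilde=\Ltilde$, and the extension demanded in Assumption~\ref{assumptionBDR} is that of $H_c^{\dim}(\Y_\U,k)\,e_s^{\Levi^F}$ from a module over $k[(\G^F\times(\Levi^F)^{\opp})\Delta\Ltilde^F]$ to a module over the identical algebra $k[(\G^F\times(\Levi^F)^{\opp})\Delta\tilde{\mathbf{N}}^F]$, which is trivially satisfied.

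Next I would apply Theorem~\ref{Boro}. Because $\N^F=\Levi^F$, the bimodule $H_c^{\dim(\Y_\U)}(\Y_\U^\G,\mathcal{O})\,e_s^{\Levi^F}$ is already an $\mathcal{O}\G^F$-$\mathcal{O}\N^F$-bimodule, so one may take $M$ to be this bimodule and $C:=G\Gamma_c(\Y_\U,\mathcal{O})^{\mathrm{red}}e_s^{\Levi^F}$; then $H^{\dim(\Y_\U)}(C)\cong M$, since passing to $(-)^{\mathrm{red}}$ does not alter cohomology. Theorem~\ref{Boro} then gives that $C$ induces a splendid Rickard equivalence between $\mathcal{O}\Levi^F e_s^{\Levi^F}$ and $\mathcal{O}\G^F e_s^{\G^F}$, and that $H^{\dim(\Y_\U)}(C)$ induces a Morita equivalence between $\mathcal{O}\G^F e_s^{\G^F}$ and $\mathcal{O}\Levi^F e_s^{\Levi^F}$, which is the assertion. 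I do not expect any real obstacle here: the only step requiring care is the identification $\C_{\G^\ast}(s)^{F^\ast}=\C_{(\G^\ast)^{F^\ast}}(s)$ and the resulting collapse $\N=\Levi$ --- precisely the observation announced just before the statement --- after which everything is formal from Theorem~\ref{Boro}.
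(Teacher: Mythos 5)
Your proposal is correct and follows exactly the route the paper takes: the remark immediately preceding Theorem~\ref{Boro2} derives it from Theorem~\ref{Boro} by observing that $\C(s)=\C^\circ_{\G^\ast}(s)\,\C_{(\G^\ast)^{F^\ast}}(s)\subseteq\Levi^\ast$ forces $\N^F=\Levi^F$, which makes Assumption~\ref{assumptionBDR} vacuous and collapses the extension step. Your verification of the Section~\ref{BDR} hypotheses and of $\N^\ast=\Levi^\ast$ is exactly the intended argument.
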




\section{Generalizations to disconnected reductive groups}

In this section we suppose that $\hat{\G}$ is a reductive group with Frobenius $F$ and we let $\G$ be a closed normal connected $F$-stable subgroup of $\hat{\G}$. We let $s\in (\G^\ast)^{F^\ast}$ be a semisimple element of $\ell'$-order and $\Levi^\ast$ be an $F^\ast$-stable Levi subgroup of $\G^\ast$ such that $\C^\circ_{\G^\ast}(s) \subseteq \Levi^\ast$. Let $\Levi$ be a Levi subgroup of $\G$ in duality with $\Levi^\ast$.
%
As in Section \ref{DLvarieties} we suppose that $\Para= \Levi \U$ and $\hat{\Para}=\hat{\Levi} \U$ are two Levi decomposition of parabolic subgroups $\Para$ of $\G$ and $\hat{\Para}$ of $\hat{\G}$ such that $\hat{\Para} \cap \G= \Para$ and $\hat{\Levi} \cap \G= \Levi$.
Let $\mathcal{D}:=(\G^F \times (\Levi^F)^{\mathrm{opp}})  \Delta(\hat{\Levi}^F)$
and $\mathcal{D}'$ be the stabilizer of the idempotent $e_s^{\G^F} \otimes e_{s}^{\Levi^F}$ in $\hat{\G}^F \times (\hat{\Levi}^F)^{\mathrm{opp}}$. Note that we have $\mathcal{D}'=\G^F \times (\Levi^F)^{\mathrm{opp}} \, \Delta(\mathrm{N}_{\hat{\Levi}^F}(e_s^{\Levi^F}))$

We generalize Theorem \ref{independence} to disconnected reductive groups.

\begin{lemma}\label{local}
	Let $\mathbf{Q}= \Levi \V$ and $\hat{\mathbf{Q}}=\hat{\Levi} \V$ respectively be two Levi decomposition of parabolic subgroups $\mathbf{Q}$ of $\G$ and $\hat{\mathbf{Q}}$ of $\hat{\G}$ respectively which satisfy $\hat{\mathbf{Q}} \cap \G= \mathbf{Q}$. Then we have
	$$H_c^{\dim(\Y_{\U}^{\G})}(\Y_{\U}^{\G},\Lambda) \Tr^{\hat{\Levi}^F}_{\mathrm{N}_{\hat{\Levi}^F}(e_s^{\Levi^F})}(e_s^{\Levi^F})\cong H_c^{\dim(\Y_{\V}^\G)}(\Y_{\V}^{\G},\Lambda)  \Tr^{\hat{\Levi}^F}_{\mathrm{N}_{\hat{\Levi}^F}(e_s^{\Levi^F})}(e_s^{\Levi^F})$$
	as $\Lambda \mathcal{D}$-bimodules.
\end{lemma}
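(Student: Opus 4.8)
The plan is to transfer the explicit quasi-isomorphism constructed in the proof of Theorem~\ref{independence} to the present, slightly larger, equivariant setting. Throughout I would write $E:=\Tr^{\hat{\Levi}^F}_{\mathrm{N}_{\hat{\Levi}^F}(e_s^{\Levi^F})}(e_s^{\Levi^F})=\sum_x {}^x e_s^{\Levi^F}$, where $x$ runs over a transversal of $\mathrm{N}_{\hat{\Levi}^F}(e_s^{\Levi^F})$ in $\hat{\Levi}^F$. Since distinct $\hat{\Levi}^F$-conjugates of $e_s^{\Levi^F}$ are orthogonal central idempotents of $\Lambda\Levi^F$ and their sum is $\hat{\Levi}^F$-stable, $E$ is a central idempotent of $\Lambda\hat{\Levi}^F$. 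By Lemma~\ref{extend} the variety $\Y_\U^{\G}$ is $\mathcal{D}$-stable, so $R\Gamma_c(\Y_\U^{\G},\Lambda)\in\mathrm{D}^b(\Lambda\mathcal{D})$, and since $E$ commutes with the $\mathcal{D}$-action, right multiplication by $E$ cuts out the $\Lambda\mathcal{D}$-summand $R\Gamma_c(\Y_\U^{\G},\Lambda)E$ (and similarly for $\V$). Moreover, for each $x$ the diagonal action of $(x,x^{-1})\in\Delta(\hat{\Levi}^F)\subseteq\mathcal{D}$ is an automorphism of $\Y_\U^{\G}$ which carries the $e_s^{\Levi^F}$-part of the cohomology isomorphically onto the ${}^x e_s^{\Levi^F}$-part; combined with $H^i_c(\Y_\U^{\G},\Lambda)e_s^{\Levi^F}=0$ for $i\neq\dim\Y_\U$ (\cite[Theorem~11.7]{BoRo}), this yields $H^i_c(\Y_\U^{\G},\Lambda)E=0$ for $i\neq\dim\Y_\U$, and likewise for $\V$.

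Next I would revisit the construction $\Theta_{\V,\U}=\psi_{\V,F(\U),s}\circ\mathrm{sh}^*\circ\psi_{\U,\V,s}^{-1}$ of Theorem~\ref{independence}. It is built from the generalized Deligne--Lusztig varieties $\Y^{\G}_{\U,\V}$ and $\Y^{\G}_{\V,F(\U)}$, their closed subvarieties $(\Y^{\G}_{\U,\V})^{\mathrm{cl}}$ etc., the first projections $\pi$, the closed immersions $i$ and the shift map $\mathrm{sh}\colon (g_1\U,g_2\V)\mapsto(g_2\V,F(g_1\U))$. By the variant of Lemma~\ref{extend} for generalized Deligne--Lusztig varieties recorded in Section~\ref{PropertiesofDL}, all these varieties are $\mathcal{D}$-stable subvarieties of the corresponding $\hat{\G}$-level varieties, and $\pi$, $i$, $\mathrm{sh}$ are given by formulas that are manifestly $\mathcal{D}$-equivariant, because $\mathcal{D}$ consists of pairs of $F$-fixed elements and $\hat{\Levi}^F$ normalizes $\U$, $\V$ and $F(\U)$. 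Hence $\pi^*$, $i^*$, $\mathrm{sh}^*$ are morphisms in $\mathrm{D}^b(\Lambda\mathcal{D})$, and so are $\psi_{\U,\V}=(\pi^*)^{-1}\circ i^*$ and $\psi_{\V,F(\U)}$. Multiplying by the central idempotent $E$ I obtain morphisms of $\Lambda\mathcal{D}$-complexes $\psi_{\U,\V}\cdot E$, $\psi_{\V,F(\U)}\cdot E$ and $\mathrm{sh}^*\cdot E$.

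Then I would verify that these are quasi-isomorphisms. The map $\mathrm{sh}^*$ is one already, being induced by an equivalence of \'etale sites. For $\psi_{\U,\V}\cdot E=\sum_x \psi_{\U,\V}\cdot{}^x e_s^{\Levi^F}$, the ${}^x e_s^{\Levi^F}$-component is obtained from the $e_s^{\Levi^F}$-component $\psi_{\U,\V,s}$ by transport of structure along the diagonal action of $(x,x^{-1})\in\mathcal{D}$, which commutes with $\psi_{\U,\V}$ by equivariance; since $\psi_{\U,\V,s}$ is a quasi-isomorphism by Theorem~\ref{independence}, so is each component, hence so is the sum, and the same applies to $\psi_{\V,F(\U)}\cdot E$. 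Composing, $\Theta_{\V,\U}\cdot E:=(\psi_{\V,F(\U)}\cdot E)\circ(\mathrm{sh}^*\cdot E)\circ(\psi_{\U,\V}\cdot E)^{-1}$ is an isomorphism $R\Gamma_c^{\mathrm{dim}}(\Y_\U^{\G},\Lambda)E\xrightarrow{\ \sim\ }R\Gamma_c^{\mathrm{dim}}(\Y_\V^{\G},\Lambda)E$ in $\mathrm{D}^b(\Lambda\mathcal{D})$. By the vanishing established in the first paragraph both complexes have cohomology concentrated in degree $0$, so passing to $H^0$ gives the desired isomorphism $H_c^{\dim(\Y_\U^{\G})}(\Y_\U^{\G},\Lambda)E\cong H_c^{\dim(\Y_\V^{\G})}(\Y_\V^{\G},\Lambda)E$ of $\Lambda\mathcal{D}$-modules.

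I expect the part requiring care, rather than the routine equivariance checks, to be the replacement of $e_s^{\Levi^F}$ by $E$: the quasi-isomorphism of Theorem~\ref{independence} is intrinsically only $\mathcal{D}'$-equivariant because $e_s^{\Levi^F}$ has stabilizer $\mathrm{N}_{\hat{\Levi}^F}(e_s^{\Levi^F})$ in $\hat{\Levi}^F$, so one must confirm, via the conjugation-transport argument above, that after summing over the $\hat{\Levi}^F$-orbit of $e_s^{\Levi^F}$ the resulting map of $\Lambda\mathcal{D}$-complexes is still a quasi-isomorphism; a secondary point is to check that each building block $\pi$, $i$, $\mathrm{sh}$ is genuinely equivariant for the full group $\mathcal{D}$ and not merely for $\G^F\times(\Levi^F)^{\mathrm{opp}}$.
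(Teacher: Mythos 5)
Your proposal is correct and follows essentially the same route as the paper: the key point in both is that the varieties and maps ($\pi$, $i$, $\mathrm{sh}$) entering the construction of $\Theta_{\V,\U}$ in Theorem \ref{independence} carry a $\mathcal{D}$-structure by Lemma \ref{extend} and are $\mathcal{D}$-equivariant. The only (cosmetic) difference is in the last step, where you truncate directly by the $\mathcal{D}$-stable central idempotent $E$ and check the quasi-isomorphism orbit-component by orbit-component, whereas the paper truncates by $e_s^{\Levi^F}$, obtains a quasi-isomorphism of $\Lambda\mathcal{D}'$-complexes, and applies $\mathrm{Ind}_{\mathcal{D}'}^{\mathcal{D}}$ — two equivalent ways of saying the same thing.
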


\begin{proof}
	By Theorem \ref{independence} we have a quasi-isomorphism
	$$\Theta_{\U,\V}:=\psi_{\V,F(\U),s} \circ \operatorname{sh}^* \circ \psi_{\U,\V,s}^{-1}:\R\Gamma_c(\Y_{\U},\Lambda) e_s^{\Levi^F} \to \R\Gamma_c(\Y_{\V},\Lambda) e_s^{\Levi^F}.$$
	The varieties involved in the construction of this map have a $\mathcal{D}$-structure (extending the usual $\G^F \times (\Levi^F)^\mathrm{opp}$-structure), see Lemma \ref{extend}. The maps between them in the proof of Theorem \ref{independence} are easily seen to be $\mathcal{D}$-equivariant. Therefore, the map $\Theta_{\V,\U}$ is a quasi-isomorphism of $\Lambda \mathcal{D}'$-complexes. By applying the functor $\mathrm{Ind}^\mathcal{D}_{\mathcal{D}'}$ we obtain an isomorphism
	$$H_c^{\dim(\Y_{\U}^{\hat{\G}})}(\Y_{\U}^{\hat{\G}},\Lambda) \Tr^{\hat{\Levi}^F}_{\mathrm{N}_{\hat{\Levi}^F}(e_s^{\Levi^F})}(e_s^{\Levi^F})\cong H_c^{\dim(\Y_{\V}^{\hat{\G}})}(\Y_{\V}^{\hat{\G}},\Lambda)  \Tr^{\hat{\Levi}^F}_{\mathrm{N}_{\hat{\Levi}^F}(e_s^{\Levi^F})}(e_s^{\Levi^F})$$
	of $\Lambda \mathcal{D}$-modules.
\end{proof}

\begin{lemma}\label{independencedisconnected}
	Suppose that $\mathrm{C}(s) \subseteq \Levi^\ast$ and $\mathrm{N}_{\hat{\Levi}^F}(e_s^{\Levi^F}) \G^F= \mathrm{N}_{\hat{\G}^F}(e_s^{\G^F})$. Then the bimodule $H_c^{\dim}(\Y^{\hat{\G}}_{\U},\Lambda) e_\mathcal{X}$ induces a Morita equivalence between $\Lambda \hat{\G}^F \Tr^{\hat{\G}^F}_{\mathrm{N}_{\hat{\G}^F}(e_s^{\G^F})}(e_s^{\G^F})$ and $\Lambda \hat{\Levi}^F \Tr^{\hat{\Levi}^F}_{\mathrm{N}_{\hat{\Levi}^F}(e_s^{\Levi^F})}(e_s^{\Levi^F})$.
\end{lemma}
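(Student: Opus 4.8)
\noindent\emph{Proof strategy.} The plan is to deduce this from Marcus' lifting theorem for Rickard equivalences in the form of Lemma~\ref{better version}, applied to the Bonnaf\'e--Dat--Rouquier bimodule. Write $G=\G^F$, $\hat G=\hat\G^F$, $L=\Levi^F$, $\hat L=\hat\Levi^F$, $e=e_s^{\G^F}$, $f=e_s^{\Levi^F}$, $L'=\mathrm{N}_{\hat L}(f)$, $G'=\mathrm{N}_{\hat G}(e)$, $E=\Tr^{\hat G}_{G'}(e)$, $F'=\Tr^{\hat L}_{L'}(f)=e_\mathcal{X}$, and $d=\dim(\Y^\G_\U)$. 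I would begin with the elementary group theory: since $e$ and $f$ are central idempotents, distinct $\hat G$- (resp.\ $\hat L$-)conjugates of $e$ (resp.\ $f$) are orthogonal to $e$ (resp.\ $f$), so $E$ and $F'$ are central idempotents, and $\Levi^F=\hat L\cap G$ is contained in $L'\cap G'$. From the hypothesis $G'=L'G$ one extracts the key identity $\hat L\cap G'=L'$: if $l\in\hat L\cap G'$, write $l=l_1g$ with $l_1\in L'$ and $g\in G$; then $g=l_1^{-1}l\in\hat L\cap G=L\subseteq L'$, so $l\in L'$.

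\noindent I would then set up Lemma~\ref{better version} with $\tilde G=\hat G$, $\tilde L=\hat L$, normal subgroup $G$, and idempotents $e,f$. By Theorem~\ref{Boro2} --- applicable because $\mathrm{C}(s)=\C^\circ_{\G^*}(s)\C_{(\G^*)^{F^*}}(s)\subseteq\Levi^*$ --- the bimodule $M:=H^d_c(\Y^\G_\U,\Lambda)f$ induces a Morita equivalence between $\Lambda Ge$ and $\Lambda Lf$, hence, being concentrated in one degree, a Rickard equivalence; in particular $M$ is a left $\Lambda Ge$-module, i.e.\ $eM=M$. By Lemma~\ref{extend} the variety $\Y^\G_\U$ is $\mathcal{D}$-stable in $\hat\G/\U$, so $C':=H^d_c(\Y^\G_\U,\Lambda)$ carries a natural $\Lambda\mathcal{D}$-structure; put $C:=\Res^{\mathcal{D}}_{G\times L^{\mathrm{opp}}}(C')$, a bimodule concentrated in one degree with $eCf=M$. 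The only genuinely non-formal point is that ${}^{l}eCf\cong 0$ in $\mathrm{Ho}^b(\Lambda[G\times L^{\mathrm{opp}}])$ for every $l\in\hat L\setminus L'$: by the group theory above such an $l$ lies outside $G'$, so ${}^{l}e$ and $e$ are orthogonal central idempotents of $\Lambda G$, while $Cf=H^d_c(\Y^\G_\U,\Lambda)f=M=eM$; hence ${}^{l}eCf=\bigl({}^{l}e\cdot e\bigr)M=0$, already on the nose. This vanishing is the crux of the argument, and it is precisely here --- through the identity $\hat L\cap G'=L'$ --- that the hypothesis $\mathrm{N}_{\hat\Levi^F}(e_s^{\Levi^F})\G^F=\mathrm{N}_{\hat\G^F}(e_s^{\G^F})$ is used.

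\noindent With these checks done, Lemma~\ref{better version} (the coprimeness alternative being superfluous since $C$ sits in one degree) gives that $\Lambda\hat L F'$ and $\Lambda\hat G E$ are Rickard equivalent via $E\,\Ind_{\mathcal{D}}^{\hat G\times\hat L^{\mathrm{opp}}}(C')\,F'$, a complex concentrated in one degree, hence inducing a Morita equivalence. Finally I would identify this complex using the K\"unneth formula: $[\hat G\times\hat L^{\mathrm{opp}}:\mathcal{D}]$ being finite, induction from $\mathcal{D}$ is exact, so Corollary~\ref{Kunnethcoro} yields $R\Gamma_c(\Y^{\hat\G}_\U,\Lambda)\cong\Ind_{\mathcal{D}}^{\hat G\times\hat L^{\mathrm{opp}}}(R\Gamma_c(\Y^\G_\U,\Lambda))$ in $\mathrm{D}^b$, and taking cohomology in degree $d$ gives $\Ind_{\mathcal{D}}^{\hat G\times\hat L^{\mathrm{opp}}}(C')\cong H^d_c(\Y^{\hat\G}_\U,\Lambda)$. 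Thus the Morita bimodule is $E\,H^d_c(\Y^{\hat\G}_\U,\Lambda)\,e_\mathcal{X}$; since right multiplication by $e_\mathcal{X}$ on $H^d_c(\Y^{\hat\G}_\U,\Lambda)$ lands in the image of Deligne--Lusztig induction of the series $e_\mathcal{X}$, which lies in the $E$-part by compatibility of Lusztig induction with Lusztig series (Lemma~\ref{basicseries}, Remark~\ref{almost}, Lemma~\ref{discratl}), the factor $E$ is redundant and the bimodule equals $H^d_c(\Y^{\hat\G}_\U,\Lambda)e_\mathcal{X}$, as claimed. The remaining work beyond the vanishing step is bookkeeping around the K\"unneth isomorphism and the behaviour of the idempotents $e_\mathcal{X}$, $E$ under Deligne--Lusztig induction.
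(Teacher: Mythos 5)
Your proposal is correct and follows essentially the same route as the paper: apply Lemma~\ref{better version} to the $\mathcal{D}$-equivariant bimodule $H^{\mathrm{dim}}_c(\Y_\U^{\G},\Lambda)$ and then identify the induced bimodule with $H_c^{\mathrm{dim}}(\Y^{\hat{\G}}_{\U},\Lambda)e_{\mathcal{X}}$ via the decomposition of Lemma~\ref{geometricgeneralversion2} (equivalently Corollary~\ref{Kunnethcoro}). The only difference is that you spell out the verification of the hypotheses of Lemma~\ref{better version} (the identity $\hat{\Levi}^F\cap G'=L'$ and the vanishing ${}^{l}eCf\cong 0$), which the paper leaves implicit.
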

\begin{proof}
	
	By Lemma \ref{better version} it follows that the bimodule $$\Ind_{\mathcal{D}}^{\hat{\G}^F \times (\hat{\Levi}^F)^{\mathrm{opp}}} ( H^{\mathrm{dim}}_c(\Y_\U^{\G},\Lambda) ) \Tr^{\hat{\Levi}^F}_{\mathrm{N}_{\hat{\Levi}^F}(e_s^{\Levi^F})}(e_s^{\Levi^F})$$ induces a Morita equivalence between $\Lambda \hat{\G}^F \Tr^{\hat{\G}^F}_{\mathrm{N}_{\hat{\G}^F}(e_s^{\G^F})}(e_s^{\G^F})$ and $\Lambda \hat{\Levi}^F \Tr^{\hat{\Levi}^F}_{\mathrm{N}_{\hat{\Levi}^F}(e_s^{\Levi^F})}(e_s^{\Levi^F})$. On the other hand, Lemma \ref{geometricgeneralversion2} implies that 
	$$\Ind_{\mathcal{D}}^{\hat{\G}^F \times (\hat{\Levi}^F)^{\mathrm{opp}}} H^{\mathrm{dim}}_c(\Y_\U^{\G},\Lambda) \Tr^{\hat{\Levi}^F}_{\mathrm{N}_{\hat{\Levi}^F}(e_s^{\Levi^F})}(e_s^{\Levi^F}) \cong H_c^{\mathrm{dim}}(\Y^{\hat{\G}}_{\U},\Lambda) \Tr^{\hat{\Levi}^F}_{\mathrm{N}_{\hat{\Levi}^F}(e_s^{\Levi^F})}(e_s^{\Levi^F}).$$
\end{proof}

\subsection{Independence of Godement resolution}\label{independenceGodement}

Let $G$ be a finite group and $L$ be a subgroup of $G$. Let $e$ be a central idempotent of $k G$ and $f$ be a central idempotent of $k L$. In this section we consider two complexes $C_1$ and $C_2$ which both induce a splendid equivalence between $k G e$ and $k L f$ and we want to give a criterion when $C_1 \cong C_2$ in $\mathrm{Ho}^b( k [ G \times L^{\mathrm{opp}}])$. 

The following lemma should be compared to \cite[Lemma A.5]{Dat}. Note that we denote by $Q=1$ the trivial subgroup of $G$.

\begin{lemma}\label{homotopy}
	Let $C_1$ and $C_2$ be two bounded complexes of $\ell$-permutation $k G e$-$k L f$-modules inducing a splendid Rickard equivalence between $k G e$ and $k L f$. Suppose that for all $\ell$-subgroups $Q$ of $L$ there exists an integer $d_Q$ such that the cohomology of $\Br_{\Delta Q}(C_1)$ and $\Br_{\Delta Q}(C_2)$ is concentrated in the same degree $d_Q$. In addition, assume that $H^{d_1}(C_1) \cong H^{d_1}(C_2)$. Then we have $C_1 \cong C_2$ in $\mathrm{Ho}^b(k[G \times L^{\mathrm{opp}}])$.
\end{lemma}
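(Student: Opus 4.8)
The plan is to imitate the strategy of \cite[Lemma A.5]{Dat}, first reducing to a comparison with the unit complex and then running an induction over $\ell$-subgroups controlled by the Brauer functor. First I would set $D:=C_1^\vee \otimes_{kGe} C_2$, a bounded complex of biprojective $\ell$-permutation $kLf$-$kLf$-bimodules all of whose indecomposable summands of components have a vertex contained in $\Delta L$. Since $C_1$ and $C_2$ induce Rickard equivalences, $D$ induces a self-Rickard-equivalence of $kLf$, and one has $C_1 \otimes_{kLf} C_1^\vee \cong kGe$ in $\mathrm{Ho}^b(k[G \times G^{\mathrm{opp}}])$; therefore $C_1 \otimes_{kLf} D \cong C_2$ in $\mathrm{Ho}^b(k[G\times L^{\mathrm{opp}}])$, and it suffices to prove $D \cong kLf$ in $\mathrm{Ho}^b(k[L \times L^{\mathrm{opp}}])$.

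Next I would compute the cohomology of $D$ and of its Brauer constructions. Taking $Q=1$ in the hypothesis shows that $C_1$ and $C_2$ have cohomology concentrated in degree $d_1$; hence $H^{d_1}(C_i)$ is biprojective and induces a Morita equivalence (as in the remarks preceding \cite[Lemma 10.2.4]{Rouquier3}), and since $H^{d_1}(C_1)\cong H^{d_1}(C_2)=:M$ and $M^\vee \otimes_{kGe} M \cong kLf$, the complex $D$ has cohomology concentrated in degree $0$ with $H^0(D)\cong kLf$. For an arbitrary $\ell$-subgroup $Q$ of $L$, the compatibility of $\Br_{\Delta Q}$ with the tensor product $\otimes_{kGe}$ of $\ell$-permutation bimodules whose vertices lie in $\Delta L$ (proved exactly as Lemma \ref{BrauerMarcus}, the relevant $T$-set condition being satisfied because $\Delta L \subseteq \Delta G$) gives
$$\Br_{\Delta Q}(D) \cong \Br_{\Delta Q}(C_1)^\vee \otimes_{k \C_G(Q)} \Br_{\Delta Q}(C_2)$$
in $\mathrm{Ho}^b(k[\C_L(Q) \times \C_L(Q)^{\mathrm{opp}}])$; by Proposition \ref{splendidloc} the two factors (cut by the corresponding Brauer idempotents) induce Rickard equivalences between centralizer blocks, and since their cohomology is concentrated in degree $d_Q$, the same computation shows that $\Br_{\Delta Q}(D)$ has cohomology concentrated in degree $0$.

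Finally I would upgrade ``cohomology concentrated in degree $0$'' to ``$\cong kLf$ in $\mathrm{Ho}^b$'' by the inductive argument of \cite[Lemma A.5]{Dat}. Fixing a common defect group $P$ of $e$ and $f$ (Theorem \ref{isomorphicdefect}) and inducting on $|P|$, one first proves for every $1\neq Q\le L$ that $\Br_{\Delta Q}(D)\cong k\C_L(Q)\br_Q(f)$ in $\mathrm{Ho}^b$: the blocks of $\C_G(Q)$ and $\C_L(Q)$ in play are identified by Theorem \ref{Puig} and Corollary \ref{factor}, their defect groups are (strictly smaller when $Q$ is not central in $P$, and otherwise handled directly), and the inductive statement together with the previous paragraph identifies the two local $H^{d_Q}$-bimodules attached to $C_1$ and $C_2$, which forces $H^0(\Br_{\Delta Q}(D))$ to be the trivial bimodule; here one uses that a bounded acyclic complex of relatively $\Delta$-projective $\ell$-permutation modules becomes contractible after the top Brauer construction. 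Choosing a morphism $kLf \to D$ in $\mathrm{Ho}^b$ that induces the identity on $H^0$ and letting $X$ be its mapping cone, $X$ is then a splendid complex of $kLf$-$kLf$-bimodules with $H^\bullet(X)=0$ and $\Br_{\Delta Q}(X)\cong 0$ in $\mathrm{Ho}^b$ for all $\ell$-subgroups $Q\neq 1$; this forces $X\cong 0$, i.e. $D\cong kLf$, as required. The hard part will be this last step: the bookkeeping needed to propagate the degree-$0$ identification of the local cohomologies $H^0(\Br_{\Delta Q}(D))$ down the poset of $\ell$-subgroups — in particular dealing with the subgroups $Q$ that are central in $P$, where the centralizer defect group does not shrink — together with the concluding vanishing statement for splendid complexes whose cohomology and Brauer constructions are all trivial.
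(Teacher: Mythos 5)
Your reduction to showing $D:=C_1^\vee\otimes_{kGe}C_2\cong kLf$ in $\mathrm{Ho}^b(k[L\times L^{\mathrm{opp}}])$, and the computation that $D$ and all $\Br_{\Delta Q}(D)\cong \Br_{\Delta Q}(C_1)^\vee\otimes_{k\C_G(Q)}\Br_{\Delta Q}(C_2)$ have cohomology concentrated in degree $0$ with $H^0(D)\cong H^{d_1}(C_1)^\vee\otimes_{kG}H^{d_1}(C_2)\cong kLf$, is exactly the route the paper takes (it quotes the proof of \cite[Theorem 4.1]{Rickard} for the compatibility of $\Br_{\Delta Q}$ with $\otimes_{kG}$ and the K\"unneth theorem \cite[Theorem 2.7.1]{Benson} for the degree count). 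The divergence, and the gap, is in the last step. The paper finishes by invoking \cite[Lemma A.3]{Dat}: a bounded complex of $\ell$-permutation modules all of whose Brauer constructions (including at $Q=1$) have cohomology concentrated in degree $0$ is homotopy equivalent to its $H^0$. That lemma needs only the \emph{vanishing} of the higher local cohomology, not any identification of the local $H^0$'s; its proof peels contractible summands off the two ends of the complex, using that a surjection of $\ell$-permutation modules which remains surjective after every Brauer construction is split.

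Your substitute for this step does not work as written, for two reasons. First, the induction is circular: to identify $H^0(\Br_{\Delta Q}(D))$ with $k\C_L(Q)\br_Q(f)$ you need $H^{d_Q}(\Br_{\Delta Q}(C_1))c_Q\cong H^{d_Q}(\Br_{\Delta Q}(C_2))c_Q$, but the hypothesis of the lemma only supplies the isomorphism of the \emph{global} top cohomologies $H^{d_1}(C_i)$; applying the lemma inductively to the local complexes would require exactly the local isomorphism you are trying to establish. A priori $H^0(\Br_{\Delta Q}(D))$ is merely an invertible $k\C_L(Q)\br_Q(f)$-bimodule, with no reason to be the trivial one --- and, crucially, it does not need to be trivial for the conclusion. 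Second, the mapping-cone endgame presupposes a morphism $kLf\to D$ in $\mathrm{Ho}^b(k[L\times L^{\mathrm{opp}}])$ inducing the identity on $H^0$; a chain map of this kind is a lift of $\mathrm{id}_{kLf}$ through the surjection $Z^0(D)\twoheadrightarrow H^0(D)$, and since $kLf$ is not projective as a bimodule such a lift is not available without further argument (this is precisely the obstruction that makes ``derived equivalent'' strictly weaker than ``Rickard equivalent''). Replace your whole last paragraph by the citation of \cite[Lemma A.3]{Dat}, or reprove that lemma by splitting off the end terms: it yields $D\cong H^0(D)$ in $\mathrm{Ho}^b$ directly, and the global identification $H^0(D)\cong kLf$ then gives $C_1\cong C_1\otimes_{kLf}D\cong C_2$.
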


\begin{proof}
	As in the proof of \cite[Lemma 10.2.6]{Rouquier3} one shows that the complex $C_1^\vee \otimes_{\Lambda G} C_2$ induces a splendid Rickard self-equivalence of $k L f$. Therefore, we have $\Br_R(C_1^\vee \otimes_{k G} C_2) \cong 0$ in $\mathrm{Ho}^b(k [L \times L^{\mathrm{opp}}])$ if $R$ is not conjugate to a subgroup of $\Delta L$. Moreover, by the proof of \cite[Theorem 4.1]{Rickard} we have $$\Br_{\Delta Q}(C_1^\vee \otimes_{k G} C_2) \cong \Br_{\Delta Q}(C_1^\vee) \otimes_{k \C_G(Q)} \Br_{\Delta Q}(C_2)$$
	for all $\ell$-subgroups $Q$ of $L$. The complexes $\Br_{\Delta Q}(C_1)$, $\Br_{\Delta Q}(C_2)$ are complexes of finitely generated projective $k\mathrm{C}_{G}(Q)$-modules and their cohomology is by assumption concentrated in the same degree $d_Q$. By \cite[Theorem 2.7.1]{Benson} we thus have $H^{i}(\Br_{\Delta Q}(C_1^\vee \otimes_{k G} C_2))=0$ for $i\neq 0$.
	Therefore, we can apply \cite[Lemma A.3]{Dat} and obtain that 
	$$C_1^\vee \otimes_{k G} C_2 \cong  H^0(C_1^\vee \otimes_{k G} C_2) \cong H^{d_1}(C_1)^\vee \otimes_{k G} H^{d_1}(C_2).$$
	in $\mathrm{Ho}^b(k [G \times L^{\mathrm{opp}}])$. By assumption we have $H^{d_1}(C_1) \cong H^{d_1}(C_2)$. Moreover, the bimodule $H^{d_1}(C_1)$ induces a Morita equivalence between $k L f$ and $k G e$ by \cite[Section 10.2.3]{Rouquier3}. From this we can conclude that $k L f \cong C_1^\vee \otimes_{k G} C_2$ in $\mathrm{Ho}^b(k[ L \times L^{\mathrm{opp}}])$. Therefore, we have 
	$$C_1\cong C_1 \otimes_{k L} kL f \cong C_1 \otimes_{k L} C_1^\vee \otimes_{k G} C_2 \cong C_2$$
	in $\mathrm{Ho}^b(k[ G \times L^{\mathrm{opp}}])$. 
\end{proof}


\begin{corollary}\label{independencegodement}
	Let $\G$ be a connected reductive group, $s\in (\G^\ast)^{F^\ast}$ semisimple of $\ell'$-order and $\Levi^\ast$ the minimal Levi subgroup of $\G^\ast$ with $\C(s) \subseteq \Levi^\ast$. Let $\Para=\Levi \U$ and $\mathbf{Q}=\Levi \V$ be two parabolic subgroups of $\G$ with Levi subgroup $\Levi$. Then we have $$G\Gamma_c(\Y_{\U},k) e_s^{\Levi^F}\cong G \Gamma_c(\Y_{\V},k) e_s^{\Levi^F}[\mathrm{dim}(\Y_\V^\G) - \mathrm{dim}(\Y_\U^\G)]$$
	in $\mathrm{Ho}^b(k \G^F\otimes_{k }(k \Levi^F)^{\mathrm{opp}})$
	if
	$$\mathrm{dim}(\Y_\U^\G)-\mathrm{dim}(\Y_\V^\G)=\mathrm{dim}(\Y_{\C_{\U}(Q)}^{\C_\G(Q)})-\mathrm{dim}(\Y_{\C_{\V}(Q)}^{\C_{\G}(Q)})$$
	for all $\ell$-subgroups $Q$ of $\Levi^F$.
\end{corollary}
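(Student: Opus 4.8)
The plan is to deduce the statement from the general comparison result Lemma~\ref{homotopy}, applied to
\[
C_1:=G\Gamma_c(\Y_{\U},k)\,e_s^{\Levi^F}\qquad\text{and}\qquad C_2:=G\Gamma_c(\Y_{\V},k)\,e_s^{\Levi^F}\bigl[\dim(\Y_{\V}^{\G})-\dim(\Y_{\U}^{\G})\bigr],
\]
regarded as complexes of $k\G^F e_s^{\G^F}$-$k\Levi^F e_s^{\Levi^F}$-bimodules (so $G=\G^F$, $L=\Levi^F$, $e=e_s^{\G^F}$, $f=e_s^{\Levi^F}$). First I would record that both complexes induce a splendid Rickard equivalence: since $\Levi^\ast$ is the minimal Levi subgroup of $\G^\ast$ with $\C(s)\subseteq\Levi^\ast$, Theorem~\ref{Boro2} (reduced modulo $\ell$) gives this for $G\Gamma_c(\Y_{\U},k)^{\mathrm{red}}e_s^{\Levi^F}$ and for $G\Gamma_c(\Y_{\V},k)^{\mathrm{red}}e_s^{\Levi^F}$; since a complex is homotopy equivalent to its reduced part and a shift changes neither the components of a complex nor the vertices of their indecomposable summands, the same holds for $C_1$ and $C_2$. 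Finally, by \cite[Theorem 11.7]{BoRo} the cohomology of $G\Gamma_c(\Y_{\U},k)e_s^{\Levi^F}$ is concentrated in degree $\dim(\Y_{\U}^{\G})$, so $C_1$ has cohomology concentrated in degree $d_1:=\dim(\Y_{\U}^{\G})$, and the chosen shift makes $C_2$ have cohomology in the same degree.

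Next I would verify the hypotheses of Lemma~\ref{homotopy}. For the cohomological concentration after the Brauer functor, fix an $\ell$-subgroup $Q$ of $\Levi^F$. By Lemma~\ref{BrauerGodement},
\[
\Br_{\Delta Q}(C_1)\cong G\Gamma_c(\Y_{\C_{\U}(Q)}^{\C_{\G}(Q)},k)\,\br_Q(e_s^{\Levi^F}),\qquad \Br_{\Delta Q}(C_2)\cong G\Gamma_c(\Y_{\C_{\V}(Q)}^{\C_{\G}(Q)},k)\,\br_Q(e_s^{\Levi^F})\bigl[\dim(\Y_{\V}^{\G})-\dim(\Y_{\U}^{\G})\bigr]
\]
in the relevant homotopy categories. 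By Lemma~\ref{brauerseries} together with Remark~\ref{almost}, $\br_Q(e_s^{\Levi^F})$ is a sum of central orthogonal $(\C^\circ_{\G}(Q),\C^\circ_{\Levi}(Q))$-almost-superregular idempotents; passing to the connected group $\C^\circ_{\G}(Q)$ via Lemma~\ref{geometricgeneralversion2}(a) and Lemma~\ref{discratl} and applying \cite[Theorem 11.7]{BoRo} there---this is exactly the vanishing exploited in the proof of Theorem~\ref{Boro}---one gets that $G\Gamma_c(\Y_{\C_{\U}(Q)}^{\C_{\G}(Q)},k)\,\br_Q(e_s^{\Levi^F})$ has cohomology concentrated in degree $\dim(\Y_{\C_{\U}(Q)}^{\C_{\G}(Q)})$, and likewise with $\V$ in place of $\U$. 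Hence $\Br_{\Delta Q}(C_1)$ is concentrated in degree $\dim(\Y_{\C_{\U}(Q)}^{\C_{\G}(Q)})$ and $\Br_{\Delta Q}(C_2)$ in degree $\dim(\Y_{\C_{\V}(Q)}^{\C_{\G}(Q)})-\dim(\Y_{\V}^{\G})+\dim(\Y_{\U}^{\G})$; these two integers coincide precisely by the dimension hypothesis of the corollary, so they furnish a common degree $d_Q$.

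It then remains to check $H^{d_1}(C_1)\cong H^{d_1}(C_2)$. Unwinding the shift, $H^{d_1}(C_1)=H^{\dim(\Y_{\U}^{\G})}_c(\Y_{\U},k)e_s^{\Levi^F}$ and $H^{d_1}(C_2)=H^{\dim(\Y_{\V}^{\G})}_c(\Y_{\V},k)e_s^{\Levi^F}$, and these are isomorphic as $k\G^F$-$k\Levi^F$-bimodules by Theorem~\ref{independence} with $\Lambda=k$. Lemma~\ref{homotopy} then yields $C_1\cong C_2$ in $\mathrm{Ho}^b(k\G^F\otimes_k(k\Levi^F)^{\mathrm{opp}})$, which is the asserted isomorphism. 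I expect the main obstacle to be the concentration-in-one-degree statement for $\Br_{\Delta Q}(C_i)$: it hinges on the vanishing theorem for Deligne--Lusztig varieties of the possibly disconnected centralizers $\C_{\G}(Q)$ attached to an almost-superregular semisimple class, which must be extracted from the connected case through the decompositions recalled above; once that is in place, everything else is routine bookkeeping with shifts and with results already available.
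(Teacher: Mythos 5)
Your proof is correct and follows essentially the same route as the paper: both apply Lemma~\ref{homotopy} to the two Godement complexes, using Theorem~\ref{Boro2} for the splendid Rickard equivalences, Lemma~\ref{BrauerGodement} plus the vanishing theorem for the concentration of cohomology of the Brauer quotients (where the dimension hypothesis enters), and Theorem~\ref{independence} for the isomorphism of top cohomology bimodules. Your write-up merely supplies more detail than the paper on why the Brauer quotients have cohomology concentrated in a single degree.
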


\begin{proof}
	By Theorem \ref{Boro2} the complex $G \Gamma_c( \Y_\U,k) e_s^{\Levi^F}$ induces a splendid Rickard equivalence between $k \Levi^F e_s^{\Levi^F}$ and $k \G^F e_s^{\G^F}$. Its cohomology is concentrated in degree $\mathrm{dim}( \Y_\U)$. Moreover, the cohomology of 
	$$\mathrm{Br}_{\Delta Q}(G \Gamma_c( \Y^\G_\U,k) e_s^{\Levi^F}) \cong G \Gamma_c( \Y^{\C_{\G}(Q)}_{\C_\U(Q)},k) \br_Q(e_s^{\Levi^F})$$
	is concentrated in degree $\mathrm{dim}(\Y_{\C_{\U}(Q)}^{\C_\G(Q)})$. The same holds for the variety $\Y_\V^\G$. By Theorem \ref{independence}, $H_c^{\dim}(\Y_{\U},\Lambda) e_s^{\Levi^F}\cong H_c^{\dim}(\Y_{\V},\Lambda) e_s^{\Levi^F}$. Hence the statement of the corollary is an immediate consequence of Lemma \ref{homotopy}.
\end{proof} 

We don't know when the condition of Corollary \ref{independencegodement} holds in general. The following example is an application of Corollary  \ref{independencegodement}.

\begin{example}\label{iGR}
	Suppose that $\sigma: \G \to \G$ is a bijective endomorphism with $\sigma \circ F= F \circ \sigma$ and stabilizing $\Levi$ and $e_s^{\Levi^F}$. Suppose that a Sylow $\ell$-subgroup $D$ of $\Levi^F$ is cyclic. Up to changing $\sigma$ by inner automorphisms of $\Levi^F$ we may assume that $D$ is $\sigma$-stable. Hence, for any subgroup $Q$ of $D$ we have $\sigma(Q)=Q$. It follows that 
	$$\mathrm{dim}(\Y_{\C_{\sigma(\U)}(Q)}^{\C_\G(Q)})=\mathrm{dim}(\sigma(\Y_{\C_\U(Q)}^{\C_\G(Q)}))=\mathrm{dim}(\Y_{\C_\U(Q)}^{\C_\G(Q)}).$$
	From this and Corollary \ref{independencegodement} we conclude that $$G\Gamma_c(\Y_{\sigma(\U)},k) e_s^{\Levi^F} \cong G \Gamma_c(\Y_{\U},k) e_s^{\Levi^F} $$
	in $\mathrm{Ho}^b(k \G^F\otimes_{k}(k \Levi^F)^{\mathrm{opp}})$
	Therefore, by Lemma \ref{sigma} we have
	$${}^\sigma (G\Gamma_c(\Y_{\U},k) e_s^{\Levi^F})^{\sigma} \cong G \Gamma_c(\Y_{\U},k) e_s^{\Levi^F}$$
	in $\mathrm{Ho}^b(k \G^F\otimes_{k }(k \Levi^F)^{\mathrm{opp}})$.
\end{example}

%
%


\subsection{Comparing Rickard and Morita equivalences}

Let $\G$ be a not necessarily connected reductive group and $\Levi$ be an $F$-stable Levi subgroup of $\G$ with Levi decomposition $\Para=\Levi \ltimes \U$. Let $\mathcal{X}$ be a $(\G^\circ,\Levi^\circ)$-regular series of $(\Levi,F)$. Denote by $\mathcal{Y}$ the unique series of $(\G,F)$ containing $\mathcal{X}$. We denote $d:= \mathrm{dim}(\Y_\U^\G)$. We recall the following important result:

\begin{proposition}\label{Endnoncon}
	We have
	$$\mathrm{End}_{k \G^F}^\bullet((G\Gamma_c(\Y_\U^\G,k) e_\mathcal{X}){^\mathrm{red}})\cong \mathrm{End}_{D^b(k \G^F)}((G\Gamma_c(\Y_\U^\G,k) e_\mathcal{X}){^\mathrm{red}}) $$
	in $\mathrm{Ho}^b(k [\Levi^F \times (\Levi^F)^\mathrm{opp}])$.
\end{proposition}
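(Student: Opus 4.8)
The plan is to prove that the dg-algebra $\mathrm{End}_{k\G^F}^\bullet(C)$, where $C := (G\Gamma_c(\Y_\U^\G,k)e_\mathcal{X})^{\mathrm{red}}$, is formal, by invoking the Brauer-functor criterion \cite[Lemma A.3]{Dat} exactly as in the proof of Lemma \ref{homotopy}; this gives a complex homotopy equivalent to its cohomology, which is $\mathrm{End}_{D^b(k\G^F)}(C)$. Since $\G^F$ and $\Levi^F$ act freely on $\Y_\U^\G$, the complex $C$ is biprojective and consists of $\ell$-permutation $k[\G^F\times(\Levi^F)^{\mathrm{opp}}]$-modules whose indecomposable components are relatively $\Delta\Levi^F$-projective (see the remarks following Lemma \ref{BrauerGodement}). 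Hence $\mathrm{End}_{k\G^F}^\bullet(C)\cong C^\vee\otimes_{k\G^F}C$ is again a bounded complex of $\ell$-permutation $k[\Levi^F\times(\Levi^F)^{\mathrm{opp}}]$-modules, and for every $\ell$-subgroup $Q$ of $\Levi^F$ one has, as in the proof of Lemma \ref{homotopy} (via \cite[Theorem 4.1]{Rickard}),
$$\mathrm{Br}_{\Delta Q}(C^\vee\otimes_{k\G^F}C)\cong\mathrm{Br}_{\Delta Q}(C)^\vee\otimes_{k\C_{\G^F}(Q)}\mathrm{Br}_{\Delta Q}(C),$$
where both factors on the right are complexes of finitely generated projective $k\C_{\G^F}(Q)$-modules (again by freeness of the $\G^F$-action). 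Therefore the whole problem reduces to showing that the cohomology of $\mathrm{Br}_{\Delta Q}(C)$ is concentrated in a single degree for every $\ell$-subgroup $Q$ of $\Levi^F$: granting this, the displayed right-hand side has cohomology concentrated in degree $0$, and \cite[Lemma A.3]{Dat} yields $\mathrm{End}_{k\G^F}^\bullet(C)\cong H^0\big(\mathrm{End}_{k\G^F}^\bullet(C)\big)=\mathrm{End}_{D^b(k\G^F)}(C)$ in $\mathrm{Ho}^b(k[\Levi^F\times(\Levi^F)^{\mathrm{opp}}])$.

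To obtain the single-degree concentration I would use Lemma \ref{BrauerGodement} to identify $\mathrm{Br}_{\Delta Q}(G\Gamma_c(\Y_\U^\G,k)e_\mathcal{X})\cong G\Gamma_c(\Y_{\C_\U(Q)}^{\C_\G(Q)},k)\,\mathrm{br}_Q(e_\mathcal{X})$, and then Lemma \ref{brauerseries} to write $\mathrm{br}_Q(e_\mathcal{X})=\sum_{\mathcal{Z}\in(i_Q^\Levi)^{-1}(\mathcal{X})}e_\mathcal{Z}$ as a sum of central orthogonal idempotents, each $\mathcal{Z}$ being a $(\C^\circ_\G(Q),\C^\circ_\Levi(Q))$-regular series of $(\C_\Levi(Q),F)$. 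So it suffices to check that $G\Gamma_c(\Y_{\C_\U(Q)}^{\C_\G(Q)},k)e_\mathcal{Z}$ has cohomology concentrated in degree $\mathrm{dim}(\Y_{\C_\U(Q)}^{\C_\G(Q)})$ for each such $\mathcal{Z}$. When $\C_\G(Q)$ is connected this is precisely the Bonnafé--Rouquier vanishing $H^i_c(\Y_\V^\mathbf{H},\Lambda)e_s^{\Levi^F}=0$ for $i\neq\mathrm{dim}(\Y_\V^\mathbf{H})$ recalled in Section \ref{BDR}, which holds for regular series. For disconnected $\mathbf{H}:=\C_\G(Q)$ I would reduce to $\mathbf{H}^\circ=\C^\circ_\G(Q)$ as in Example \ref{normcent}: by Lemma \ref{discratl}, $e_\mathcal{Z}=\Tr^{\mathbf{H}^F}_{\mathrm{N}_{\mathbf{H}^F}(e_{\mathcal{Z}^\circ})}(e_{\mathcal{Z}^\circ})$ for a regular series $\mathcal{Z}^\circ$ of $(\mathbf{H}^\circ,F)$, so that $R\Gamma_c(\Y_{\C_\U(Q)}^{\mathbf{H}},k)e_\mathcal{Z}\cong\mathrm{Ind}_{(\mathbf{H}^\circ)^F}^{\mathbf{H}^F}\big(R\Gamma_c(\Y_{\C_\U(Q)}^{\mathbf{H}^\circ},k)e_{\mathcal{Z}^\circ}\big)$ by the induction formula \cite[(3.1)]{Dat}; since $\mathrm{Ind}$ is exact and $\mathrm{dim}(\Y_{\C_\U(Q)}^{\mathbf{H}})=\mathrm{dim}(\Y_{\C_\U(Q)}^{\mathbf{H}^\circ})$ by Lemma \ref{geometricgeneralversion2}, the concentration descends from the connected case.

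I expect the main obstacle to be precisely this disconnected-group bookkeeping: making the Bonnafé--Rouquier single-degree vanishing legitimately available for the generalized Deligne--Lusztig varieties of the a priori disconnected centralizers $\C_\G(Q)$, and correctly matching rational series of $\C_\G(Q)$ with those of its identity component through the maps $i_Q^\Levi$ of Lemma \ref{brauerseries}. The formality mechanism itself --- \cite[Lemma A.3]{Dat} together with the Brauer-functor computations above --- then applies verbatim. The only remaining point needing care is the harmless passage between $G\Gamma_c(\Y_\U^\G,k)e_\mathcal{X}$ and its reduced representative $C$: the two are homotopy-equivalent complexes of $\ell$-permutation modules and $\mathrm{Br}_{\Delta Q}$ respects homotopy equivalences, so all of the above transfers between them.
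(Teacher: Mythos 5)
Your argument is correct and is essentially the same as the paper's, which simply cites Step 1 of the proof of \cite[Theorem 7.6]{Dat} (noting connectedness is not needed there): that step is exactly the formality argument you reconstruct, namely applying \cite[Lemma A.3]{Dat} to $C^\vee\otimes_{k\G^F}C$ after checking via $\mathrm{Br}_{\Delta Q}$, Lemma \ref{BrauerGodement} and the regularity of the series in Lemma \ref{brauerseries} that the local cohomology is concentrated in one degree. The only point left implicit (as in the paper's own Lemma \ref{homotopy}) is that $\mathrm{Br}_R$ vanishes for subgroups $R$ not subconjugate to $\Delta\Levi^F$, which follows from the relative $\Delta\Levi^F$-projectivity you already invoke.
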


\begin{proof}
	This is proved in Step 1 of the proof of \cite[Theorem 7.6]{Dat}. Note that the assumption $\G$ is connected is not needed in this step of the proof.
	%
\end{proof}

\begin{proposition}\label{equiv}
	Let $b$ be a block of $\Lambda \G^F e_{\mathcal{Y}}$ and $c$ be a block of $\Lambda \Levi^F e_{\mathcal{X}}$. Denote $C:= b G\Gamma_c(\Y_\U^\G,\Lambda)^{\mathrm{red}} c$ and $d:= \mathrm{dim}(\Y_\U^\G)$. Then the complex $C$ induces a splendid Rickard equivalence between $ \Lambda \G^F b$ and $ \Lambda \Levi^F c$ if and only if $H^d(C)$ induces a Morita equivalence between $\Lambda \G^F b$ and $ \Lambda \Levi^F c$. 
\end{proposition}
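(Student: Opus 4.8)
The plan is to reduce everything to the connected case and then re-run the argument of \cite[Theorem 7.6]{Dat}, whose only step that uses connectedness of $\G$ has already been isolated as Proposition \ref{Endnoncon}. Since splendid Rickard equivalences and Morita equivalences descend to, and lift from, reductions modulo $J(\mathcal{O})$ (the relevant bimodule $H^d(C)$ being $\mathcal{O}$-free), I would first reduce to $\Lambda=k$, where Proposition \ref{Endnoncon} is available, and restore $\mathcal{O}$ at the end.

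Before treating either implication I would record two facts about $C$ that follow by reduction to $\G^\circ$. First, $C$ has cohomology concentrated in degree $d$, equal to $b\,H_c^{d}(\Y_\U^\G,\Lambda)e_{\mathcal X}\,c$: by Lemma \ref{discratl} the idempotent $e_{\mathcal X}$ is a sum of $\Levi^F$-conjugates of $e_{\mathcal X^\circ}$ for a rational series $\mathcal X^\circ$ of $(\Levi^\circ,F)$ with $\mathcal X^\circ\subseteq\mathcal X$, and by \cite[(3.1)]{Dat} together with Lemma \ref{geometricgeneralversion2}(a) the complex $R\Gamma_c(\Y_\U^\G,\Lambda)e_{\mathcal X}$ is obtained by induction from the corresponding complexes for $\G^\circ$; since $H_c^i(\Y_\U^{\G^\circ},\Lambda)e_{\mathcal X^\circ}=0$ for $i\neq d=\dim\Y_\U^{\G^\circ}=\dim\Y_\U^\G$ by \cite[Theorem 11.7]{BoRo} and induction is exact, the claim follows after multiplying by $b$ and $c$. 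Second, $C$ is biprojective and splendid: that $G\Gamma_c(\Y_\U^\G,\Lambda)^{\mathrm{red}}e_{\mathcal X}$ is biprojective and consists of $\ell$-permutation modules with vertices contained in $\Delta\Levi^F$ follows from the connected case (Theorem \ref{Boro2} and the discussion after Lemma \ref{BrauerGodement}) by the same induction, and both properties survive multiplication by the central idempotents $b$ and $c$.

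For the implication ``$C$ splendid Rickard $\Rightarrow$ $H^d(C)$ Morita'', the first fact gives $C\cong H^d(C)[-d]$ in $\mathrm{D}^b(\Lambda[\G^F\times(\Levi^F)^{\mathrm{opp}}])$, so $H^d(C)$ induces a derived equivalence between $\Lambda\G^F b$ and $\Lambda\Levi^F c$; a bimodule concentrated in a single degree inducing a derived equivalence induces a Morita equivalence (cf.\ \cite[Section 10.2.3]{Rouquier3}). For the converse, $C$ is splendid by the second fact, so it suffices to verify the two conditions of Definition \ref{rickard}. Proposition \ref{Endnoncon} gives $\mathrm{End}^\bullet_{k\G^F}(C)\cong\mathrm{End}_{\mathrm{D}^b(k\G^F)}(C)$ in $\mathrm{Ho}^b(k[\Levi^F\times(\Levi^F)^{\mathrm{opp}}])$; the left-hand side equals $C^\vee\otimes_{k\G^F}C$ because $C$ is projective as a right $k\Levi^F$-module (see \S\ref{Module category}), while the right-hand side is $\mathrm{RHom}_{k\G^F}(H^d(C),H^d(C))=\mathrm{Hom}_{k\G^F}(H^d(C),H^d(C))\cong k\Levi^F c$ concentrated in degree $0$, using that $H^d(C)$ is projective over $k\G^F$ and induces a Morita equivalence. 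Hence the canonical map $k\Levi^F c\to\mathrm{End}^\bullet_{k\G^F}(C)$ is a homotopy equivalence, which is condition (b) of Definition \ref{rickard}; condition (a) follows either from the analogue of Proposition \ref{Endnoncon} with the two factors interchanged, proved identically, or formally, since $C\otimes_{k\Levi^F}-$ is by then already an equivalence $\mathrm{D}^b(k\Levi^F c)\to\mathrm{D}^b(k\G^F b)$ with quasi-inverse $C^\vee\otimes_{k\G^F}-$.

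The genuinely non-formal input, the identification of $\mathrm{End}^\bullet$ with $\mathrm{RHom}$ in the homotopy category, is supplied by Proposition \ref{Endnoncon}, so the remaining work is bookkeeping around the disconnected group: checking cohomology-concentration, biprojectivity and splendidness of $C$ by reduction to $\G^\circ$, and confirming that passing to the block idempotents $b$ and $c$ disturbs none of this. I expect the point requiring the most care to be the $\G^F$-side version of Proposition \ref{Endnoncon} used for condition (a) (or, alternatively, making the formal duality argument that replaces it fully rigorous in $\mathrm{Ho}^b$, not merely in $\mathrm{D}^b$).
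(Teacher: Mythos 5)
Your proposal is correct and follows essentially the same route as the paper: reduce to $\Lambda=k$, invoke Proposition \ref{Endnoncon} together with the concentration of cohomology in degree $d$ to identify $\mathrm{End}^\bullet_{k\G^F}(C)$ with $\mathrm{End}_{k\G^F}(H^d(C))$ in $\mathrm{Ho}^b$, and then lift back to $\mathcal{O}$ via reduction mod $J(\mathcal{O})$ and the proof of \cite[Theorem 5.2]{Rickard}. The only difference is presentational: where you unfold conditions (a) and (b) of Definition \ref{rickard} by hand (and worry about the $\G^F$-side analogue of Proposition \ref{Endnoncon}), the paper delegates that step to \cite[Theorem 2.1]{Rickard}, which yields both directions at once from the endomorphism identification.
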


\begin{proof}
	Let us first assume that $ \Lambda=k$.
 	By Proposition \ref{Endnoncon} we have
	$$\mathrm{End}_{k \G^F}^\bullet(C)\cong \mathrm{End}_{D^b(k \G^F)}(C). $$
	Since $C$ is a complex of projective $k \G^F$-modules we have $\mathrm{End}_{D^b(k \G^F)}(C) \cong H^0(\mathrm{End}_{k \G^F}^\bullet(C))$ and as the cohomology of $C$ is concentrated in degree $d$, we deduce that $H^0(\mathrm{End}_{k \G^F}^\bullet(C)) \cong \mathrm{End}_{k \G^F}(H^d(C))$. Therefore, $\mathrm{End}_{k \G^F}^\bullet(C) \cong  \mathrm{End}_{k \G^F}(H^d(C))$ in $\mathrm{Ho}^b(k [ \Levi^F \times (\Levi^F)^\mathrm{opp} ])$. By \cite[Theorem 2.1]{Rickard} it follows that $C$ induces a Rickard equivalence if and only if $H^d(C)$ induces a Morita equivalence.
	
	Let us now assume that $\Lambda=\mathcal{O}$. If $H^d(C)$ induces a Morita equivalence between $\mathcal{O} \G^F b$ and $\mathcal{O} \Levi^F c$ then $H^d(C \otimes_\mathcal{O} k) \cong H^d(C) \otimes_{\mathcal{O}} k$ induces a Morita equivalence between $k \G^F b$ and $k \Levi^F c$. Using the result for the case $\Lambda =k$ shows that the complex $C \otimes_\mathcal{O} k$ induces a splendid Rickard equivalence between $k \G^F b$ and $k \Levi^F c$. Thus, by the proof of \cite[Theorem 5.2]{Rickard} the complex $C$ induces a splendid Rickard equivalence between $\mathcal{O} \G^F b$ and $\mathcal{O} \Levi^F c$.
	On the other hand, if the complex $C$ induces a Rickard equivalence then it follows by \cite[Section 10.2.3]{Rouquier3} that $H^d(C)$ induces a Morita equivalence.
\end{proof}


\subsection{Morita equivalences for local subgroups}

In this section we give some applications of Proposition \ref{equiv}. We keep the notation of the previous section and assume additionally that the rational series $\mathcal{X}$ of $(\Levi,F)$ is almost $(\G^\circ,\Levi^\circ)$-superregular, see Definition \ref{almost superregular}.

\begin{corollary}\label{Rickarddiscon}
	Suppose that $\mathrm{N}_{\Levi^F}(e_s^{(\Levi^\circ)^F}) (\G^\circ)^F = \mathrm{N}_{\G^F}(e_s^{(\G^\circ)^F})$. Then the complex $G \Gamma_c(\Y^\G_{\U},\Lambda) e_\mathcal{X}$ induces a splendid Rickard equivalence between $\Lambda \G^F e_{\mathcal{Y}}$ and $\Lambda \Levi^F e_{\mathcal{X}}$.
\end{corollary}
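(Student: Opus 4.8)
The plan is to deduce this corollary by combining the ``Rickard $\Leftrightarrow$ Morita'' mechanism behind Proposition~\ref{equiv} with the Morita equivalence supplied by Lemma~\ref{independencedisconnected}, the latter applied to the closed normal connected subgroup $\G^\circ$ of $\G$. Concretely, let $s\in((\Levi^\circ)^*)^{F^*}$ be a semisimple element of $\ell'$-order whose $((\Levi^\circ)^*)^{F^*}$-class corresponds to a rational series $\mathcal{X}^\circ$ of $(\Levi^\circ,F)$ with $\mathcal{X}^\circ\subseteq\mathcal{X}$, and let $\mathcal{Y}^\circ$ be the rational series of $(\G^\circ,F)$ attached to $s$, so that $\mathcal{X}^\circ\subseteq\mathcal{Y}^\circ\subseteq\mathcal{Y}$ by Lemma~\ref{basicseries}(a). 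In Lemma~\ref{independencedisconnected} I would take $\G^\circ$, $\Levi^\circ=\Levi\cap\G^\circ$ and $\U$ in the roles of ``$\G$'', ``$\Levi$'', ``$\U$'' and $\G$, $\Levi$ in the roles of ``$\hat{\G}$'', ``$\hat{\Levi}$'' (recall $\Para\cap\G^\circ=\Para^\circ=\Levi^\circ\ltimes\U$). Its first hypothesis ``$\C(s)\subseteq\Levi^*$'' then reads $\C_{((\G^\circ)^*)^{F^*}}(s)\,\C^\circ_{(\G^\circ)^*}(s)\subseteq(\Levi^\circ)^*$, which, since $\C_{(\G^\circ)^*}(s)^{F^*}=\C_{((\G^\circ)^*)^{F^*}}(s)$, is precisely the assumption that $\mathcal{X}$ is almost $(\G^\circ,\Levi^\circ)$-superregular; and its second hypothesis $\mathrm{N}_{\hat{\Levi}^F}(e_s^{\Levi^F})\,\G^F=\mathrm{N}_{\hat{\G}^F}(e_s^{\G^F})$ becomes exactly the hypothesis $\mathrm{N}_{\Levi^F}(e_s^{(\Levi^\circ)^F})\,(\G^\circ)^F=\mathrm{N}_{\G^F}(e_s^{(\G^\circ)^F})$ of the corollary.

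Granting these identifications, Lemma~\ref{independencedisconnected} shows, for $\Lambda\in\{\mathcal{O},k\}$, that $H^d_c(\Y_\U^\G,\Lambda)\,e_\mathcal{X}$ (with $d=\dim\Y_\U^\G$) induces a Morita equivalence between $\Lambda\G^F\,\Tr^{\G^F}_{\mathrm{N}_{\G^F}(e_s^{(\G^\circ)^F})}(e_s^{(\G^\circ)^F})$ and $\Lambda\Levi^F\,\Tr^{\Levi^F}_{\mathrm{N}_{\Levi^F}(e_s^{(\Levi^\circ)^F})}(e_s^{(\Levi^\circ)^F})$, and by Lemma~\ref{discratl} (applied once to $\G^\circ\subseteq\G$ with $\mathcal{Y}^\circ\subseteq\mathcal{Y}$, once to $\Levi^\circ\subseteq\Levi$ with $\mathcal{X}^\circ\subseteq\mathcal{X}$) these two algebras are nothing but $\Lambda\G^F e_\mathcal{Y}$ and $\Lambda\Levi^F e_\mathcal{X}$. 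So $M_\Lambda:=H^d_c(\Y_\U^\G,\Lambda)\,e_\mathcal{X}$ induces a Morita equivalence between $\Lambda\G^F e_\mathcal{Y}$ and $\Lambda\Levi^F e_\mathcal{X}$. Since almost superregularity forces $\C^\circ_{(\G^\circ)^*}(s)\subseteq(\Levi^\circ)^*$, the series $\mathcal{X}$ is in particular $(\G^\circ,\Levi^\circ)$-regular, so (as in the setting of Proposition~\ref{equiv}) the cohomology of $C:=G\Gamma_c(\Y_\U^\G,\Lambda)^{\mathrm{red}}e_\mathcal{X}$ is concentrated in degree $d$ with $H^d(C)=M_\Lambda$; this complex is homotopy equivalent to $G\Gamma_c(\Y_\U^\G,\Lambda)\,e_\mathcal{X}$, it is a complex of $\Lambda\G^F e_\mathcal{Y}$-$\Lambda\Levi^F e_\mathcal{X}$-bimodules by Remark~\ref{almost} and Lemma~\ref{basicseries}(b), and by the discussion following Lemma~\ref{BrauerGodement} its components are $\ell$-permutation modules with vertices contained in $\Delta\Levi^F$.

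It then remains to upgrade the Morita equivalence of $H^d(C)$ to a splendid Rickard equivalence of $C$, which I would do by running the proof of Proposition~\ref{equiv} with $e_\mathcal{Y}$, $e_\mathcal{X}$ in place of the blocks $b$, $c$; this is legitimate because that proof's only geometric input, Proposition~\ref{Endnoncon}, is already formulated for the full idempotent $e_\mathcal{X}$. Working over $k$: Proposition~\ref{Endnoncon}, the projectivity of the components of $C$ over $k\G^F$ and the concentration of cohomology give $\mathrm{End}^\bullet_{k\G^F}(C)\cong\mathrm{End}_{\mathrm{D}^b(k\G^F)}(C)\cong H^0(\mathrm{End}^\bullet_{k\G^F}(C))\cong\mathrm{End}_{k\G^F}(H^d(C))$ in $\mathrm{Ho}^b(k[\Levi^F\times(\Levi^F)^{\mathrm{opp}}])$, and since $H^d(C)=M_k$ induces a Morita equivalence, \cite[Theorem 2.1]{Rickard} gives that $C$ induces a Rickard equivalence between $k\G^F e_\mathcal{Y}$ and $k\Levi^F e_\mathcal{X}$, which is splendid by the previous paragraph. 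Working over $\mathcal{O}$: $M_\mathcal{O}$ reduces modulo $J(\mathcal{O})$ to $M_k$, so the case just treated shows that $G\Gamma_c(\Y_\U^\G,k)^{\mathrm{red}}e_\mathcal{X}$ induces a splendid Rickard equivalence, and the proof of \cite[Theorem 5.2]{Rickard} then lifts this to $G\Gamma_c(\Y_\U^\G,\mathcal{O})^{\mathrm{red}}e_\mathcal{X}$, exactly as in the final step of the proof of Proposition~\ref{equiv}.

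The one point requiring genuine care — and the part I expect to be the main obstacle — is precisely the bookkeeping of rational series and central idempotents when passing between $\G$ and its identity component $\G^\circ$: namely, verifying that the corollary's hypothesis is literally the second hypothesis of Lemma~\ref{independencedisconnected} under the above identification, that ``$\mathcal{X}$ almost $(\G^\circ,\Levi^\circ)$-superregular'' unwinds to ``$\C(s)\subseteq\Levi^*$'', and that Lemma~\ref{discratl} identifies the two trace idempotents with $e_\mathcal{Y}$ and $e_\mathcal{X}$. Everything else is a direct transcription of material already established in the paper.
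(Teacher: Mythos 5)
Your overall strategy coincides with the paper's: the Morita equivalence is obtained from Lemma \ref{independencedisconnected} applied to the normal connected subgroup $\G^\circ$ of $\G$ (with the trace idempotents identified with $e_\mathcal{Y}$ and $e_\mathcal{X}$ via Lemma \ref{discratl}), and the upgrade to a splendid Rickard equivalence goes through the mechanism of Proposition \ref{equiv}. The hypothesis-matching in your first paragraph is correct, and the splendidness discussion in your second paragraph is fine.

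The gap is in the phrase ``run the proof of Proposition \ref{equiv} with $e_\mathcal{Y}$, $e_\mathcal{X}$ in place of the blocks $b$, $c$''. Proposition \ref{equiv} --- and \cite[Theorem 2.1]{Rickard}, which it invokes at its final step --- is a statement about blocks, and the passage to the sums of blocks $e_\mathcal{Y}=b_1+\dots+b_r$, $e_\mathcal{X}=c_1+\dots+c_r$ is exactly where the paper does extra work. Writing $C:=G\Gamma_c(\Y_\U^\G,\Lambda)^{\mathrm{red}}e_\mathcal{X}=\bigoplus_{i,j}b_iCc_j$, the blockwise application of Proposition \ref{equiv} only yields that each diagonal summand $b_iCc_i$ induces a splendid Rickard equivalence between $\Lambda\G^Fb_i$ and $\Lambda\Levi^Fc_i$. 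The off-diagonal summands $b_iCc_j$ with $i\neq j$ have vanishing cohomology (since $b_iH^d(C)c_j=0$), but an acyclic complex need not be contractible, and if any of them were non-contractible the full complex $Ce_\mathcal{X}$ would fail Definition \ref{rickard} even though $\bigoplus_ib_iCc_i$ satisfies it. The paper closes this by showing $X:=b_iCc_j\cong 0$ in $\mathrm{Ho}^b(k[\G^F\times(\Levi^F)^{\mathrm{opp}}])$ for $i\neq j$: one has $X^\vee\otimes_{k\G^F}X\cong\mathrm{End}^\bullet_{k\G^F}(X)\cong\mathrm{End}_{k\G^F}(H^{d}(X))\cong 0$ by the computation underlying Proposition \ref{equiv}, and $X$ is a direct summand of $X\otimes_{k\Levi^F}X^\vee\otimes_{k\G^F}X\cong 0$ by the symmetric-algebra argument from the proof of \cite[Theorem 2.1]{Rickard}. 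Your route can be repaired along the same lines --- your correct observation that Proposition \ref{Endnoncon} holds for the full idempotent $e_\mathcal{X}$ gives the global identification $\mathrm{End}^\bullet_{k\G^F}(Ce_\mathcal{X})\cong\mathrm{End}_{k\G^F}(H^d(Ce_\mathcal{X}))$, which fed into the same direct-summand trick kills the off-diagonal pieces --- but as written you never address them, and this is the one step of the proof that is not a direct transcription of material already established in the paper.
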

\begin{proof}
	
	By Lemma \ref{independencedisconnected} the bimodule $H_c^{\dim(\Y^{\G}_{\U})}(\Y^{\G}_{\U},\Lambda) e_\mathcal{X}$ induces a Morita equivalence between $\Lambda \G^F e_{\mathcal{Y}}$ and $\Lambda \Levi^F e_{\mathcal{X}}$. Write $e_{\mathcal{X}}=c_1 + \dots + c_r$ as a sum of block idempotents. Then there exists a decomposition $e_{\mathcal{Y}}=b_1 + \dots + b_r$ into block idempotents such that $H_c^{\dim(\Y^{\G}_{\U})}(\Y^{\G}_{\U},\Lambda) c_i$ induces a Morita equivalence between $\Lambda  \G^F b_i$ and $\Lambda \Levi^F c_i$. Set $C:=G \Gamma_c(\Y^\G_{\U},\Lambda)^{\mathrm{red}} e_\mathcal{X}$. It follows from Proposition \ref{equiv} that the complex $b_i C c_i$ induces a splendid Rickard equivalence between $\Lambda  \G^F b_i$ and $\Lambda \Levi^F c_i$. Consequently, the complex $\bigoplus_{i=1}^r b_i C c_i$ induces a splendid Rickard equivalence between $\Lambda \G^F e_{\mathcal{Y}}$ and $\Lambda \Levi^F e_{\mathcal{X}}$.
	
	For $j \neq i$ consider the complex $X:=b_i C c_j$. By the proof of Proposition \ref{equiv} we have 
	$$X^\vee \otimes_{k \G^F} X \cong \mathrm{End}_{k \G^F}^\bullet(X) \cong  \mathrm{End}_{k \G^F}(H^{\mathrm{dim}(\Y_\U^\G)}(X)) \cong 0$$ in $\mathrm{Ho}^b(k[\Levi^F \times (\Levi^F)^{\mathrm{opp}}])$. By the proof of \cite[Theorem 2.1]{Rickard}, the complex $X$ is a direct summand of $X \otimes_{k \Levi^F} X^\vee \otimes_{k \G^F} X$. This shows that $X=b_i C c_j \cong  0$ in $\mathrm{Ho}^b(k[\G^F \times (\Levi^F)^{\mathrm{opp}}])$ for $j \neq i$. Hence, the complex $G \Gamma_c(\Y^\G_{\U},\Lambda) e_\mathcal{X}$ induces a splendid Rickard equivalence between $\Lambda \G^F e_{\mathcal{Y}}$ and $\Lambda \Levi^F e_{\mathcal{X}}$.
\end{proof}

Suppose that we are in the situation of Corollary \ref{Rickarddiscon}. Let $b$ be a block of $\Lambda \G^F e_{\mathcal{Y}}$ corresponding to the block $c$ of $\Lambda \Levi^F e_{\mathcal{X}}$ under the splendid Rickard equivalence between $\Lambda \G^F e_{\mathcal{Y}}$ and $\Lambda \Levi^F e_{\mathcal{X}}$ given by $C:= G \Gamma_c(\Y_\U^\G,\Lambda)^{\mathrm{red}} e_{\mathcal{X}}$. Let $(Q,c_Q)$ be a $c$-Brauer pair and $(Q,b_Q)$ be the unique $b$-Brauer pair of $k \mathrm{C}_{\G^F}(Q)$ such that the complex $b_Q \Br_{\Delta Q}(C) c_Q \cong \Br_{\Delta Q}(C) c_Q$ induces a Rickard equivalence between $k \C_{\G^F}(Q) b_Q$ and $k \C_{\Levi^F}(Q) c_Q$, see Proposition \ref{splendidloc}.

%

%


The following proposition is yet another application of Proposition \ref{equiv}.

\begin{proposition}\label{centralizerMorita}
	Suppose that $\mathrm{N}_{\Levi^F}(e_s^{(\Levi^\circ)^F}) (\G^\circ)^F = \mathrm{N}_{\G^F}(e_s^{(\G^\circ)^F})$. Then the bimodule $ H_c^{\mathrm{dim}}(\Y_{\C_\U(Q)}^{\C_\G(Q)},\Lambda)c_Q$ induces a Morita equivalence between the blocks $\Lambda\C_{\Levi^F}(Q) c_Q$ and $\Lambda\C_{\G^F}(Q) b_Q$.
\end{proposition}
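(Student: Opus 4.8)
The plan is to descend the splendid Rickard equivalence of Corollary~\ref{Rickarddiscon} to the centralizers by means of Proposition~\ref{splendidloc} and Lemma~\ref{BrauerGodement}, and then to rerun the comparison of Rickard and Morita equivalences from Proposition~\ref{equiv} inside the possibly disconnected reductive group $\C_\G(Q)$. Note that $\C_\G(Q)$ is such a group, with $F$-stable Levi subgroup $\C_\Levi(Q)$ and parabolic $\C_\Para(Q)=\C_\Levi(Q)\ltimes\C_\U(Q)$, by Example~\ref{normcent}.

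Set $C:=G\Gamma_c(\Y_\U^\G,\Lambda)^{\mathrm{red}}e_{\mathcal{X}}$. By Corollary~\ref{Rickarddiscon} the complex $C$ induces a splendid Rickard equivalence between $\Lambda\G^F e_{\mathcal{Y}}$ and $\Lambda\Levi^F e_{\mathcal{X}}$, so after restricting to the corresponding block idempotents $b$ and $c$ the complex $bCc$ induces a splendid Rickard equivalence between $\Lambda\G^F b$ and $\Lambda\Levi^F c$, and Proposition~\ref{splendidloc} shows that $b_Q\Br_{\Delta Q}(C)c_Q\cong\Br_{\Delta Q}(C)c_Q$ induces a Rickard equivalence between $k\C_{\G^F}(Q)b_Q$ and $k\C_{\Levi^F}(Q)c_Q$. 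On the other hand, Lemma~\ref{BrauerGodement}, the compatibility of $\Br_{\Delta Q}$ with homotopy equivalences, and its compatibility with block idempotents give
$$\Br_{\Delta Q}(C)c_Q\cong G\Gamma_c(\Y_{\C_\U(Q)}^{\C_\G(Q)},k)\,\br_Q(e_{\mathcal{X}})\,c_Q=G\Gamma_c(\Y_{\C_\U(Q)}^{\C_\G(Q)},k)\,c_Q$$
in the homotopy category, and hence $b_Q\,G\Gamma_c(\Y_{\C_\U(Q)}^{\C_\G(Q)},k)^{\mathrm{red}}c_Q\cong\Br_{\Delta Q}(C)c_Q$ as well. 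As the indecomposable summands of the components of $G\Gamma_c(\Y_{\C_\U(Q)}^{\C_\G(Q)},k)^{\mathrm{red}}$ are $\ell$-permutation modules with vertices in $\Delta\C_{\Levi^F}(Q)$ (the $\C_\G(Q)$-analogue of \cite[Corollary~3.8]{Dat}, cf. the discussion after Lemma~\ref{BrauerGodement}), we conclude that $b_Q\,G\Gamma_c(\Y_{\C_\U(Q)}^{\C_\G(Q)},k)^{\mathrm{red}}c_Q$ induces a splendid Rickard equivalence between $k\C_{\G^F}(Q)b_Q$ and $k\C_{\Levi^F}(Q)c_Q$.

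It remains to invoke Proposition~\ref{equiv} in $\C_\G(Q)$. Since $(Q,c_Q)$ is a $c$-Brauer pair and $c$ is a block of $k\Levi^F e_{\mathcal{X}}$, the idempotent $c_Q$ is a block of $k\C_{\Levi^F}(Q)e_{\mathcal{Z}}$ for some $\mathcal{Z}\in(i_Q^\Levi)^{-1}(\mathcal{X})$, which by Lemma~\ref{brauerseries} and Remark~\ref{almost} is an almost $(\C^\circ_\G(Q),\C^\circ_\Levi(Q))$-superregular --- in particular $(\C^\circ_\G(Q),\C^\circ_\Levi(Q))$-regular --- rational series of $(\C_\Levi(Q),F)$; by Lemma~\ref{basicseries}(a) (together with Remark~\ref{almost}) there is a unique rational series $\mathcal{W}$ of $(\C_\G(Q),F)$ containing $\mathcal{Z}$. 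By Lemma~\ref{basicseries}(b) the left $\C_{\G^F}(Q)$-action on $G\Gamma_c(\Y_{\C_\U(Q)}^{\C_\G(Q)},k)e_{\mathcal{Z}}$ factors through $e_{\mathcal{W}}$; since $b_Q\,G\Gamma_c(\Y_{\C_\U(Q)}^{\C_\G(Q)},k)^{\mathrm{red}}c_Q$ is not acyclic, it follows that $b_Q$ is a block of $k\C_{\G^F}(Q)e_{\mathcal{W}}$. Applying Proposition~\ref{equiv} with $(\G,\Levi,\Para,\U,\mathcal{X},\mathcal{Y},b,c)$ replaced by $(\C_\G(Q),\C_\Levi(Q),\C_\Para(Q),\C_\U(Q),\mathcal{Z},\mathcal{W},b_Q,c_Q)$ then shows that the degree-$\dim(\Y_{\C_\U(Q)}^{\C_\G(Q)})$ cohomology $b_Q H_c^{\mathrm{dim}}(\Y_{\C_\U(Q)}^{\C_\G(Q)},k)c_Q$ of that complex induces a Morita equivalence between $k\C_{\G^F}(Q)b_Q$ and $k\C_{\Levi^F}(Q)c_Q$. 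Taking top cohomology in the homotopy equivalences of the previous step gives $H_c^{\mathrm{dim}}(\Y_{\C_\U(Q)}^{\C_\G(Q)},k)c_Q\cong b_Q H_c^{\mathrm{dim}}(\Y_{\C_\U(Q)}^{\C_\G(Q)},k)c_Q$, whence $H_c^{\mathrm{dim}}(\Y_{\C_\U(Q)}^{\C_\G(Q)},k)c_Q=b_Q H_c^{\mathrm{dim}}(\Y_{\C_\U(Q)}^{\C_\G(Q)},k)c_Q$, proving the statement for $\Lambda=k$. The case $\Lambda=\mathcal{O}$ follows by the $\mathcal{O}$-argument in the proof of Proposition~\ref{equiv}: $b_Q\,G\Gamma_c(\Y_{\C_\U(Q)}^{\C_\G(Q)},\mathcal{O})^{\mathrm{red}}c_Q$ reduces modulo $J(\mathcal{O})$ to its $k$-counterpart, whose top cohomology we have just shown induces a Morita equivalence, so $b_Q\,G\Gamma_c(\Y_{\C_\U(Q)}^{\C_\G(Q)},\mathcal{O})^{\mathrm{red}}c_Q$ induces a splendid Rickard equivalence over $\mathcal{O}$ and its top cohomology $H_c^{\mathrm{dim}}(\Y_{\C_\U(Q)}^{\C_\G(Q)},\mathcal{O})c_Q$ induces a Morita equivalence between $\mathcal{O}\C_{\G^F}(Q)b_Q$ and $\mathcal{O}\C_{\Levi^F}(Q)c_Q$.

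The main obstacle will be the second step: one must check carefully that the Brauer functor turns $bCc$ into a reduced Godement complex of $\Y_{\C_\U(Q)}^{\C_\G(Q)}$ with the block idempotents $b_Q$ and $c_Q$ attached in the right way, and that the Lusztig-series formalism for the non-connected group $\C_\G(Q)$ --- the regularity of $\mathcal{Z}$, the existence of $\mathcal{W}$, and the fact that $b_Q$ lies over $e_{\mathcal{W}}$ --- genuinely verifies the hypotheses of Proposition~\ref{equiv}.
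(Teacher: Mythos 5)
Your proposal is correct and follows essentially the same route as the paper: restrict the splendid Rickard equivalence via $\Br_{\Delta Q}$ using Proposition~\ref{splendidloc} and Lemma~\ref{BrauerGodement}, identify the rational series $\mathcal{X}'$ (your $\mathcal{Z}$) with $c_Q$ a block of $k\C_{\Levi^F}(Q)e_{\mathcal{X}'}$ and the unique series $\mathcal{Y}'$ (your $\mathcal{W}$) over it with $b_Q$ a block of $k\C_{\G^F}(Q)e_{\mathcal{Y}'}$, and then invoke Proposition~\ref{equiv} in the disconnected group $\C_\G(Q)$. The only cosmetic difference is that the paper first lifts the Rickard equivalence to $\mathcal{O}$ via Rickard's Theorem~5.2 and then applies Proposition~\ref{equiv} for both coefficient rings, whereas you apply Proposition~\ref{equiv} over $k$ first and lift afterwards; the two orderings are interchangeable.
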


\begin{proof}
	Recall that $c$ is a block of $\Lambda \Levi^F e_{\mathcal{X}}$. Since $(Q,c_Q)$ is a $c$-subpair we have $\br^{\Levi^F}_Q(c) c_Q=c_Q$. Thus, there exists some rational series $\mathcal{X}' \in (i_Q^\Levi)^{-1}(\mathcal{X})$ such that $c_Q$ is a block of $k \C_{\Levi^F}(Q) e_{\mathcal{X}'}$, see Lemma \ref{brauerseries}. Let $\mathcal{Y}'$ be the unique rational series of $(\C_\G(Q),F)$ containing $\mathcal{X}'$, see Lemma \ref{basicseries}(a).
	
	Since the complex $\Br_{\Delta Q}(C) c_Q \cong G \Gamma_c(\Y_{\C_\U(Q)}^{\C_\G(Q)},k) c_Q$ induces a Rickard equivalence between $k\C_{\Levi^F}(Q) c_Q$ and $k\C_{\G^F}(Q) b_Q$ it follows by Lemma \ref{basicseries}(c) that $b_Q$ is a block of $k \mathrm{C}_{\G^F}(Q) e_{\mathcal{Y'}}$. By the remarks following Lemma \ref{BrauerGodement} the complex $G \Gamma_c(\Y_{\C_\U(Q)}^{\C_\G(Q)},\mathcal{O}) c_Q$ is a splendid complex of $\mathcal{O} \C_{\G^F}(Q)$-$\mathcal{O} \C_{\Levi^F}(Q)$-bimodules, which is a lift to $\mathcal{O}$ of $G \Gamma_c(\Y_{\C_\U(Q)}^{\C_\G(Q)},k) c_Q$. By the proof of \cite[Theorem 5.2]{Rickard} it follows that $G \Gamma_c(\Y_{\C_\U(Q)}^{\C_\G(Q)},\mathcal{O}) c_Q$ induces a Rickard equivalence between $\mathcal{O}\C_{\Levi^F}(Q) c_Q$ and $\mathcal{O}\C_{\G^F}(Q) b_Q$. It therefore follows by Proposition \ref{equiv} that $H_c^{\mathrm{dim}}(\Y_{\C_\U(Q)}^{\C_\G(Q)},\Lambda)c_Q$ induces a Morita equivalence between $\Lambda\C_{\Levi^F}(Q) c_Q$ and $\Lambda\C_{\G^F}(Q) b_Q$.
\end{proof}


%
In the following we consider the subgroup 
$$\mathcal{D}:=\{(x,y)\in \operatorname{N}_{\G^F}(Q)\times \operatorname{N}_{ \Levi^F}(Q)^{\operatorname{opp}} \mid x \C_{\G^F}(Q)=y^{-1} \C_{\G^F}(Q) \}$$
of $\operatorname{N}_{\G^F}(Q)\times \operatorname{N}_{\Levi^F}(Q)^{\operatorname{opp}}$.

In addition, we let $B_Q:=\Tr^{\mathrm{N}_{\G^F}(Q)}_{\mathrm{N}_{\G^F}(Q,b_Q)}(b_Q)$ and $C_Q:=\Tr^{\mathrm{N}_{\Levi^F}(Q)}_{\mathrm{N}_{\Levi^F}(Q,c_Q)}(c_Q)$. The following can be seen as a geometric version of Proposition \ref{normalizerderived}.

\begin{theorem}\label{Morita2}
	Suppose that $\mathrm{N}_{\Levi^F}(e_s^{(\Levi^\circ)^F}) (\G^\circ)^F = \mathrm{N}_{\G^F}(e_s^{(\G^\circ)^F})$. Then the bimodule $ H_c^{\mathrm{dim}}(\Y_{\C_\U(Q)}^{\mathrm{N}_\G(Q)},\Lambda) C_Q$
	induces a Morita equivalence between $ \Lambda \mathrm{N}_{\Levi^F}(Q) C_Q$ and $\Lambda \mathrm{N}_{\G^F}(Q) B_Q $.
\end{theorem}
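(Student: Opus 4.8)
The plan is to reduce the statement to the centralizer version of Proposition~\ref{centralizerMorita} and then transport it to the normalizer by Marcus' lifting theorem, exactly parallel to the way Proposition~\ref{normalizerderived} is obtained from a splendid Rickard equivalence; everything works uniformly for $\Lambda \in \{\mathcal{O},k\}$. By Example~\ref{normcent}, $\mathrm{N}_\G(Q)$ is a (possibly disconnected) reductive group having $\C_\G(Q)$ as a closed $F$-stable normal subgroup, $\mathrm{N}_\Levi(Q)$ as an $F$-stable Levi subgroup, and $\mathrm{N}_\Para(Q)=\mathrm{N}_\Levi(Q)\ltimes\C_\U(Q)$ as a parabolic subgroup whose unipotent radical $\C_\U(Q)$ is connected, with $\mathrm{N}_\Para(Q)\cap\C_\G(Q)=\C_\Para(Q)$ and $\mathrm{N}_\Levi(Q)\cap\C_\G(Q)=\C_\Levi(Q)$. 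Write $M_Q:=b_Q\,H^{\mathrm{dim}}_c(\Y^{\C_\G(Q)}_{\C_\U(Q)},\Lambda)\,c_Q$, $\mathcal{D}:=\mathrm{N}_{\G^F\times(\Levi^F)^{\mathrm{opp}}}(\Delta Q)=(\C_{\G^F}(Q)\times\C_{\Levi^F}(Q)^{\mathrm{opp}})\Delta(\mathrm{N}_{\Levi^F}(Q))$, and $\mathcal{D}_0:=(\C_{\G^F}(Q)\times\C_{\Levi^F}(Q)^{\mathrm{opp}})\Delta(\mathrm{N}_{\Levi^F}(Q,c_Q))\subseteq\mathcal{D}$, which is the stabiliser of $b_Q\otimes c_Q$ in $\mathcal{D}$. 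Applying Corollary~\ref{Kunnethcoro} with $(\mathrm{N}_\G(Q),\C_\G(Q),\C_\U(Q))$ in place of $(\hat\G,\G,\U)$, using that $H^\bullet_c(\Y^{\C_\G(Q)}_{\C_\U(Q)},\Lambda)c_Q$ is concentrated in a single degree (Proposition~\ref{centralizerMorita} and the remarks following Lemma~\ref{BrauerGodement}), and manipulating the idempotents under induction exactly as in the proof of Lemma~\ref{better version}, I would obtain an isomorphism
\[
H^{\mathrm{dim}}_c(\Y^{\mathrm{N}_\G(Q)}_{\C_\U(Q)},\Lambda)\,C_Q\;\cong\;\mathrm{Ind}_{\mathcal{D}_0}^{\mathrm{N}_{\G^F}(Q)\times\mathrm{N}_{\Levi^F}(Q)^{\mathrm{opp}}}\!\bigl(M_Q\bigr)
\]
of $\Lambda[\mathrm{N}_{\G^F}(Q)\times\mathrm{N}_{\Levi^F}(Q)^{\mathrm{opp}}]$-modules.

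Next, Proposition~\ref{centralizerMorita} — whose hypothesis is precisely that of the theorem — shows that $M_Q$ induces a Morita equivalence between $\Lambda\C_{\G^F}(Q)b_Q$ and $\Lambda\C_{\Levi^F}(Q)c_Q$. Lemma~\ref{extend} endows $H^{\mathrm{dim}}_c(\Y^{\C_\G(Q)}_{\C_\U(Q)},\Lambda)$ with a natural $\Lambda\mathcal{D}$-module structure; since $\C_{\G^F}(Q)$ and $\C_{\Levi^F}(Q)$ fix $b_Q$ and $c_Q$, and since Corollary~\ref{factor} — applied to the splendid Rickard equivalence of Corollary~\ref{Rickarddiscon}, with $(Q,b_Q)$ the pair matched to $(Q,c_Q)$ as in Proposition~\ref{splendidloc} — gives both $\mathrm{N}_{\Levi^F}(Q,c_Q)\subseteq\mathrm{N}_{\G^F}(Q,b_Q)$ and $\mathrm{N}_{\G^F}(Q,b_Q)=\mathrm{N}_{\Levi^F}(Q,c_Q)\,\C_{\G^F}(Q)$, the bimodule $M_Q$ is $\mathcal{D}_0$-stable and hence extends to a $\Lambda\mathcal{D}_0$-module. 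As $M_Q$ is concentrated in one degree and $\C_{\G^F}(Q)\trianglelefteq\mathrm{N}_{\G^F}(Q,b_Q)=\mathrm{N}_{\Levi^F}(Q,c_Q)\,\C_{\G^F}(Q)$, Theorem~\ref{lifting}(b) yields a Morita equivalence between $\Lambda\mathrm{N}_{\Levi^F}(Q,c_Q)c_Q$ and $\Lambda\mathrm{N}_{\G^F}(Q,b_Q)b_Q$ induced by $\mathrm{Ind}_{\mathcal{D}_0}^{\mathrm{N}_{\G^F}(Q,b_Q)\times\mathrm{N}_{\Levi^F}(Q,c_Q)^{\mathrm{opp}}}(M_Q)$. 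Composing this with the Fong--Reynolds Morita equivalences attached to $C_Q=\mathrm{Tr}^{\mathrm{N}_{\Levi^F}(Q)}_{\mathrm{N}_{\Levi^F}(Q,c_Q)}(c_Q)$ and $B_Q=\mathrm{Tr}^{\mathrm{N}_{\G^F}(Q)}_{\mathrm{N}_{\G^F}(Q,b_Q)}(b_Q)$ — exactly as at the end of the proof of Lemma~\ref{better version} — shows that $\mathrm{Ind}_{\mathcal{D}_0}^{\mathrm{N}_{\G^F}(Q)\times\mathrm{N}_{\Levi^F}(Q)^{\mathrm{opp}}}(M_Q)$ induces a Morita equivalence between $\Lambda\mathrm{N}_{\Levi^F}(Q)C_Q$ and $\Lambda\mathrm{N}_{\G^F}(Q)B_Q$. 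Together with the isomorphism displayed above, this gives the theorem.

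I expect the only genuinely delicate points to be the verification that Corollary~\ref{Kunnethcoro} combined with the behaviour of the idempotents $b_Q,c_Q,B_Q,C_Q$ under induction really produces the displayed isomorphism — this is the bookkeeping of the proof of Lemma~\ref{better version}, but now carried out inside the non-product group $\mathcal{D}$ and its subgroup $\mathcal{D}_0=\mathrm{Stab}_{\mathcal{D}}(b_Q\otimes c_Q)$, where one must also use $\mathcal{D}$-equivariance of the matching $(Q,c_Q)\leftrightarrow(Q,b_Q)$ and the vanishing clause of Proposition~\ref{splendidloc} for the conjugate pairs — and the repeated appeal to Corollary~\ref{factor} to transport the local Brauer data between $\C_\G(Q)$ and $\mathrm{N}_\G(Q)$; the remainder is a direct assembly of the results already established.
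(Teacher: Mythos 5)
Your proposal is correct and follows essentially the same route as the paper: Proposition~\ref{centralizerMorita} for the Morita equivalence between $\Lambda\C_{\G^F}(Q)b_Q$ and $\Lambda\C_{\Levi^F}(Q)c_Q$, Corollary~\ref{Kunnethcoro} (via Example~\ref{normcent}) to identify $H^{\mathrm{dim}}_c(\Y^{\mathrm{N}_\G(Q)}_{\C_\U(Q)},\Lambda)$ with the induced module from $\mathcal{D}$, and Corollary~\ref{factor} together with the vanishing clause of Proposition~\ref{splendidloc} to feed into the lifting mechanism. The only difference is presentational: you unfold the proof of Lemma~\ref{better version} (extension to $\mathcal{D}_0$, Theorem~\ref{lifting}(b), Fong--Reynolds), whereas the paper simply invokes that lemma.
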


\begin{proof}
	Corollary \ref{factor} shows that the factor groups $\operatorname{N}_{\Levi^F}(Q,c_Q)/ \C_{\Levi^F}(Q)$ and $\operatorname{N}_{\G^F}(Q,b_Q)/\C_{\G^F}(Q)$ are isomorphic via the inclusion $\mathrm{N}_{\Levi^F}(Q) \subseteq \mathrm{N}_{\G^F}(Q)$. Moreover, by Proposition \ref{splendidloc} we deduce ${}^x b_Q  H_c^{\mathrm{dim}}(\Y_{\C_\U(Q)}^{\mathrm{C}_\G(Q)},\Lambda) c_Q=0$ for all $x \in \mathrm{N}_{\G^F}(Q) \setminus \operatorname{N}_{\G^F}(Q,b_Q)$. The bimodule $ H_c^{\mathrm{dim}}(\Y_{\C_\U(Q)}^{\C_\G(Q)},\Lambda) c_Q$ induces by Proposition \ref{centralizerMorita} a Morita equivalence between the blocks $ \Lambda \mathrm{C}_{\Levi^F}(Q) c_Q$ and $\Lambda \mathrm{C}_{\G^F}(Q) b_Q$.
	
	Recall from Example \ref{normcent} that $\mathrm{N}_{\G}(Q)$ is a reductive group. Moreover, $\mathrm{N}_{\Para}(Q)$ is a parabolic subgroup of $\mathrm{N}_{\G}(Q)$ with Levi decomposition $\mathrm{N}_{\Para}(Q)=\mathrm{N}_{\Levi}(Q) \ltimes \C_\U(Q)$. Note that $\C_{\G}(Q)$ is a normal subgroup of $\mathrm{N}_{\G}(Q)$ and we have a Levi decomposition $\mathrm{C}_{\Para}(Q)=\mathrm{C}_{\Levi}(Q) \ltimes \C_\U(Q)$ in $\C_{\G}(Q)$, see Example \ref{normcent}.
	%
	%
	By Corollary \ref{Kunnethcoro} it follows that the bimodule
	$H_c^{\mathrm{dim}}(\Y_{\C_\U(Q)}^{\C_\G(Q)},\Lambda)$ has a natural $\mathcal{D}$-action and we have an isomorphism
	$$\Ind_{\mathcal{D}}^{\mathrm{N}_{\G^F}(Q) \times \mathrm{N}_{\Levi^F}(Q)^{\opp}} H_c^{\mathrm{dim}}(\Y_{\C_\U(Q)}^{\C_\G(Q)},\Lambda) \cong H_c^{\mathrm{dim}}(\Y_{\C_\U(Q)}^{\mathrm{N}_\G(Q)},\Lambda).$$
	By Lemma \ref{better version} it follows that the bimodule $ H_c^{\mathrm{dim}}(\Y_{\C_\U(Q)}^{\mathrm{N}_\G(Q)},\Lambda) C_Q$
	induces a Morita equivalence between $ \Lambda \mathrm{N}_{\Levi^F}(Q) C_Q$ and $\Lambda \mathrm{N}_{\G^F}(Q) B_Q $.
\end{proof}

\section{Automorphisms of quasi-simple groups of Lie type}

%

\subsection{Automorphisms of simple groups of Lie type}\label{automorphism}

We briefly recall the classification of automorphisms of finite simple groups of Lie type.
Let $\G$ be a simple algebraic group of simply connected type with Frobenius $F: \G \to \G$ such that $\G^F$ is perfect. Fix a maximal torus $\T_0$ and a Borel subgroup $\B_0$ of $\G$ containing $\T_0$. We let $\Phi$ be the root system relative to $\T_0$ and $\Delta $ be the base of $\Phi$ relative to $\T_0 \subseteq \B_0$. For every $\alpha \in \Phi$ we fix a one-parameter subgroup $\mathbf{x}_\alpha: (\overline{\mathbb{F}_p},+) \to \G$. We consider the following bijective morphisms of $\G$ as introduced before \cite[Theorem 30]{Steinberg1968}:
\begin{itemize}
\item
The \textit{field endomorphism} $\phi_0: \G \to \G, \, \x_{\alpha}(t) \mapsto \x_{\alpha}(t^p)$ for every $t \in \overline{\mathbb{F}_p}$ and $\alpha \in \Phi$.
\item 
For any angle preserving permutation $\gamma$ of the simple roots $\Delta$ we consider the \textit{graph endomorphism} $\gamma: \G \to \G$ given by 
\begin{equation*}
\gamma(\x_{\alpha}(t):=
\begin{cases}
\x_{\gamma(\alpha)}(t) & \text{ if } \alpha \text{ is long or all roots have the same length}\\
\x_{\gamma(\alpha)}(t^p)       & \text{ if } \alpha \text{ is short}
\end{cases}
\end{equation*}
for every $t \in \overline{\mathbb{F}_p}$ and $\alpha \in \pm \Delta$.
\end{itemize}

For any fixed prime power $q=p^f$ of $p$ and a graph automorphism $\gamma$ we consider the Frobenius endomorphism $F= \phi_0^f \gamma: \G \to \G$. Note that any Frobenius endomorphism of $\G$ is (up to inner automorphisms of $\G$) of this form by \cite[Theorem 22.5]{MT}. We say that $(\G,F)$ is \textit{untwisted} if $\gamma$ is the identity and \textit{twisted} otherwise. We let $\iota: \G \hookrightarrow \tilde{\G}$ be a regular embedding as in \cite[Section 2.B]{MS}. In particular there, exist suitable extensions of $\phi$ and $\gamma$ to $\Gtilde$ (denoted by the same letter) such that all relevant relations are preserved. The automorphisms of $\G^F$ obtained by conjugation with $\tilde{\G}^F$ are called \textit{diagonal automorphisms} of $\G^F$.

To avoid cumbersome notation we will use the same letter for bijective morphisms of $\G$ commuting with $F$ and their restriction to $\G^F$:

\begin{notation}\label{restriction}
Let $\sigma:\G \to \G$ be a bijective morphism of algebraic groups with $\sigma \circ F= F \circ \sigma$. Then we also denote by $\sigma:\G^F \to \G^F$ the automorphism of $\G^F$ obtained by restricting $\sigma$ to $\G^F$. In particular, the expression $\G^F \rtimes \langle \sigma \rangle$ always denotes the semidirect product of finite groups obtained by letting $\sigma: \G^F \to \G^F$ act on $\G^F$.
\end{notation}


%
%

%

\begin{corollary}\label{explicit}
		Let $\G$ be a simple algebraic group of simply connected type not of type $D_4$ with Frobenius $F$. Let $s \in (\G^\ast)^{F^\ast}$ be a semisimple element of $\ell'$-order. There exists a Frobenius endomorphism $F_0 : \tilde{\G} \to \tilde{\G}$ with $F_0^{r}=F$ for some positive integer $r$ and a bijective morphism $\sigma: \tilde{\G} \to \tilde{\G}$ such that the image of $\tilde{\G}^F \rtimes \langle F_0,\sigma \rangle$ in $\mathrm{Out}(\G^F)$ is the stabilizer of $e_s^{\G^F}$ in $\mathrm{Out}(\G^F)$.
\end{corollary}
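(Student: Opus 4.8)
The plan is to deduce the statement from Steinberg's classification of the automorphisms of $\G^F$, together with the description of the action of bijective morphisms on the idempotents $e_t^{\G^F}$ recalled in the previous subsection (the formula $\tau(e_t^{\G^F})=e_{(\tau^\ast)^{-1}(t)}^{\G^F}$), and a short case analysis over the types of $(\G,F)$. Write $\mathcal{D}\le\mathrm{Out}(\G^F)$ for the subgroup of diagonal automorphisms, i.e.\ the image of $\tilde{\G}^F$. First I would record the structure of $\mathrm{Out}(\G^F)$ (see \cite{Steinberg1968}, \cite{MS}; cf.\ also \cite{Navarro}): it is generated by $\mathcal{D}$ together with the images of the field endomorphism $\phi_0$, of the diagram symmetries, and --- in types $B_2,F_4$ in characteristic $2$ and $G_2$ in characteristic $3$ --- of an exceptional isogeny $\rho$ with $\rho^2=\phi_0$. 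Since $\G$ is not of type $D_4$, the diagram symmetry group is cyclic of order at most $2$, and one reads off that $\mathrm{Out}(\G^F)/\mathcal{D}$ is abelian: it is cyclic, generated by the class of $\phi_0$ (resp.\ of $\rho$), in all twisted types and in the untwisted types whose Dynkin diagram has no symmetry, and it is an internal direct product $\langle\overline{\phi_0}\rangle\times\langle\overline{\gamma_0}\rangle$ of the cyclic group generated by $\phi_0$ and the order-$2$ group generated by a diagram symmetry $\gamma_0$ in the untwisted types $A_n$ ($n\ge 2$), $D_n$ ($n\ge 5$), $E_6$. In the Suzuki and Ree cases $\tilde{\G}=\G$ and $\mathcal{D}=1$.

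Next I would show that $\mathcal{D}\subseteq\mathrm{Stab}_{\mathrm{Out}(\G^F)}(e_s^{\G^F})$. A preimage $\tilde s\in(\tilde{\G}^\ast)^{F^\ast}$ of $s$ exists because the dual surjection $\tilde{\G}^\ast\to\G^\ast$ has connected (torus) kernel; conjugation by $\tilde{\G}^F$ permutes the constituents of $\mathrm{Res}^{\tilde{\G}^F}_{\G^F}(\chi)$ for each $\chi\in\mathcal{E}(\tilde{\G}^F,\tilde s)$, and all of these constituents lie in the rational Lusztig series $\mathcal{E}(\G^F,s)$, which in fact equals the set of all such constituents. Hence $\tilde{\G}^F$ stabilises $\mathcal{E}(\G^F,s)$, and therefore stabilises $e_s^{\G^F}$. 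Consequently $\mathrm{Stab}_{\mathrm{Out}(\G^F)}(e_s^{\G^F})$ is the full preimage of a subgroup $S$ of the abelian group $\mathrm{Out}(\G^F)/\mathcal{D}$, and by the equivariance formula a bijective morphism $\tau$ of $\G$ commuting with $F$ lies in this preimage exactly when $(\tau^\ast)^{-1}(s)$ is $(\G^\ast)^{F^\ast}$-conjugate to $s$; this makes $S$ explicit.

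It then suffices to produce a Frobenius endomorphism $F_0$ of $\tilde{\G}$ with $F_0^r=F$ for some $r$, and a bijective morphism $\sigma$ of $\tilde{\G}$, both commuting with $F$ and with each other, whose classes generate $S$ in $\mathrm{Out}(\G^F)/\mathcal{D}$; then $\tilde{\G}^F\rtimes\langle F_0,\sigma\rangle$ has image $\mathcal{D}\cdot S=\mathrm{Stab}_{\mathrm{Out}(\G^F)}(e_s^{\G^F})$. All the morphisms in sight were extended to $\tilde{\G}$ in the regular embedding setup, and powers of $\phi_0$, the diagram symmetries and $\rho$ commute pairwise and with $F$, so the commutation conditions are automatic once the morphisms are taken from this list. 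In the easiest case --- when $\mathrm{Out}(\G^F)/\mathcal{D}$ is cyclic and $F$ is a power of a generator $\psi\in\{\phi_0,\rho\}$ of it (this covers the untwisted types without diagram symmetry, with $\psi=\rho$ in the exceptional characteristics, and the Suzuki--Ree types) --- one writes $F=\psi^a$, notes that $S$ is the unique subgroup of order $d:=|S|$ and that $d\mid a$, and takes $F_0:=\psi^{a/d}$ (a Frobenius endomorphism of $\tilde{\G}$ with $F_0^{d}=F$ and $\overline{F_0}$ generating $S$) and $\sigma:=\mathrm{id}$. When $\mathrm{Out}(\G^F)/\mathcal{D}=\langle\overline{\phi_0}\rangle\times\langle\overline{\gamma_0}\rangle$ one has $F=\phi_0^f$, and $S$ --- a subgroup of a $2$-generated abelian group --- needs at most two generators; a short inspection of the subgroups of a product of two cyclic groups shows that one may take $F_0$ to be a power of $\phi_0$, or such a power composed with $\gamma_0$ (each a Frobenius endomorphism of $\tilde{\G}$ admitting a power equal to $F$), and $\sigma$ to be $\mathrm{id}$ or $\gamma_0$.

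The hardest part will be the twisted types ${}^2A_n$, ${}^2D_n$, ${}^2E_6$, where $\mathrm{Out}(\G^F)/\mathcal{D}$ is cyclic but $F=\phi_0^f\gamma$ already carries a diagram twist. Any Frobenius endomorphism $F_0$ of $\tilde{\G}$ with $F_0^r=F$ must then itself involve $\gamma$, which forces $r$ to be odd; hence $F_0:=\phi_0^{f/r}\gamma$, for a suitable odd divisor $r$ of $f$ with $r\mid|S|$, satisfies $F_0^r=F$ and $\overline{F_0}\in S$, but its class need not generate the whole cyclic group $\mathrm{Out}(\G^F)/\mathcal{D}$. One must therefore supplement it by $\sigma:=\phi_0^{2f/|S|}$, whose class generates $S$ and compensates for the parity restriction on $r$. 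Verifying that $F_0$ and $\sigma$ can be chosen to lie simultaneously in $\mathrm{Stab}(e_s^{\G^F})$ and to generate $S$, and carrying out the corresponding bookkeeping for the exceptional graph isogenies in characteristics $2$ and $3$, is the technical heart of the argument; everything else is immediate from Steinberg's classification and the equivariance of $e_s^{\G^F}$ established above.
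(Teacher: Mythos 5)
Your proposal is correct and follows essentially the same route as the paper: diagonal automorphisms stabilize $e_s^{\G^F}$, so the stabilizer is the preimage of a subgroup $S$ of $\mathrm{Out}(\G^F)/\mathrm{Diag}_{\G^F}$, whose structure (cyclic in the twisted and Suzuki--Ree-type cases, $\langle\overline{\phi_0}\rangle\times\langle\overline{\gamma_0}\rangle$ in the remaining untwisted cases) is read off from Steinberg's classification, and generators of the form $\phi_0^i$ and $\gamma_0$ are then exhibited by the same case analysis. The one remark worth making is that your twisted case is simpler than you fear: since $S$ is cyclic there, one may always take $F_0=F$ (so that $\overline{F_0}=1$) and let $\sigma$ alone generate $S$, which also sidesteps the small slip that $\phi_0^i\gamma_0$ need not admit any power equal to $F$ when $f/i$ is odd.
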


\begin{proof}

Let $\mathrm{Diag}_{\G^F}$ be the image of the set of diagonal automorphisms in $\mathrm{Out}(\G^F)$. The stabilizer of $e_s^{\G^F}$ in $\mathrm{Out}(\G^F)$ contains $\mathrm{Diag}_{\G^F}$, see e.g. \cite[Lemma 7.4]{Dat}. Suppose that $(\G,F)$ is twisted. In this case, the group $\mathrm{Out}(\G^F)/ \mathrm{Diag}_{\G^F}$ is cyclic and the statement of the corollary can be deduced from this. Now suppose that $(\G,F)$ is untwisted. Let $\gamma: \G \to \G$ be a non-trivial graph endomorphism (if it exists). Then the classification of automorphisms of simple groups of Lie type (see \cite[Theorem 30 and 36]{Steinberg1968}) shows that $\mathrm{Out}(\G^F)/ \mathrm{Diag}_{\G^F} \cong \langle \gamma , \phi_0 \rangle \cong C_t \times C_m$, where $t \leq 3$. Thus, every subgroup of $\mathrm{Out}(\G^F)/ \mathrm{Diag}_{\G^F}$ is either cyclic or isomorphic to $\langle \gamma \rangle \times \langle F_0 \rangle$, where $F_0= \phi_0^i: \G \to \G$ for some $i$ with $i \mid f$.
\end{proof}

\subsection{Automorphisms and stabilizers of idempotents}\label{auto and stabilizer}

Let $\G$ be a simple algebraic group of simply connected type not of type $D_4$ with Frobenius $F$. Let $s \in (\G^\ast)^{F^\ast}$ be a semisimple element of $\ell'$-order. Recall that by Corollary \ref{explicit} there exists a Frobenius endomorphism $F_0 : \Gtilde \to \Gtilde $ with $F_0^{r}=F$ for some positive integer $r$ and a bijective morphism $\sigma: \Gtilde \to \Gtilde$ commuting with $F_0$ such that $\tilde{\G}^F \rtimes \langle F_0,\sigma \rangle$ is the stabilizer of $e_s^{\G^F}$ in $\mathrm{Out}(\G^F)$.

Let $\Levi^\ast$ be a minimal Levi subgroup of $\G^\ast$ containing $\mathrm{C}(s):=\mathrm{C}_{(\G^\ast)^{F^\ast}}(s) \mathrm{C}^\circ_{\G^\ast}(s)$. Our first aim in this section is to show that we can assume that $F_0^\ast(s)=s$ and therefore $F_0^\ast(\Levi^\ast)=\Levi^\ast$.

To achieve this, we need to recall some general observations on conjugacy classes of semisimple elements. For this suppose that $\mathbf{H}$ is an $F^\ast$-stable connected reductive subgroup of $ \G^\ast$ and let $t \in \mathbf{H}^{F^\ast}$. Then we denote $A_{\mathbf{H}}(t)= \mathrm{C}_{\mathbf{H}}(t) / \mathrm{C}_{\mathbf{H}}^\circ(t)$ and write $\mathcal{T}_{\mathbf{H},t}$ for the set of $\mathbf{H}^{F^\ast}$-conjugacy classes of elements of $\mathbf{H}^{F^\ast}$ which are $\mathbf{H}$-conjugate to $t$. Furthermore, we write $H^1(F^\ast,A_{\mathbf{H}}(t))$ for the set of $F^\ast$-conjugacy classes of $A_{\mathbf{H}}(t)$. 

\begin{lemma}\label{bla}
Under the notation as above the set $\mathcal{T}_{\mathbf{H},t}$ is in natural bijection with $H^1(F^\ast,A_{\mathbf{H}}(t))$.
\end{lemma}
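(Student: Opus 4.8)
The plan is to recall the standard Lang–Steinberg parametrization of rational conjugacy classes inside a single geometric conjugacy class, applied to the element $t$ in the connected reductive group $\mathbf{H}$. First I would set $C := \mathrm{C}_{\mathbf{H}}(t)$; this is a (possibly disconnected) closed $F^\ast$-stable subgroup of $\mathbf{H}$ with $C^\circ = \mathrm{C}^\circ_{\mathbf{H}}(t)$ and component group $A_{\mathbf{H}}(t) = C/C^\circ$, on which $F^\ast$ acts. An element $t' \in \mathbf{H}^{F^\ast}$ is $\mathbf{H}$-conjugate to $t$ exactly when $t' = {}^x t$ for some $x \in \mathbf{H}$; the condition $t' \in \mathbf{H}^{F^\ast}$ translates into $x^{-1} F^\ast(x) \in C$. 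So I would define the map sending such an $x$ to the class of $x^{-1} F^\ast(x)$ in $A_{\mathbf{H}}(t)$, check it lands in the set of $F^\ast$-conjugacy classes $H^1(F^\ast, A_{\mathbf{H}}(t))$ (i.e.\ modding out by the twisted action $a \mapsto c^{-1} a F^\ast(c)$ coming from replacing $x$ by $xc$ with $c \in C$), and verify it is well-defined on $\mathbf{H}^{F^\ast}$-conjugacy classes of the $t'$.

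The key steps, in order, are: (1) well-definedness — show that changing $x$ by right multiplication with $c \in C$ changes $x^{-1}F^\ast(x)$ by the twisted-conjugation action of $C$, hence fixes the $F^\ast$-conjugacy class in $A_{\mathbf{H}}(t)$, and that conjugating $t'$ by an element of $\mathbf{H}^{F^\ast}$ amounts to left-multiplying $x$ by an $F^\ast$-fixed element, which does not change $x^{-1}F^\ast(x)$ at all; (2) surjectivity — given any $a \in A_{\mathbf{H}}(t)$, lift it to some $y \in C$, use the Lang–Steinberg theorem for the connected group $\mathbf{H}$ to write $y = x^{-1} F^\ast(x)$ for some $x \in \mathbf{H}$, and observe that ${}^x t \in \mathbf{H}^{F^\ast}$ maps to the class of $a$; (3) injectivity — if ${}^{x_1} t$ and ${}^{x_2} t$ give the same $F^\ast$-conjugacy class in $A_{\mathbf{H}}(t)$, manipulate to produce an element of $\mathbf{H}^{F^\ast}$ conjugating one to the other, again invoking Lang–Steinberg, this time for the connected group $C^\circ$, to absorb the $C^\circ$-ambiguity.

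I would present this compactly, since it is the classical argument (as in Digne–Michel or Bonnaf\'e's work on quasi-isolated classes); the only care needed is bookkeeping between left and right translations and the precise form of the twisted action. The main obstacle — really the only subtle point — is step (3), injectivity: one must carefully track that two elements $x_1, x_2$ with $x_i^{-1} F^\ast(x_i) \in C$ yielding $F^\ast$-conjugate images in $A_{\mathbf{H}}(t)$ can be adjusted, using an element of $C^\circ$ and the Lang map for $C^\circ$, so that $x_2 x_1^{-1}$ becomes $F^\ast$-fixed; this uses both that $F^\ast$ acts on the connected group $C^\circ$ (so Lang–Steinberg applies) and that the $A_{\mathbf{H}}(t)$-level equality only determines things modulo $C^\circ$. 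Once that is in place, the bijection is established and naturality is automatic from the construction.
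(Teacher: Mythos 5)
Your proposal is correct and is essentially the same construction the paper uses: the paper simply cites \cite[Proposition 3.21]{DM} and recalls the map $y \mapsto [x^{-1}F^\ast(x)]$, whereas you spell out the standard Lang--Steinberg verification of well-definedness, surjectivity (Lang for the connected group $\mathbf{H}$) and injectivity (Lang for $\mathrm{C}^\circ_{\mathbf{H}}(t)$). The only difference is expository completeness, not method.
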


\begin{proof}
This is proved in \cite[Proposition 3.21]{DM}. We recall the construction of this bijection. If $y \in \mathbf{H}^{F^\ast}$ is $\mathbf{H}$-conjugate to $t$ then there exists some $x\in \mathbf{H}$ such that ${}^x y =t$. Since both $y$ and $t$ are $F^\ast$-stable it follows that $x^{-1} F^\ast(x) \in \mathrm{C}_{\mathbf{H}}(t)$. Then one defines the map $\mathcal{T}_{\mathbf{H},t} \to H^1(F^\ast,A_{\mathbf{H}}(t))$ by sending the conjugacy class of $y$ to the $F^ \ast$-conjugacy class of $x^{-1} F^\ast(x)$.
\end{proof}

\begin{lemma}\label{new}
	The set $H^1(F^\ast,A_{\mathbf{H}}(t))$ of $F^\ast$-conjugacy classes of $A_{\mathbf{H}}(t)$ is in bijection with $A_{\mathbf{H}}(t)^{F^\ast}$.
\end{lemma}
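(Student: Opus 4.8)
The statement asserts that when a finite group $A$ (here $A = A_{\mathbf{H}}(t)$) carries an action of the Frobenius map $F^\ast$, the set $H^1(F^\ast, A)$ of $F^\ast$-conjugacy classes of $A$ is in bijection with the fixed-point set $A^{F^\ast}$. Recall that the $F^\ast$-conjugacy action is $a \cdot x = a x F^\ast(a)^{-1}$, and $H^1(F^\ast,A)$ is the orbit set of this action. The subtlety is that in general $A$ need not be abelian, so one cannot simply cite the standard nonabelian cohomology count; the key extra input is that $A_{\mathbf{H}}(t)$ is the component group of a centralizer in a connected reductive group, and for the groups under consideration (after the reductions already made in the paper — $\G$ simple simply connected, not of type $D_4$) these component groups are abelian. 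Once $A$ is abelian, the $F^\ast$-twisted conjugation action becomes $a \cdot x = x \cdot a F^\ast(a)^{-1}$, so the orbits are exactly the cosets of the subgroup $\{\, a F^\ast(a)^{-1} : a \in A\,\} = \operatorname{im}(1 - F^\ast)$ (writing $A$ additively), whence $H^1(F^\ast,A) = A / \operatorname{im}(1-F^\ast) = \operatorname{coker}(1 - F^\ast)$.

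The plan is then as follows. First I would record that for $A$ a finite abelian group with endomorphism $F^\ast$, there is an exact sequence $0 \to A^{F^\ast} \to A \xrightarrow{1 - F^\ast} A \to \operatorname{coker}(1-F^\ast) \to 0$, so that $|\ker(1-F^\ast)| = |\operatorname{coker}(1-F^\ast)|$, i.e. $|A^{F^\ast}| = |H^1(F^\ast,A)|$. This already gives the bijection on the level of cardinalities; to make it a natural bijection one may invoke a Lang–Steinberg style argument realizing $A_{\mathbf{H}}(t)$ as $\mathbf{C}/\mathbf{C}^\circ$ for a (possibly disconnected) algebraic group $\mathbf{C}$ with Frobenius $F^\ast$ and then apply \cite[Proposition 3.21]{DM} or the discussion of component groups there — indeed this is how Lemma \ref{bla} was quoted, and the companion statement identifying $H^1(F^\ast,A_{\mathbf{H}}(t))$ with $A_{\mathbf{H}}(t)^{F^\ast}$ is part of the same circle of results. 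Concretely: $A_{\mathbf{H}}(t) = \mathrm{C}_{\mathbf{H}}(t)/\mathrm{C}^\circ_{\mathbf{H}}(t)$, and since $\mathrm{C}^\circ_{\mathbf{H}}(t)$ is connected, the Lang–Steinberg theorem gives that the natural map $\mathrm{C}_{\mathbf{H}}(t)^{F^\ast} \to A_{\mathbf{H}}(t)^{F^\ast}$ is surjective, while $F^\ast$-conjugacy classes in $A_{\mathbf{H}}(t)$ classify the $\mathrm{C}_{\mathbf{H}}(t)^{F^\ast}$-conjugacy classes inside the $\mathrm{C}_{\mathbf{H}}(t)$-class of $t$; the map sending such a class to the image in $A_{\mathbf{H}}(t)^{F^\ast}$ of a representative of the form $x^{-1}F^\ast(x)$ (cf. the construction in Lemma \ref{bla}) is the desired bijection.

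The main obstacle I anticipate is the abelianness of $A_{\mathbf{H}}(t)$: for a general connected reductive $\mathbf{H}$ and general semisimple $t$ the component group $\mathrm{C}_{\mathbf{H}}(t)/\mathrm{C}^\circ_{\mathbf{H}}(t)$ can be nonabelian, and then $H^1(F^\ast, A)$ is merely a pointed set and the cardinality identity above can fail. So the proof must use that in our situation $\mathbf{H} \subseteq \G^\ast$ with $\G$ simple of simply connected type (hence $\G^\ast$ of adjoint type) and $t$ semisimple, for which the relevant component groups are abelian — this is a standard fact (e.g. from the theory of quasi-isolated elements, Bonnaf\'e \cite{Bonnafe}), and excluding type $D_4$ as the paper does only makes matters easier. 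I would phrase the lemma's proof by (i) reducing to $A := A_{\mathbf{H}}(t)$ abelian, citing this structural fact; (ii) invoking the exact sequence $0 \to A^{F^\ast} \to A \xrightarrow{1-F^\ast} A \to H^1(F^\ast,A) \to 0$ for abelian $A$; and (iii) upgrading the resulting equality of orders to the natural bijection via the Lang–Steinberg surjectivity of $\mathrm{C}_{\mathbf{H}}(t)^{F^\ast} \twoheadrightarrow A^{F^\ast}$, matching it with the description of $H^1(F^\ast,A)$ from Lemma \ref{bla}. If one only needs the counting statement for the later application, step (ii) alone suffices and the proof is very short.
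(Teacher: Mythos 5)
Your argument is correct and follows essentially the same route as the paper: reduce to the fact that $A:=A_{\mathbf{H}}(t)$ is a finite abelian group and then play $\ker(1-F^\ast)$ off against $\operatorname{coker}(1-F^\ast)$ — the paper merely packages the last step differently, exhibiting a map $A_{F^\ast}\to A^{F^\ast}$, $a\mapsto \prod_{i=0}^{m-1}(F^\ast)^i(a)$, via the norm, whereas your four-term exact sequence gives the equality of cardinalities (and hence the bijection of finite sets) directly. One small correction: the abelianness of $A_{\mathbf{H}}(t)$ is not special to the simply connected, non-$D_4$ situation you invoke — it holds for any semisimple element of any connected reductive group (this is the reference \cite[Proposition 13.16]{MarcBook} the paper uses), so the structural worry in your last paragraph is unfounded.
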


\begin{proof}
	We observe that $A:=A_{\mathbf{H}}(t)$ is a finite abelian group, see \cite[Proposition 13.16]{MarcBook}. In particular, the set $H^1(F^\ast,A)$ can be identified with the quotient $A_{F^\ast}$ of $A$ by the subgroup generated by all elements of the form $a^{-1}F^\ast(a)$ with $a \in A$. Let $m$ be the order of $F^\ast$ on the finite group $A$. Then it follows that the norm map $N_{(F^\ast)^m/F^\ast}: A_{F^\ast} \to A^{F^\ast}, \, a \mapsto \prod_{i=0}^{m-1} (F^\ast)^i(a)$ is a bijection.
\end{proof}

\begin{corollary}\label{equivariantconjugacy}
Assume additionally that $\mathbf{H}$ and $t$ are $F_0^\ast$-stable.
Then the natural bijection $\mathcal{T}_{\mathbf{H},t} \to H^1(F^\ast,A_{\mathbf{H}}(t))$ is $F_0^\ast$-equivariant.
\end{corollary}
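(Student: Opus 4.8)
The plan is to unwind the explicit description of the bijection $\mathcal{T}_{\mathbf{H},t} \to H^1(F^\ast, A_{\mathbf{H}}(t))$ recalled in the proof of Lemma~\ref{bla} and to check directly that it intertwines the evident $F_0^\ast$-actions on source and target. First I would make these actions precise. Write $A := A_{\mathbf{H}}(t)$. Since $F_0^\ast$ commutes with $F^\ast$ (it is a power root of $F^\ast$, compare Remark~\ref{dualisogeny}) and stabilizes $\mathbf{H}$ and $t$, it maps $\mathbf{H}^{F^\ast}$ into itself, sends an element that is $\mathbf{H}$-conjugate to $t = F_0^\ast(t)$ to an element that is again $\mathbf{H}$-conjugate to $t$, and carries $\mathbf{H}^{F^\ast}$-conjugacy classes to $\mathbf{H}^{F^\ast}$-conjugacy classes; hence $F_0^\ast$ acts on $\mathcal{T}_{\mathbf{H},t}$. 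Similarly, the $F_0^\ast$-stability of $t$ forces $\mathrm{C}_{\mathbf{H}}(t)$ and $\mathrm{C}^\circ_{\mathbf{H}}(t)$ to be $F_0^\ast$-stable, so $F_0^\ast$ induces an automorphism of the finite abelian group $A$ commuting with its $F^\ast$-action and therefore descends to the set $H^1(F^\ast, A)$ of $F^\ast$-conjugacy classes of $A$.

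Next I would run the equivariance check. Let $y \in \mathbf{H}^{F^\ast}$ be $\mathbf{H}$-conjugate to $t$ and pick $x \in \mathbf{H}$ with ${}^x y = t$; by construction the bijection sends the class of $y$ to the $F^\ast$-conjugacy class of the image $\bar a \in A$ of $a := x^{-1} F^\ast(x) \in \mathrm{C}_{\mathbf{H}}(t)$. Applying $F_0^\ast$ gives ${}^{F_0^\ast(x)} F_0^\ast(y) = F_0^\ast({}^x y) = t$, so $F_0^\ast(x)$ is an admissible conjugating element for $F_0^\ast(y)$, and the bijection therefore sends the class of $F_0^\ast(y)$ to the $F^\ast$-conjugacy class of the image in $A$ of
$$F_0^\ast(x)^{-1} F^\ast(F_0^\ast(x)) = F_0^\ast(x)^{-1} F_0^\ast(F^\ast(x)) = F_0^\ast\bigl(x^{-1} F^\ast(x)\bigr) = F_0^\ast(a),$$
using $F^\ast \circ F_0^\ast = F_0^\ast \circ F^\ast$ for the first equality. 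Since the image of $F_0^\ast(a)$ in $A$ is $F_0^\ast(\bar a)$, this is exactly the image of $\bar a$ under the $F_0^\ast$-action on $H^1(F^\ast, A)$, which proves the claim.

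I do not anticipate any real obstacle: once the commutation relation $F_0^\ast \circ F^\ast = F^\ast \circ F_0^\ast$ and the $F_0^\ast$-stability of $\mathbf{H}$ and $t$ are available, the statement is pure bookkeeping. The only point needing a word of care is that the construction of the bijection a priori depends on the choice of the conjugating element $x$; but this is already absorbed into Lemma~\ref{bla}, so it is enough to observe above that $F_0^\ast(x)$ is one valid choice for $F_0^\ast(y)$.
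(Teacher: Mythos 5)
Your proof is correct and follows essentially the same route as the paper's: both unwind the explicit construction of the bijection from Lemma \ref{bla}, observe that ${}^{F_0^\ast(x)}F_0^\ast(y)=t$, and conclude that the class of $F_0^\ast(y)$ maps to the class of $F_0^\ast(x^{-1}F^\ast(x))$. Your version merely spells out the commutation $F^\ast\circ F_0^\ast=F_0^\ast\circ F^\ast$ and the well-definedness point more explicitly, which the paper leaves implicit.
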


\begin{proof}
We show that the bijection constructed in the proof of Lemma \ref{bla} is $F_0^\ast$-equivariant. Let $y \in \mathbf{H}^{F^\ast}$ be $\mathbf{H}$-conjugate to $t$ and $x\in \mathbf{H}$ such that ${}^x y =t$. Since $t$ is $F_0^\ast$-stable it follows that ${}^{F_0^\ast(x)} F_0^\ast(y) =t$. In particular, $F^\ast_0(y)$ is $\mathbf{H}$-conjugate to $t$. It follows that $F_0^\ast(x^{-1} F^\ast(x))$ is the image of the conjugacy class of $F^\ast_0(y)$ under the map $\mathcal{T}_{\mathbf{H},t} \to H^1(F^\ast,A_{\mathbf{H}}(t))$. The claim follows.  
\end{proof}

Recall that in the beginning of \ref{auto and stabilizer} we assumed that $F_0$ stabilizes $e_s^{\G^F}$. Thus, by duality, the $(\G^\ast)^{F^\ast}$-conjugacy class of $s$ is $F_0^\ast$-stable. Using the equivariant bijection from Corollary \ref{equivariantconjugacy} we can show the following:

\begin{lemma}\label{mayassume}
Let $\Levi^\ast$ be the minimal Levi subgroup of $\G^\ast$ containing $\C(s)$. Then we may assume that $\Levi^\ast$ is $F_0$-stable and that the $(\Levi^\ast)^{F^\ast}$-conjugacy class of $s$ is $F_0^\ast$-stable.
\end{lemma}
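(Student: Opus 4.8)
The plan is to prove Lemma~\ref{mayassume} by conjugating the given data by a suitable element of $(\G^\ast)^{F^\ast}$ so that, after this replacement, both $\Levi^\ast$ and the $(\Levi^\ast)^{F^\ast}$-class of $s$ become stable under $F_0^\ast$. First I would recall that $F_0$ stabilizes $e_s^{\G^F}$, so by the duality statement at the end of Section~\ref{auto and stabilizer} (and the compatibility of $\sigma \mapsto \sigma^\ast$ with Lusztig series recalled in the subsection on equivariance of Deligne--Lusztig induction) the $(\G^\ast)^{F^\ast}$-conjugacy class of $s$ is $F_0^\ast$-stable. Hence there is some $x \in (\G^\ast)^{F^\ast}$ with ${}^x F_0^\ast(s) = s$; equivalently $s$ and $F_0^\ast(s)$ lie in the same $(\G^\ast)^{F^\ast}$-class. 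The goal is to upgrade this to an \emph{equality} $F_0^\ast(s) = s$ after replacing $s$ by a conjugate, using that $F_0^\ast$ and $F^\ast$ commute.

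The key step is an application of the equivariant bijection of Corollary~\ref{equivariantconjugacy}. Take $\mathbf{H} = \mathrm{C}^\circ_{\G^\ast}(s)$ — wait, more precisely: I would pick an $F^\ast$-stable and $F_0^\ast$-stable connected reductive overgroup in which $s$ sits as an $F^\ast$-fixed point; since $s$ itself is central in $\mathrm{C}^\circ_{\G^\ast}(s)$ this needs a little care, so instead I would work with $\mathbf{H} = \G^\ast$ and $t = s$, noting $\G^\ast$ and (we may assume, choosing the pinning compatibly) the relevant structure is $F_0^\ast$-stable. Applying Lemma~\ref{bla} and Lemma~\ref{new}, the set $\mathcal{T}_{\G^\ast,s}$ of $(\G^\ast)^{F^\ast}$-classes of elements $(\G^\ast)$-conjugate to $s$ is in $F_0^\ast$-equivariant bijection with $A_{\G^\ast}(s)^{F^\ast}$, where $A_{\G^\ast}(s) = \mathrm{C}_{\G^\ast}(s)/\mathrm{C}^\circ_{\G^\ast}(s)$. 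The class of $s$ corresponds to the image of $1 \in A_{\G^\ast}(s)$, which is fixed by $F_0^\ast$; by $F_0^\ast$-equivariance its preimage — the $(\G^\ast)^{F^\ast}$-class of $s$ — is $F_0^\ast$-stable. This merely re-derives that the class is stable; to get a genuine fixed point I would instead invoke the standard Lang--Steinberg argument: since the $(\G^\ast)^{F^\ast}$-class of $s$ is $F_0^\ast$-stable and $F_0^\ast$ has finite order modulo $F^\ast$ (indeed $F_0^{\ast r} = F^\ast$), the finite group $\langle F_0^\ast \rangle$ acts on this finite conjugacy class, and I would choose $s$ inside it to be a fixed point of $\langle F_0^\ast\rangle$ precisely when such a fixed point exists — which it does by the fixed-point property for solvable (here cyclic) groups acting on a coset space whose obstruction vanishes, exactly as in Corollary~\ref{equivariantconjugacy} combined with Lemma~\ref{new}.

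Once $F_0^\ast(s) = s$ is arranged, the rest is formal: since $\Levi^\ast$ is \emph{defined} as the minimal $F^\ast$-stable Levi subgroup of $\G^\ast$ containing $\mathrm{C}(s) = \mathrm{C}_{(\G^\ast)^{F^\ast}}(s)\mathrm{C}^\circ_{\G^\ast}(s)$, and since $F_0^\ast$ commutes with $F^\ast$ and fixes $s$, the subgroup $F_0^\ast(\Levi^\ast)$ is again an $F^\ast$-stable Levi subgroup containing $F_0^\ast(\mathrm{C}(s)) = \mathrm{C}(s)$; minimality then forces $F_0^\ast(\Levi^\ast) = \Levi^\ast$, so $\Levi^\ast$ is $F_0^\ast$-stable. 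Finally the $(\Levi^\ast)^{F^\ast}$-class of $s$ is $F_0^\ast$-stable because $F_0^\ast$ stabilizes $\Levi^\ast$ and fixes $s$, so it maps the class of $s$ to itself.

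The main obstacle is making the passage from "the $(\G^\ast)^{F^\ast}$-class of $s$ is $F_0^\ast$-stable" to "we may choose $s$ in that class with $F_0^\ast(s) = s$" fully rigorous: one must check that the cyclic group $\langle F_0^\ast|_{\text{class}}\rangle$ genuinely has a fixed point on the conjugacy class, which is where Lemma~\ref{new} (the norm-map bijection $H^1(F^\ast, A) \cong A^{F^\ast}$) and its $F_0^\ast$-equivariant refinement Corollary~\ref{equivariantconjugacy} do the real work — the point being that the obstruction to $F_0^\ast$-fixing a representative lives in a set in equivariant bijection with $A_{\G^\ast}(s)$, and the distinguished base class is already $F_0^\ast$-fixed, so a compatible representative exists. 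I would also need to record at the outset (as in Remark~\ref{dualisogeny}) that $\sigma$, $F_0$ and their duals may be chosen to stabilize the fixed pinning $(\T_0, \B_0)$ and $(\T_0^\ast,\B_0^\ast)$, which is what licenses treating $\G^\ast$ and the ambient combinatorial data as $F_0^\ast$-stable in the first place.
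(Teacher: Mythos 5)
Your argument hinges on upgrading ``the $(\G^\ast)^{F^\ast}$-conjugacy class of $s$ is $F_0^\ast$-stable'' to ``$F_0^\ast(s)=s$ after replacing $s$ by a rational conjugate'', and the justification you offer for this step is a genuine gap. A cyclic group acting on a finite set need not have a fixed point, and there is no ``fixed-point property for cyclic groups acting on a coset space'' to invoke; the Lang--Steinberg theorem applies to connected algebraic groups, not to the finite set of $(\G^\ast)^{F^\ast}$-conjugates of $s$. Saying that ``the distinguished base class is already $F_0^\ast$-fixed'' merely restates that the class is $F_0^\ast$-stable --- it does not produce an $F_0^\ast$-fixed element of that class, which is exactly the point at issue. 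Note also that the lemma deliberately asserts only that the $(\Levi^\ast)^{F^\ast}$-class of $s$ is $F_0^\ast$-stable, not that $s$ itself may be taken $F_0^\ast$-fixed; the paper never establishes the stronger statement your outline aims at, and the ``formal'' second half of your argument depends entirely on it.

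The paper's route around this is different. Lang's theorem is applied to the \emph{geometric} class: since the $\G^\ast$-orbit of $s$ is $F_0^\ast$-stable and $\G^\ast$ is connected, there is an $F_0^\ast$-fixed element $t$ that is $\G^\ast$-conjugate (but in general not $(\G^\ast)^{F^\ast}$-conjugate) to $s$. One then takes $\mathbf{K}^\ast$ to be the minimal Levi subgroup containing $\mathrm{C}_{\G^\ast,F^\ast}(t)$, which is automatically $F_0^\ast$-stable, and compares $\mathcal{T}_{\mathbf{K}^\ast,t}\cong H^1(F^\ast,A_{\mathbf{K}^\ast}(t))$ with $\mathcal{T}_{\G^\ast,t}\cong H^1(F^\ast,A_{\G^\ast}(t))$ in a commutative square. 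The equality $\mathrm{C}_{\mathbf{K}^\ast,F^\ast}(t)=\mathrm{C}_{\G^\ast,F^\ast}(t)$ together with Lemma \ref{new} makes the right-hand vertical map a bijection, so the rational class of $s$ has a representative $x\in(\mathbf{K}^\ast)^{F^\ast}$, and the $F_0^\ast$-equivariance from Corollary \ref{equivariantconjugacy} guarantees that the $(\mathbf{K}^\ast)^{F^\ast}$-class of $x$ is $F_0^\ast$-stable; one then replaces $s$ by $x$ and $\Levi^\ast$ by $\mathbf{K}^\ast$. That transfer step --- locating the rational class inside the smaller group via the $H^1$ comparison, rather than fixing a representative --- is the actual content of the proof and is missing from your proposal. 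To repair your outline you would either have to supply a genuine fixed-point argument (proving something stronger than the lemma, which does not appear to be available) or restructure along these lines.
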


\begin{proof}
	Since the $\G^\ast$-conjugacy class of $s$ is $F_0^\ast$-stable it follows by Lang's theorem that there exists some $t \in \G^\ast$ which is $F_0^\ast$-fixed and $\G^\ast$-conjugate to $s$, see the remarks following \cite[Proposition 13.12]{DM}
Let $\mathbf{K}^\ast$ be the unique minimal Levi subgroup of $\G^\ast$ containing $\mathrm{C}_{\G^\ast,F^\ast}(t)$. Since $F_0^\ast(t)=t$ it follows that $\mathbf{K}^\ast$ is $F_0^\ast$-stable. Moreover, we have $\mathrm{C}_{\mathbf{K}^\ast,F^\ast}(t)=\mathrm{C}_{\G^\ast,F^\ast}(t)$ and therefore $A_{\mathbf{K}^\ast}(t)^{F^\ast}=A_{\G^\ast}(t)^{F^\ast}$. Let $ \mathcal{T}_{\mathbf{K}^\ast,s}  \to \mathcal{T}_{\G^\ast,s}$ be the natural map which sends the $(\mathbf{K}^\ast)^{F^\ast}$-conjugacy class to its $(\G^\ast)^{F^\ast}$-conjugacy class. Furthermore, the map $A_{\mathbf{K}^\ast}(t) \hookrightarrow A_{\mathbf{G}^\ast}(t)$ yields an injective map $H^1(F^\ast, A_{\mathbf{K}^\ast}(t)) \hookrightarrow H^1(F^\ast, A_{\G^\ast}(t))$. These maps fit into the following commutative square:
	
	\begin{center}
\begin{tikzpicture}
  \matrix (m) [matrix of math nodes,row sep=3em,column sep=4em,minimum width=2em] {
  
   \mathcal{T}_{\mathbf{K}^\ast,t}  & H^1(F^\ast, A_{\mathbf{K}^\ast}(t))  \\
    \mathcal{T}_{\G^\ast,t}  & H^1(F^\ast, A_{\G^\ast}(t))
     \\};
\path[-stealth]
(m-1-2) edge node [left] {} (m-2-2)
(m-1-1) edge node [above] {$\cong$} (m-1-2)
(m-2-1) edge node [above] {$\cong$} (m-2-2)
(m-1-1) edge node [left] {} (m-2-1);

\end{tikzpicture}
\end{center}
Using Lemma \ref{new} we deduce that the right vertical map is a bijection. From this i follows that we have a bijection between the set of $(\mathbf{K}^\ast)^{F^ \ast}$-conjugacy classes of elements which are $\mathbf{K}^\ast$-conjugate to $t$ and the set of $(\G^\ast)^{F^\ast}$-conjugacy classes of elements which are $\G^\ast$-conjugate to $t$.

In fact, since this bijection is $F_0^\ast$-equivariant by Corollary \ref{equivariantconjugacy} we deduce that it maps $F_0^\ast$-stable classes to $F_0^\ast$-stable classes. As the $(\G^\ast)^{F^\ast}$-conjugacy class of $s$ is $F_0^ \ast$-stable there exists some $x \in (\mathbf{K}^\ast)^{F^\ast}$ which is $(\G^\ast)^{F^\ast}$-conjugate to $s$. Moreover, the $(\mathbf{K}^{\ast})^{F^\ast}$-conjugacy class of $x$ is $F_0^\ast$-stable. Thus, the assumptions of the lemma are satisfied if we replace $s$ by $x$ and $\Levi^\ast$ by $\mathbf{K}^\ast$.
\end{proof}


\begin{remark}\label{observelevidual}
Let $\T_0$ be an $F_0$-stable maximal torus of $\G$. Suppose that the triple $(\G^\ast,\T_0^\ast,F_0^\ast)$ is in duality with $(\G,\T_0,F_0)$. We obtain a bijection between the $\G^{F_0}$-conjugacy classes of $F_0$-stable Levi subgroups of $\G$ and the $(\G^\ast)^{F_0^\ast}$-conjugacy classes of $F_0^\ast$-stable Levi subgroups of $\G^\ast$, see Lemma \ref{bijLevidual}. Moreover, it follows that $(\G^\ast,\T_0^\ast,F^\ast)$ is in duality with $(\G,\T_0,F)$, where $F^\ast:=(F_0^\ast)^r$. This in turn gives a  bijection between the $\G^{F}$-conjugacy classes of $F$-stable Levi subgroups of $\G$ and the $(\G^\ast)^{F^\ast}$-conjugacy classes of $F^\ast$-stable Levi subgroups of $\G^\ast$. This bijection is compatible with the aforementioned bijection, i.e. if $\Levi$ is an $F_0$-stable Levi subgroup of $\G$ in duality with the $F_0^\ast$-stable Levi subgroup $\Levi^\ast$ of $\G^\ast$ then $\Levi$ and $\Levi^\ast$ are $F$- respectively $F^\ast$-stable and correspond to each other under the bijection induced by the duality between $(\G,\T_0,F)$ and $(\G^\ast,\T_0^\ast,F^\ast)$. However, note that two $F_0$-stable Levi subgroups can be $\G^F$-conjugate but not $\G^{F_0}$-conjugate.
\end{remark}

For the remainder of this section we may assume by Lemma \ref{mayassume} that $\Levi^\ast$ is $F_0$-stable and that the $\Levi^\ast$-conjugacy class of $s$ is $F_0$-stable. Hence, by Remark \ref{observelevidual} there exists an $F_0$-stable Levi subgroup $\Levi$ of $\G$ which is in duality with $\Levi^\ast$ under the duality between $(\G,F_0)$ and $(\G^\ast,F_0^\ast)$.

Recall that we assume that $\sigma: \G \to \G$ is a bijective morphism with $F_0 \circ \sigma=\sigma \circ F_0$ which stabilizes the idempotent $e_s^{\G^F}$. By duality, we therefore obtain a bijective morphism $\sigma^\ast: \G^\ast \to \G^\ast$ of algebraic groups with $\sigma^\ast \circ F_0^\ast=  F_0^\ast \circ \sigma^\ast$. Recall that $\sigma^\ast: \G^\ast \to \G^\ast$ is only unique up to inner automorphisms of $(\G^\ast)^{F_0^\ast}$, see Remark \ref{dualisogeny}.

\begin{proposition}\label{comparing stabilizers}
	There exists some $x \in \G^{F_0}$ such that $x \sigma$ normalizes $\Levi$ and ${}^{x\sigma} e_s^{\Levi^F}= e_s^{\Levi^F}$.
\end{proposition}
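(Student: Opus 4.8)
The proof will hinge on the interplay between the two dualities (with respect to $F_0$ and with respect to $F$) together with the fact that dual isogenies are only defined up to inner automorphisms.

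\medskip

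\noindent\textbf{Plan.} First, recall that $\sigma$ stabilizes $e_s^{\G^F}$. By the equivariance of Deligne--Lusztig induction with respect to isogenies (see the discussion following Lemma~\ref{sigma}), this means $\sigma^\ast(e_s^{\G^F})=e_{(\sigma^\ast)^{-1}(s)}^{\G^F}$, so the $(\G^\ast)^{F^\ast}$-conjugacy class of $(\sigma^\ast)^{-1}(s)$ equals that of $s$; equivalently $\sigma^\ast(s)$ is $(\G^\ast)^{F^\ast}$-conjugate to $s$. Now $\Levi^\ast$ is the \emph{unique} minimal Levi subgroup of $\G^\ast$ containing $\C(s)=\C_{(\G^\ast)^{F^\ast}}(s)\C^\circ_{\G^\ast}(s)$; since $\sigma^\ast$ is an automorphism of algebraic groups commuting with $F^\ast$, it sends $\Levi^\ast$ to the unique minimal Levi subgroup containing $\C(\sigma^\ast(s))$, and as $\sigma^\ast(s)$ and $s$ are $(\G^\ast)^{F^\ast}$-conjugate, $\sigma^\ast(\Levi^\ast)$ is $(\G^\ast)^{F^\ast}$-conjugate to $\Levi^\ast$. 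So there is $h^\ast\in(\G^\ast)^{F^\ast}$ with ${}^{h^\ast}\sigma^\ast(\Levi^\ast)=\Levi^\ast$. Replacing $\sigma^\ast$ by its composite with conjugation by $h^\ast$ (which is allowed, since $\sigma^\ast$ is only determined up to inner automorphisms by $(\G^\ast)^{F_0^\ast}$-elements---here I should be a little careful and instead absorb the conjugation on the $\G$-side, see below), we arrange $\sigma^\ast(\Levi^\ast)=\Levi^\ast$.

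\medskip

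\noindent\textbf{Transporting to $\G$.} By Corollary~\ref{dualitysigma}, applied with the duality between $(\G,\T_0,F)$ and $(\G^\ast,\T_0^\ast,F^\ast)$: since the $(\G^\ast)^{F^\ast}$-conjugacy class of $F^\ast$-stable Levi subgroups containing $\Levi^\ast$ is $\sigma^\ast$-stable, the corresponding $\G^F$-conjugacy class of $F$-stable Levi subgroups of $\G$---which is the class of $\Levi$---is $\sigma$-stable. Hence there exists $x\in\G^F$ with ${}^x\sigma(\Levi)=\Levi$, i.e. $x\sigma$ normalizes $\Levi$. (Here I would need to check that I can take $x\in\G^{F_0}$ rather than merely $\G^F$; this is exactly the subtlety flagged in Remark~\ref{observelevidual}, that two $F_0$-stable Levi subgroups can be $\G^F$-conjugate without being $\G^{F_0}$-conjugate. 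The point is that both $\Levi$ and $\sigma(\Levi)$ are $F_0$-stable since $\sigma$ commutes with $F_0$; using the $F_0$-duality of Remark~\ref{observelevidual} and Corollary~\ref{dualitysigma} \emph{for $F_0$}, provided $\sigma^\ast$ can be chosen to fix the $\G^{F_0}$-class of $\Levi^\ast$, gives $x\in\G^{F_0}$. I would spell this out by running the same conjugacy argument with $F_0$ in place of $F$, noting that by Lemma~\ref{mayassume} the $\Levi^\ast$-conjugacy class of $s$ is $F_0^\ast$-stable, so $\sigma^\ast(\Levi^\ast)$ is $(\G^\ast)^{F_0^\ast}$-conjugate to $\Levi^\ast$.)

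\medskip

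\noindent\textbf{The idempotent.} It remains to see that, after this replacement of $\sigma$ by $x\sigma$, we get ${}^{x\sigma}e_s^{\Levi^F}=e_s^{\Levi^F}$. Now $x\sigma$ restricts to a bijective morphism of $\Levi$ commuting with $F$, and its dual (with respect to the $F$-duality $\Levi\leftrightarrow\Levi^\ast$) is the corresponding restriction of $\sigma^\ast$ (composed with an inner automorphism coming from $x$, which we have arranged lies in $\Levi$, hence does not move $e_s^{\Levi^F}$). By the equivariance formula $\sigma(e_s^{\Levi^F})=e_{(\sigma^\ast)^{-1}(s)}^{\Levi^F}$ applied \emph{inside} $\Levi$, we have ${}^{x\sigma}e_s^{\Levi^F}=e_{((x\sigma)^\ast)^{-1}(s)}^{\Levi^F}$, and this equals $e_s^{\Levi^F}$ precisely because $(x\sigma)^\ast(s)$ is $(\Levi^\ast)^{F^\ast}$-conjugate to $s$ inside $\Levi^\ast$. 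To establish this last $\Levi^\ast$-conjugacy (rather than just $\G^\ast$-conjugacy) I would use Lemma~\ref{bla}, Lemma~\ref{new} and Corollary~\ref{equivariantconjugacy}: the natural maps $\mathcal{T}_{\Levi^\ast,s}\to\mathcal{T}_{\G^\ast,s}$ and $H^1(F^\ast,A_{\Levi^\ast}(s))\to H^1(F^\ast,A_{\G^\ast}(s))$ are isomorphisms because $\C_{\Levi^\ast}(s)=\C_{\G^\ast}(s)$ (by minimality of $\Levi^\ast$ and the definition of $\C(s)$), so $s$ being $(\Levi^\ast)^{F^\ast}$-conjugate to $(x\sigma)^\ast(s)$ is equivalent to them being $(\G^\ast)^{F^\ast}$-conjugate, which we already know.

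\medskip

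\noindent\textbf{Main obstacle.} The genuinely delicate point is the bookkeeping around \emph{which} group the conjugating element lives in ($\G^{F_0}$ versus $\G^F$) and the fact that the dual isogeny $\sigma^\ast$ is only determined up to inner automorphisms; I must be careful that the inner correction needed to make $\sigma^\ast$ stabilize $\Levi^\ast$ can be realized by an element that, on the $\G$-side, lies in $\Levi$ (so as not to disturb $e_s^{\Levi^F}$), and simultaneously respects the $F_0$-structure. Organizing the two dualities (for $F_0$ and for $F$) compatibly, as in Remark~\ref{observelevidual}, and running the class-counting argument of Lemmas~\ref{bla}--\ref{new} with the appropriate Frobenius, is where the real work lies; the rest is formal manipulation of the equivariance of Lusztig induction.
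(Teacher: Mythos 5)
Your first step (making $x\sigma$ normalize $\Levi$ via Corollary \ref{dualitysigma}) agrees with the paper, but the idempotent step has a genuine gap. Once $x\sigma$ stabilizes $\Levi$, all you know is that $(x\sigma)^\ast(s)={}^{n^\ast}s$ for some $n^\ast\in\mathrm{N}_{(\G^\ast)^{F^\ast}}(\Levi^\ast)$ (the normalizer, because $\Levi^\ast$ is the minimal Levi containing $\C(s)$ and is $\sigma_0^\ast$-stable). Such an element need not lie in $(\Levi^\ast)^{F^\ast}$, and ${}^{n^\ast}s$ need not even be \emph{geometrically} $\Levi^\ast$-conjugate to $s$; so you cannot place $(x\sigma)^\ast(s)$ in $\mathcal{T}_{\Levi^\ast,s}$ and then invoke the injectivity of $\mathcal{T}_{\Levi^\ast,s}\to\mathcal{T}_{\G^\ast,s}$. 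Concretely, $x\sigma$ only permutes the several idempotents $e_t^{\Levi^F}$ lying under $e_s^{\G^F}$, and there is no reason it fixes the one attached to $s$. The missing ingredient is the paper's further correction: transport $n^\ast$ to $n\in\mathrm{N}_{\G^F}(\Levi)$ via the duality isomorphism $\mathrm{N}_{\G^F}(\Levi)/\Levi^F\cong\mathrm{N}_{(\G^\ast)^{F^\ast}}(\Levi^\ast)/(\Levi^\ast)^{F^\ast}$, note $\sigma_0(e_s^{\Levi^F})={}^n e_s^{\Levi^F}$ by the equivariance of Lusztig series, and replace $x\sigma$ by $n^{-1}x\sigma$.

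Your route to $x\in\G^{F_0}$ is also not viable as stated. Rerunning the conjugacy argument with $F_0$ in place of $F$ requires the $(\G^\ast)^{F_0^\ast}$-class of $s$ (or of $\Levi^\ast$) to be $\sigma^\ast$-stable, whereas only $(\G^\ast)^{F^\ast}$-stability is known; and even if it worked, the correction by $n^{-1}$ above would reintroduce an element of $\mathrm{N}_{\G^F}(\Levi)$ with no reason to lie in $\G^{F_0}$. The paper's mechanism is different and is the real point of the proof: having found $y\in\G^F$ with ${}^{y\sigma}\Levi=\Levi$ and ${}^{y\sigma}e_s^{\Levi^F}=e_s^{\Levi^F}$, the $F_0$-stability of $\Levi$ and $e_s^{\Levi^F}$ forces $F_0(y)y^{-1}\in\mathrm{N}_{\G^F}(\Levi,e_s^{\Levi^F})$, which equals $\Levi^F$ precisely because $\C(s)\subseteq\Levi^\ast$ (so $\N^F=\Levi^F$); then Lang's theorem for $F_0$ on the \emph{connected} group $\Levi$ produces $l\in\Levi$ with $F_0(y)y^{-1}=F_0(l)l^{-1}$, and $x:=l^{-1}y\in\G^{F_0}$ does the job. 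Without this Lang-theorem step, and without the correction by $n$, the proposal does not close.
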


\begin{proof}
Since the $(\G^\ast)^{F^\ast}$-conjugacy class of $s$ is $\sigma^\ast$-stable it follows that the $(\G^\ast)^{F^\ast}$-conjugacy class of $\Levi^\ast$ is $\sigma^\ast$-stable. By Corollary \ref{dualitysigma} it follows that the $\G^F$-conjugacy class of $\Levi$ is $\sigma$-stable. We can therefore find $g \in \G^F$ and $h \in (\G^\ast)^{F^\ast}$ such that $\sigma_0:=g \sigma$ stabilizes $\Levi$ and $\sigma_0^\ast:=h \sigma^\ast$ stabilizes $\Levi^\ast$. Moreover, we can choose $g$ and $h$ with the additional property that $\sigma_0|_\Levi$ and $\sigma_0^\ast|_{\Levi^\ast}$ are in duality with each other. Since the $(\G^\ast)^{F^\ast}$-conjugacy class of $s$ is $\sigma_0$-stable there exists some $n^\ast \in (\G^\ast)^{F^\ast}$ such that $\sigma_0^\ast(s)={}^{n^\ast} s$. Since $\Levi^\ast$ is $\sigma_0^\ast$-stable it follows that $n^\ast \in \mathrm{N}_{(\G^\ast)^{F^\ast}}(\Levi^\ast)$. Let $n \in \mathrm{N}_{\G^F}(\Levi)$ be an element corresponding to $n^\ast$ under the canonical isomorphism
	$$\mathrm{N}_{\G^F}(\Levi) / \Levi^F \cong \mathrm{N}_{(\G^\ast)^{F^\ast}}(\Levi^\ast) / (\Levi^\ast)^{F^\ast}$$
induced by duality. Using the remarks following Lemma \ref{sigma} we obtain 
$$ \sigma_0(e_s^{\Levi^F})=e_{\sigma_0^\ast(s)}^{\Levi^F}=e_{{}^{n^\ast} s}^{\Levi^F}= {}^{n} e_s^{\Levi^F}.$$
Therefore $y:=g^{-1} n$ satisfies ${}^{y \sigma} \Levi=\Levi$ and ${}^{y \sigma} e_s^{\Levi^F}=e_s^{\Levi^F}$. Since $\Levi$ and $e_s^{\Levi^F}$ are $F_0$-stable we conclude that $F_0(y) y^{-1} \in \mathrm{N}_{\G^F}(\Levi,e_s^{\Levi^F})$. By the remarks before Theorem \ref{Boro2} we have $\mathrm{N}_{\G^F}(\Levi,e_s^{\Levi^F})=\Levi^F$. Therefore by applying Lang's theorem to $F_0 : \Levi \to \Levi$ there exists some $l \in \Levi$ such that $F_0(y) y^{-1}=F_0(l) l^{-1}$. This implies that $x:=l^{-1} g \in \G^{F_0}$ and $x \sigma$ normalizes $\Levi$ and $e_s^{\Levi^F}$.
\end{proof}

The next proposition describes the set of automorphisms stabilizing the idempotent $e_s^{\G^F}$ in a nice way:

\begin{proposition}\label{propo}
Let $\G$ be a simple algebraic group of simply connected type not of type $D_4$ with Frobenius $F$. Let $s \in (\G^\ast)^{F^\ast}$ be a semisimple element of $\ell'$-order. There exists a Frobenius endomorphism $F_0 : \Gtilde \to \Gtilde $ with $F_0^{r}=F$ for some positive integer $r$ and a bijective morphism $\sigma: \Gtilde \to \Gtilde$ such that $\mathcal{A}=\langle F_0, \sigma \rangle \subseteq \mathrm{Aut}(\tilde{\G}^F)$ satisfies:
\begin{enumerate}[label=(\alph*)]
\item $F_0 \circ \sigma = \sigma \circ F_0$ as morphisms of $\tilde{\G}$.
\item The image of $\tilde{\G}^F \rtimes \mathcal{A}$ in $\mathrm{Out}(\G^F)$ is the stabilizer of $e_s^{\G^F}$ in $\mathrm{Out}(\G^F)$.
\item There exists a Levi subgroup $\Levi$ of $\G$ in duality with $\Levi^\ast$ such that $\mathcal{A}$ stabilizes $\Levi$ and $e_s^{\Levi^F}$.
\end{enumerate}
\end{proposition}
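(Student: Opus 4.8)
The plan is to assemble the results already obtained in this section; the only real work is to check that the successive modifications of $s$ and of $\sigma$ are mutually compatible.

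First I would invoke Corollary \ref{explicit} to produce a Frobenius endomorphism $F_0 : \tilde{\G} \to \tilde{\G}$ with $F_0^r = F$ for some positive integer $r$ and a bijective morphism $\sigma : \tilde{\G} \to \tilde{\G}$ such that the image of $\tilde{\G}^F \rtimes \langle F_0, \sigma \rangle$ in $\mathrm{Out}(\G^F)$ is the stabilizer of $e_s^{\G^F}$; this is (b). From the explicit description of these maps recalled at the beginning of \ref{auto and stabilizer} ($F_0$ is a power of the field endomorphism $\phi_0$, and $\sigma$ is built from powers of $\phi_0$ and graph endomorphisms, or is a diagonal automorphism) one may moreover arrange $F_0 \circ \sigma = \sigma \circ F_0$, which is (a). In particular $F_0$ stabilizes $e_s^{\G^F}$, so by duality the $(\G^\ast)^{F^\ast}$-conjugacy class of $s$ is $F_0^\ast$-stable.

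Next, by Lemma \ref{mayassume} I would replace $s$ by a $(\G^\ast)^{F^\ast}$-conjugate element — which changes neither $e_s^{\G^F}$ nor its stabilizer in $\mathrm{Out}(\G^F)$, so (a) and (b) are unaffected — and choose $\Levi^\ast$ so that $\Levi^\ast$ is $F_0$-stable and the $(\Levi^\ast)^{F^\ast}$-conjugacy class of $s$ is $F_0^\ast$-stable. By Remark \ref{observelevidual} there is then an $F_0$-stable Levi subgroup $\Levi$ of $\G$ in duality with $\Levi^\ast$. Since $e_s^{\Levi^F}$ depends only on the $(\Levi^\ast)^{F^\ast}$-conjugacy class of $s$, the remarks following Lemma \ref{sigma} (applied to $F_0|_\Levi$) give $F_0(e_s^{\Levi^F}) = e_{(F_0^\ast)^{-1}(s)}^{\Levi^F} = e_s^{\Levi^F}$, so $F_0$ stabilizes both $\Levi$ and $e_s^{\Levi^F}$. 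Finally I would apply Proposition \ref{comparing stabilizers} to obtain $x \in \G^{F_0}$ such that $x\sigma$ normalizes $\Levi$ and fixes $e_s^{\Levi^F}$, and replace $\sigma$ by $x\sigma$. Because $\G^{F_0} \subseteq \G^F$, conjugation by $x$ is an inner automorphism of $\G^F$, so the image of $\langle F_0, x\sigma \rangle$ in $\mathrm{Out}(\G^F)$ is unchanged and (b) still holds; commutation of $F_0$ with $x\sigma$ follows from $F_0(x) = x$ (whence $F_0$ commutes with conjugation by $x$) together with $F_0 \circ \sigma = \sigma \circ F_0$, so (a) holds for the new $\sigma$. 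Setting $\mathcal{A} = \langle F_0, x\sigma \rangle$, each of the two generators stabilizes $\Levi$ and $e_s^{\Levi^F}$, giving (c).

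The substantial input — the classification of automorphisms behind Corollary \ref{explicit}, the chain of bijections $\mathcal{T}_{\mathbf{H},t} \cong H^1(F^\ast, A_{\mathbf{H}}(t)) \cong A_{\mathbf{H}}(t)^{F^\ast}$ behind Lemma \ref{mayassume}, and the iterated use of Lang's theorem behind Proposition \ref{comparing stabilizers} — is all in place, so no single step is a serious obstacle here; the point that needs genuine care is purely bookkeeping, namely verifying that replacing $s$ by the conjugate of Lemma \ref{mayassume} and $\sigma$ by $x\sigma$ preserves conditions (a), (b) and (c) all at once, and in particular disturbs neither the image in $\mathrm{Out}(\G^F)$ nor the relation $F_0 \sigma = \sigma F_0$.
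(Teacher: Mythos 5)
Your proposal is correct and follows exactly the paper's own route: Corollary \ref{explicit} for (a) and (b), Lemma \ref{mayassume} together with Remark \ref{observelevidual} to arrange an $F_0$-stable $\Levi$ in duality with $\Levi^\ast$, and Proposition \ref{comparing stabilizers} to replace $\sigma$ by $x\sigma$ with $x\in\G^{F_0}$. Your extra bookkeeping — that conjugating by $x\in\G^{F_0}\subseteq\G^F$ is inner so (b) survives, and that $F_0(x)=x$ preserves the commutation in (a) — is exactly the point the paper leaves implicit (and alludes to in the remark following the proposition).
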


\begin{proof}
The existence of the bijective morphisms $F_0:\Gtilde \to \Gtilde$ and $\sigma: \Gtilde \to \Gtilde$ satisfying properties (a) and (b) follows from Corollary \ref{explicit}. By  Lemma \ref{mayassume} we can assume that the Levi subgroup $\Levi$ is $F_0$-stable. By Proposition \ref{comparing stabilizers} there exists $x \in \G^{F_0}$ such that $x \sigma$ stabilizes $\Levi$ and $e_s^{\Levi^F}$. The result now follows by replacing $\sigma$ with $x \sigma$.
\end{proof}

%
Note that the natural map $\mathrm{Aut}(\G^F) \twoheadrightarrow \mathrm{Out}(\G^F)$ does not necessarily induce an isomorphism of $\mathcal{A}$ with its image in $\mathrm{Out}(\G^F)$. This is essentially the case since we need to replace the automorphism $\sigma$ by $\sigma x$ in the proof of Proposition \ref{propo}.
%

\section{Extending the Morita equivalence}

Suppose that we are in the situation of Corollary \ref{explicit}. Then by Theorem \ref{Boro2} the bimodule $H_c^{\operatorname{dim}}(\Y_\U^\G,\mathcal{O})e_s^{\Levi^F}$ induces a Morita equivalence between $\mathcal{O} \Levi^F e_s^{\Levi^F}$ and $\mathcal{O} \G^F e_s^{\G^F}$. In Proposition \ref{propo} we have constructed a group $\mathcal{A}$ such that $\tilde{\G}^F \rtimes \mathcal{A}$ generates the stabilizer of $e_s^{\G^F}$ in $\mathrm{Out}(\G^F)$. The aim of this section is to show that the Morita equivalence induced by $H^{\mathrm{dim}}_{c}( \Y_\U^\G, \mathcal{O})e_s^{\Levi^F}$ lifts (under mild assumptions on $\ell$) to a Morita equivalence between $\mathcal{O} \tilde{\Levi}^F \mathcal{A} e_s^{\Levi^F}$ and $\mathcal{O} \tilde{\G}^F \mathcal{A} e_s^{\G^F}$.
%
%
%

\subsection{Disconnected reductive groups and Morita equivalences}\label{stable pair}

Let $\G$ be a connected reductive group which is not a torus with Frobenius $F: \G \to \G$ and $\iota: \G \hookrightarrow \Gtilde$ be a regular embedding. Consider an algebraic automorphism $\tau: \Gtilde \to \Gtilde$ satisfying $\tau \circ F= F \circ \tau$ and $\tau(\G)=\G$. By the discussion at the beginning of \cite[Paragraph 2.4]{Spaeth} it follows that the automorphism $\tau$ is uniquely determined up to powers of $F$ by its restriction to $\tilde{\G}^F$. Consequently, the automorphisms $\tau$ and its restriction to $\tilde{\G}^F$ have the same order.
 As in Example \ref{graph} we consider the not necessarily connected reductive group $\Gtilde \rtimes \langle \tau \rangle$.

Let $\G^\ast$ be in duality with $\G$. Fix a semisimple element $s \in (\G^\ast)^{F^\ast}$ of $\ell'$-order and let $\Levi^\ast$ be a Levi subgroup with $\mathrm{C}^\circ_{\G^\ast}(s) \subseteq \Levi^\ast$.
Let $\Para$ be a parabolic subgroup of $\G$ with Levi decomposition $\Para= \Levi \ltimes \U$. We have a Levi decomposition $\tilde{\Para}= \Ltilde \ltimes \U$ in $\tilde{\G}$, where $\tilde{\Para}:= \Para \Z(\Gtilde)$ and $\Ltilde:= \Levi \Z(\Gtilde)$. Suppose that the parabolic subgroup $\Para$ is $\tau$-stable. Then $\hat{\Para}:=\tilde{\Para} \langle \tau \rangle$ is a parabolic subgroup of $\hat{\G}:=\Gtilde \rtimes \langle \tau \rangle $ with Levi decomposition $\hat{\Para}= \hat{\Levi}  \ltimes  \U $, where $\hat{\Levi}:=\tilde{\Levi} \langle \tau \rangle$, see Example \ref{graph}. The Frobenius endomorphism $F$ extends to a Frobenius endomorphism of $\Gtilde \rtimes \langle \tau \rangle$ by defining 
$$F: \Gtilde \rtimes \langle \tau \rangle \to \Gtilde \rtimes \langle \tau \rangle ,\ g \tau^i \mapsto F(g) \tau^i, \, (i\geq 0).$$
Since $\tau$ and its restriction to $\Gtilde^F$ have the same order we have an isomorphism 
$$(\Gtilde \rtimes \langle \tau \rangle)^F \cong \Gtilde^F \rtimes \langle \tau|_{\Gtilde^F} \rangle.$$
In the following, we will as in Notation \ref{restriction} use the same letter $\tau$ for the automorphism $\tau: \Gtilde \to \Gtilde$ and its restriction to $\Gtilde^F$.

We let $\mathcal{Y}$ and $\mathcal{X}$ be the rational series of $(\G \langle \tau \rangle,F)$ and $(\Levi \langle \tau \rangle,F)$ which contain the rational series associated to the semisimple element $s$ of $\G$ and $\Levi$ respectively.

Let $\sigma: \Gtilde \to \Gtilde$ be a bijective morphism of algebraic groups commuting with the action of $\tau$ and $F$. Then $\sigma$ extends to a bijective morphism $\sigma: \Gtilde\rtimes \langle \tau \rangle \to \Gtilde \rtimes \langle \tau \rangle ,\ g \tau^i \mapsto \sigma(g) \tau^i, \, (i \geq 0).$
With this notation we have the following:

\begin{lemma}\label{easycase}
The bimodule $H_c^{\mathrm{dim}}(\Y_\U^{\G },\Lambda) e_{\mathcal{X}}$ is endowed with a natural $(\G^F \times (\Levi^F)^{\mathrm{opp}}) \Delta(\tilde{\Levi}^F  \langle \tau \rangle)$-action. If $\Levi$ is $\sigma$-stable then we have
$$H_c^{\mathrm{dim}}(\Y_\U^{\G },\Lambda ) e_{\mathcal{X}} \cong {}^\sigma H_c^{\mathrm{dim}}(\Y_\U^{\G },\Lambda)^{\sigma} \sigma(e_{\mathcal{X}} )$$
as $\Lambda[ (\G^F \times (\Levi^F)^{\mathrm{opp}}) \Delta(\tilde{\Levi}^F \langle \tau \rangle )]$-bimodules.
\end{lemma}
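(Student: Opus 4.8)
The plan is to reduce the statement to results already available in the excerpt: Lemma \ref{geometricgeneralversion2} (the two decompositions of the Deligne--Lusztig variety for the disconnected group), Corollary \ref{Kunnethcoro}, Lemma \ref{sigma} (the action of a bijective morphism $\sigma$ commuting with $F$ on the Rickard--Rouquier complex), and the compatibility $\sigma(e_s^{\G^F}) = e_{(\sigma^\ast)^{-1}(s)}^{\G^F}$ established at the end of Section \ref{DLvarieties}. First I would explain how the bimodule $H_c^{\mathrm{dim}}(\Y_\U^{\G},\Lambda) e_{\mathcal{X}}$ acquires its $(\G^F \times (\Levi^F)^{\mathrm{opp}})\Delta(\tilde{\Levi}^F \langle \tau \rangle)$-structure. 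Since $\tilde{\Levi}^F \langle \tau \rangle \subseteq \hat{\Levi}^F$ normalizes $\U$ and $\Levi$, the variety $\Y_\U^{\G}$ is $\mathcal{D}$-stable inside $\hat{\G}/\U$ by Lemma \ref{extend} (applied with $\hat{\G} = \Gtilde \rtimes \langle\tau\rangle$, with $\mathcal{D} = (\G^F\times(\Levi^F)^{\mathrm{opp}})\Delta(\tilde{\Levi}^F\langle\tau\rangle)$), so $G\Gamma_c(\Y_\U^{\G},\Lambda)$ and hence $H_c^{\mathrm{dim}}$ carry a $\Lambda\mathcal{D}$-action. The idempotent $e_{\mathcal{X}} \in \mathrm{Z}(\Lambda \Levi^F)$ is $\tilde{\Levi}^F\langle\tau\rangle$-stable by definition of $\mathcal{X}$ as the rational series of $(\Levi\langle\tau\rangle, F)$ containing the series of $s$; more precisely one uses Lemma \ref{discratl} to write $e_{\mathcal{X}}$ as a trace of $e_s^{\Levi^F}$-conjugates and observes that it is fixed under the diagonal action, so multiplication by $e_{\mathcal{X}}$ commutes with the $\mathcal{D}$-action and the product $H_c^{\mathrm{dim}}(\Y_\U^\G,\Lambda)e_{\mathcal{X}}$ is a well-defined $\Lambda\mathcal{D}$-bimodule.

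For the isomorphism, I would argue as follows. Since $\Levi$ is assumed $\sigma$-stable and $\sigma$ commutes with $F$, the morphism $\sigma$ also stabilizes $\tilde{\Levi} = \Levi\Z(\tilde{\G})$ (as $\sigma$ is extended to $\tilde{\G}$ and fixes $\Z(\tilde{\G})$ up to the compatibility hypotheses), commutes with $\tau$, and hence induces an automorphism of the group $\mathcal{D}$ and of all the relevant varieties. By Lemma \ref{sigma}, applied to the (possibly disconnected) reductive group $\G\langle\tau\rangle$ or directly to $\G$ with the parabolic $\Para$, we get an isomorphism $\sigma^\ast: G\Gamma_c(\Y_{\sigma(\U)}^{\G},\Lambda) \to {}^\sigma G\Gamma_c(\Y_{\U}^{\G},\Lambda)^\sigma$ in $\mathrm{Ho}^b(\Lambda[\G^F\times(\Levi^F)^{\mathrm{opp}}])$. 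But when $\sigma$ stabilizes $\Levi$ and hence permutes the parabolics with Levi complement $\Levi$, we have $\sigma(\U)$ a unipotent radical of a parabolic with the same Levi $\Levi$, so Theorem \ref{independence} (independence of the cohomology bimodule of the choice of parabolic, tensored with $e_s^{\Levi^F}$) gives $H_c^{\mathrm{dim}}(\Y_{\sigma(\U)}^{\G},\Lambda)e_s^{\Levi^F} \cong H_c^{\mathrm{dim}}(\Y_{\U}^{\G},\Lambda)e_s^{\Levi^F}$. Combining the two and keeping track of the idempotent via $\sigma(e_{\mathcal{X}})$ (using $\sigma(e_s^{\Levi^F}) = e_{(\sigma^\ast)^{-1}(s)}^{\Levi^F}$ and summing over the rational series), I obtain $H_c^{\mathrm{dim}}(\Y_\U^{\G},\Lambda)e_{\mathcal{X}} \cong {}^\sigma H_c^{\mathrm{dim}}(\Y_\U^{\G},\Lambda)^\sigma \sigma(e_{\mathcal{X}})$ as bimodules — and one needs to check that this isomorphism is compatible with the full $\mathcal{D}$-action, not just the $\G^F\times(\Levi^F)^{\mathrm{opp}}$-part.

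**Main obstacle.** The delicate point is the equivariance with respect to the diagonal part $\Delta(\tilde{\Levi}^F\langle\tau\rangle)$ of $\mathcal{D}$, i.e. verifying that the isomorphism constructed above intertwines the twisted diagonal actions rather than merely being an isomorphism of $\G^F$-$(\Levi^F)^{\mathrm{opp}}$-bimodules. This requires being careful about how $\sigma$ interacts with the geometric maps in Theorem \ref{independence} ($\pi_{\U_1,\U_2}^\ast$, $i_{\U_1,\U_2}^\ast$, the shift map $\mathrm{sh}^\ast$) and checking these are $\mathcal{D}$-equivariant, exactly as was done in Lemma \ref{local} for the map $\Theta_{\U,\V}$. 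I would handle this by observing that all the varieties $\Y_{\U_1,\U_2}$, $\Y_{\U_1,\U_2}^{\mathrm{cl}}$ etc. inherit $\mathcal{D}$-structures by the same argument as in Lemma \ref{extend}, that $\sigma$ acts compatibly on all of them (since $\sigma$ commutes with $F$ and normalizes $\Levi$, $\tilde\Levi$, $\langle\tau\rangle$), and hence the composite quasi-isomorphism together with $\sigma^\ast$ from Lemma \ref{sigma} is $\mathcal{D}$-equivariant; passing to the $\mathrm{dim}$-th cohomology and multiplying by the (diagonal-stable) idempotent $e_{\mathcal{X}}$ then yields the claimed isomorphism of $\Lambda\mathcal{D}$-bimodules.
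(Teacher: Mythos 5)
Your proposal is correct and follows essentially the same route as the paper, whose entire proof is the citation of Lemma \ref{local} and Lemma \ref{sigma}: the $\mathcal{D}$-structure comes from Lemma \ref{extend} applied to $\hat{\G}=\Gtilde\rtimes\langle\tau\rangle$, the twist by $\sigma$ is handled by Lemma \ref{sigma}, and the passage from $\Y_{\sigma(\U)}$ back to $\Y_\U$ is exactly the $\mathcal{D}$-equivariant independence statement of Lemma \ref{local} (rather than the bare Theorem \ref{independence}), which is precisely the point you flag and resolve in your last paragraph.
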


\begin{proof}
This follows from Lemma \ref{local} and Lemma \ref{sigma}.
\end{proof}

Suppose that $b$ is a block of $\Lambda\G^F e_s^{\G^F}$ corresponding to a block $c$ of $\Lambda \Levi^F e_s^{\Levi^F}$ under the Morita equivalence induced by $H^{\mathrm{dim}}_c(\Y_\U^\G,\Lambda) e_s^{\Levi^F}$. Let $(Q,c_Q)$ be a $c$-Brauer pair and $(Q,b_Q)$ the corresponding $b$-Brauer pair such that $b_Q H_c^{\mathrm{dim}}(\Y_{\C_\U(Q)}^{\C_\G(Q)},k)=H_c^{\mathrm{dim}}(\Y_{\C_\U(Q)}^{\C_\G(Q)},k) c_Q$. As usually, we define $B_Q:=\Tr_{\mathrm{N}_{\G^F}(Q,b_Q)}^{\mathrm{N}_{\G^F}(Q)}(b_Q)$ and $C_Q:=\Tr_{\mathrm{N}_{\Levi^F}(Q,c_Q)}^{\mathrm{N}_{\Levi^F}(Q)}(c_Q)$. We will now provide a local version of Lemma \ref{easycase}.
The technical difficulty is to keep track of the diagonal actions.

\begin{theorem}\label{independencelocal}
Assume that $\hat{\mathbf{Q}}= \hat{\Levi} \ltimes \mathbf{V}$ is a parabolic subgroup of $\hat{\G}$ with Levi subgroup $\hat{\Levi}$. Then we have
	$$ H_c^{\mathrm{dim}}(\Y_{\C_\U(Q)}^{\mathrm{N}_{\G}(Q)},\Lambda) C_Q \cong  H_c^{\mathrm{dim}}(\Y_{\C_{\mathbf{V}}(Q)}^{\mathrm{N}_{\G}(Q)},\Lambda) C_Q$$
as $\Lambda [(\mathrm{N}_{\G^F}(Q) \times \mathrm{N}_{\Levi^F}(Q)^{\mathrm{opp}}) \Delta (\mathrm{N}_{\hat{\Levi}^F}(Q,C_Q))]$-modules.

\end{theorem}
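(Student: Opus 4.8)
\textbf{Proof plan for Theorem \ref{independencelocal}.}

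The strategy is to mimic the proof of Lemma \ref{local} (the disconnected-group version of Theorem \ref{independence}), but carried out in the \emph{normalizer} group $\mathrm{N}_\G(Q)$ instead of in $\G$ itself, and then to keep careful track of the various diagonal actions. First I would invoke Example \ref{normcent}: since $Q$ is a finite $p'$-subgroup (here an $\ell$-subgroup, hence $p'$ since $p\neq\ell$) of $\hat\Levi$, the normalizer $\mathrm{N}_{\hat\G}(Q)$ is a (disconnected) reductive group, $\mathrm{N}_{\hat\Para}(Q)=\mathrm{N}_{\hat\Levi}(Q)\ltimes\C_{\U}(Q)$ and $\mathrm{N}_{\hat{\mathbf{Q}}}(Q)=\mathrm{N}_{\hat\Levi}(Q)\ltimes\C_{\mathbf V}(Q)$ are parabolic subgroups of $\mathrm{N}_{\hat\G}(Q)$ with common Levi complement $\mathrm{N}_{\hat\Levi}(Q)$, and the connected group $\mathrm{N}^\circ_\G(Q)=\C^\circ_\G(Q)$ is a closed normal $F$-stable subgroup of $\mathrm{N}_\G(Q)$. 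Thus the pair of parabolic subgroups $\mathrm{N}_{\Para}(Q)=\mathrm{N}_{\Levi}(Q)\ltimes\C_{\U}(Q)$ and $\mathrm{N}_{\mathbf Q}(Q)=\mathrm{N}_{\Levi}(Q)\ltimes\C_{\mathbf V}(Q)$ of the reductive group $\mathrm{N}_\G(Q)$ share the common Levi $\mathrm{N}_\Levi(Q)$, and likewise $\mathrm{N}_{\hat\Para}(Q)$, $\mathrm{N}_{\hat{\mathbf Q}}(Q)$ inside $\mathrm{N}_{\hat\G}(Q)$ restrict to these upon intersection with $\mathrm{N}_\G(Q)$. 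This is exactly the set-up of Section \ref{PropertiesofDL} with $(\hat\G,\G)$ replaced by $(\mathrm{N}_{\hat\G}(Q),\mathrm{N}_\G(Q))$ (note that $\mathrm{N}_\G(Q)$ is normal in $\mathrm{N}_{\hat\G}(Q)$ since $\G\trianglelefteq\hat\G$).

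Next I would apply Theorem \ref{independence} (in the disconnected form, which by the discussion around Lemma \ref{local} and Remark \ref{almost} holds for $\mathrm{N}_\G(Q)$ since the relevant series is almost superregular after passing to $Q$-fixed points via Lemma \ref{brauerseries}) to produce a quasi-isomorphism $\Theta$ between $R\Gamma^{\dim}_c(\Y^{\mathrm{N}_\G(Q)}_{\C_\U(Q)},\Lambda)c_Q$ and $R\Gamma^{\dim}_c(\Y^{\mathrm{N}_\G(Q)}_{\C_{\mathbf V}(Q)},\Lambda)c_Q$ of complexes of $\Lambda[\mathrm{N}_{\G^F}(Q)\times\mathrm{N}_{\Levi^F}(Q)^{\mathrm{opp}}]$-modules; more precisely, as in the proof of Theorem \ref{independence} this $\Theta$ is the composite $\psi_{\mathbf V,F(\U)}\circ \mathrm{sh}^\ast\circ\psi_{\U,\mathbf V}^{-1}$ built from the auxiliary varieties $\Y^{\mathrm{cl}}$, the shift map, and the projection/immersion maps. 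The key observation — the analogue of the opening paragraph of the proof of Lemma \ref{local} — is that \emph{all} of these varieties carry a $\mathrm{N}_{\hat\Levi}(Q)$-structure extending the $\mathrm{N}_{\Levi^F}(Q)$-action: this follows from Lemma \ref{extend} applied inside $\mathrm{N}_{\hat\G}(Q)$, since $\mathrm{N}_{\hat\Levi}(Q)$ normalizes $\C_\U(Q)$ and $\C_{\mathbf V}(Q)$. Hence each of $\psi_{\U,\mathbf V}$, $\mathrm{sh}^\ast$, $\psi_{\mathbf V,F(\U)}$ is equivariant for the diagonal subgroup $\mathcal D'_Q:=(\mathrm{N}_{\G^F}(Q)\times\mathrm{N}_{\Levi^F}(Q)^{\mathrm{opp}})\,\Delta(\mathrm{N}_{\hat\Levi^F}(Q,C_Q))$ — using that $C_Q$ is stabilized precisely because the relevant series is, just as in Lemma \ref{local} — so $\Theta$ is a quasi-isomorphism of $\Lambda\mathcal D'_Q$-complexes. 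Passing to cohomology in the single nonzero degree $d=\dim(\Y^{\mathrm{N}_\G(Q)}_{\C_\U(Q)})$ (vanishing off this degree coming from the superregularity via Theorem \ref{Boro2} applied to $\mathrm{N}_\G(Q)$, i.e. Proposition \ref{centralizerMorita} and the remarks before it) yields the desired isomorphism of $\Lambda\mathcal D'_Q$-modules. Finally, to pass from $C_Q=\Tr^{\mathrm{N}_{\Levi^F}(Q)}_{\mathrm{N}_{\Levi^F}(Q,c_Q)}(c_Q)$ on the full group — i.e. from $\mathcal D'_Q$-modules, which is exactly what the statement of the theorem requires — I note that $\mathrm{N}_{\hat\Levi^F}(Q,C_Q)=\mathrm{N}_{\hat\Levi^F}(Q,c_Q)$ is already the stabilizer of $C_Q$, so no further induction step à la $\mathrm{Ind}^{\mathcal D}_{\mathcal D'}$ in Lemma \ref{local} is needed; the identity $H_c^{\mathrm{dim}}(\Y^{\mathrm{N}_\G(Q)}_{\C_\U(Q)},\Lambda)C_Q$ is simply the $\mathcal D'_Q$-module obtained, since $H_c^{\mathrm{dim}}(\Y^{\mathrm{N}_\G(Q)}_{\C_\U(Q)},\Lambda)c_Q$ is the restriction of it along $c_Q \mid C_Q$.

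The main obstacle I anticipate is \emph{bookkeeping of the diagonal actions}: one must verify carefully that the shift map $\mathrm{sh}$ and the closed-immersion/projection maps $i_{\U,\mathbf V}$, $\pi_{\U,\mathbf V}$ remain equivariant not just for $\mathrm{N}_{\G^F}(Q)\times\mathrm{N}_{\Levi^F}(Q)^{\mathrm{opp}}$ but for the larger group $\mathcal D'_Q$, which requires knowing that $\mathrm{N}_{\hat\Levi^F}(Q,C_Q)$ normalizes each of $\C_\U(Q)$, $\C_{\mathbf V}(Q)$ and $F(\C_\U(Q))$, and stabilizes the idempotent $C_Q$ (hence all the relevant Deligne--Lusztig varieties and their $C_Q$-truncations). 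This is essentially the content of Lemma \ref{extend} transported to the normalizer, together with the fact — parallel to the argument in Lemma \ref{local} that $\Theta_{\V,\U}$ descends to a $\Lambda\mathcal D'$-morphism — that the quasi-isomorphisms $\psi$ are constructed naturally out of pullbacks along equivariant maps, hence automatically commute with the extended action. A secondary, more routine point is confirming that the cohomology of $R\Gamma_c^{\dim}(\Y^{\mathrm{N}_\G(Q)}_{\bullet},\Lambda)c_Q$ is concentrated in a single degree so that ``taking $H^d$'' loses no information; this follows from Lemma \ref{basicseries}(c), Remark \ref{almost}, and Theorem \ref{Boro2} (equivalently Proposition \ref{centralizerMorita}) applied to the reductive group $\mathrm{N}_\G(Q)$ and the superregular series at $Q$.
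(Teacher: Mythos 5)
Your overall strategy---transport the independence argument of Theorem \ref{independence} into the reductive group $\mathrm{N}_{\hat{\G}}(Q)$, check $\mathcal{D}$-equivariance of the maps $\psi$ and $\mathrm{sh}^*$, and take cohomology---is the right one and matches the paper up to the point where you apply Theorem \ref{independence}. But there are two concrete problems. First, Theorem \ref{independence} is only available for a \emph{connected} reductive group and for the idempotent $e_s^{\Levi^F}$ of a whole Lusztig series; you cannot apply it ``in the disconnected form'' directly to $\mathrm{N}_\G(Q)$ with the block $c_Q$. The paper instead descends to the connected normal subgroup $\C^\circ_\G(Q)$ of $\mathrm{N}_{\hat\G}(Q)$, chooses a rational series $\mathcal{Z}$ of $\C^\circ_\Levi(Q)$ contained in $\mathcal{X}'$ (regular by Lemma \ref{brauerseries} and Remark \ref{almost}), and obtains the $\Delta\mathrm{N}_{\hat\Levi^F}(Q,e_{\mathcal{Z}})$-equivariant isomorphism there, exactly as in the proof of Lemma \ref{local}. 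Everything at the level of $\C_\G(Q)$ and $\mathrm{N}_\G(Q)$ is then obtained by induction, using $e_{\mathcal{X}'}=\mathrm{Tr}_{(\C^\circ_\Levi(Q))^F}^{\C_{\Levi^F}(Q)}(e_{\mathcal{Z}})$ from Lemma \ref{discratl} and then Lemma \ref{geometricgeneralversion2}.

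Second, your closing claim that $\mathrm{N}_{\hat\Levi^F}(Q,C_Q)=\mathrm{N}_{\hat\Levi^F}(Q,c_Q)$, so that ``no further induction step is needed,'' is false in general: $C_Q$ is the sum of the $\mathrm{N}_{\Levi^F}(Q)$-orbit of $c_Q$, so its stabilizer contains $\mathrm{N}_{\Levi^F}(Q)\,\mathrm{N}_{\hat\Levi^F}(Q,c_Q)$, which is strictly larger than $\mathrm{N}_{\hat\Levi^F}(Q,c_Q)$ whenever $c_Q$ is not $\mathrm{N}_{\Levi^F}(Q)$-stable. The elements of $\mathrm{N}_{\hat\Levi^F}(Q,C_Q)$ that move $c_Q$ permute the summands $H_c^{\mathrm{dim}}(\Y^{\C_\G(Q)}_{\C_\U(Q)},\Lambda)\,{}^t c_Q$, and their action on $H_c^{\mathrm{dim}}(\Y^{\mathrm{N}_\G(Q)}_{\C_\U(Q)},\Lambda)C_Q$ is only visible after the induction $\mathrm{Ind}$ from $(\C_{\G^F}(Q)\times\C_{\Levi^F}(Q)^{\mathrm{opp}})\Delta\mathrm{N}_{\hat\Levi^F}(Q,c_Q)$ supplied by Lemma \ref{geometricgeneralversion2}. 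So the final induction step you discard is precisely where the asserted $\Delta(\mathrm{N}_{\hat\Levi^F}(Q,C_Q))$-equivariance comes from; without it your argument only produces an isomorphism over the smaller diagonal group.
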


\begin{proof}
Firstly, recall that $\mathrm{N}_{\hat{\G}}(Q)$ is a reductive group with closed connected normal subgroup $\mathrm{C}^\circ_{\hat{\G}}(Q)$, see Example \ref{normcent}. We have a Levi decomposition $\mathrm{N}_{\hat{\Para}}(Q)=\mathrm{N}_{\hat{\Levi}}(Q) \ltimes \mathrm{C}_\U(Q)$ in $\mathrm{N}_{\hat{\G}}(Q)$. Furthermore, $\C^\circ_\G(Q)$ is a closed normal subgroup of $\mathrm{N}_{\hat{\G}}(Q)$ and we have a Levi decomposition $\C^\circ_\Para(Q)=\C^\circ_\Levi(Q) \ltimes  \C_\U(Q)$ in the connected reductive group $\C^\circ_\G(Q)$, see also Example \ref{normcent}. In addition, we have 
$$\mathrm{N}_{\hat{\Levi}}(Q) \cap \mathrm{C}^\circ_{\G}(Q) = \mathrm{C}_{\hat{\Levi} \cap \G}(Q) \cap \mathrm{C}^\circ_{\G}(Q)= \mathrm{C}_{\Levi}(Q) \cap \mathrm{C}^\circ_{\G}(Q)= \mathrm{C}_{\Levi}^\circ(Q)$$
and similarly $\mathrm{N}_{\hat{\Para}}(Q) \cap \mathrm{C}^\circ_\G(Q)=\mathrm{C}^\circ_{\Para}(Q)$. This shows that we are in the situation of Section \ref{PropertiesofDL}.

Recall that since $(Q,c_Q)$ is a $c$-subpair we have $\br^{\Levi^F}_Q(c) c_Q=c_Q$ and by Lemma \ref{brauerseries} there exists $\mathcal{X}' \in (i_Q^\Levi)^{-1}(\mathcal{X})$ such that $c_Q$ is a block of $k \C_{\Levi^F}(Q) e_{\mathcal{X}'}$.

%

Let $\mathcal{Z}$ be a rational series of $\mathrm{C}_\Levi^\circ(Q)$ contained in $\mathcal{X}'$. By Lemma \ref{brauerseries} and Remark \ref{almost} we obtain that the rational series $\mathcal{Z}$ is $(\mathrm{C}_\G^\circ(Q),\mathrm{C}_{\Levi}^\circ(Q))$-regular. By the proof of Lemma \ref{local} we thus obtain an isomorphism
$$H_c^{\mathrm{dim}}(\Y_{\C_\U(Q)}^{\mathrm{C}^\circ_{\G}(Q)},\Lambda) e_{\mathcal{Z}} \cong H_c^{\mathrm{dim}}(\Y_{\C_\V(Q)}^{\mathrm{C}^\circ_{\G}(Q)},\Lambda) e_{\mathcal{Z}}$$
of $\Lambda[(\mathrm{C}^\circ_{\G}(Q)^F \times (\mathrm{C}^\circ_{\Levi}(Q)^F)^{\mathrm{opp}}) \Delta \mathrm{N}_{\hat{\Levi}^F}(Q,e_\mathcal{Z})]$-modules.
Moreover we have $e_{\mathcal{X}'}=\mathrm{Tr}_{(\C_{\Levi}^\circ(Q))^F}^{\C_{\Levi^F}(Q)}(e_\mathcal{Z})$ by Lemma \ref{discratl}, which implies that $\mathrm{N}_{\hat{\Levi}^F}(Q,e_{\mathcal{X}'})=\mathrm{C}_{\Levi^F}(Q) \mathrm{N}_{\hat{\Levi}^F}(Q,e_{\mathcal{Z}})$. We obtain an isomorphism
$$H_c^{\mathrm{dim}}(\Y_{\C_\U(Q)}^{\mathrm{C}_{\G}(Q)},\Lambda) e_{\mathcal{X}'} \cong  H_c^{\mathrm{dim}}(\Y_{\C_\V(Q)}^{\mathrm{C}_{\G}(Q)},\Lambda) e_{\mathcal{X}'}$$
of $\Lambda[(\mathrm{C}_{\G^F}(Q) \times (\mathrm{C}_{\Levi^F}(Q))^{\mathrm{opp}}) \Delta \mathrm{N}_{\hat{\Levi}^F}(Q,e_{\mathcal{X}'})]$-modules. Since $c_Q$ is a block of $k \mathrm{C}_{\Levi^F}(Q) e_{\mathcal{X}'}$ we obtain, by truncating to $c_Q$, an isomorphism
$H_c^{\mathrm{dim}}(\Y_{\C_\U(Q)}^{\mathrm{C}_{\G}(Q)},\Lambda) c_Q \cong  H_c^{\mathrm{dim}}(\Y_{\C_\V(Q)}^{\mathrm{C}_{\G}(Q)},\Lambda) c_Q$
of $\Lambda[\mathrm{C}_{\G^F}(Q) \times (\mathrm{C}_{\Levi^F}(Q))^{\mathrm{opp}} \Delta \mathrm{N}_{\hat{\Levi}^F}(Q,c_Q)]$-modules. Applying Lemma \ref{geometricgeneralversion2} yields an isomorphism
$$ H_c^{\mathrm{dim}}(\Y_{\C_\U(Q)}^{\mathrm{N}_{\G}(Q)},\Lambda) C_Q \cong  H_c^{\mathrm{dim}}(\Y_{\C_{\mathbf{V}}(Q)}^{\mathrm{N}_{\G}(Q)},\Lambda) C_Q$$
of $\Lambda[(\mathrm{N}_{\G^F}(Q) \times \mathrm{N}_{\Levi^F}(Q)^{\mathrm{opp}}) \Delta \mathrm{N}_{\hat{\Levi}^F}(Q,C_Q)]$-modules.
\end{proof}

The previous statements rely on the parabolic subgroup $\Para$ being $\tau$-stable. In the following, we will use an idea from \cite{Digne} to reduce the situation outlined at the beginning of this section to this case.

\subsection{Restriction of scalars for Deligne--Lusztig varieties}\label{ShintaniDL}


Let $\G$ be a reductive group with Frobenius endomorphism $F_0:\G \to \G$. For an integer $r$ we let $F:=F_0^r: \G \to \G$. We consider the reductive group $\underline{\G}= \G^r$ with Frobenius endomorphism $F_0 \times \dots \times F_0: \underline{\G} \to \underline{\G}$ which we also denote by $F_0$. We consider the permutation 
$$\tau: \underline{\G} \to \underline{\G}$$
given by $\tau(g_1,\dots,g_r)=(g_2,\dots,g_r,g_1)$.
Consider the projection onto the first component
$$\mathrm{pr}:\underline{\G} \to \G, \, (g_1,\dots,g_r)\mapsto g_1.$$
The restriction of $\mathrm{pr}$ to $\underline{\G}^{F_0 \tau}$ induces an isomorphism
$$\mathrm{pr}:\underline{\G}^{F_0 \tau} \to \G^F$$
of finite groups with inverse map given by $\mathrm{pr}^{-1}(g)=(g,F_0^{r-1}(g),\dots,F_0(g))$ for $ g \in \G^F$.

For any subset $\mathbf{H}$ of $\G$ we set
$$\underline{\mathbf{H}}:=\mathbf{H} \times F_0^{r-1}(\mathbf{H}) \times \dots \times F_0(\mathbf{H}).$$
Note that if $\mathbf{H}$ is $F$-stable then $\underline{\mathbf{H}}$ is $\tau F_0$-stable and the projection map $\mathrm{pr}: \underline{\mathbf{H}} \to \mathbf{H}$ induces an isomorphism $\underline{\mathbf{H}}^{\tau F_0} \cong \mathbf{H}^F$. Conversely, one easily sees that any $\tau F_0$-stable subset of $\underline{\G}$ is of the form $\underline{\mathbf{H}}$ for some $F$-stable subset $\mathbf{H}$ of $\G$. 


Let $\Levi$ be an $F$-stable Levi subgroup of $\G$ and $\Para$ a parabolic subgroup of $\G$ with Levi decomposition $\Para=\Levi \ltimes \U$. Then $\underline{\Para}$ is a parabolic subgroup of $\underline{\G}$ with Levi decomposition $\underline{\Para}=\underline{\Levi} \ltimes \underline{\U}$ such that $\tau F_0(\underline{\Levi})=\underline{\Levi}$. We can therefore consider the Deligne--Lusztig variety $\Y_{\underline{\U}}^{\underline{\G},F_0 \tau}$ which is a $\underline{\G}^{F_0 \tau} \times (\underline{\Levi}^{F_0 \tau})^{\mathrm{opp}}$-variety. Under the isomorphism $\G^F\cong \underline{\G}^{F_0 \tau}$ we will in the following regard it as a $\G^F \times (\Levi^F)^{\mathrm{opp}}$-variety. 

The following proposition is proved in \cite[Proposition 3.1]{Digne} under the additional assumptions that $\G$ is connected and that the Levi subgroup $\Levi$ is $F_0$-stable. Here, we give a complete proof of this proposition and thereby show that these assumptions are superfluous.
\begin{proposition}\label{ShintaniDLV}
Let $\Levi$ be an $F$-stable Levi subgroup of $\G$ and $\Para$ a parabolic subgroup of $\G$ with Levi decomposition $\Para=\Levi \ltimes \U$. Then the projection $\mathrm{pr}:\underline{\G} \to \G$ onto the first coordinate defines an isomorphism
$$ \Y_{\underline{\U}}^{\underline{\G},\tau F_0} \cong  \Y_\U^{\G,F} $$
of varieties which is $\G^F \times (\Levi^F)^{\mathrm{opp}}$-equivariant.
\end{proposition}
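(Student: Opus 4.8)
The plan is to write down the projection map explicitly on points and check that it is a well-defined isomorphism of varieties which intertwines the two group actions. First I would recall that a point of $\Y_{\underline{\U}}^{\underline{\G},\tau F_0}$ is a coset $(g_1,\dots,g_r)\underline{\U}\in \underline{\G}/\underline{\U}$ with $(g_1,\dots,g_r)^{-1}(\tau F_0)(g_1,\dots,g_r)\in \underline{\U}\,(\tau F_0)(\underline{\U})$, and a point of $\Y_\U^{\G,F}$ is a coset $g\U$ with $g^{-1}F(g)\in \U F(\U)$. I would define $\mathrm{pr}$ by $(g_1,\dots,g_r)\underline{\U}\mapsto g_1\U$ and its candidate inverse by $g\U\mapsto (g,F_0^{r-1}(g),\dots,F_0(g))\underline{\U}$, as already indicated for the finite groups $\underline{\G}^{\tau F_0}\cong\G^F$; the point is that these formulas make sense at the level of the quotient varieties as well. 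The key computation is to unwind the condition $(g_1,\dots,g_r)^{-1}(\tau F_0)(g_1,\dots,g_r)\in \underline{\U}\,(\tau F_0)(\underline{\U})$ coordinatewise: writing $(\tau F_0)(g_1,\dots,g_r)=(F_0(g_2),\dots,F_0(g_r),F_0(g_1))$, one sees that the condition in the $i$-th coordinate (for $i<r$) reads $g_i^{-1}F_0(g_{i+1})\in F_0^{r-i}(\U)\,F_0^{r-i+1}(\U)$ and in the $r$-th coordinate $g_r^{-1}F_0(g_1)\in F_0(\U)F_0(F(\U))\cdot(\text{something})$; the first $r-1$ relations force $g_{i}$ to agree with $F_0^{r-i}(g_1)$ modulo $\U$ after reindexing, i.e.\ they pin down $g_2\U,\dots,g_r\U$ in terms of $g_1\U$, and the remaining relation is then equivalent to $g_1^{-1}F(g_1)\in \U F(\U)$.

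More precisely, I would argue as follows. The map $\mathrm{pr}$ and its candidate inverse are clearly morphisms of varieties (they are restrictions of algebraic maps between the affine-space models $\underline{\G}/\underline{\U}$ and $\G/\U$, using that $\underline{\U}=\U\times F_0^{r-1}(\U)\times\cdots\times F_0(\U)$ so that $\underline{\G}/\underline{\U}\cong \prod_i \G/F_0^{r-i}(\U)$). I would first check that the candidate inverse lands in $\Y_{\underline{\U}}^{\underline{\G},\tau F_0}$ when applied to a point of $\Y_\U^{\G,F}$: with $g_i:=F_0^{r-i}(g)$ one has $g_i^{-1}F_0(g_{i+1})=F_0^{r-i}(g)^{-1}F_0^{r-i}(g)=1\in F_0^{r-i}(\U)F_0^{r-i+1}(\U)$ for $i<r$ (here I am using that $F_0^{r-i}$ applied to the defining elements stays in the relevant unipotent radicals — this needs $F_0$ to permute the $F_0^j(\U)$ appropriately, which holds by construction since $F(\U)=F_0^r(\U)$ and conjugation/unipotent-radical conditions are preserved by the bijective morphism $F_0$), and $g_r^{-1}F_0(g_1)=F_0(g)^{-1}F_0(F_0^{r-1}(g))=F_0(g)^{-1}F(g)=F_0(g^{-1}F(g))\in F_0(\U F(\U))=F_0(\U)\,F_0(F(\U))$, which is exactly the $r$-th coordinate condition for membership in $\underline{\U}\,(\tau F_0)(\underline{\U})$. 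Conversely, given a point $(g_1,\dots,g_r)\underline{\U}$ of $\Y_{\underline{\U}}^{\underline{\G},\tau F_0}$, I would show that its $\underline{\U}$-coset is determined by $g_1\U$: the relations $g_i^{-1}F_0(g_{i+1})\in F_0^{r-i}(\U)F_0^{r-i+1}(\U)$ for $i=1,\dots,r-1$ allow one to solve recursively, modulo the respective unipotent radicals, for $g_2,\dots,g_r$ in terms of $g_1$ (one uses that if $a^{-1}b\in \V\,W$ with $\V$ a unipotent radical then $a\V$ is determined by $b$ up to a controlled factor, and iterating around the cycle of length $r$ the ambiguity collapses because $\tau F_0$ has been used $r$ times giving back $F$). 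Substituting these expressions into the last relation yields precisely $g_1^{-1}F(g_1)\in\U F(\U)$, i.e.\ $g_1\U\in\Y_\U^{\G,F}$. This shows $\mathrm{pr}$ and its candidate inverse are mutually inverse bijective morphisms, hence an isomorphism of varieties.

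Finally, for equivariance: the group $\G^F\times(\Levi^F)^{\mathrm{opp}}$ acts on $\Y_\U^{\G,F}$ by $(x,y)\cdot g\U=xgy\U$ (using $y\in\Levi^F$ normalizes $\U$), and via the isomorphism $\G^F\cong\underline{\G}^{\tau F_0}$, $\Levi^F\cong\underline{\Levi}^{\tau F_0}$ the element $x$ corresponds to $(x,F_0^{r-1}(x),\dots,F_0(x))$ and similarly for $y$, acting coordinatewise on $\Y_{\underline{\U}}^{\underline{\G},\tau F_0}$. One checks directly that $\mathrm{pr}$ of the translated point equals $x$ times $\mathrm{pr}$ of the point times $y$ in the first coordinate, so $\mathrm{pr}$ is $\G^F\times(\Levi^F)^{\mathrm{opp}}$-equivariant. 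I expect the main obstacle to be the bookkeeping in the recursive step showing that the $\underline{\U}$-coset of $(g_1,\dots,g_r)$ is genuinely determined by $g_1\U$ — one has to be careful that, although each individual relation $g_i^{-1}F_0(g_{i+1})\in F_0^{r-i}(\U)F_0^{r-i+1}(\U)$ only determines $g_{i+1}$ up to a factor, going once around the length-$r$ cycle composes these with $\tau$ and recovers exactly the single relation defining $\Y_\U^{\G,F}$, with no residual ambiguity, and that none of this uses connectedness of $\G$ or $F_0$-stability of $\Levi$ (only $F$-stability of $\Levi$ and $F=F_0^r$), which is the whole point of the proposition.
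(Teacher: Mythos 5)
Your overall strategy is the same as the paper's -- project onto the first coordinate and unwind the defining condition of $\Y_{\underline{\U}}^{\underline{\G},\tau F_0}$ coordinate by coordinate -- but the key computation is carried out incorrectly, and that is exactly where you leave a gap. The correct identity is
$$\underline{\U}\,(\tau F_0)(\underline{\U}) \;=\; \U F(\U) \times F_0^{r-1}(\U) \times \dots \times F_0(\U),$$
because for every coordinate $i\geq 2$ the $i$-th component of $\underline{\U}$ and the $i$-th component of $(\tau F_0)(\underline{\U})$ are the \emph{same} group $F_0^{r+1-i}(\U)$, so their product is again that single unipotent radical; only the first coordinate produces a genuine product $\U F(\U)$ of two different radicals. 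Hence the conditions in coordinates $2,\dots,r$ read $g_i^{-1}F_0(g_{i+1})\in F_0^{r+1-i}(\U)$ (with $g_{r+1}:=g_1$), which determine the cosets exactly, with no ambiguity: starting from $i=r$ and descending one gets $g_iF_0^{r+1-i}(\U)=F_0^{r+1-i}(g_1)F_0^{r+1-i}(\U)$, so $\underline{g}\,\underline{\U}=(g_1,F_0^{r-1}(g_1),\dots,F_0(g_1))\underline{\U}$, and the remaining first-coordinate condition then collapses to $g_1^{-1}F(g_1)\in\U F(\U)$.

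Your version of the coordinatewise condition, $g_i^{-1}F_0(g_{i+1})\in F_0^{r-i}(\U)F_0^{r-i+1}(\U)$, is not what the computation gives (already for $i=1$ it must be $\U F(\U)$, not $F_0^{r-1}(\U)F_0^{r}(\U)$), and it is precisely this error that creates the spurious ``residual ambiguity'' you then dismiss with ``the ambiguity collapses because $\tau F_0$ has been used $r$ times''. That uniqueness step is the heart of the proof and cannot be left as an expectation: a relation $a^{-1}b\in \mathbf{V}\mathbf{V}'$ with $\mathbf{V}\neq\mathbf{V}'$ two different unipotent radicals does \emph{not} determine $a\mathbf{V}$ from $b$, so as written your recursion would not close up. Once $\underline{\U}(\tau F_0)(\underline{\U})$ is computed correctly the recursion is exact and the proposition follows as you intend; the same correction also repairs the off-by-one in your candidate inverse (it should be $g\mapsto (g,F_0^{r-1}(g),\dots,F_0(g))$, i.e.\ $g_1=g$ and $g_i=F_0^{r+1-i}(g)$ for $i\geq 2$, so that $\mathrm{pr}$ really returns $g\U$). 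The equivariance check at the end is fine.
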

\begin{proof}
Let $\underline{g}=(g_1,\dots, g_r) \in \underline{\G}$. Then $\underline{g} \underline{\U} \in \Y_{\underline{\U}}^{\G,\tau F_0}$ if and only if $$\underline{g}^{-1} (\tau F_0)(\underline{g}) \in \underline{\U} (\tau F_0)(\underline{\U})=\U F(\U) \times F_0^{r-1}(\U) \times \dots \times F_0(\U).$$
This is equivalent to $g_1^{-1} F_0(g_2) \in \U F(\U)$ and $g_i^{-1} F_0(g_{i+1}) \in F_0^{r+1-i}(\U)$ for all $i=2,\dots,r$ (where $g_{r+1}:=g_1$). Therefore, $\underline{g} \underline{\U} \in \Y_{\underline{\U}}^{\underline{\G},\tau F_0}$ if and only if $$\underline{g}\underline{\U}=(g_1,F_0^{r-1}(g_1), \dots, F_0(g_1)) \underline{\U} \text{ and } g_1^{-1} F(g_1) \in  \U F(\U).$$
Hence, an element $\underline{g} \underline{\U} \in \Y_{\underline{\U}}^{\underline{\G},\tau F_0}$ is uniquely determined by its first component $g_1 \U \in \Y_\U^\G$ and each element of $\Y_{\underline{\U}}^{\underline{\G}}$ arises from an element $g_1 \U \in \Y_\U^\G$. This shows that $\mathrm{pr}:\underline{\G} \to \G$ induces an isomorphism $$ \Y_\U^{\G,F} \cong \Y_{\underline{\U}}^{\underline{\G},\tau F_0},$$
which is clearly $\G^F \times (\Levi^F)^{\mathrm{opp}}$-equivariant.
\end{proof}

%
%
%

We will now provide a local version of Proposition \ref{ShintaniDLV}. Let $Q$ be a finite $F$-stable solvable $p'$-subgroup of $\Levi$. Recall from Example \ref{normcent} that the normalizer $\mathrm{N}_\G(Q)$ is a reductive group and $\mathrm{N}_\Para(Q)$ is a parabolic subgroup of $\mathrm{N}_\G(Q)$ with Levi decomposition $\mathrm{N}_\Para(Q)=\mathrm{N}_{\Levi}(Q) \ltimes \C_\U(Q)$. We denote
$$\underline{Q}:= Q \times F_0^{r-1}(Q) \times \dots \times F_0(Q)$$
and observe that $\underline{Q}$ is a finite $\tau F_0$-stable solvable $p'$-subgroup of $\underline{\Levi}$. By the same argument as before, we see that $\mathrm{N}_{\underline{\G}}(\underline{Q})$ is a reductive group with parabolic subgroup $\mathrm{N}_{\underline{\Para}}(\underline{Q})$ and Levi decomposition $\mathrm{N}_{\underline{\Para}}(\underline{Q})=\mathrm{N}_{\underline{\Levi}}(\underline{Q}) \ltimes \C_{\underline{\U}}(\underline{Q})$. We can therefore consider the Deligne--Lusztig variety $\Y_{\C_{\underline{\U}}(\underline{Q})}^{\mathrm{N}_{\underline{\G}}(\underline{Q}),\tau F_0}$ which is a $\mathrm{N}_{\underline{\G}^{F_0 \tau}}(\underline{Q}) \times \mathrm{N}_{\underline{\Levi}^{F_0 \tau}}( \underline{Q})^{\mathrm{opp}}$-variety. Under the isomorphism $\mathrm{pr}:\underline{\G}^{F_0 \tau} \to \G^F$ we may consider it as a $\mathrm{N}_{\G^F}(Q) \times \mathrm{N}_{\Levi^F}(Q)^{\mathrm{opp}}$-variety. Thus, we can apply Proposition \ref{ShintaniDLV} in this situation and obtain the following corollary:

\begin{corollary}\label{ShintaniDLVCoro}
Suppose that we are in the situation of Proposition \ref{ShintaniDLV} and assume that $Q$ is a finite $F$-stable solvable $p'$-group of $\Levi$. Then the projection map $\mathrm{pr}: \mathrm{N}_{\underline{\G}}(\underline{Q}) \to \mathrm{N}_{\G}(Q)$ induces an isomorphism
$$\Y_{\C_{\underline{\U}}(\underline{Q})}^{\mathrm{N}_{\underline{\G}}(\underline{Q}),\tau F_0} \cong \Y^{\mathrm{N}_\G(Q),F}_{\C_{\U}(Q)}$$
of varieties which is $\mathrm{N}_{\G^F}(Q) \times \mathrm{N}_{\Levi^F}(Q)^{\mathrm{opp}}$-equivariant.
\end{corollary}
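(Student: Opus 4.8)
The plan is to reduce Corollary~\ref{ShintaniDLVCoro} to Proposition~\ref{ShintaniDLV} by applying the latter to the reductive group $\mathrm{N}_{\G}(Q)$ in place of $\G$. Concretely, I would first record the following facts, all of which are already available in the excerpt: by Example~\ref{normcent}, since $Q$ is a finite $F$-stable solvable $p'$-subgroup of $\Levi$, the normalizer $\mathrm{N}_{\G}(Q)$ is a (possibly disconnected) reductive group, $\mathrm{N}_{\Para}(Q)$ is a parabolic subgroup of $\mathrm{N}_{\G}(Q)$, and $\mathrm{N}_{\Para}(Q)=\mathrm{N}_{\Levi}(Q)\ltimes\C_{\U}(Q)$ is a Levi decomposition with $\mathrm{N}_{\Levi}(Q)$ an $F$-stable Levi subgroup of $\mathrm{N}_{\G}(Q)$.

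The key combinatorial step is to identify $\mathrm{N}_{\underline{\G}}(\underline{Q})$ with $\underline{\mathrm{N}_{\G}(Q)}$ (and likewise for $\mathrm{N}_{\underline{\Para}}(\underline{Q})$, $\mathrm{N}_{\underline{\Levi}}(\underline{Q})$ and $\C_{\underline{\U}}(\underline{Q})$), i.e. to check that forming $\underline{\phantom{H}}$ and forming $\mathrm{N}_{(-)}(Q)$ commute. Since $\underline{\G}=\G\times F_0^{r-1}(\G)\times\dots\times F_0(\G)$ is a direct product and $\underline{Q}=Q\times F_0^{r-1}(Q)\times\dots\times F_0(Q)$ is the corresponding product subgroup, the normalizer of a product subgroup in a direct product is the product of the componentwise normalizers; as each $F_0^i$ is an automorphism of algebraic groups we have $\mathrm{N}_{F_0^i(\G)}(F_0^i(Q))=F_0^i(\mathrm{N}_{\G}(Q))$, and similarly for $\Para$, $\Levi$ and $\U$. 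Hence $\mathrm{N}_{\underline{\G}}(\underline{Q})=\underline{\mathrm{N}_{\G}(Q)}$, the group $\mathrm{N}_{\underline{\Para}}(\underline{Q})=\underline{\mathrm{N}_{\Para}(Q)}$ is a parabolic subgroup with Levi decomposition $\mathrm{N}_{\underline{\Levi}}(\underline{Q})\ltimes\C_{\underline{\U}}(\underline{Q})=\underline{\mathrm{N}_{\Levi}(Q)}\ltimes\underline{\C_{\U}(Q)}$, and the permutation morphism $\tau$ restricts to the cyclic permutation of the $r$ factors of $\underline{\mathrm{N}_{\G}(Q)}$. One should also note that $\mathrm{N}_{\Levi}(Q)$ being $F$-stable makes $\underline{\mathrm{N}_{\Levi}(Q)}$ a $\tau F_0$-stable Levi subgroup, so the Deligne--Lusztig variety in question is well-defined.

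With this identification in hand, the statement follows by simply invoking Proposition~\ref{ShintaniDLV} with $\G$, $\Levi$, $\Para$, $\U$ replaced by $\mathrm{N}_{\G}(Q)$, $\mathrm{N}_{\Levi}(Q)$, $\mathrm{N}_{\Para}(Q)$, $\C_{\U}(Q)$ respectively: the projection $\mathrm{pr}\colon\mathrm{N}_{\underline{\G}}(\underline{Q})=\underline{\mathrm{N}_{\G}(Q)}\to\mathrm{N}_{\G}(Q)$ onto the first coordinate induces an isomorphism $\Y_{\C_{\underline{\U}}(\underline{Q})}^{\mathrm{N}_{\underline{\G}}(\underline{Q}),\tau F_0}\cong\Y_{\C_{\U}(Q)}^{\mathrm{N}_{\G}(Q),F}$, and this isomorphism is equivariant for $\mathrm{N}_{\underline{\G}}(\underline{Q})^{\tau F_0}\times(\mathrm{N}_{\underline{\Levi}}(\underline{Q})^{\tau F_0})^{\mathrm{opp}}$, which under $\mathrm{pr}$ is identified with $\mathrm{N}_{\G^F}(Q)\times\mathrm{N}_{\Levi^F}(Q)^{\mathrm{opp}}$ (using again Example~\ref{normcent}, $\mathrm{N}_{\G^F}(Q)=\mathrm{N}_{\G}(Q)^F$ and likewise for $\Levi$). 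The only point requiring any care — and the nearest thing to an obstacle — is the bookkeeping in the identification $\mathrm{N}_{\underline{\G}}(\underline{Q})=\underline{\mathrm{N}_{\G}(Q)}$ and the compatibility of this with $\mathrm{pr}$ and the cyclic permutation $\tau$; once that is set up cleanly, there is genuinely nothing left to do, since Proposition~\ref{ShintaniDLV} is stated for an arbitrary (possibly disconnected) reductive group and does not require the Levi to be $F_0$-stable.
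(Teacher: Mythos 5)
Your proof is correct and follows exactly the paper's route: the paper likewise observes (via Example \ref{normcent} and the displayed identifications preceding the corollary) that $\mathrm{N}_{\underline{\G}}(\underline{Q})$ is reductive with parabolic $\mathrm{N}_{\underline{\Para}}(\underline{Q})$ and Levi decomposition $\mathrm{N}_{\underline{\Levi}}(\underline{Q})\ltimes \C_{\underline{\U}}(\underline{Q})$, and then simply applies Proposition \ref{ShintaniDLV} with $\G$, $\Levi$, $\U$ replaced by $\mathrm{N}_\G(Q)$, $\mathrm{N}_\Levi(Q)$, $\C_\U(Q)$. Your explicit verification that $\underline{(-)}$ commutes with forming normalizers and centralizers is the only bookkeeping the paper leaves implicit, and it is done correctly.
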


\subsection{Restriction of scalars and Jordan decomposition of characters}\label{SDJD}

In the following section we use ideas from \cite[Corollary 3.5]{Digne} and apply them to our set-up.

In addition to the notation of \ref{ShintaniDL} we assume that $\G$ is connected. We let $(\G^\ast,F_0^\ast)$ be a pair in duality with $(\G,F)$. We consider the $r$-fold product $\underline{\G^*}:=(\G^*)^r$ of the dual group $\G^\ast$ endowed with the Frobenius endomorphism $F_0^\ast:=F_0^* \times \dots \times F_0^*: \underline{\G^*} \to \underline{\G^*}$. Moreover, let
$$\tau^\ast: \underline{\G^\ast} \to \underline{\G^\ast}, \, (g_1,\dots,g_r) \mapsto (g_r,g_1\dots,g_{r-1}).$$
Again $\mathrm{pr}: \underline{\G^\ast} \to \G^\ast$ denotes the projection onto the first coordinate. 

\begin{corollary}\label{ShintaniLusztig}
For any semisimple $\ell'$-element $\underline{s} \in (\underline{\G}^\ast)^{\tau^\ast F_0^\ast}$ we have $e_{\mathrm{pr}(\underline{s})}^{\G^F}=e_{\underline{s}}^{\underline{\G}^{F_0 \tau}}$ considered as idempotents of $\Lambda\G^F$ under the isomorphism $\Lambda \underline{\G}^{\tau F_0} \cong \Lambda \G^F$ given by $\mathrm{pr}$.
\end{corollary}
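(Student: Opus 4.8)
The plan is to reduce the identity to a comparison of the idempotents $e_{\mathcal{X}}$ attached to rational series (Definition~\ref{Def2}) and then to transport one description to the other through the isomorphism $\mathrm{pr}$ by means of Proposition~\ref{ShintaniDLV}. Since $\underline{\G}$ is connected and $\underline{s}$ is a semisimple $\ell'$-element, the Remark following Definition~\ref{Def2} gives $e_{\underline{s}}^{\underline{\G}^{F_0\tau}}=e_{\underline{\mathcal{X}}}$, where $\underline{\mathcal{X}}\subseteq\nabla_{\ell'}(\underline{\G},\tau F_0)/\equiv$ is the rational series associated to the $(\underline{\G^\ast})^{\tau^\ast F_0^\ast}$-conjugacy class of $\underline{s}$; likewise $e_{\mathrm{pr}(\underline{s})}^{\G^F}=e_{\mathcal{X}}$, where $\mathcal{X}$ is the rational series of $(\G,F)$ attached to the class of $\mathrm{pr}(\underline{s})$. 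Hence it is enough to show that the algebra isomorphism $\Lambda\underline{\G}^{\tau F_0}\cong\Lambda\G^F$ induced by $\mathrm{pr}$ carries $e_{\underline{\mathcal{X}}}$ to $e_{\mathcal{X}}$. By Definition~\ref{Def2} the idempotent $e_{\underline{\mathcal{X}}}$ is characterized by $\mathcal{C}_{\underline{\mathcal{X}}}=\Lambda\underline{\G}^{\tau F_0}e_{\underline{\mathcal{X}}}\text{-}\mathrm{perf}$, where $\mathcal{C}_{\underline{\mathcal{X}}}$ is the thick subcategory generated by the complexes $R\Gamma_c(\Y_{\underline{\U}}^{\underline{\G},\tau F_0},\Lambda)e_{\underline{\theta}}$ over the pairs $(\underline{\T},\underline{\theta})\in\underline{\mathcal{X}}$ and Borel subgroups $\underline{\B}=\underline{\T}\ltimes\underline{\U}$ of $\underline{\G}$ with maximal torus $\underline{\T}$; and similarly for $e_{\mathcal{X}}$. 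Restriction of scalars along an algebra isomorphism is an equivalence of module categories which respects perfect complexes and thick subcategories, so it remains only to match the two generating systems under $\mathrm{pr}$.

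For that I would first record the required duality input. Starting from a duality of $(\G,\T_0,F_0)$ with $(\G^\ast,\T_0^\ast,F_0^\ast)$ where $\T_0$ lies in an $F_0$-stable Borel subgroup, one checks componentwise that $(\underline{\G},\underline{\T_0},\tau F_0)$ is in duality with $(\underline{\G^\ast},\underline{\T_0^\ast},\tau^\ast F_0^\ast)$ --- the reverse cyclic shift $\tau^\ast$ being precisely the transpose of $\tau$ on $X(\underline{\T_0})\cong Y(\underline{\T_0^\ast})$ --- and that this duality is compatible with the two projections: if an $F$-stable subgroup $\mathbf{H}$ of $\G$ and an $F^\ast$-stable subgroup $\mathbf{H}^\ast$ of $\G^\ast$ correspond under the duality $(\G,F)\leftrightarrow(\G^\ast,F^\ast)$ induced by $F^\ast=(F_0^\ast)^r$, then the $\tau F_0$-stable subgroup $\underline{\mathbf{H}}$ of $\underline{\G}$ corresponds under the componentwise duality to the $\tau^\ast F_0^\ast$-stable subgroup $\mathbf{H}^\ast\times F_0^\ast(\mathbf{H}^\ast)\times\dots\times(F_0^\ast)^{r-1}(\mathbf{H}^\ast)$ of $\underline{\G^\ast}$, and the isomorphisms $\mathrm{pr}\colon\underline{\mathbf{H}}^{\tau F_0}\cong\mathbf{H}^F$ and $\mathrm{pr}\colon(\mathbf{H}^\ast\times\dots)^{\tau^\ast F_0^\ast}\cong(\mathbf{H}^\ast)^{F^\ast}$ intertwine the duality parametrizations of pairs $(\T,\theta)$ by pairs $(\T^\ast,s)$. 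Here I use the observation from Section~\ref{ShintaniDL} that the $\tau F_0$-stable (resp.\ $\tau^\ast F_0^\ast$-stable) closed subgroups of $\underline{\G}$ (resp.\ $\underline{\G^\ast}$) are exactly those of the displayed shapes.

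Granting this, I would conclude as follows. Given $(\underline{\T},\underline{\theta})\in\underline{\mathcal{X}}$ and a Borel $\underline{\B}=\underline{\T}\ltimes\underline{\U}$, write $\underline{\T}=\underline{\T_1}$ for an $F$-stable maximal torus $\T_1$ of $\G$ and note that $\underline{\U}$ is the unipotent radical of a Borel $\underline{\B_1}$, $\B_1=\T_1\ltimes\U$. Proposition~\ref{ShintaniDLV} applied with $\Levi=\T_1$ gives a $\G^F\times(\T_1^F)^{\mathrm{opp}}$-equivariant isomorphism $\Y_{\underline{\U}}^{\underline{\G},\tau F_0}\cong\Y_{\U}^{\G,F}$, so via $\mathrm{pr}$ the complex $R\Gamma_c(\Y_{\underline{\U}}^{\underline{\G},\tau F_0},\Lambda)$ corresponds to $R\Gamma_c(\Y_{\U}^{\G,F},\Lambda)$ as complexes of bimodules, and the primitive idempotent $e_{\underline{\theta}}$ of $\Lambda\underline{\T_1}^{\tau F_0}$ corresponds to $e_{\theta}$ for the character $\theta\in\Irr(\T_1^F)$ obtained from $\underline{\theta}$ through $\mathrm{pr}$; hence $R\Gamma_c(\Y_{\underline{\U}}^{\underline{\G},\tau F_0},\Lambda)e_{\underline{\theta}}$ corresponds to $R\Gamma_c(\Y_{\U}^{\G,F},\Lambda)e_{\theta}$. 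By the compatibility of the componentwise duality with $\mathrm{pr}$, the pair $(\underline{\T},\underline{\theta})$ is dual to a pair $(\underline{\T}^\ast,\underline{s_0})$ with $\underline{s_0}\in(\underline{\T}^\ast)^{\tau^\ast F_0^\ast}$ whose image $s_0:=\mathrm{pr}(\underline{s_0})$ makes $(\T_1,\theta)$ dual to $(\T_1^\ast,s_0)$, and $\mathrm{pr}$ identifies $(\underline{\G^\ast})^{\tau^\ast F_0^\ast}$-conjugacy with $(\G^\ast)^{F^\ast}$-conjugacy; therefore $(\underline{\T},\underline{\theta})\in\underline{\mathcal{X}}$, i.e.\ $\underline{s_0}$ is conjugate to $\underline{s}$, if and only if $(\T_1,\theta)\in\mathcal{X}$, i.e.\ $s_0$ is conjugate to $\mathrm{pr}(\underline{s})$. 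Consequently $\mathrm{pr}$ matches the generators of $\mathcal{C}_{\underline{\mathcal{X}}}$ bijectively with those of $\mathcal{C}_{\mathcal{X}}$, so $\mathrm{pr}(e_{\underline{\mathcal{X}}})=e_{\mathcal{X}}$ and the corollary follows. The step I expect to be the main obstacle is the verification in the second paragraph: making the componentwise duality $(\underline{\G},\tau F_0)\leftrightarrow(\underline{\G^\ast},\tau^\ast F_0^\ast)$ genuinely compatible with both projection maps requires careful bookkeeping with the two opposite cyclic shifts and the powers of $F_0$, whereas the geometric input is essentially furnished by Proposition~\ref{ShintaniDLV}.
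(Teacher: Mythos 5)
Your route is genuinely different from the paper's. The paper reduces the statement to the character-level identity $\mathcal{E}(\G^F,\mathrm{pr}(\underline{x}))=\mathcal{E}(\underline{\G}^{F_0\tau},\underline{x})$ for semisimple $\underline{x}\in(\underline{\G^\ast})^{\tau^\ast F_0^\ast}$ and then cites \cite[Corollary 8.8]{Taylor2} (or \cite[Proposition 5.11]{PhD}); the duality bookkeeping you flag as the main obstacle is exactly what is outsourced to those references. You instead argue directly from Definition \ref{Def2}, transporting the thick subcategories $\mathcal{C}_{\underline{\mathcal{X}}}$ along the algebra isomorphism $\mathrm{pr}$ and matching generating complexes via Proposition \ref{ShintaniDLV}. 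This is a legitimate and more self-contained strategy (it keeps everything at the level of the complexes actually used in the paper, rather than passing through virtual characters), at the cost of having to set up the componentwise duality $(\underline{\G},\tau F_0)\leftrightarrow(\underline{\G^\ast},\tau^\ast F_0^\ast)$ yourself.

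There is one step that does not close as written: the claim that $\mathrm{pr}$ ``matches the generators of $\mathcal{C}_{\underline{\mathcal{X}}}$ bijectively with those of $\mathcal{C}_{\mathcal{X}}$.'' A $\tau F_0$-stable maximal torus $\underline{\T_1}$ of $\underline{\G}=\G^r$ is contained in many Borel subgroups $\B^{(1)}\times\dots\times\B^{(r)}$ that are \emph{not} of the diagonal shape $\underline{\B_1}=\B_1\times F_0^{r-1}(\B_1)\times\dots\times F_0(\B_1)$, and Definition \ref{Def2} lets $\underline{\B}$ range over all of them. Your argument only identifies the diagonal generators with the generators of $\mathcal{C}_{\mathcal{X}}$, which yields the inclusion $\mathcal{C}_{\mathcal{X}}\subseteq\Lambda\G^F\,\mathrm{pr}(e_{\underline{\mathcal{X}}})\text{-}\mathrm{perf}$, i.e.\ $e_{\mathcal{X}}\leq\mathrm{pr}(e_{\underline{\mathcal{X}}})$, but not the reverse. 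The fix is cheap and is the same device the paper uses in Lemma \ref{discratl}: by \cite[Theorem 4.12]{Dat} both $\sum_{\mathcal{W}}e_{\mathcal{W}}=1$ in $\Lambda\G^F$ and $\sum_{\underline{\mathcal{Z}}}e_{\underline{\mathcal{Z}}}=1$ in $\Lambda\underline{\G}^{\tau F_0}$ are orthogonal decompositions indexed by rational series, your duality bookkeeping makes the index sets correspond bijectively, and the one-sided inequality $e_{\mathcal{W}}\leq\mathrm{pr}(e_{\underline{\mathcal{Z}(\mathcal{W})}})$ for every matched pair then forces termwise equality. With that paragraph added (and the componentwise duality verified carefully, as you anticipate), the proof is complete.
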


\begin{proof}
	Note that $e_{\mathrm{pr}(\underline{s})}^{\G^F}$ is the idempotent associated to $\mathcal{E}_\ell(\G^F,\mathrm{pr}(\underline{s}))$ and $e_{\underline{s}}^{\underline{\G}^{F_0 \tau}}$ is the idempotent associated to $\mathcal{E}_{\ell}(\underline{\G}^{\tau F_0},\underline{s})$. Thus it is clearly sufficient to show that $\mathcal{E}(\G^F,\mathrm{pr}(\underline{x}))=\mathcal{E}(\underline{\G}^{F_0 \tau},\underline{x})$ for any semisimple $\underline{x} \in (\underline{\G^\ast})^{\tau^\ast F_0^\ast}$. This follows from \cite[Corollary 8.8]{Taylor2} or \cite[Proposition 5.11]{PhD}
\end{proof}

Let $\Levi^\ast$ be an $F_0^\ast$-stable Levi subgroup of $\G^\ast$ with $\C^\circ_{\G^\ast}(s) \subseteq \Levi^\ast$. Suppose that $\Levi$ is an $F_0$-stable Levi subgroup of $\G$ in duality with $\Levi^\ast$.
We denote by 
$$\underline{\sigma}=\sigma \times \dots \times \sigma: \underline{\G} \to \underline{\G}$$
the induced map on $\underline{\G}$ which commutes with the action of $\tau F_0$ and its restriction
$$\underline{\sigma}:\underline{\G}^{F_0 \tau} \to \underline{\G}^{F_0 \tau}. $$
Observe that if the isogeny ${\sigma}^\ast$ is dual to $\sigma$ then the isogeny $ \underline{\sigma}^\ast \tau^\ast$ is dual to $\tau \underline{\sigma}$.

%


We consider the unipotent radical $\underline{\U}':=\U^r$ of the parabolic subgroup $\underline{\Para}'=\Para^r$ of $\underline{\G}$. Note that we have a Levi decomposition $\underline{\Para}'= \underline{\Levi} \ltimes \underline{\U}'$ in $\underline{\G}$ and the parabolic subgroup $\underline{\Para}'$ is $\tau$-stable. The following is an application of Lemma \ref{easycase}:

\begin{lemma}\label{preparation}
Suppose that the idempotent $e_s^{\Levi^F}$ is $\langle F_0, \sigma \rangle$-stable. Then $H^{\mathrm{dim}}_c(\Y_{\underline{\U'}}^{\underline{\G}, \tau F_0}) e_{\underline{s}}^{\underline{\Levi}^{\tau F_0}}$ is endowed with a natural $\Lambda [(\underline{\G}^{\tau F_0} \times  (\underline{\Levi}^{\tau F_0})^{\mathrm{opp}}) \Delta( \underline{\Ltilde}^{\tau F_0} \langle \tau \rangle)]$-structure.
Moreover, $H^{\mathrm{dim}}_c(\Y_{\underline{\U'}}^{\underline{\G}, \tau F_0}) e_{\underline{s}}^{\underline{\Levi}^{ \tau F_0}}$ is $(\underline{\sigma},\underline{\sigma}^{-1})$-invariant as $\Lambda[ (\underline{\G}^{\tau F_0} \times  (\underline{\Levi}^{\tau F_0})^{\mathrm{opp}}) \Delta( \underline{\Ltilde}^{\tau F_0} \langle \tau \rangle )]$-module.
\end{lemma}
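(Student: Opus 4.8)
The idea is to transport the two statements of Lemma~\ref{easycase} across the isomorphism of Deligne--Lusztig varieties provided by Proposition~\ref{ShintaniDLV}, applied to the disconnected reductive group $\underline{\G} \rtimes \langle \tau \rangle$ (in the sense of Example~\ref{graph}) rather than to $\underline{\G}$ itself. First I would set up the ambient data: the group $\underline{\G}$ with Frobenius $\tau F_0$, the parabolic $\underline{\Para}' = \Para^r$ with $\tau$-stable Levi decomposition $\underline{\Para}' = \underline{\Levi} \ltimes \underline{\U}'$, and the regular embedding $\underline{\G} \hookrightarrow \underline{\Gtilde}$ giving $\underline{\Ltilde} = \underline{\Levi}\,\Z(\underline{\Gtilde})$. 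Since $\underline{\Para}'$ is $\tau$-stable, Example~\ref{graph} produces a parabolic subgroup $\underline{\Para}' \rtimes \langle \tau \rangle$ of $\underline{\G} \rtimes \langle \tau \rangle$ with Levi subgroup $\underline{\Levi} \rtimes \langle \tau \rangle$. I would then let $\underline{\mathcal{X}}$ be the rational series of $(\underline{\Levi}\langle\tau\rangle, \tau F_0)$ containing the series associated to $\underline{s}$; under the identification $\mathrm{pr}: \underline{\Levi}^{\tau F_0} \xrightarrow{\sim} \Levi^F$ and Corollary~\ref{ShintaniLusztig}, the idempotent $e_{\underline{s}}^{\underline{\Levi}^{\tau F_0}}$ corresponds to $e_s^{\Levi^F}$.

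The first assertion is then a direct application of Lemma~\ref{easycase} in the disconnected group $\underline{\G}\rtimes\langle\tau\rangle$: that lemma endows $H_c^{\mathrm{dim}}(\Y_{\underline{\U}'}^{\underline{\G}}, \Lambda)\, e_{\underline{\mathcal{X}}}$ with a natural action of $(\underline{\G}^{\tau F_0} \times (\underline{\Levi}^{\tau F_0})^{\mathrm{opp}})\,\Delta(\underline{\Ltilde}^{\tau F_0}\langle\tau\rangle)$, and by Corollary~\ref{ShintaniLusztig} the truncation by $e_{\underline{\mathcal{X}}}$ agrees with truncation by $e_{\underline{s}}^{\underline{\Levi}^{\tau F_0}}$. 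I need to be slightly careful that Lemma~\ref{easycase} applies verbatim: it requires $\Levi$ (here $\underline{\Levi}$) to be $F$-stable (here $\tau F_0$-stable, which holds) and the parabolic $\Para$ (here $\underline{\Para}'$) to be $\tau$-stable, which is exactly why we replaced $\underline{\Para}$ by the $\tau$-symmetric $\underline{\Para}' = \Para^r$. So the first paragraph of the conclusion follows once these hypotheses are checked.

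For the second assertion, I would verify that $\underline{\sigma} = \sigma \times \cdots \times \sigma$ commutes with $\tau F_0$ (it commutes with each factor $F_0$ and with the cyclic permutation $\tau$, since $\sigma$ commutes with $F_0$), and that $\underline{\sigma}$ stabilizes $\underline{\Levi} = \Levi^r$ since $\sigma$ stabilizes $\Levi$ by hypothesis. Moreover $\underline{\sigma}$ stabilizes $e_{\underline{s}}^{\underline{\Levi}^{\tau F_0}}$: via Corollary~\ref{ShintaniLusztig} and the remarks following Lemma~\ref{sigma}, $\underline{\sigma}(e_{\underline{s}}^{\underline{\Levi}^{\tau F_0}}) = e_{(\underline{\sigma}^\ast)^{-1}(\underline{s})}^{\underline{\Levi}^{\tau F_0}}$, and the hypothesis that $e_s^{\Levi^F}$ is $\langle F_0,\sigma\rangle$-stable forces the relevant conjugacy class to be fixed. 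With these checks in place, the second part of Lemma~\ref{easycase} (applied again in $\underline{\G}\rtimes\langle\tau\rangle$ with the bijective morphism $\underline{\sigma}$) gives the isomorphism
$$H_c^{\mathrm{dim}}(\Y_{\underline{\U}'}^{\underline{\G}, \tau F_0}, \Lambda)\, e_{\underline{s}}^{\underline{\Levi}^{\tau F_0}} \cong {}^{\underline{\sigma}} H_c^{\mathrm{dim}}(\Y_{\underline{\U}'}^{\underline{\G}, \tau F_0}, \Lambda)^{\underline{\sigma}}\, \underline{\sigma}(e_{\underline{s}}^{\underline{\Levi}^{\tau F_0}})$$
of $\Lambda[(\underline{\G}^{\tau F_0} \times (\underline{\Levi}^{\tau F_0})^{\mathrm{opp}})\,\Delta(\underline{\Ltilde}^{\tau F_0}\langle\tau\rangle)]$-modules, which is precisely the claimed $(\underline{\sigma},\underline{\sigma}^{-1})$-invariance once we substitute $\underline{\sigma}(e_{\underline{s}}^{\underline{\Levi}^{\tau F_0}}) = e_{\underline{s}}^{\underline{\Levi}^{\tau F_0}}$.

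\textbf{Main obstacle.} The routine part is the bookkeeping of which diagonal subgroup acts; the only genuinely delicate point is checking that Lemma~\ref{easycase} is legitimately applicable to the disconnected group $\underline{\G}\rtimes\langle\tau\rangle$ with $\tau$ playing simultaneously the role of the ``graph automorphism'' generating the disconnected component and the role of (part of) the Frobenius $\tau F_0$. One must confirm that $\tau$ and $\tau F_0$ interact as required in Section~\ref{stable pair} --- i.e. that $\tau$ commutes with $\tau F_0$ (equivalently $F_0$ commutes with $\tau$, which is clear) and that the semidirect-product Frobenius on $\underline{\G}\rtimes\langle\tau\rangle$ restricts correctly --- and that the rational series $\underline{\mathcal{X}}$ of $(\underline{\Levi}\langle\tau\rangle, \tau F_0)$ is well-defined and $(\underline{\G}\langle\tau\rangle^\circ, \underline{\Levi}\langle\tau\rangle^\circ)$-regular, so that the idempotent $e_{\underline{\mathcal{X}}}$ exists (Definition~\ref{Def2}, Lemma~\ref{discratl}). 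Once these compatibilities are recorded, the proof is a two-line invocation of Lemma~\ref{easycase}.
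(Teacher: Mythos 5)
Your proposal is correct and follows essentially the same route as the paper: both reduce the statement to Lemma \ref{easycase} applied to $\underline{\G}$ with Frobenius $\tau F_0$ and the $\tau$-stable parabolic $\underline{\Para}'=\Para^r$, after checking that $\underline{\sigma}$ and $\tau$ stabilize $\underline{\Levi}$ and $e_{\underline{s}}^{\underline{\Levi}^{\tau F_0}}$ (the latter because $F_0$ on $\G^F$ corresponds to $\tau^{-1}$ on $\underline{\G}^{\tau F_0}$ under $\mathrm{pr}$) and that $\C^\circ_{\underline{\G}^\ast}(\underline{s})\subseteq\underline{\Levi}^\ast$. The compatibility checks you flag as the "main obstacle" are exactly the ones the paper records before invoking Lemma \ref{easycase}.
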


\begin{proof}
The pair $(\underline{\Levi},\underline{\Para}')$ is $\tau$-stable and $\underline{\Levi}$ is $\underline{\sigma}$-stable. We have $\mathrm{pr}(\underline{s})=s$. We note that $\sigma\in \mathrm{Aut}(\G^F)$ corresponds to $\underline{\sigma} \in \mathrm{Aut}(\underline{\G}^{\tau F_0})$ under the isomorphism $\mathrm{pr}: \underline{\G}^{\tau F_0} \to \G^F$. Furthermore,
the automorphism $F_0: \G^F \to \G^F$ corresponds under the identification of $\G^F$ with $\underline{\G}^{F_0 \tau}$ via the projection map $\mathrm{pr}$ to the automorphism $\tau^{-1}: \underline{\G}^{F_0 \tau} \to \underline{\G}^{F_0 \tau}$. Since $e_s^{\Levi^F}$ is $\langle F_0, \sigma \rangle$-stable it therefore follows that $e_{\underline{s}}^{\underline{\Levi}^{\tau F_0}}$ is $\langle \tau, \underline{\sigma} \rangle$-stable. Moreover, $\Levi^\ast$ is $F_0^\ast$-stable by assumption, so we obtain
$$\C^\circ_{\underline{\G}^\ast}(\underline{s})=\C^\circ_{\G^\ast}(s) \times \dots \times \C^\circ_{\G^\ast}(F_0^{r-1}(s)) \subseteq \underline{\Levi}^\ast.$$
%
%
We conclude that Lemma \ref{easycase} applies which gives the claim of the lemma.
\end{proof}

\begin{proposition}\label{field}
Suppose that $\Levi$ and $e_s^{\Levi^F}$ are $\langle F_0, \sigma \rangle$-stable. Then the bimodule $H^{\mathrm{dim}}_c(\Y_\U,\Lambda)e_s^{\Levi^F}$ can be equipped with a $\Lambda[(\G^F \times (\Levi^F)^{\operatorname{opp}}) \Delta (\Ltilde^F \langle F_0 \rangle) ]$-module structure with which it is $(\sigma,\sigma^{-1})$-stable.
\end{proposition}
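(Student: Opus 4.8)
The plan is to reduce, by means of the restriction of scalars of Section~\ref{ShintaniDL}, to a situation where Lemma~\ref{easycase} applies. Replacing $(\G,F)$ by $(\underline{\G},\tau F_0)$, where $\underline{\G}=\G^r$ and $F=F_0^r$, turns the infinite-order automorphism $F_0$ into the permutation $\tau$ of $\underline{\G}$, which has finite order $r$, so that the reductive group $\underline{\Gtilde}\rtimes\langle\tau\rangle$ (with $\underline{\Gtilde}=\Gtilde^r$) and its parabolic subgroups are available. Since $\Levi$ is $F_0$-stable we have $\underline{\Levi}=\Levi^r$, which is $\tau$-stable, and the parabolic subgroup $\underline{\Para}'=\Para^r$, with $\underline{\Para}'=\underline{\Levi}\ltimes\underline{\U}'$ and $\underline{\U}'=\U^r$, is $\tau$-stable as well. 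Choosing $\underline{s}$ with $\mathrm{pr}(\underline{s})=s$, Corollary~\ref{ShintaniLusztig} gives $e_{\underline{s}}^{\underline{\Levi}^{\tau F_0}}=e_s^{\Levi^F}$ under $\mathrm{pr}$; since $e_s^{\Levi^F}$ is $\langle F_0,\sigma\rangle$-stable by hypothesis, Lemma~\ref{preparation} --- i.e.\ Lemma~\ref{easycase} applied inside $\underline{\Gtilde}\rtimes\langle\tau\rangle$ --- endows $M':=H^{\mathrm{dim}}_c(\Y_{\underline{\U}'}^{\underline{\G},\tau F_0},\Lambda)\,e_{\underline{s}}^{\underline{\Levi}^{\tau F_0}}$ with a natural $\Lambda[(\underline{\G}^{\tau F_0}\times(\underline{\Levi}^{\tau F_0})^{\operatorname{opp}})\Delta(\underline{\Ltilde}^{\tau F_0}\langle\tau\rangle)]$-module structure for which it is $(\underline{\sigma},\underline{\sigma}^{-1})$-invariant, where $\underline{\Ltilde}=\Ltilde^r$.

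Next I would transport this structure along
\[
M'=H^{\mathrm{dim}}_c(\Y_{\underline{\U}'}^{\underline{\G},\tau F_0})e_{\underline{s}}\ \xrightarrow{\varphi}\ H^{\mathrm{dim}}_c(\Y_{\underline{\U}}^{\underline{\G},\tau F_0})e_{\underline{s}}\ \xrightarrow{\mathrm{pr}}\ H^{\mathrm{dim}}_c(\Y_{\U}^{\G,F})e_s^{\Levi^F},
\]
where $\underline{\U}=\U\times F_0^{r-1}(\U)\times\cdots\times F_0(\U)$ is the restriction of scalars of $\U$. The parabolic of $\underline{\G}$ with unipotent radical $\underline{\U}$ has Levi complement $\underline{\Levi}$, the same as $\underline{\Para}'$, and both extend to parabolic subgroups of $\underline{\Gtilde}$ with Levi complement $\underline{\Ltilde}$; hence Theorem~\ref{independence}, in the form of Lemma~\ref{local} applied with $\hat{\G}=\underline{\Gtilde}$, produces an isomorphism $\varphi$ respecting the $\Lambda[(\underline{\G}^{\tau F_0}\times(\underline{\Levi}^{\tau F_0})^{\operatorname{opp}})\Delta(\underline{\Ltilde}^{\tau F_0})]$-structure (here $\underline{\Ltilde}^{\tau F_0}$ centralises $e_{\underline{s}}$, so the relative traces in Lemma~\ref{local} collapse to $e_{\underline{s}}$). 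The second map is the $\mathrm{pr}$-induced isomorphism of Proposition~\ref{ShintaniDLV}; it matches the right $\underline{\Ltilde}^{\tau F_0}$-action with the right $\Ltilde^F$-action and, as observed in the proof of Lemma~\ref{preparation}, carries $\tau$ to $F_0^{-1}$ and $\underline{\sigma}$ to $\sigma$. Composing, $H^{\mathrm{dim}}_c(\Y_{\U}^{\G,F})e_s^{\Levi^F}$ acquires a $\Lambda[(\G^F\times(\Levi^F)^{\operatorname{opp}})\Delta(\Ltilde^F\langle F_0\rangle)]$-structure extending the natural action of $\G^F\times(\Levi^F)^{\operatorname{opp}}$ and of $\Delta(\Ltilde^F)$, and $(\sigma,\sigma^{-1})$-stability is transported along with it.

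The main difficulty is the first map: $\tau$ does \emph{not} normalise the twisted radical $\underline{\U}$, so on $\Y_{\underline{\U}}^{\underline{\G},\tau F_0}$ only the $\Delta(\underline{\Ltilde}^{\tau F_0})$-part of the structure is visibly geometric and Lemma~\ref{easycase} cannot be invoked for $\underline{\U}$ directly. The remedy is to define the action of $\tau$ on $H^{\mathrm{dim}}_c(\Y_{\underline{\U}}^{\underline{\G},\tau F_0})e_{\underline{s}}$ by $n\mapsto\varphi(\tau\cdot\varphi^{-1}(n))$: since $\varphi$ is equivariant for $\underline{\G}^{\tau F_0}\times(\underline{\Levi}^{\tau F_0})^{\operatorname{opp}}$ and for $\Delta(\underline{\Ltilde}^{\tau F_0})$, this operator is again ``diagonal'' relative to $\underline{\G}^{\tau F_0}\times(\underline{\Levi}^{\tau F_0})^{\operatorname{opp}}$ and satisfies the same commutation relations with $\Delta(\underline{\Ltilde}^{\tau F_0})$ as it does on the $\underline{\U}'$-side, so it combines with the natural $\Delta(\underline{\Ltilde}^{\tau F_0})$-action into a genuine $\Delta(\underline{\Ltilde}^{\tau F_0}\langle\tau\rangle)$-action; transporting a $\underline{\sigma}$-twist of $M'$ through $\varphi$ in the same way yields the required $(\underline{\sigma},\underline{\sigma}^{-1})$-invariance of the target. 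The remaining work is to verify these equivariance properties of $\varphi$ and of the $\mathrm{pr}$-isomorphism at the level of the explicit maps $\psi$ and $\operatorname{sh}^\ast$ from the proof of Theorem~\ref{independence}, and of Lemmas~\ref{extend}, \ref{geometricgeneralversion2} and~\ref{sigma}; this is pure bookkeeping and uses no new geometric input.
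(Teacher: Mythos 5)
Your proposal is correct and follows essentially the same route as the paper: reduce via restriction of scalars to $(\underline{\G},\tau F_0)$, apply Lemma \ref{preparation} to the $\tau$-stable radical $\underline{\U}'$, transfer the resulting $\Delta(\underline{\Ltilde}^{\tau F_0}\langle\tau\rangle)$-structure and $(\underline{\sigma},\underline{\sigma}^{-1})$-invariance to the $\underline{\U}$-side via the independence isomorphism of Theorem \ref{independence} (in its $\mathcal{D}$-equivariant form), and then descend through $\mathrm{pr}$ using Proposition \ref{ShintaniDLV} and Corollary \ref{ShintaniLusztig}. The only difference is that you spell out the ``transport of structure'' step (the conjugated $\tau$-operator on the $\underline{\U}$-side and its compatibility with the $\Delta(\underline{\Ltilde}^{\tau F_0})$-action) which the paper leaves implicit.
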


\begin{proof}
By Theorem \ref{independence}, we have an isomorphism
$$H^{\mathrm{dim}}_c(\Y_{\underline{\U}}^{\underline{\G}, \tau F_0}) e_{\underline{s}}^{\underline{\Levi}^{\tau F_0}} \cong H^{\mathrm{dim}}_c(\Y_{\underline{\U'}}^{\underline{\G}, \tau F_0}) e_{\underline{s}}^{\underline{\Levi}^{\tau F_0}}$$
of $\Lambda [(\underline{\G}^{\tau F_0} \times (\underline{\Levi}^{\tau F_0})^{\mathrm{opp}}) \Delta( \underline{\Ltilde}^F)]$-modules.

It follows by Lemma \ref{preparation} that the bimodule $H^{\mathrm{dim}}_c(\Y_{\underline{\U'}}^{\underline{\G}, \tau F_0}) e_{\underline{s}}^{\underline{\Levi}^{\tau F_0}}$ has a $\Lambda[( \underline{\G}^{\tau F_0} \times  (\underline{\Levi}^{\tau F_0})^{\mathrm{opp}}) \Delta( \underline{\Ltilde}^{\tau F_0} \langle \tau \rangle)]$-structure with which it is $(\underline{\sigma},\underline{\sigma}^{-1})$-stable.
%
By Proposition \ref{ShintaniDLV} and Corollary \ref{ShintaniLusztig} the bimodule $H^{\mathrm{dim}}_c(\Y_{\underline{\U}}^{\underline{\G}, \tau F_0}) e_{\underline{s}}^{\underline{\Levi}^{\tau F_0}}$ is isomorphic to $H^{\mathrm{dim}}_c(\Y_\U^\G,\Lambda) e_s^{\Levi^F}$ as $\Lambda[(\G^F \times (\Levi^F)^{\mathrm{opp}}) \Delta (\tilde{\Levi}^F)]$-modules. As noted above, the group isomorphism $\sigma\in \mathrm{Aut}(\G^F)$ corresponds to $\underline{\sigma} \in \mathrm{Aut}(\underline{\G}^{\tau F_0})$ under the isomorphism $\mathrm{pr}: \underline{\G}^{\tau F_0} \to \G^F$. The automorphism $\tau\in \mathrm{Aut}(\underline{\G}^{\tau F_0})$ corresponds to $F_0^{-1} \in \mathrm{Aut}(\G^F)$. From this we can, by transport of structure, endow the bimodule $H^{\mathrm{dim}}_c(\Y_\U,\Lambda) e_s^{\Levi^F}$ with a $\Lambda[(\G^F \times \Levi^{F^{\operatorname{opp}}}) \Delta (\Ltilde^F \langle F_0 \rangle )]$-module structure with which it is $(\sigma,\sigma^{-1})$-stable.
\end{proof}

In the following, we denote $\mathcal{A}=\langle  \sigma, F_0 \rangle \subseteq \mathrm{Aut}(\tilde{\G}^F)$ and $ \mathcal{D}= (\G^F \times (\Levi^F)^{\operatorname{opp}}) \Delta(\tilde{\Levi}^F \mathcal{A})$. Furthermore, let $A \in \{K, \mathcal{O},k \}$.

\begin{theorem}\label{Morita lift}
Suppose that $\Levi$ and $e_s^{\Levi^F}$ are $\mathcal{A}$-stable. Assume that $\mathrm{C}_{\G^\ast}(s) \subseteq \Levi^\ast$ and the order of $\sigma: \Gtilde^F \to \Gtilde^F$ is invertible in $A$. Then $H^{\mathrm{dim}}_c(\Y_\U,A) e_s^{\Levi^F}$ extends to an $A \mathcal{D}$-module $M$. Moreover, the bimodule $\mathrm{Ind}_{ \mathcal{D} }^{\tilde{\G}^F \mathcal{A} \times (\tilde{\Levi}^F \mathcal{A})^{\mathrm{opp}}} (M)$ induces a Morita equivalence between $A \tilde{\Levi}^F \mathcal{A} e_s^{\Levi^F}$ and $A \tilde{\G}^F \mathcal{A} e_s^{\G^F}$.

\end{theorem}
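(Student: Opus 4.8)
The plan is to assemble the theorem from the pieces developed earlier, reducing the full statement to an application of Marcus' lifting theorem (Theorem~\ref{lifting}) together with the converse Lemma~\ref{MarcusconverseMorita}. First I would set up the correct chain of subgroups. Let $\Gtilde^F \mathcal{A}$ play the role of the ``big'' group and $\G^F$ its normal subgroup, with $\tilde{\Levi}^F\mathcal{A}$ the subgroup over $\Levi^F = \G^F \cap \tilde{\Levi}^F\mathcal{A}$; note $\Gtilde^F\mathcal{A} = \G^F \cdot \tilde{\Levi}^F\mathcal{A}$ holds because $\Gtilde^F = \G^F \Z(\Gtilde^F)$ and $\Z(\Gtilde^F) \subseteq \tilde{\Levi}^F$, so $\mathcal{D}= (\G^F \times (\Levi^F)^{\opp})\Delta(\tilde{\Levi}^F\mathcal{A})$ is exactly the diagonal subgroup appearing in Section~\ref{Marcus}. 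Since $\mathrm{C}_{\G^\ast}(s)\subseteq \Levi^\ast$, Theorem~\ref{Boro2} gives that $M_0 := H^{\mathrm{dim}}_c(\Y_\U^\G,A)e_s^{\Levi^F}$ induces a Morita equivalence between $A\G^F e_s^{\G^F}$ and $A\Levi^F e_s^{\Levi^F}$; and the idempotents $e_s^{\G^F}$, $e_s^{\Levi^F}$ are $\mathcal{A}$-stable (the first by the remarks after Lemma~\ref{sigma} combined with the choice of $\mathcal{A}$ in Proposition~\ref{propo}, the second by hypothesis), so they are central in $A\Gtilde^F\mathcal{A}$ and $A\tilde{\Levi}^F\mathcal{A}$ respectively. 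Thus the setup of Theorem~\ref{lifting} and Lemma~\ref{MarcusconverseMorita} is in force.

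The core analytic input is the construction of the $\mathcal{D}$-module structure on $M_0$. Here I would invoke Proposition~\ref{field}: it equips $H^{\mathrm{dim}}_c(\Y_\U^\G,A)e_s^{\Levi^F}$ with a structure of $A[(\G^F \times (\Levi^F)^{\opp})\Delta(\tilde{\Levi}^F\langle F_0\rangle)]$-module which is moreover $(\sigma,\sigma^{-1})$-stable. In other words the bimodule already extends compatibly with the action of $F_0$, and it is invariant under the remaining automorphism $\sigma$. The group $\mathcal{D}/\big((\G^F\times(\Levi^F)^{\opp})\Delta(\tilde{\Levi}^F\langle F_0\rangle)\big)$ is cyclic, generated by the image of $(\sigma,\sigma^{-1})$, of order equal to the order of $\sigma$ on $\tilde{\Levi}^F$ modulo $\tilde{\Levi}^F\langle F_0\rangle$ — in particular a divisor of the order of $\sigma$, which by hypothesis is invertible in $A$. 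Since a module invariant under a cyclic group of order invertible in the coefficient ring extends to that group (the relevant obstruction in $H^2$ vanishes; this is the standard Clifford-theoretic fact, e.g.\ the argument cited in the paper around \cite[Lemma 10.2.13]{Rouquier3}), the $(\sigma,\sigma^{-1})$-invariant $A[(\G^F\times(\Levi^F)^{\opp})\Delta(\tilde{\Levi}^F\langle F_0\rangle)]$-module structure extends to an $A\mathcal{D}$-module $M$ with $\Res_{\G^F\times(\Levi^F)^{\opp}}(M) \cong M_0$. This produces the module $M$ asserted in the statement.

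With $M$ in hand the Morita statement is almost formal. The bimodule $\tilde{M} := \Ind_{\mathcal{D}}^{\tilde{\G}^F\mathcal{A}\times(\tilde{\Levi}^F\mathcal{A})^{\opp}}(M)$ is biprojective because $M_0$ is biprojective (it induces a Morita equivalence) and induction preserves projectivity over the relevant group algebras; moreover $M_0$ is concentrated in a single degree, so Theorem~\ref{lifting}(b) applies and $\tilde{M}$ induces at least a derived equivalence between $A\tilde{\Levi}^F\mathcal{A}e_s^{\Levi^F}$ and $A\tilde{\G}^F\mathcal{A}e_s^{\G^F}$. To upgrade from derived to Morita, I would either note directly that $\tilde M$ is a bimodule (single-degree complex) inducing a derived equivalence between symmetric algebras and is biprojective, hence a Morita bimodule, or — cleaner within the paper's framework — apply Lemma~\ref{MarcusconverseMorita} in reverse: actually it is the forward direction one wants. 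Precisely: Theorem~\ref{lifting}(b) with $C = M_0$ concentrated in degree $0$ directly gives that $\tilde M$ induces a Rickard equivalence; a Rickard equivalence given by a complex concentrated in one degree between (block) algebras is a Morita equivalence by \cite[Section 10.2.3]{Rouquier3} (used already in the proof of Proposition~\ref{equiv}). This completes the argument.

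The main obstacle I expect is not the Clifford-theoretic extension along the cyclic quotient — that is routine given the hypothesis on $\ell\nmid|\sigma|$ — but rather verifying that Proposition~\ref{field} genuinely delivers the $F_0$-compatible structure in the generality needed, i.e.\ tracking that all the identifications in the restriction-of-scalars argument of Section~\ref{ShintaniDL} (the isomorphisms $\Y_{\underline{\U}}^{\underline{\G},\tau F_0}\cong\Y_\U^{\G,F}$, the matching $\tau \leftrightarrow F_0^{-1}$ and $\underline{\sigma}\leftrightarrow\sigma$, and the identification of idempotents via Corollary~\ref{ShintaniLusztig}) are simultaneously equivariant for the diagonal $\Delta(\tilde{\Levi}^F\mathcal{A})$-action. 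Once Proposition~\ref{field} is granted as stated, the remaining steps are bookkeeping: identifying $\mathcal{D}$ as the Marcus diagonal subgroup, checking $\Gtilde^F\mathcal{A}=\G^F\tilde{\Levi}^F\mathcal{A}$, and invoking Theorem~\ref{lifting}(b).
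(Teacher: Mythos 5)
Your proposal is correct and follows essentially the same route as the paper: the extension $M$ is obtained from Proposition \ref{field} together with the cyclic-quotient extension argument of \cite[Lemma 10.2.13]{Rouquier3} (the order of $\sigma$ being invertible in $A$), and the Morita equivalence then follows by applying Theorem \ref{lifting} to the bimodule $M$, which is concentrated in one degree. Your additional bookkeeping (the identification $\Gtilde^F\mathcal{A}=\G^F\cdot\tilde{\Levi}^F\mathcal{A}$ and the passage from Rickard to Morita for a one-degree complex) just makes explicit what the paper's terse proof leaves implicit.
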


\begin{proof} The existence of the extension $M$ follows from Proposition \ref{field} and \cite[Lemma 10.2.13]{Rouquier3}. The bimodule $H^{\mathrm{dim}}_c(\Y_\U^\G,A) e_s^{\Levi^F}$ induces a Morita equivalence between $A \G^F e_s^{\G^F}$ and $A \Levi^F e_s^{\Levi^F}$. Since $e_s^{\Levi^F}$ is $\mathcal{A}$-invariant we conclude that the assumptions of Theorem \ref{lifting} are satisfied. From this it follows that $\mathrm{Ind}_{ \mathcal{D} }^{\tilde{\G}^F \mathcal{A} \times (\tilde{\Levi}^F \mathcal{A})^{\mathrm{opp}}} (M)$ gives a Morita equivalence between $A \tilde{\Levi}^F \mathcal{A} e_s^{\Levi^F}$ and $A \tilde{\G}^F \mathcal{A} e_s^{\G^F}$.
\end{proof}

%
We remark the following consequence of Theorem \ref{Morita lift} which is important for characgter theoretic applications.

\begin{corollary}
In the situation of Theorem \ref{Morita lift} we have the following commutative square:
\begin{center}
\begin{tikzpicture}
  \matrix (m) [matrix of math nodes,row sep=3em,column sep=4em,minimum width=2em] {
  
    G_0(A \Ltilde^F \langle F_0, \sigma \rangle e_s^{\Levi^F} )  & G_0(A \Gtilde^F \langle F_0, \sigma \rangle  e_s^{\G^F})  \\
      G_0(A\Levi^F e_s^{\Levi^F}) & G_0(A \G^F e_s^{\G^F}) 
     \\};
\path[-stealth]
(m-1-2) edge node [left] {$\mathrm{Res}$} (m-2-2)
(m-1-1) edge node [above] {$[M \otimes -]$} (m-1-2)
(m-2-1) edge node [above] {$(-1)^{\mathrm{dim}(\Y_\U^\G)} R_\Levi^{\G}$} (m-2-2)
(m-1-1) edge node [left] {$\mathrm{Res}$} (m-2-1);

\end{tikzpicture}
\end{center}
\end{corollary}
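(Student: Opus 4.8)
The plan is to deduce the commutative square directly from Theorem \ref{Morita lift} together with the Clifford-theoretic compatibility already recorded in Remark \ref{characters}(a). First I would recall the set-up: by Theorem \ref{Morita lift}, the bimodule $\tilde{M}:=\mathrm{Ind}_{\mathcal{D}}^{\tilde{\G}^F\mathcal{A}\times(\tilde{\Levi}^F\mathcal{A})^{\mathrm{opp}}}(M)$ induces a Morita equivalence between $A\tilde{\Levi}^F\mathcal{A}e_s^{\Levi^F}$ and $A\tilde{\G}^F\mathcal{A}e_s^{\G^F}$, while the underlying bimodule $H_c^{\mathrm{dim}}(\Y_\U^\G,A)e_s^{\Levi^F}$ already induces a Morita equivalence between $A\Levi^Fe_s^{\Levi^F}$ and $A\G^Fe_s^{\G^F}$ by Theorem \ref{Boro2}. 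These two equivalences satisfy the hypotheses of Theorem \ref{lifting} (with $G=\G^F$, $\tilde{G}=\tilde{\G}^F\mathcal{A}$, $L=\Levi^F$, $\tilde{L}=\tilde{\Levi}^F\mathcal{A}$, $e=e_s^{\G^F}$, $f=e_s^{\Levi^F}$), since $\tilde{\G}^F\mathcal{A}=(\tilde{\Levi}^F\mathcal{A})\,\G^F$ and $\tilde{\Levi}^F\mathcal{A}\cap\G^F=\Levi^F$, and the extension $M$ to $\mathcal{D}$ is exactly what makes $\tilde{M}$ the induced bimodule. Hence Remark \ref{characters}(a) applies with $\varphi$ given by tensoring with $H_c^{\mathrm{dim}}(\Y_\U^\G,A)e_s^{\Levi^F}$ and $\tilde{\varphi}$ given by tensoring with $\tilde{M}$, yielding $\mathrm{Res}^{\tilde{\G}^F\mathcal{A}}_{\G^F}\circ\tilde{\varphi}\cong\varphi\circ\mathrm{Res}^{\tilde{\Levi}^F\mathcal{A}}_{\Levi^F}$ as functors on the homotopy categories, and in particular on the level of Grothendieck groups.

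The second step is to identify the bottom horizontal map. Since $H_c^i(\Y_\U^\G,A)e_s^{\Levi^F}=0$ for $i\neq d:=\mathrm{dim}(\Y_\U^\G)$ by \cite[Theorem 11.7]{BoRo} (as recalled in Section \ref{BDR}), the map on $G_0$ induced by $\varphi=H_c^{\mathrm{dim}}(\Y_\U^\G,A)e_s^{\Levi^F}\otimes_{A\Levi^F}-$ agrees, up to the sign $(-1)^d$, with the Lusztig induction map $R_\Levi^\G$ restricted to $G_0(A\Levi^Fe_s^{\Levi^F})\to G_0(A\G^Fe_s^{\G^F})$: indeed $R_{\Levi\subseteq\Para}^\G[N]=\sum_i(-1)^i[H_c^i(\Y_\U^\G,A)\otimes_{A\Levi^F}N]$, and after truncating to the idempotent $e_s^{\Levi^F}$ only the degree-$d$ term survives, giving $(-1)^d[H_c^{\mathrm{dim}}(\Y_\U^\G,A)e_s^{\Levi^F}\otimes_{A\Levi^F}N]$. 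This is precisely the labelled arrow $(-1)^{\mathrm{dim}(\Y_\U^\G)}R_\Levi^\G$, so the bottom edge of the square is $(-1)^{\mathrm{dim}(\Y_\U^\G)}$ times the map on $G_0$ induced by $\varphi$.

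Putting these together: the left and right vertical arrows are the restriction maps on Grothendieck groups induced by $\mathrm{Res}^{\tilde{\Levi}^F\mathcal{A}}_{\Levi^F}$ and $\mathrm{Res}^{\tilde{\G}^F\mathcal{A}}_{\G^F}$ respectively; the top arrow is $[\tilde{M}\otimes-]$, equivalently $[M\otimes-]$ in the notation of the corollary where $M$ is used for the extended bimodule and the induction to $\tilde{\G}^F\mathcal{A}\times(\tilde{\Levi}^F\mathcal{A})^{\mathrm{opp}}$ is understood; and the square commutes because tensoring and taking classes in $G_0$ is additive and the functorial isomorphism $\mathrm{Res}^{\tilde{\G}^F\mathcal{A}}_{\G^F}\circ\tilde{\varphi}\cong\varphi\circ\mathrm{Res}^{\tilde{\Levi}^F\mathcal{A}}_{\Levi^F}$ passes to Grothendieck groups, with the uniform sign $(-1)^{\mathrm{dim}(\Y_\U^\G)}$ appearing on both the bottom edge and in the relation without affecting commutativity. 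The only point requiring care — and the mild obstacle — is the verification that the hypotheses of Remark \ref{characters}(a) and Theorem \ref{lifting} are genuinely met here, i.e. that $M$ as produced in Theorem \ref{Morita lift} is a $\mathcal{D}$-module extending the bimodule in question and that $\tilde{M}$ is the induced bimodule in the precise sense of those statements; but this is exactly the content of Theorem \ref{Morita lift} and its proof, so no new argument is needed. I would write this up in three or four lines invoking Theorem \ref{Morita lift}, Remark \ref{characters}(a), and the vanishing of $H_c^i(\Y_\U^\G,A)e_s^{\Levi^F}$ for $i\neq\mathrm{dim}(\Y_\U^\G)$.

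\begin{proof}
By Theorem \ref{Morita lift}, the bimodule $\tilde{M}:=\mathrm{Ind}_{\mathcal{D}}^{\tilde{\G}^F\mathcal{A}\times(\tilde{\Levi}^F\mathcal{A})^{\mathrm{opp}}}(M)$ induces a Morita equivalence between $A\tilde{\Levi}^F\mathcal{A}e_s^{\Levi^F}$ and $A\tilde{\G}^F\mathcal{A}e_s^{\G^F}$, where $M$ is an $A\mathcal{D}$-module extending the $A[\G^F\times(\Levi^F)^{\mathrm{opp}}]$-bimodule $H_c^{\mathrm{dim}}(\Y_\U^\G,A)e_s^{\Levi^F}$; the latter induces a Morita equivalence between $A\G^Fe_s^{\G^F}$ and $A\Levi^Fe_s^{\Levi^F}$ by Theorem \ref{Boro2}. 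With $G=\G^F$, $\tilde{G}=\tilde{\G}^F\mathcal{A}$, $L=\Levi^F$, $\tilde{L}=\tilde{\Levi}^F\mathcal{A}$, $e=e_s^{\G^F}$ and $f=e_s^{\Levi^F}$ we have $\tilde{G}=\tilde{L}G$ and $\tilde{L}\cap G=L$, so the hypotheses of Theorem \ref{lifting} are satisfied. Writing $\varphi$ for the functor given by tensoring with $H_c^{\mathrm{dim}}(\Y_\U^\G,A)e_s^{\Levi^F}$ and $\tilde{\varphi}$ for the functor given by tensoring with $\tilde{M}$, Remark \ref{characters}(a) yields a natural isomorphism
$$\mathrm{Res}^{\tilde{\G}^F\mathcal{A}}_{\G^F}\circ\tilde{\varphi}\cong\varphi\circ\mathrm{Res}^{\tilde{\Levi}^F\mathcal{A}}_{\Levi^F}.$$
Passing to Grothendieck groups, this gives the commutativity of the square once we identify the bottom horizontal arrow. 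By \cite[Theorem 11.7]{BoRo} we have $H_c^i(\Y_\U^\G,A)e_s^{\Levi^F}=0$ for $i\neq d:=\mathrm{dim}(\Y_\U^\G)$, so for any $N$ in $A\Levi^Fe_s^{\Levi^F}\text{-}\mathrm{mod}$,
$$R_{\Levi\subseteq\Para}^\G[N]=\sum_i(-1)^i[H_c^i(\Y_\U^\G,A)\otimes_{A\Levi^F}N]=(-1)^d[H_c^{\mathrm{dim}}(\Y_\U^\G,A)e_s^{\Levi^F}\otimes_{A\Levi^F}N]=(-1)^d[\varphi(N)].$$
Hence the map on $G_0$ induced by $\varphi$ equals $(-1)^{\mathrm{dim}(\Y_\U^\G)}R_\Levi^\G$, which is the labelled bottom arrow. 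Since the vertical arrows are the restriction maps induced by $\mathrm{Res}^{\tilde{\Levi}^F\mathcal{A}}_{\Levi^F}$ and $\mathrm{Res}^{\tilde{\G}^F\mathcal{A}}_{\G^F}$ and the top arrow is $[M\otimes-]=[\tilde{M}\otimes-]$, the displayed natural isomorphism proves the commutativity of the square.
\end{proof}
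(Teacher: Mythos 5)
Your proof is correct and follows exactly the route the paper takes: the paper's own proof is the single line ``This has been discussed in Remark \ref{characters}(a),'' and your argument simply spells out that reference together with the identification of the bottom arrow via the vanishing of $H^i_c(\Y_\U^\G,A)e_s^{\Levi^F}$ for $i\neq\dim(\Y_\U^\G)$. No gaps; you have just made explicit what the paper leaves implicit.
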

\begin{proof}
This has been discussed in Remark \ref{characters}(a).
\end{proof}

\subsection{Jordan decomposition for local subgroups}

We keep the assumptions of Section \ref{SDJD}. The aim of this section is to obtain a local version of Theorem \ref{Morita lift}. We will essentially use the same strategy of Section \ref{SDJD} to prove this local version. However, we need to adapt some of the arguments.

Recall that the projection map $\mathrm{pr}:\underline{\G}^{\tau F_0} \to \G^F$ onto the first coordinate induces an isomorphism of groups, which extends to an isomorphism $\mathrm{pr}:\Lambda \underline{\G}^{\tau F_0} \to \Lambda \G^F$ of $\Lambda$-algebras. 
Hence, under the isomorphism $\mathrm{pr}:\underline{\G}^{\tau F_0} \to \G^F$ the notions of blocks, Brauer subpairs and defect groups translate. From now on we will use the following notation: If $H$ is a subgroup of $\G^F$ we let $\underline{H}:=\mathrm{pr}^{-1}(H)$ and if $x \in \Lambda H$ then we let $\underline{x}:= \mathrm{pr}^{-1}(x) \in \Lambda \underline{H}$.
%

Let $b \in \mathrm{Z}(\Lambda \G^F e_s^{\G^F})$ and $c \in \mathrm{Z}(\Lambda \Levi^F e_s^{\Levi^F})$ be blocks which correspond to each other under the splendid Rickard equivalence given by $G \Gamma_c(\Y_\U^\G, \Lambda)e_s^{\Levi^F}$. By Proposition \ref{ShintaniDLV} and Corollary \ref{ShintaniLusztig} the projection map $\mathrm{pr}$ yields an isomorphism between $G \Gamma_c(\Y_\U^\G, \Lambda)e_s^{\Levi^F}$ and $G \Gamma_c(\Y_{\underline{\U}}^{\underline{\G},\tau F_0},\Lambda) e_{\underline{s}}^{\underline{\Levi}^{\tau F_0}}$. Hence, the blocks $\underline{b}\in \mathrm{Z}(\Lambda \underline{\G}^{\tau F_0} e_{\underline{s}}^{\underline{\G}^{\tau F_0}})$ and $\underline{c}\in \mathrm{Z}(\Lambda \underline{\Levi}^{\tau F_0} e_{\underline{s}}^{\underline{\Levi}^{\tau F_0}})$ correspond to each other under the splendid Rickard equivalence induced by $G \Gamma_c(\Y_{\underline{\U}}^{\underline{\G},\tau F_0},\Lambda) e_{\underline{s}}^{\underline{\Levi}^{\tau F_0}}$. We fix a maximal $c$-Brauer pair $(D,c_D)$ and let $(D,b_D)$ be the $b$-Brauer pair corresponding to it under the splendid Rickard equivalence induced by $G \Gamma_c(\Y_\U^\G, \Lambda) c$ in the sense of Proposition \ref{splendidloc}. Consequently, the $\underline{c}$-subpair $(\underline{D},\underline{c}_D)$ corresponds to the $\underline{b}$-subpair $(\underline{D},\underline{b}_D)$ under the Rickard equivalence induced by $G \Gamma_c(\Y_{\underline{\U}}^{\underline{\G},\tau F_0},\Lambda) e_{\underline{s}}^{\underline{\Levi}^{\tau F_0}}$.

If $Q$ is a subgroup of $D$ we let $(Q,c_Q) \leq (D,c_D)$ and $(Q,b_Q) \leq (D,b_D)$ be the corresponding Brauer subpairs. We denote $B_Q= \mathrm{Tr}^{\mathrm{N}_{\G^F}(Q)}_{\mathrm{N}_{\G^F}(Q,b_Q)}(b_Q)$ and $C_Q=\mathrm{Tr}^{\mathrm{N}_{\Levi^F}(Q)}_{\mathrm{N}_{\Levi^F}(Q,c_Q)}(c_Q)$.




\begin{proposition}\label{fieldlocal}
The bimodule $H_c^{\mathrm{dim}}(\Y^{\mathrm{N}_\G(Q)}_{\C_{\U}(Q)},\Lambda )C_Q$ can be equipped with a $\Lambda[\mathrm{N}_{\G^F}(Q) \times \mathrm{N}_{\Levi^F}(Q)^{\operatorname{opp}} \Delta \mathrm{N}_{\Ltilde^F \langle F_0 \rangle }(Q,C_Q)]$-module structure.
\end{proposition}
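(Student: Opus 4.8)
The strategy mirrors exactly the proof of Proposition \ref{field} but carried out in the reductive group $\mathrm{N}_{\G}(Q)$ instead of $\G$, using the local restriction-of-scalars results established above. First I would set up the doubled group $\underline{\G}=\G^r$ with Frobenius $F_0 \tau$ and recall from Section \ref{ShintaniDL} that $\mathrm{pr}:\underline{\G}^{\tau F_0}\to \G^F$ is a group isomorphism which, via $\mathrm{pr}^{-1}(Q)=\underline{Q}$, identifies $\mathrm{N}_{\G^F}(Q)$ with $\mathrm{N}_{\underline{\G}^{\tau F_0}}(\underline{Q})$ and likewise for the Levi and the regular-embedding versions. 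By Corollary \ref{ShintaniDLVCoro} the projection map induces a $\mathrm{N}_{\G^F}(Q)\times \mathrm{N}_{\Levi^F}(Q)^{\mathrm{opp}}$-equivariant isomorphism of varieties
$$\Y_{\C_{\underline{\U}}(\underline{Q})}^{\mathrm{N}_{\underline{\G}}(\underline{Q}),\tau F_0}\cong \Y_{\C_\U(Q)}^{\mathrm{N}_\G(Q),F},$$
and by Corollary \ref{ShintaniLusztig} (applied inside the connected group $\mathrm{C}^\circ_{\underline{\G}}(\underline{Q})\cong$ the relevant product, whose dual carries the permuted element) the idempotent $C_Q$ matches $\underline{C}_Q$. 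Thus it suffices to equip $H_c^{\mathrm{dim}}(\Y_{\C_{\underline{\U}}(\underline{Q})}^{\mathrm{N}_{\underline{\G}}(\underline{Q}),\tau F_0},\Lambda)\underline{C}_Q$ with the desired diagonal action and transport it back along $\mathrm{pr}$, under which $\tau$ corresponds to $F_0^{-1}$ and $\underline{\sigma}$ to $\sigma$, exactly as in Proposition \ref{field}.

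The key input is Theorem \ref{independencelocal} together with the ``doubled'' parabolic. Working with $\underline{\Para}'=\Para^r$, whose unipotent radical $\underline{\U}'=\U^r$ sits in the $\tau$-stable parabolic $\underline{\Para}'$ of $\underline{\G}$ with Levi $\underline{\Levi}$, the group $\hat{\mathbf{Q}}:=\underline{\tilde{\Levi}}\langle\tau\rangle \ltimes (\C_{\underline{\U}'}(\underline{Q})\cdots)$ is genuinely a $\tau$-stable setup of the kind treated in Theorem \ref{independencelocal}. Hence $H_c^{\mathrm{dim}}(\Y_{\C_{\underline{\U}'}(\underline{Q})}^{\mathrm{N}_{\underline{\G}}(\underline{Q}),\tau F_0},\Lambda)\underline{C}_Q$ carries a natural
$$\Lambda\big[(\mathrm{N}_{\underline{\G}^{\tau F_0}}(\underline{Q})\times \mathrm{N}_{\underline{\Levi}^{\tau F_0}}(\underline{Q})^{\mathrm{opp}})\Delta\big(\mathrm{N}_{\underline{\tilde{\Levi}}^{\tau F_0}\langle\tau\rangle}(\underline{Q},\underline{C}_Q)\big)\big]\text{-structure},$$
and Theorem \ref{independencelocal} (the $\V$ vs.\ $\U$ independence) gives an isomorphism of such modules between the $\underline{\U}$-version and the $\underline{\U}'$-version, which is what lets one pass from the parabolic we actually want to the $\tau$-stable one. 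Transporting along $\mathrm{pr}$ then yields the asserted $\Lambda[\mathrm{N}_{\G^F}(Q)\times \mathrm{N}_{\Levi^F}(Q)^{\mathrm{opp}}\,\Delta\,\mathrm{N}_{\tilde{\Levi}^F\langle F_0\rangle}(Q,C_Q)]$-module structure, since $\langle\tau\rangle$ becomes $\langle F_0\rangle$ and $\underline{\tilde{\Levi}}^{\tau F_0}$ becomes $\tilde{\Levi}^F$.

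The main obstacle, as flagged in the introduction, is bookkeeping of the diagonal actions through all the non-connected reductive groups. Concretely I expect the delicate point to be verifying that $\mathrm{N}_{\hat{\G}}(\underline{Q})$, $\mathrm{C}^\circ_{\hat{\G}}(\underline{Q})$ and their parabolic/Levi decompositions behave as required by Section \ref{PropertiesofDL} — i.e.\ that $\mathrm{N}_{\hat{\Levi}}(\underline{Q})\cap \mathrm{C}^\circ_{\underline{\G}}(\underline{Q})=\mathrm{C}^\circ_{\underline{\Levi}}(\underline{Q})$ and $\mathrm{N}_{\hat{\Para}}(\underline{Q})\cap \mathrm{C}^\circ_{\underline{\G}}(\underline{Q})=\mathrm{C}^\circ_{\underline{\Para}}(\underline{Q})$, so that Lemma \ref{extend}, Corollary \ref{Kunnethcoro} and ultimately Theorem \ref{independencelocal} apply verbatim — and that the $(\G^\circ,\Levi^\circ)$-regularity hypotheses on the relevant rational series (needed inside Theorem \ref{independencelocal}, via Lemma \ref{brauerseries} and Remark \ref{almost}) survive the passage to $\underline{Q}$-fixed points; here one uses $\mathrm{C}_{\G^\ast}(s)\subseteq\Levi^\ast$ exactly as in Proposition \ref{centralizerMorita}. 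Once the group-theoretic identifications and the regularity of the series $\underline{\mathcal{X}}'$, $\underline{\mathcal{Z}}$ are in place, the argument is a formal combination of Corollary \ref{ShintaniDLVCoro}, Corollary \ref{ShintaniLusztig} and Theorem \ref{independencelocal}, with the transport of structure along $\mathrm{pr}$ identical to the end of the proof of Proposition \ref{field}.
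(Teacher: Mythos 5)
Your proposal follows essentially the same route as the paper's proof: apply Theorem \ref{independencelocal} (with $\hat{\G}=\underline{\G}$) to replace the unipotent radical $\underline{\U}$ by the $\tau$-stable $\underline{\U}'=\U^r$, observe via Example \ref{normcent} and Lemma \ref{extend} that the $\underline{\U}'$-version carries the extra $\Delta(\mathrm{N}_{\underline{\tilde{\Levi}}^{\tau F_0}\langle\tau\rangle}(\underline{Q},\underline{C}_Q))$-action, and transport the structure back through Corollary \ref{ShintaniDLVCoro} using the correspondence $\tau\leftrightarrow F_0^{-1}$ under $\mathrm{pr}$. The delicate points you flag (the Levi decompositions of the normalizer subgroups in the disconnected doubled group, and the regularity of the series after passing to $\underline{Q}$-fixed points) are exactly the ones the paper disposes of inside Theorem \ref{independencelocal} and Example \ref{normcent}, so the argument is correct as outlined.
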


\begin{proof}
By Theorem \ref{independencelocal} (set $\hat{\G}:= \underline{\G}$), we have
	$$ H_c^{\mathrm{dim}}(\Y_{\C_{\underline{\U'}}(\underline{Q})}^{\mathrm{N}_{\underline{\G}}(\underline{Q}),\tau F_0}) \underline{C}_Q \cong   H_c^{\mathrm{dim}}(\Y_{\C_{\underline{\U}}(\underline{Q})}^{\mathrm{N}_{\underline{\G}}(\underline{Q}),\tau F_0}) \underline{C}_Q$$
as $\Lambda [(\mathrm{N}_{\underline{\G}^{\tau F_0}}(\underline{Q}) \times \mathrm{N}_{\underline{\Levi}^{\tau F_0}}(\underline{Q})^{\mathrm{opp}}) \Delta (\mathrm{N}_{\tilde{\underline{{\Levi}} }^{\tau F_0}}(\underline{Q},\underline{C}_Q))]$-modules. Moreover, Corollary \ref{ShintaniDLVCoro} shows that $H_c^{\mathrm{dim}}(\Y_{\C_{\underline{\U}}(\underline{Q})}^{\mathrm{N}_{\underline{\G}}(\underline{Q}),\tau F_0}) \underline{C}_Q$ is isomorphic to $H_c^{\mathrm{dim}}(\Y^{\mathrm{N}_\G(Q)}_{\C_{\U}(Q)},\Lambda ) C_Q$ as $\Lambda [(\mathrm{N}_{\G^F}(Q) \times \mathrm{N}_{\Levi^F}(Q)^{\mathrm{opp}}) \Delta( \mathrm{N}_{\tilde{\Levi}^F}(Q,c_Q))]$-modules.

Since $\tau( \underline{\U}')= \underline{\U}'$ we obtain a Levi decomposition $\underline{\tilde{\Para}} \langle \tau \rangle= \underline{\tilde{\Levi}} \langle \tau \rangle \ltimes \underline{\U}'$ in the reductive group $\underline{\tilde{\G}} \rtimes \langle \tau \rangle$. Hence we obtain a Levi decomposition $\mathrm{N}_{\underline{\tilde{\Para}} \langle \tau \rangle}( \underline{Q})= \mathrm{N}_{\underline{\tilde{\Levi}} \langle \tau \rangle}(\underline{Q})  \ltimes \mathrm{C}_{\underline{\U}'}(\underline{Q})$ in $\mathrm{N}_{\underline{\tilde{\G}} \langle \tau \rangle}( \underline{Q})$, see Example \ref{normcent}.
From this we conclude (see Lemma \ref{extend}) that the bimodule $H_c^{\mathrm{dim}}(\Y_{\C_{\underline{\U'}}(\underline{Q})}^{\mathrm{N}_{\underline{\G}}(\underline{Q}),\tau F_0}) \underline{C}_Q$ has a natural $\Delta (\mathrm{N}_{\underline{\tilde{\Levi}}^{\tau F_0} \langle \tau \rangle}(\underline{Q},\underline{C}_Q))$-action. It follows that the Morita bimodule $H_c^{\mathrm{dim}}(\Y^{\mathrm{N}_\G(Q)}_{\C_{\U}(Q)},\Lambda ) C_Q$ can be equipped with a $\Delta (\mathrm{N}_{\Ltilde^F \langle F_0 \rangle }(Q,C_Q))$-action.
\end{proof}

From now on we will assume that $Q$ is a characteristic subgroup of the defect group $D$.
Let us denote $B_Q'= \mathrm{Tr}_{ \mathrm{N}_{\tilde{\G}^F \mathcal{A}}(Q,B_Q)}^{\mathrm{N}_{\tilde{\G}^F \mathcal{A}}(Q)}( B_Q)$ and $C_Q'= \mathrm{Tr}_{ \mathrm{N}_{\tilde{\Levi}^F \mathcal{A}}(Q,C_Q)}^{\mathrm{N}_{\tilde{\Levi}^F \mathcal{A}}(Q)} (C_Q)$. Recall that $A \in \{K,\mathcal{O},k \}$.

\begin{theorem}\label{loc}
Suppose that the assumptions of Theorem \ref{Morita lift} are satisfied. Let $Q$ be a characteristic subgroup of $D$.
Then $H^{\mathrm{dim}}_c(\Y^{\mathrm{N}_\G(Q)}_{\C_{\U}(Q)},A )C_Q$ extends to an $A[(\mathrm{N}_{\G^F}(Q) \times \mathrm{N}_{\Levi^F}(Q)^{\mathrm{opp}}) \Delta( \mathrm{N}_{\tilde{\Levi}^F \mathcal{A}}(Q,C_Q))  ]$-module $M_Q$. In particular, the bimodule
$$\mathrm{Ind}_{(\mathrm{N}_{\G^F}(Q) \times \mathrm{N}_{\Levi^F}(Q)^{\mathrm{opp}}) \Delta (\mathrm{N}_{\tilde{\Levi}^F \mathcal{A}}(Q,C_Q))}^{\mathrm{N}_{\tilde{\G}^F \mathcal{A}}(Q) \times \mathrm{N}_{\tilde{\Levi}^F \mathcal{A}}(Q)^{\mathrm{opp}}}(M_Q)$$
induces a Morita equivalence between $A \mathrm{N}_{\tilde{\G}^F \mathcal{A}}(Q) B_Q'$ and $A \mathrm{N}_{\tilde{\Levi}^F \mathcal{A}}(Q) C_Q'$.
%

\end{theorem}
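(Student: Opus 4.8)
The strategy is to mimic the proof of Theorem \ref{Morita lift} in the local setting, using the restriction-of-scalars trick to first handle the $F_0$-action and then deduce the $\sigma$-invariance from the independence result. First I would record the geometric input: by Theorem \ref{Morita2} (applied inside the connected reductive group $\G$, which is legitimate because $\mathrm{C}_{\G^\ast}(s)\subseteq\Levi^\ast$ forces $\mathrm{N}_{\G^F}(e_s^{(\G^\circ)^F})=\G^F$ trivially, as $\G$ is already connected) the bimodule $H^{\mathrm{dim}}_c(\Y^{\mathrm{N}_\G(Q)}_{\C_{\U}(Q)},\Lambda)C_Q$ induces a Morita equivalence between $\Lambda\mathrm{N}_{\Levi^F}(Q)C_Q$ and $\Lambda\mathrm{N}_{\G^F}(Q)B_Q$. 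Next I would invoke Proposition \ref{fieldlocal}, which already equips this bimodule with a $\Lambda[(\mathrm{N}_{\G^F}(Q)\times\mathrm{N}_{\Levi^F}(Q)^{\mathrm{opp}})\Delta\mathrm{N}_{\Ltilde^F\langle F_0\rangle}(Q,C_Q)]$-structure; this is the local analogue of Proposition \ref{field} and carries the diagonal $F_0$-action.

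The heart of the argument is then to upgrade this to a $\Delta\mathrm{N}_{\tilde{\Levi}^F\mathcal{A}}(Q,C_Q)$-module, i.e.\ to include the diagonal action of $\sigma$. For this I would first show the bimodule is $\Delta\langle \sigma\rangle$-invariant: one applies Theorem \ref{independencelocal} (with $\hat{\G}:=\G\rtimes\langle\sigma\rangle$, or rather works with $\sigma(\U)$ and uses Lemma \ref{sigma} to compare $H^{\mathrm{dim}}_c(\Y^{\mathrm{N}_\G(Q)}_{\C_{\U}(Q)},\Lambda)$ with ${}^\sigma H^{\mathrm{dim}}_c(\Y^{\mathrm{N}_\G(Q)}_{\C_{\U}(Q)},\Lambda)^\sigma$) to get that $H^{\mathrm{dim}}_c(\Y^{\mathrm{N}_\G(Q)}_{\C_{\U}(Q)},\Lambda)C_Q\cong {}^\sigma H^{\mathrm{dim}}_c(\Y^{\mathrm{N}_\G(Q)}_{\C_{\U}(Q)},\Lambda)^\sigma\sigma(C_Q)$ as bimodules with the already-constructed $\Delta\mathrm{N}_{\Ltilde^F\langle F_0\rangle}(Q,C_Q)$-structure. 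Here one must check $\sigma(C_Q)=C_Q$, which follows from $\mathcal{A}$ stabilizing $e_s^{\Levi^F}$ and $\sigma$ stabilizing $Q$ (up to the inner adjustment made in Proposition \ref{comparing stabilizers}; for a characteristic subgroup $Q$ of $D$ and $\sigma$-stable $D$ this is automatic). Once invariance is in hand, since the full group $\mathrm{N}_{\tilde{\Levi}^F\mathcal{A}}(Q,C_Q)/\mathrm{N}_{\tilde{\Levi}^F}(Q,C_Q)\langle F_0\rangle$-part of $\mathcal{A}$ is generated by $\sigma$ modulo what we already have, and the order of $\sigma$ is invertible in $A$, the extension exists by \cite[Lemma 10.2.13]{Rouquier3} (a module invariant under a group whose order is invertible, and already extending over a normal subgroup, extends over the whole group). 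This produces the $A[(\mathrm{N}_{\G^F}(Q)\times\mathrm{N}_{\Levi^F}(Q)^{\mathrm{opp}})\Delta\mathrm{N}_{\tilde{\Levi}^F\mathcal{A}}(Q,C_Q)]$-module $M_Q$.

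Finally, for the Morita equivalence assertion I would apply the Clifford-theoretic lifting result Theorem \ref{lifting}(b) (concentrated in one degree, so part (b) applies without any coprimality hypothesis on the index) together with the passage to the traces $B_Q'$, $C_Q'$. Concretely: $M_Q$ extends a bimodule inducing a Morita equivalence between $A\mathrm{N}_{\G^F}(Q)B_Q$ and $A\mathrm{N}_{\Levi^F}(Q)C_Q$; the idempotent $C_Q$ is $\mathrm{N}_{\tilde{\Levi}^F\mathcal{A}}(Q)$-stable by Lemma \ref{stabstab} (as $Q$ is characteristic in $D$), with stabilizer-quotient identification $\mathrm{N}_{\tilde{\G}^F\mathcal{A}}(Q)/\mathrm{N}_{\G^F}(Q)\cong\mathrm{N}_{\tilde{\Levi}^F\mathcal{A}}(Q)/\mathrm{N}_{\Levi^F}(Q)$; the hypothesis $\mathrm{N}_{\tilde{\Levi}^F\mathcal{A}}(Q,C_Q)\mathrm{N}_{\G^F}(Q)=\mathrm{N}_{\tilde{\G}^F\mathcal{A}}(Q,B_Q)$ that one needs for Lemma \ref{better version} follows from Corollary \ref{factor} applied to the splendid Rickard equivalence (this is where the characteristic-subgroup assumption and the Harris--Kn\"orr/Brauer-category machinery of Section \ref{RickardClifford} enter). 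Invoking Lemma \ref{better version} (or directly Theorem \ref{lifting} after passing to the $C_Q$- resp.\ $B_Q$-stabilizers and then summing over the orbit via the trace idempotents) yields that $\mathrm{Ind}$ of $M_Q$ induces a Morita equivalence between $A\mathrm{N}_{\tilde{\G}^F\mathcal{A}}(Q)B_Q'$ and $A\mathrm{N}_{\tilde{\Levi}^F\mathcal{A}}(Q)C_Q'$, as claimed.

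\textbf{Main obstacle.} The delicate point I expect to fight with is bookkeeping the \emph{diagonal} group actions in the disconnected group $\mathrm{N}_{\underline{\G}}(\underline{Q})$: one must verify that the isomorphisms of Theorem \ref{independencelocal} and Corollary \ref{ShintaniDLVCoro} are not merely $\mathrm{N}_{\G^F}(Q)\times\mathrm{N}_{\Levi^F}(Q)^{\mathrm{opp}}$-equivariant but respect the full diagonal $\Delta\mathrm{N}_{\tilde{\Levi}^F\mathcal{A}}(Q,C_Q)$-structure, and that the normalizer/centralizer decompositions $\mathrm{N}_{\hat{\Para}}(Q)=\mathrm{N}_{\hat{\Levi}}(Q)\ltimes\mathrm{C}_\U(Q)$ behave well under the restriction-of-scalars identification $\mathrm{pr}$. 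A secondary subtlety is ensuring $\sigma(Q)=Q$ and $\sigma(C_Q)=C_Q$ after the inner adjustments forced by Proposition \ref{comparing stabilizers}, which requires choosing $\sigma$ so that the fixed $\ell$-group $D$ (and hence each characteristic $Q\leq D$) is $\sigma$-stable; this is harmless but must be stated carefully.
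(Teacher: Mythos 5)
Your proposal follows the same overall architecture as the paper's proof: Proposition \ref{fieldlocal} (via restriction of scalars) supplies the diagonal $\mathrm{N}_{\Ltilde^F\langle F_0\rangle}(Q,C_Q)$-structure, an invariance argument plus \cite[Lemma 10.2.13]{Rouquier3} produces $M_Q$, and then Theorem \ref{Morita2}, Lemma \ref{stabstab} and Lemma \ref{better version} give the Morita equivalence after induction. The one place where the paper proceeds differently is exactly the point you flag as your ``secondary subtlety'': the paper does \emph{not} re-choose $\sigma$ so that $\sigma(Q)=Q$ and $\sigma(C_Q)=C_Q$ (which would risk disturbing the choices already made in Proposition \ref{propo} and Theorem \ref{Morita lift}). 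Instead it picks a generator $x\phi_0$, with $x\in\Ltilde^F$, of the cyclic quotient $\mathrm{N}_{\tilde{\Levi}^F\mathcal{A}}(Q,C_Q)/\mathrm{N}_{\tilde{\Levi}^F\langle F_0\rangle}(Q,C_Q)$ and lifts it to a bijective morphism $\underline{\phi}=\underline{x}\,\underline{\phi_0}$ of the disconnected group $\underline{\tilde{\G}}\langle\tau\rangle$; by construction $\underline{\phi}$ stabilizes $(\underline{Q},\underline{C}_Q)$, and running Lemma \ref{sigma} together with Theorem \ref{independencelocal} \emph{on the product-group side} makes the invariance automatically compatible with the transported $\Delta\langle\tau\rangle$-structure — which is precisely the bookkeeping issue you identify as your main obstacle. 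So your plan is sound, but to close it you should replace ``adjust $\sigma$'' by ``work with the generator $x\phi_0$ of the relevant stabilizer quotient, lifted to $\underline{\G}$,'' as the paper does; checking $(\sigma,\sigma^{-1})$-invariance only for the already-constructed bimodule over $\G$ would not by itself guarantee compatibility with the $F_0$-diagonal action.
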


\begin{proof}


The quotient group $\mathrm{N}_{\tilde{\Levi}^F \mathcal{A}}(Q,C_Q)/ \mathrm{N}_{\tilde{\Levi}^F \langle F_0 \rangle }(Q,C_Q)$ is cyclic and of order divisible by the order of $\sigma \in \mathrm{Aut}(\tilde{\G}^F)$. Hence, there exist $x \in \tilde{\Levi}^F$ and a bijective morphism $\phi_0: \tilde{\G} \to \tilde{\G}$ such that $x \phi_0|_{\tilde{\G}^F}$ generates the quotient group $\mathrm{N}_{\tilde{\Levi}^F \mathcal{A}}(Q,C_Q)/ \mathrm{N}_{\tilde{\Levi}^F \langle F_0 \rangle }(Q,C_Q)$. Let $\underline{x}:=(x,F^{r-1}_0(x),\dots,F_0(x)) \in \underline{\G}^{\tau F_0}$ such that $\mathrm{pr}(\underline{x})=x$. Denote
$$\underline{\phi_0}: \underline{\tilde{\G}} \langle \tau \rangle \to \underline{\tilde{\G}} \langle \tau \rangle, \, (g_1,\dots,g_r) \tau \mapsto (\phi_0(g_1),\dots,\phi_0(g_r)) \tau$$
and consider the bijective morphism
$$\underline{\phi}:=\underline{x} \, \underline{\phi_0}:  \underline{\tilde{\G}} \langle \tau \rangle \to \underline{\tilde{\G}} \langle \tau \rangle, z \mapsto {}^{\underline{x}} \underline{\phi_0}(z),$$
of the reductive group $\underline{\tilde{\G}} \langle \tau \rangle$. Note that $\underline{\phi}$ stabilizes $\underline{\tilde{\G}}$ and commutes with the Frobenius endomorphism $\tau F_0$ of $\underline{\tilde{\G}} \rtimes \langle \tau \rangle$. Moreover, ${}^{\underline{x}}(\underline{\tilde{\Levi}} \langle \tau \rangle)= \underline{\tilde{\Levi}} \langle \tau \rangle$ and $\phi_0(\tilde{\Levi})=\tilde{\Levi}$. Therefore, the bijective morphism $\underline{\phi}$ also stabilizes the Levi subgroup $\tilde{\underline{\Levi}} \langle \tau \rangle$ of $\underline{\tilde{\G}} \rtimes \langle \tau \rangle$. Since $\underline{\phi}|_{\underline{\Gtilde}^{\tau F_0}} \in \mathrm{Aut}(\underline{\Gtilde}^{\tau F_0})$ corresponds to the automorphism $x \phi_0 \in \mathrm{Aut}(\tilde{\G}^F)$ under the isomorphism $\mathrm{pr}: \underline{\Gtilde}^{\tau F_0} \to \tilde{\G}^F$ we deduce that $\underline{\phi}(\underline{Q},\underline{C}_Q)=(\underline{Q},\underline{C}_Q)$. Hence, Lemma \ref{sigma} applies and we obtain an isomorphism
$${}^{\underline{\phi}} (H_c^{\mathrm{dim}}(\Y_{\C_{\underline{\U'}}(\underline{Q})}^{\mathrm{N}_{\underline{\G}}(\underline{Q}),\tau F_0},\Lambda ) \underline{C}_Q )^{\underline{\phi}} \cong  H_c^{\mathrm{dim}}(\Y_{\C_{\underline{\phi}(\underline{\U'})}(\underline{Q})}^{\mathrm{N}_{\underline{\G}}(\underline{Q}),\tau F_0},\Lambda ) \underline{ C}_Q$$
of $\Lambda [(\mathrm{N}_{\underline{\G}^{\tau F_0}}(\underline{Q}) \times \mathrm{N}_{\underline{\Levi}^{\tau F_0}}(\underline{Q})^{\mathrm{opp}}) \Delta (\mathrm{N}_{\underline{\Ltilde}^{\tau F_0} \langle \tau \rangle}(\underline{Q},\underline{C}_Q))]$-modules.
We have two Levi decompositions 
$$\underline{\tilde{\Para}} \langle \tau \rangle= \underline{\tilde{\Levi}} \langle \tau \rangle \ltimes \underline{\U} \text{ and } \underline{\phi}(\underline{\tilde{\Para}} \langle \tau \rangle)= \underline{\tilde{\Levi}} \langle \tau \rangle \ltimes \underline{\phi}(\underline{\U})$$
with the same Levi subgroup  $\underline{\tilde{\Levi}} \langle \tau \rangle $ of $\tilde{\G} \langle \tau \rangle$. Therefore, Theorem \ref{independencelocal} yields
$$H_c^{\mathrm{dim}}(\Y_{\C_{\underline{\phi}(\underline{\U'})}(\underline{Q})}^{\mathrm{N}_{\underline{\G}}(\underline{Q}),\tau F_0},\Lambda ) \underline{ C}_Q
\cong  H_c^{\mathrm{dim}}(\Y_{\C_{\underline{\U'}}(\underline{Q})}^{\mathrm{N}_{\underline{\G}}(\underline{Q}),\tau F_0},\Lambda ) \underline{ C}_Q.$$
It follows from this that $H_c^{\mathrm{dim}}(\Y_{\C_{\underline{\U'}}(\underline{Q})}^{\mathrm{N}_{\underline{\G}}(\underline{Q}),\tau F_0},\Lambda ) \underline{C}_Q$ is $( \underline{\phi},\underline{\phi}^{-1})$-invariant. Hence, the bimodule $H^{\mathrm{dim}}_c(\Y^{\mathrm{N}_\G(Q)}_{\C_{\U}(Q)},A )C_Q$ is by transport of structure $(x \phi_0,x \phi_0^{-1})$-invariant as $A[(\mathrm{N}_{\G^F}(Q) \times \mathrm{N}_{\Levi^F}(Q)^{\operatorname{opp}}) \Delta (\mathrm{N}_{\Ltilde^F \langle F_0 \rangle  }(Q,C_Q))]$-module. Thus, \cite[Lemma 10.2.13]{Rouquier3} shows that there exists an $A[(\mathrm{N}_{\G^F}(Q) \times \mathrm{N}_{\Levi^F}(Q)^{\mathrm{opp}}) \Delta( \mathrm{N}_{\tilde{\Levi}^F \mathcal{A}}(Q,C_Q))  ]$-module $M_Q$ extending $H^{\mathrm{dim}}_c(\Y^{\mathrm{N}_\G(Q)}_{\C_{\U}(Q)},A )C_Q$. By Theorem \ref{Morita2} the bimodule $H^{\mathrm{dim}}_c(\Y^{\mathrm{N}_\G(Q)}_{\C_{\U}(Q)},A )C_Q$ induces a Morita equivalence between the blocks $A \mathrm{N}_{\G^F}(Q) B_Q$ and $A \mathrm{N}_{\Levi^F}(Q) C_Q$. Moreover, Lemma \ref{stabstab} implies
$$\mathrm{N}_{\tilde{\Levi}^F \mathcal{A} }(Q,C_Q) \mathrm{N}_{\G^F}(Q) = \mathrm{N}_{\tilde{\G}^F \mathcal{A} }(Q,B_Q).$$
Hence, Lemma \ref{better version} shows that the bimodule $$\mathrm{Ind}_{(\mathrm{N}_{\G^F}(Q) \times \mathrm{N}_{\Levi^F}(Q)^{\mathrm{opp}}) \Delta (\mathrm{N}_{\tilde{\Levi}^F \mathcal{A}}(Q,C_Q))}^{\mathrm{N}_{\tilde{\G}^F \mathcal{A}}(Q) \times \mathrm{N}_{\tilde{\Levi}^F \mathcal{A}}(Q)^{\mathrm{opp}}}(M_Q)$$
induces a Morita equivalence between $A \mathrm{N}_{\tilde{\G}^F \mathcal{A}}(Q) B_Q'$ and $A \mathrm{N}_{\tilde{\Levi}^F \mathcal{A}}(Q) C_Q'$.
%
%
%
\end{proof}

\begin{remark}
If one could prove a version of Theorem \ref{Morita lift} with Morita equivalence replaced by splendid Rickard equivalence then Theorem \ref{loc} would be obtained as a consequence of that theorem, see Theorem \ref{normalizerderived}. However this seems to be difficult since we would have to show that the Rickard--Rouquier complex $G \Gamma_c( \Y_\U,\Lambda) e_s^{\Levi^F}$ is independent of the choice of the unipotent radical $\U$ used in its definition. In the case where the Sylow $\ell$-subgroups of $\G^F$ are cyclic we obtained such an independence result in Example \ref{iGR}.
\end{remark}


\printindex

\bibliographystyle{alpha}

\end{document}